\newtheorem{rhp}{Riemann-Hilbert Problem}
\newtheorem{theorem}{Theorem}
\newtheorem{lemma}{Lemma}
\newtheorem{proposition}{Proposition}
\theoremstyle{remark}
\def\eq{\begin{equation}}
\def\endeq{\end{equation}}
\def\bbm{\begin{bmatrix}}
\def\ebm{\end{bmatrix}}
\def\bpm{\begin{pmatrix}}
\def\epm{\end{pmatrix}}
\numberwithin{equation}{section}
\title[NLS $n$-solitons]{Far-field asymptotics for multiple-pole solitons in the large-order limit}
\author{Deniz Bilman}
\address{Deniz Bilman: Department of Mathematical Sciences, University of Cincinnati, Cincinnati, OH, USA}
\email{bilman@uc.edu}
\author{Robert Buckingham}
\address{Robert Buckingham: Department of Mathematical Sciences, University of Cincinnati, Cincinnati, OH, USA}
\email{buckinrt@uc.edu}
\author{Deng-Shan Wang}
\address{Deng-Shan Wang: Laboratory of Mathematics and Complex Systems (Ministry of Education), School of Mathematical Sciences, Beijing Normal University, Beijing, People's Republic of China}
\email{dswang@bnu.edu.cn}
\thanks{
D. Bilman was partially supported by a research fellowship from Charles Phelps Taft Research Center.
R. Buckingham was supported by the National Science Foundation through grant
DMS-1615718.
D. S. Wang was supported by the National Natural Science Foundation of China through grant 11971067 and the Fundamental Research Funds for the Central Universities through grant 2020NTST22.
}
\begin{document}

\begin{abstract}

The integrable focusing nonlinear Schr\"odinger equation admits soliton solutions 
whose associated spectral data consist of a single pair of conjugate poles of 
arbitrary order.  We study families of such multiple-pole solitons generated by 
Darboux transformations as the pole order tends to infinity.  We show that 
in an appropriate scaling, there are four regions in the space-time plane where 
solutions display qualitatively distinct behaviors:  an exponential-decay region, 
an algebraic-decay region, a non-oscillatory region, and an oscillatory region.  
Using the nonlinear steepest-descent method for analyzing Riemann-Hilbert problems, 
we compute the leading-order asymptotic behavior in the algebraic-decay, 
non-oscillatory, and oscillatory regions.

\end{abstract}

\maketitle


\section{Introduction}

The one-dimensional focusing cubic nonlinear Schr\"odinger (NLS) equation
\eq
\label{nls}
i\psi_t + \frac{1}{2}\psi_{xx} + |\psi|^2\psi = 0, \quad x,t\in\mathbb{R},
\endeq
is well known to be a completely integrable equation admitting solitons, 
i.e.\ localized traveling-wave solutions.  Each initial datum from an 
appropriate function space (Schwartz space is sufficient for our needs) is 
associated with a set of scattering data, consisting of poles and norming 
constants encoding solitons, as well as a reflection coefficient encoding 
radiation.  The scattering data for a standard soliton consist of a 
complex-conjugate pair of first-order poles (and an associated norming 
constant) and an identically zero reflection coefficient.  However, for any 
$n\in\mathbb{Z}_+$, the NLS equation also has solutions whose scattering data 
consist of a complex-conjugate pair of poles order $n$ (plus $n$ auxiliary 
parameters that are higher-order analogues of norming constants) and no reflection.
These \emph{mulitple-pole solitons} ($n\geq 2$) have very 
different qualitative behavior than standard solitons.  At sufficiently large 
time scales, the $n$th-order pole soliton resembles $n$ solitons approaching 
each other, interacting, and then separating again.  This complicated 
interaction displays a remarkable degree of structure at different scales 
as $n$ increases.  These distinguished scales include:

\vspace{.1in}

\noindent
{\bf The near-field limit.}  
The scaling $X:=nx$, $T:=n^2t$ is appropriate for studying the 
rogue-wave-type behavior near the origin.  Here the key feature is a single 
peak with amplitude of order $n$.  Locally the solution satisfies for each fixed $T$ 
a certain differential equation in the Painlev\'e-III hierarchy.  This regime was 
analyzed by two of the authors in \cite{BilmanB:2019}, the first large-$n$ analysis 
of $n$th-order pole solitons. The asymptotic solution seems to be a type of  
universal behavior, also appearing in the study of high-order Peregrine 
breathers for the NLS equation with constant, non-zero boundary conditions 
\cite{BilmanLM:2018}.  

\vspace{.1in}

\noindent
{\bf The far-field limit.}  Define 
\eq
\label{scale}
\chi:=\frac{x}{n}, \quad  \tau:=\frac{t}{n}.
\endeq
As the pole order $n\to\infty$, then the ($\chi$,$\tau$)-plane can be partitioned 
into $n$-independent regions in which the {multiple-pole} soliton has distinct 
behaviors, such as rapid oscillations of frequency $n$ or decay to zero.  
This scaling was previously studied in \cite{BilmanB:2019} and is the focus 
of the current work.

\vspace{.1in}

\noindent
{\bf The long-time limit.}  If $x$ and $t$ are unscaled, then as $t\to {\pm}\infty$ 
the $n$th-order pole soliton asymptotically resembles a train of $n$ 
distinct one-solitons.  
{
Asymptotics as $t\to \pm \infty$ were obtained by Olmedilla in \cite{Olmedilla:1987} for $n$th-order pole solitons for fixed order $n=2$ and $n=3$ by solving Gel'fand-Levitan-Marchenko equations with an appropriate kernel and arriving at a representation for the $n$th-order pole soliton that involves determinants of size $n$ via Cramer's rule.
Large-$t$ asymptotics for multiple-pole solutions of arbitrary but finite and fixed order $n$ were obtained by Schiebold in \cite{Schiebold:2017} using the earlier algebraic results \cite{Schiebold:2010b} by the same author.
}


\begin{figure}[h]
\begin{center}
\hspace{0.2in} $n=2$ \hspace{1.6in} $n=4$ \hspace{1.6in} $n=8$ \\
\vspace{.05in}
\includegraphics[height=2in]{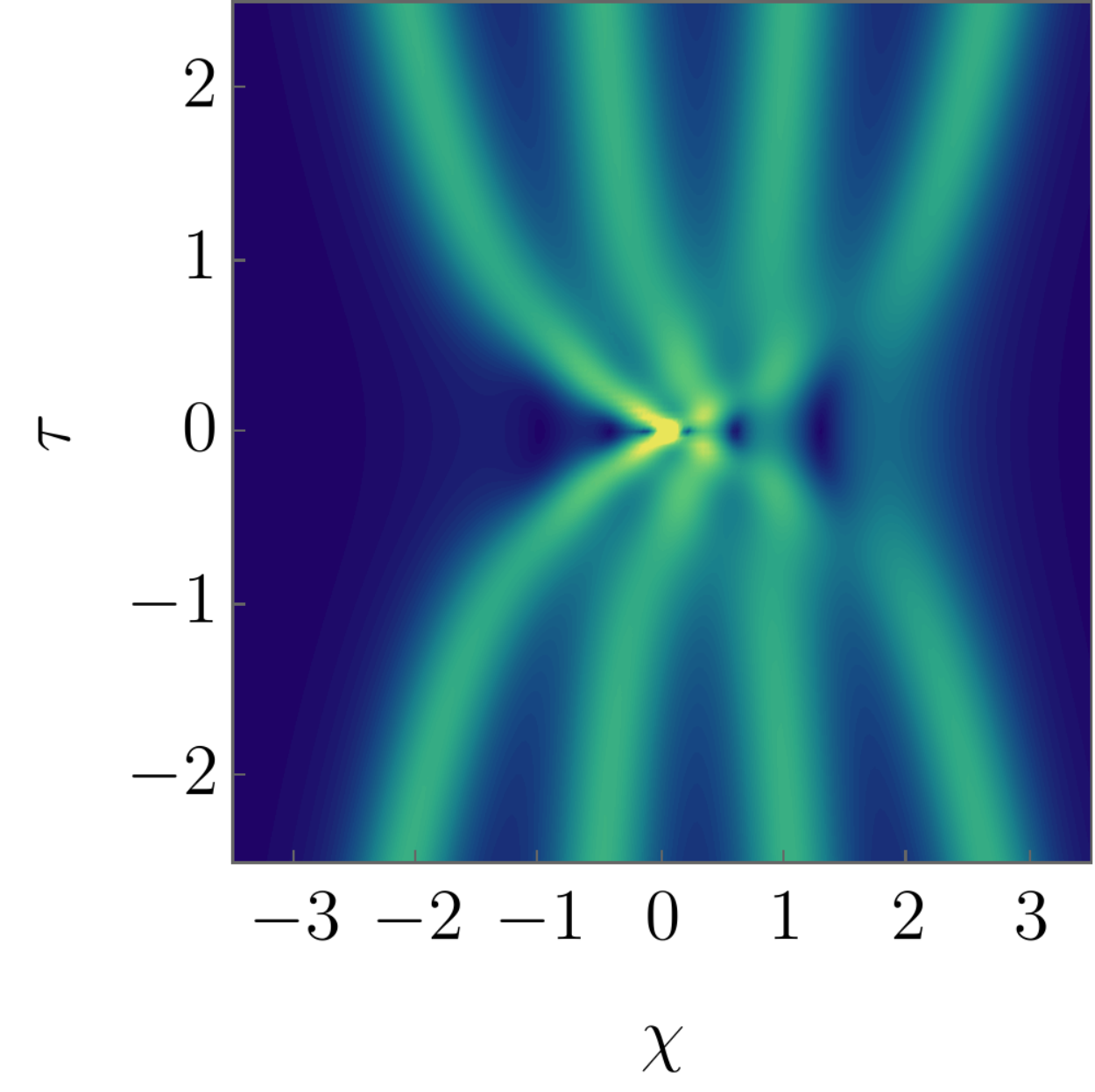}
\includegraphics[height=2in]{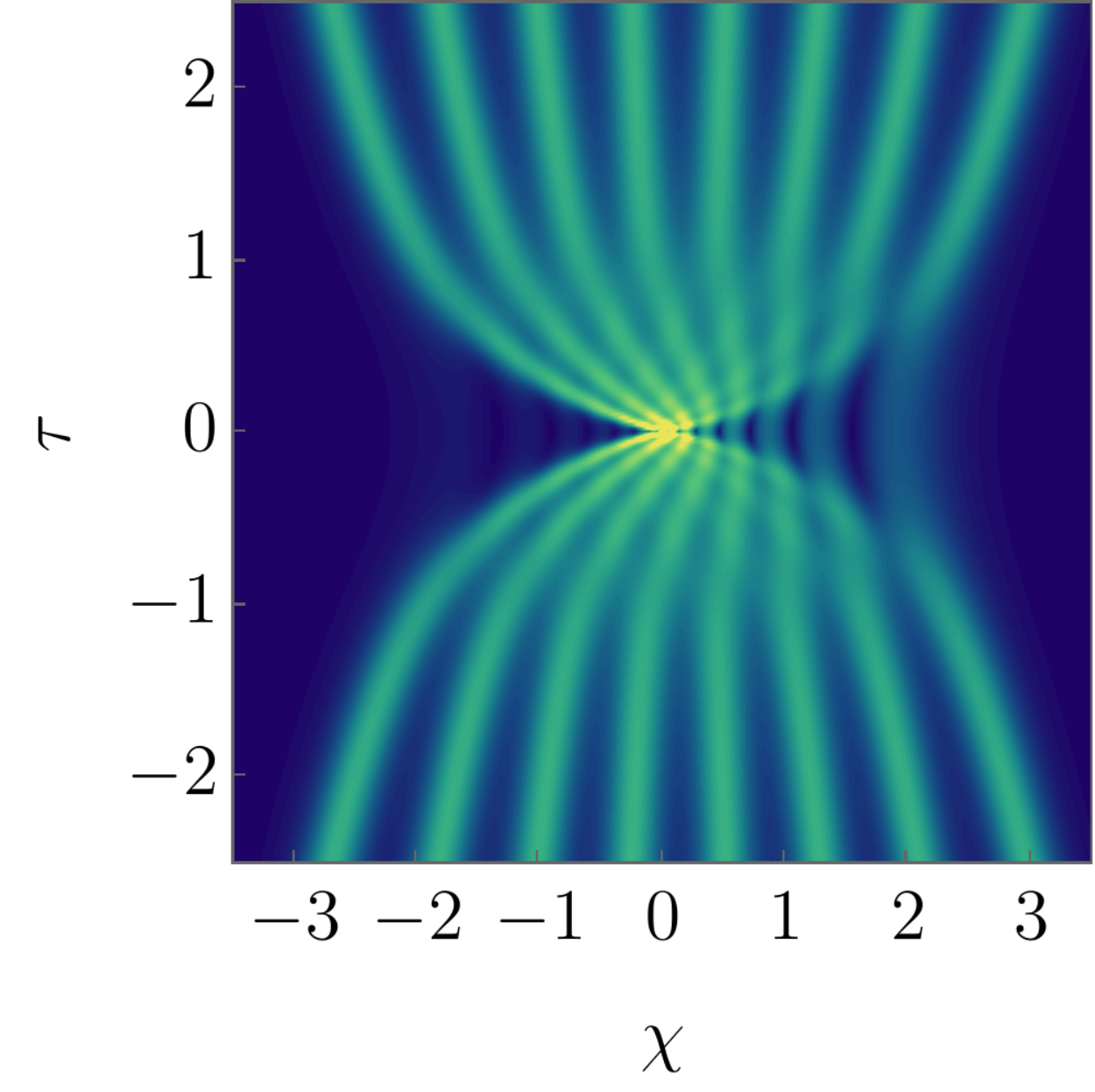}
\includegraphics[height=2in]{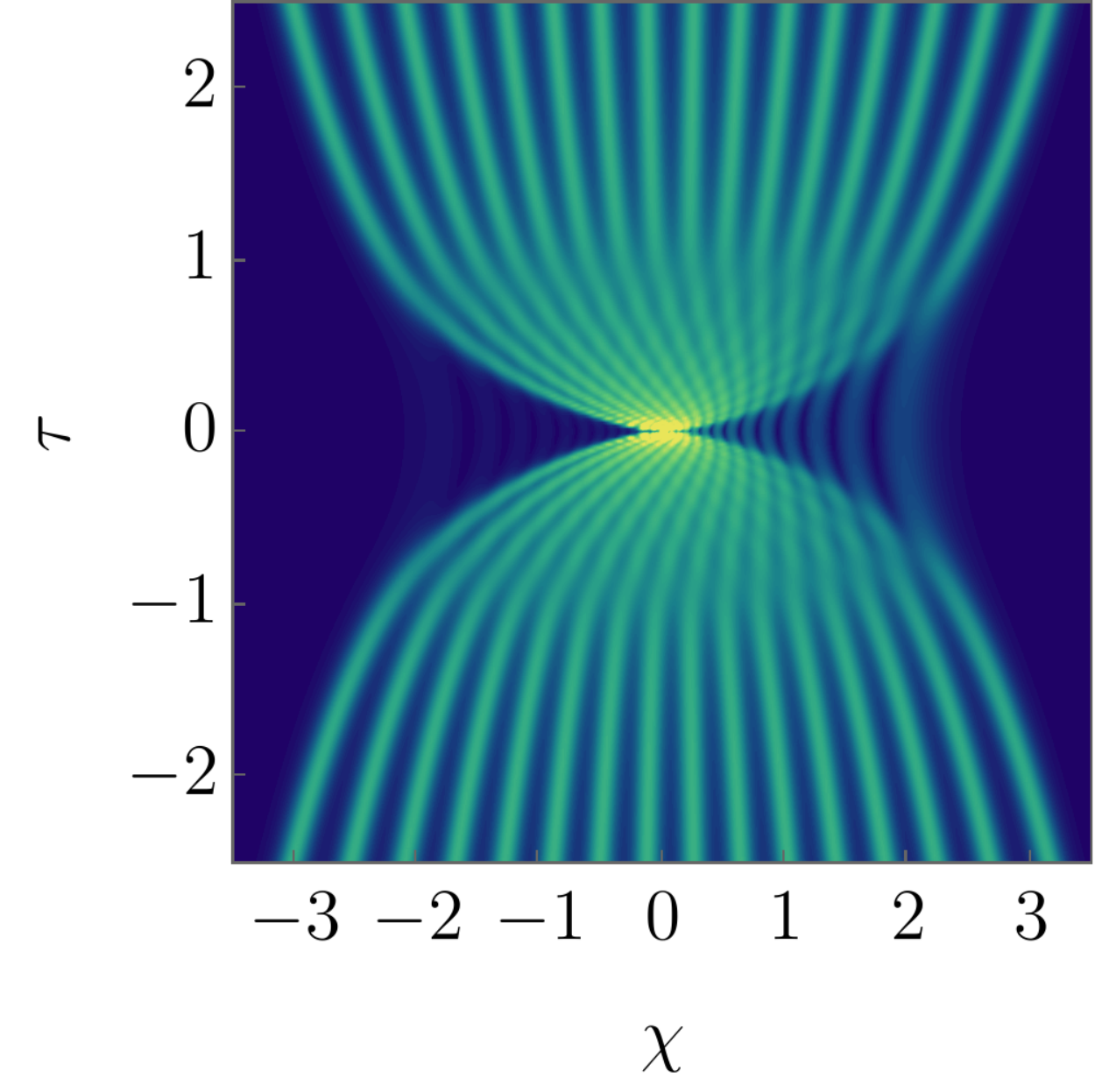}
\caption{The far-field scaling.  Plots of $|\psi^{[2n]}(n\chi,n\tau;(1,3),i)|$ for $-3.5\leq \chi\leq 3.5$ and $-2.5\leq \tau\leq 2.5$, where $\psi^{[2n]}(n\chi,n\tau;(1,3),i)$ is a multiple-pole soliton solution of the nonlinear Schr\"odinger equation \eqref{nls}.  In each plot $c_1=1$, $c_2=3$, and $\xi=i$.  \emph{Left:} $n=2$, \emph{Center:} $n=4$. \emph{Right:} $n=8$.}
\label{fig-far-field-2d}
\end{center}
\end{figure}

The generic $n$th-order pole soliton depends on a complex parameter $\xi$ 
(the spectral pole in the upper half-plane) and $n$ constant nonzero row vectors 
$(d_{1,j},d_{2,j})\in\mathbb{C}^2$, $j=1,...,n$ (higher-order analogues of the 
norming constants).  This function can be constructed via $n$ iterated Darboux 
transformations as described in \cite[\S 2]{BilmanB:2019}.  Working directly with a 
Riemann-Hilbert problem characterization in the context of the robust 
inverse-scattering transform framework provides fundamental eigenfunction matrices 
that are analytic at $\xi$ after each iteration by encoding the effect of the 
Darboux transformation in the form of a jump condition instead of a singularity in 
the spectral plane.  In order to obtain well-defined limits as $n\to\infty$, we 
first fix nonzero complex numbers $c_1$ and $c_2$  and set 
${\bf c}:=(c_1,c_2)\in(\mathbb{C}^*)^2$ (here 
$\mathbb{C}^*:=\mathbb{C}\setminus\{0\}$). We then take 
$(d_{1,j},d_{2,j}):=(\epsilon^{-1}c_1,\epsilon^{-1}c_2)$ for $j=1,...,n$ and take 
the limit $\epsilon\to 0^+$.  See Figure \ref{fig-far-field-2d} for plots of 
representative multiple-pole solitons in the far-field scaling.  This construction 
procedure is given in Appendix~\ref{app-Darboux} for completeness of our work, and 
it yields a representation of these multiple-pole solitons 
$\psi^{[2n]}(x,t;{\bf c},\xi)$ given in Riemann-Hilbert Problem \ref{rhp-M} below, which 
is convenient for our purposes of asymptotic analysis.

{
A related avenue of research pioneered by the work of Gesztesy, Karwowski, and Zhao in \cite{GesztesyKZ:1992} is the so-called \emph{countable superposition of solitons}. The authors considered a sequence of distinct eigenvalues $\{-\kappa_j^2\}_{j=1}^\infty$ along with associated norming constants $\{c_j\}_{j=1}^\infty$ and zero reflection coefficient for the Schr\"odinger operator. For each finite $N\in\mathbb{N}$, the scattering data $\{\kappa_j, c_j\}_{j=1}^N$ defines a reflectionless $N$-soliton solution $V_N(x,t)$ of the Korteweg-de Vries equation. Under certain summability and growth conditions on $\{\kappa_j, c_j\}$ as $N\to +\infty$, the authors established a limiting solution $V_\infty(x,t)$ of the Korteweg-de Vries equation that is reflectionless, global, and smooth. The study of countable superposition of solitons was extended to the focusing NLS equation \eqref{nls} later by Schiebold in \cite{Schiebold:2008} and \cite{Schiebold:2010a} for a sequence of distinct eigenvalues $\{ \lambda_j\}_{j=1}^{\infty}$ of the Zakharov-Shabat problem in the upper half-plane along with the associated norming constants again subject to appropriate growth conditions.
Drawing a comparison, the solutions we study can be thought of as a countable superposition as $n\to+\infty$ over $\mathbb{N}$, albeit with $\lambda_j\equiv \xi$ for \emph{all} $j\in \mathbb{N}$. Due to the repeated choice of the exceptional points $\lambda_j$, however, the family of solutions we study fall outside of the classes studied in these works. Indeed, following the proof of \cite[Lemma 1]{BilmanB:2019}, it is easy to see that $\psi^{[2n]}(0,0; \mathbf{c}, \xi) = 8\Im(\xi) c_1 c_2^* |\mathbf{c}|^{-2} n$, and hence the amplitudes of the solutions $\psi^{[2n]}(x,t; \mathbf{c}, \xi)$ explode as $n\to+\infty$. Therefore, there is not a limiting profile in the unscaled $(x,t)$-plane as $n\to+\infty$, contrary to the case in \cite{GesztesyKZ:1992,Schiebold:2008,Schiebold:2010a}. On the other hand, for each $n\in\mathbb{N}$, $\psi^{[2n]}(x,t; \mathbf{c}, \xi)$ defines a global classical solution (in fact, real-analytic in $(x,t)$) of the focusing NLS equation \eqref{nls}. This is a consequence of analytic Fredholm theory applied to the Riemann-Hilbert Problem~\ref{rhp-M}, which has analytic dependence on $(x,t)$ with a compact jump contour (see \cite[Proposition 3]{BilmanLM:2018} for details). Regularity properties of these solutions for fixed $n\in\mathbb{N}$ were also recently established using determinant representations \cite{ZhangTTH:2020}.
 }

In the present work we show that in the far-field scaling 
$\psi^{[2n]}(n\chi,n\tau;{\bf c},\xi)$ has 
four qualitatively different behaviors depending on the values of $\chi$ and 
$\tau$, and we give the leading-order large-$n$ asymptotic behavior for all 
$\chi$ and $\tau$ off the boundary curves.  As $n\to\infty$, 
$\psi^{[2n]}(n\chi,n\tau;{\bf c},\xi)$ exhibits the following four behaviors:

\vspace{.1in}

\noindent
{\bf The exponential-decay region.}  In this region the solution decays 
exponentially fast to zero as $n\to\infty$.  This was proven in 
\cite{BilmanB:2019}.  In the Riemann-Hilbert analysis the model problem has 
no bands (indicating no order-one contributions) and no parametrices 
(indicating no algebraically decaying contributions).  

\vspace{.1in}

\noindent
{\bf The algebraic-decay region.}  Here the leading-order solution decays 
as $n^{-1/2}$ and is given explicitly in terms of elementary functions.  The 
Riemann-Hilbert model problem consists of no bands and two parabolic-cylinder 
parametrices giving the leading-order contribution to the solution.  

\vspace{.1in}

\noindent
{\bf The non-oscillatory region.} In this region the leading-order solution 
is independent of $n$ and can be written explicitly up to the solution of a 
septic equation.  The model Riemann-Hilbert problem has a single band.  

\vspace{.1in}

\noindent
{\bf The oscillatory region.} In the final region the solution exhibits 
rapid oscillations with frequency of order $n$ within an amplitude envelope of 
order one.  The leading-order behavior is written in terms of genus-one 
Riemann-theta functions.  The corresponding Riemann-Hilbert model problem has 
two bands.  

\vspace{.1in}

The four far-field regions depend on $\xi$ but are independent of ${\bf c}$.  
The regions are illustrated for $\xi=i$ in Figure \ref{fig-far-field-2d-boundary}.
\begin{figure}[ht]
\begin{center}
\includegraphics[height=2.8in]{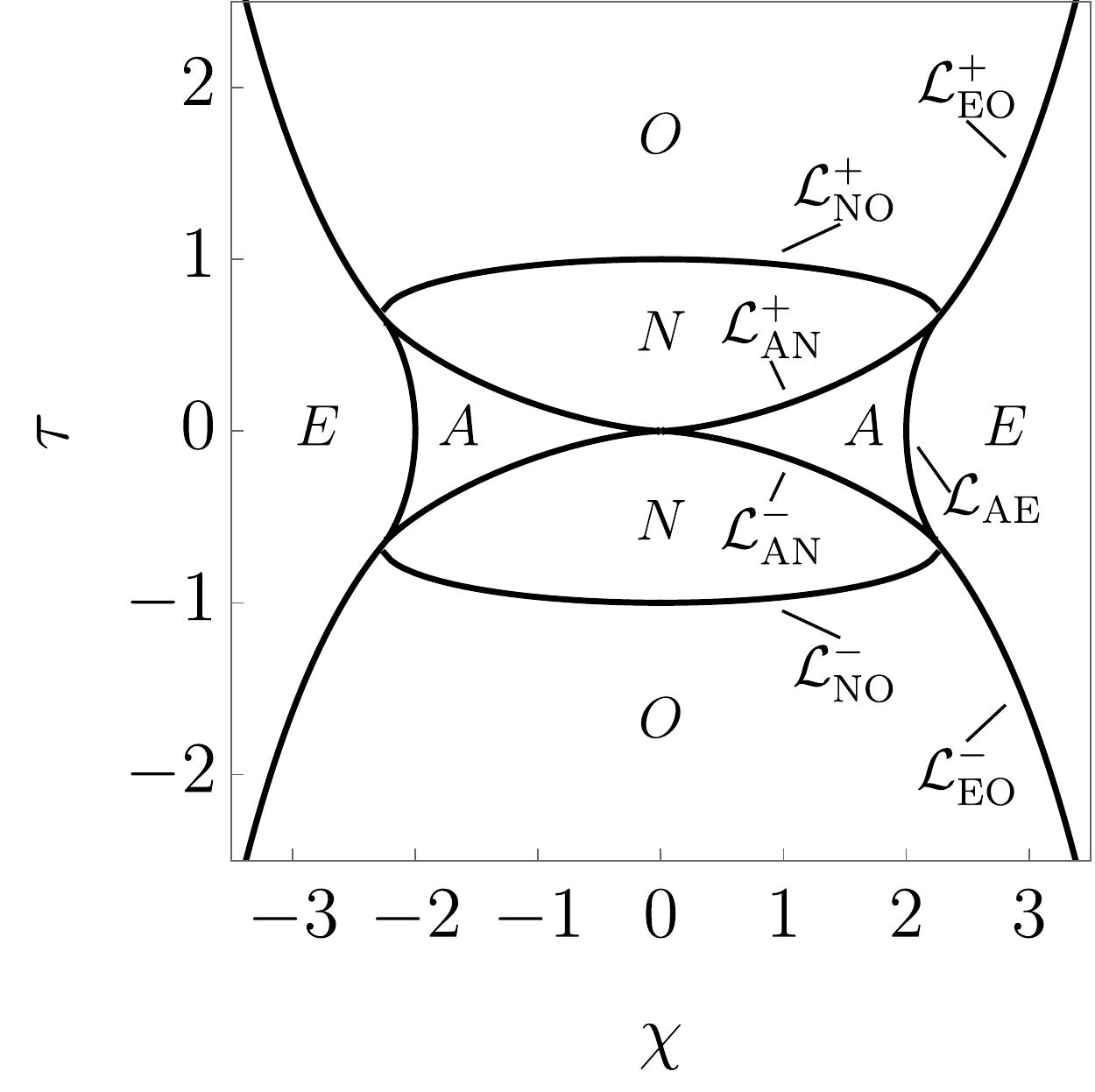}
\includegraphics[height=2.8in]{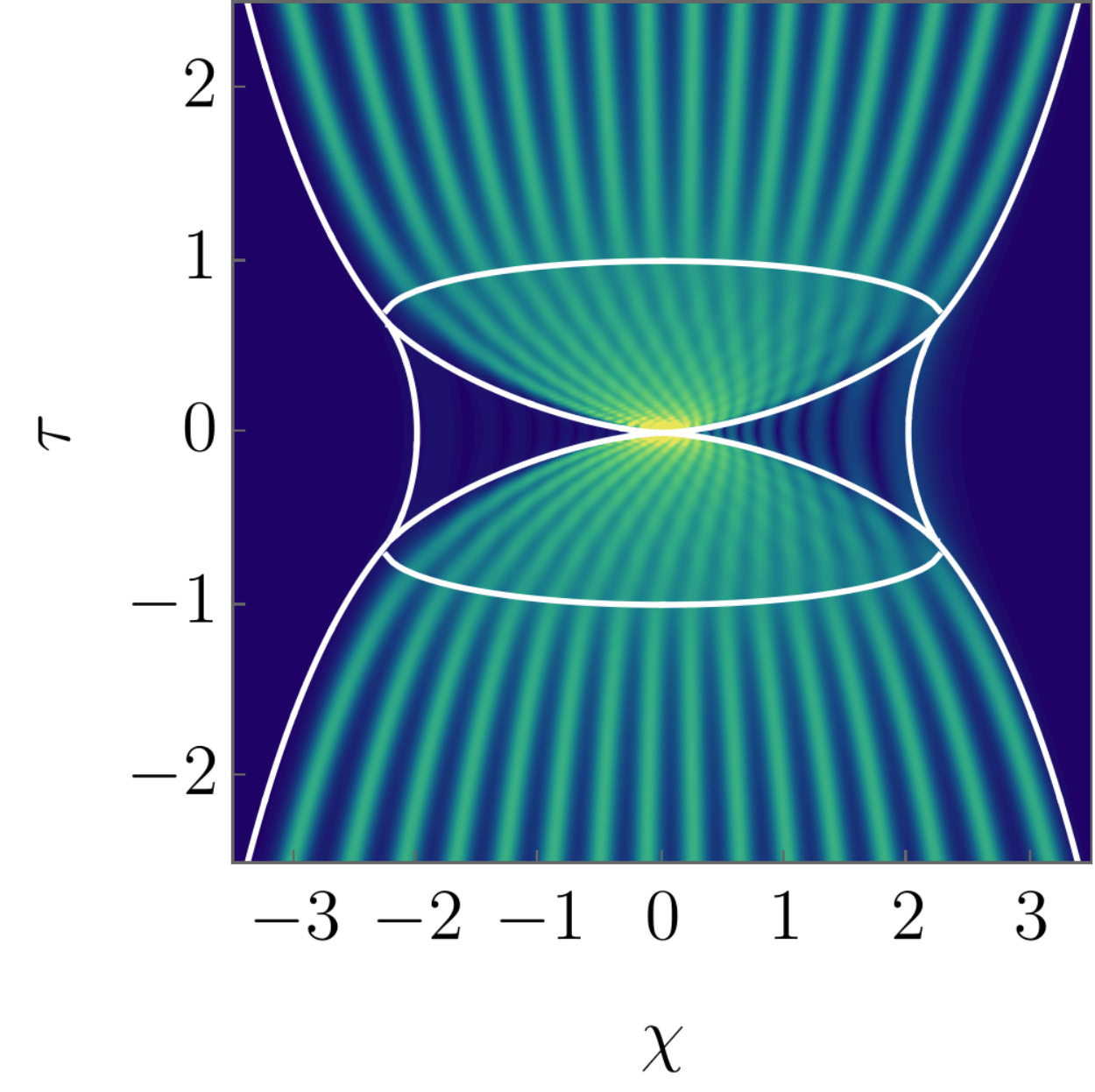}
\caption{The boundaries of the far-field regions.  \emph{Left}:  The algebraic-decay, exponential-decay, non-oscillatory, and oscillatory regions (denoted by A, E, N, and O, respectively), along with the various boundary curves for $\xi=i$.  \emph{Right}: The boundary curves superimposed on $|\psi^{[2n]}(n\chi,n\tau;(1,3),i)|$ with $c_1=1$, $c_2=3$, and $\xi=i$ for $-3.5\leq \chi\leq 3.5$ and $-2.5\leq \tau\leq 2.5$.}
\label{fig-far-field-2d-boundary}
\end{center}
\end{figure}

\subsection{The far-field regions.}
\label{subsec-regions}
In order to give our exact results we start by defining the region boundaries.  
We write $\xi=\alpha+i\beta$, $\alpha\in\mathbb{R}$, $\beta>0$.  

\vspace{.1in}

\noindent
{\bf Definition of the boundaries of the algebraic-decay region.}  Define 
\eq
\label{phi-def}
\varphi(\lambda;\chi,\tau;\xi) :=i(\lambda \chi+\lambda^2 \tau) + \log\left(\frac{\lambda-\xi^*}{\lambda-\xi}\right).
\endeq
This is the controlling phase function in the exponential-decay and 
algebraic-decay regions.  The critical points of $\varphi(\lambda)$ satisfy
\eq
2\tau(\lambda-\alpha)^3 + (\chi+2\alpha\tau)(\lambda-\alpha)^2 + 2\beta^2\tau(\lambda-\alpha) + (\beta^2\chi-2\beta+2\alpha\beta^2\tau) = 0.  
\endeq
First, set $\tau=0$ and $0<\chi<\frac{2}{\beta}$.  Then $\varphi(\lambda)$ has 
two real distinct critical points $\lambda^{(1)}$ and $\lambda^{(2)}$, where we choose  
$\lambda^{(1)}<\lambda^{(2)}$ (the third critical point is at infinity).  See Figure 
\ref{algebraic-phase-plots}.  The 
algebraic-decay region (with $\chi>0$) consists of those $\chi$ and $\tau$ 
values that can be reached by continuously varying $\chi$ and $\tau$ with no 
two critical points coinciding.  In this region if $\tau\neq 0$ 
then $\varphi(\lambda)$ has three distinct real critical points, which we 
label $\lambda^{(0)}<\lambda^{(1)}<\lambda^{(2)}$ if $\tau>0$ and 
$\lambda^{(1)}<\lambda^{(2)}<\lambda^{(0)}$ if $\tau<0$.  The region is bounded by the 
locus of points in the ($\chi$,$\tau$)-plane satisfying
\eq
\label{algebraic-boundary}
\begin{split}
& (16\alpha^4\beta+32\alpha^2\beta^3+16\beta^5)\tau^4 + (32\alpha^3\beta \chi - 16\alpha^3 + 32\alpha\beta^3 \chi - 144\alpha\beta^2)\tau^3 \\
& + (24\alpha^2\beta\chi^2 - 24\alpha^2 \chi + 8\beta^3 \chi^2 - 72\beta^2 \chi + 108\beta)\tau^2 + (8\alpha\beta \chi^3 - 12\alpha \chi^2)\tau + (\beta\chi^4 - 2\chi^3) = 0.
\end{split}
\endeq
For real $\alpha$ and positive $\beta$, this algebraic curve consists of three 
arcs in the ($\chi$,$\tau$)-plane that intersect pairwise at the three points 
\eq
P^0:=(0,0), \quad P^+:=\left(\frac{-3\sqrt{3}\alpha+9\beta}{4\beta^2},\frac{3\sqrt{3}}{8\beta^2}\right), \quad P^-:=\left(\frac{3\sqrt{3}\alpha+9\beta}{4\beta^2},\frac{-3\sqrt{3}}{8\beta^2}\right)
\endeq
(each of these three points corresponds to $\lambda^{(1)}=\lambda^{(2)}=\lambda^{(0)}$).  
The arc with endpoints $P^-$ and $P^+$ passes through the point 
$\left(\frac{2}{\beta},0\right)$ on the $\chi$-axis and is denoted by 
$\mathcal{L}_\text{AE}$.  This arc is a boundary between the algebraic-decay 
and the exponential-decay regions and corresponds to $\lambda^{(1)}=\lambda^{(2)}$.  
The arc from $P^0$ to $P^+$ is denoted by $\mathcal{L}_\text{AN}^+$ (and 
corresponds to $\lambda^{(1)}=\lambda^{(0)}$), while that from $P^0$ to $P^-$ is 
denoted by $\mathcal{L}_\text{AN}^-$ (and corresponds to 
$\lambda^{(2)}=\lambda^{(0)}$).  Both of these arcs form boundaries between the 
algebraic-decay region and the non-oscillatory region.  Note that if $\xi=i$, 
the defining condition \eqref{algebraic-boundary} for the boundary of the 
algebraic-decay region simplifies to 
\eq
\label{xi-i-boundary-quadratic}
16 \tau^4 + (8 \chi^2 - 72 \chi + 108)\tau^2 + (\chi^4 - 2\chi^3) = 0.
\endeq

\vspace{.1in}

\noindent
{\bf Definition of the exponential-decay / oscillatory boundary.}
We now define $\mathcal{L}_\text{EO}^\pm$, the boundaries between the 
exponential-decay and oscillatory regions when $\chi>0$.  Set $\tau=0$ and 
choose $\chi>\frac{2}{\beta}$.  Then $\varphi(\lambda)$ has a complex-conjugate 
pair of critical points $\lambda^+$ and $\lambda^-$, where we choose 
$\lambda^+$ to be in the upper half-plane.  See Figure \ref{exponential-phase-plots}. 
Here we have that 
$\Re(\varphi(\lambda^\pm))\neq 0$.  The exponential-decay region consists of 
those $(\chi,\tau)$ pairs we can reach by continuously varying $\chi$ and 
$\tau$ such that no two critical points coincide \emph{and} such that the 
level lines $\Re(\varphi(\lambda))=0$ never intersect either of the two 
critical points with nonzero imaginary part (which we continue to label as 
$\lambda^\pm$).  In this region if $\tau\neq 0$ then there is a third finite 
critical point which is real and that we label as $\lambda^{(0)}$.  The curve 
$\mathcal{L}_\text{AE}$ corresponds to $\lambda^+=\lambda^-$.  The curve 
$\mathcal{L}_\text{EO}^+$ (respectively, $\mathcal{L}_\text{EO}^-$) is defined 
as those points with $\tau>0$ (respectively, $\tau<0$) such that 
$\Re(\varphi(\lambda^+))=\Re(\varphi(\lambda^-))=0$.  Both 
$\mathcal{L}_\text{EO}^+$ and $\mathcal{L}_\text{EO}^-$ are simple, 
semi-infinite curves with endpoints $P^+$ and $P^-$, respectively.  

\vspace{.1in}

\noindent
{\bf Definition of the oscillatory / non-oscillatory boundary.}
Finally, we define $\mathcal{L}_\text{NO}^+$, the boundary between the 
oscillatory and non-oscillatory regions when $\tau>0$.  Given a complex 
number $a=a(\chi,\tau)$, define 
\eq
\label{r-def}
R(\lambda)\equiv R(\lambda;\chi,\tau):=((\lambda-a(\chi,\tau))(\lambda-a(\chi,\tau)^*))^{1/2}
\endeq
with asymptotic behavior $R(\lambda)=\lambda+\mathcal{O}(1)$ as 
$\lambda\to\infty$ and branch cut from $a^*$ to $a$ (we will completely 
specify the branch cut momentarily).  Set 
\eq 
\label{gprime-def}
g'(\lambda):=\frac{R(\lambda)}{R(\xi^{*})(\xi^{*}-\lambda)}-\frac{R(\lambda)}{R(\xi)(\xi-\lambda)}-2i\tau R(\lambda)+i\chi+2i\tau\lambda+\frac{1}{\lambda-\xi^*}-\frac{1}{\lambda-\xi}.
\endeq
Then $a(\chi,\tau)$ is chosen so that $g'(\lambda)=\mathcal{O}(\lambda^{-2})$ 
as $\lambda\to\infty$.  The function $\varphi'(\lambda)-g'(\varphi)$ (which 
will turn out to be the derivative of the controlling phase function in the 
non-oscillatory region) has two real zeros if 
$(\chi,\tau)\in\mathcal{L}_\text{AN}^+$.  One  zero is simple (corresponding to 
$\lambda^{(2)}$ from the algebraic-decay region) and one zero is double 
(corresponding to $\lambda^{(0)}=\lambda^{(1)}$ from the algebraic-decay 
region).  See Figure \ref{nonoscillatory-phase-plots}.  Keeping $\chi$ fixed and 
increasing $\tau$, the double zero splits 
into one real zero (denoted by $\lambda^{(1)}$) and two square-root branch 
points at $a$ and $a^*$.  The simple real zero persists and is again denoted by 
$\lambda^{(2)}$.  See Figure \ref{oscillatory-phase-plots}.  We now choose the branch 
cut for $R(\lambda)$ (and thus the 
cut for $g'(\lambda)$ as well) to run from $a^*$ to $\lambda^{(1)}$ to $a$.  
As $\chi$ increases, the non-oscillatory region continues until the two real 
zeros coincide:  $\lambda^{(1)}=\lambda^{(2)}$.  This is the condition for the 
contour $\mathcal{L}_\text{NO}$ {separating} the non-oscillatory and oscillatory 
regions.  

The exponential-decay, algebraic-decay, non-oscillatory, and oscillatory regions 
are now defined by these boundary curves as illustrated in Figure 
\ref{fig-far-field-2d-boundary}.  

\subsection{Results.}  We now give our main results, the leading-order 
asymptotic behavior in each of the four regions. 
{The symmetry properties of $\psi^{[2n]}(x,t)$ stated in Proposition~\ref{prop:symmetries} allow us to restrict our analysis to the first quadrant of the $(\chi,\tau)$ plane without loss of generality.}

\begin{theorem}
\label{exp-decay-thm}
{\rm (The exponential-decay region).}  
{ Fix $\chi>0$ and $\tau\geq 0$ so that} $(\chi,\tau)$ is in the exponential-decay region.  Then 
\eq
\psi^{[2n]}(n\chi,n\tau) = \mathcal{O}(e^{- \delta n}),\quad { n\to +\infty},
\endeq
for some constant $\delta>0$.  
\end{theorem}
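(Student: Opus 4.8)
The plan is to prove the estimate by the nonlinear steepest-descent method applied to Riemann-Hilbert Problem~\ref{rhp-M}, as was done in \cite{BilmanB:2019}. Recall that Riemann-Hilbert Problem~\ref{rhp-M} characterizes a matrix $\mathbf{M}(\lambda)=\mathbf{M}(\lambda;x,t)$ that is analytic in $\lambda$ off two small loops encircling $\xi$ and $\xi^*$, normalized so that $\mathbf{M}(\lambda)\to\mathbf{I}$ as $\lambda\to\infty$, with $\psi^{[2n]}(x,t)$ recovered from the $\lambda^{-1}$ coefficient of $\mathbf{M}$ at infinity through a reconstruction formula of the form $\psi^{[2n]}(x,t)=2i\lim_{\lambda\to\infty}\lambda M_{12}(\lambda;x,t)$. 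After substituting $x=n\chi$, $t=n\tau$ and performing (if needed) a preliminary explicit transformation that exhibits $\varphi$ as the controlling phase, the jump matrices on the two loops become triangular with off-diagonal entries of the form $r(\lambda)e^{\pm 2n\varphi(\lambda;\chi,\tau;\xi)}$, where $\varphi$ is the phase \eqref{phi-def} and $r$ is an $n$-independent rational prefactor. Two structural features of $\varphi$ drive the argument: it satisfies $\varphi(\lambda^*)=-\varphi(\lambda)^*$, so that $\Re\varphi$ is odd under $\lambda\mapsto\lambda^*$; and $\Re\varphi(\lambda)\to+\infty$ as $\lambda\to\xi$ while $\Re\varphi(\lambda)\to-\infty$ as $\lambda\to\xi^*$.

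Next I would carry out the steepest-descent deformation. Factor each loop jump into triangular pieces in the standard way, and deform each piece off the loop onto an $n$-independent contour on which the controlling exponential is uniformly small: the factor carrying $e^{2n\varphi}$ is pushed into the open set $\{\Re\varphi<-c\}$ and the factor carrying $e^{-2n\varphi}$ into $\{\Re\varphi>c\}$ for some fixed $c=c(\chi,\tau)>0$, keeping all deformed contours a fixed distance from the finite critical points of $\varphi$ and from $\xi$ and $\xi^*$. The hypothesis that $(\chi,\tau)$ lies in the exponential-decay region is exactly what permits this deformation: by the construction in Section~\ref{subsec-regions}, the exponential-decay region is the connected set of $(\chi,\tau)$, reachable from $\tau=0$, $\chi>2/\beta$, for which no two critical points of $\varphi$ collide and the zero level set $\{\Re\varphi=0\}$ never passes through the critical points $\lambda^\pm$ with nonzero imaginary part. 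I would then show that under precisely these conditions the sign chart of $\Re\varphi$ contains a negative "channel" joining a neighborhood of $\xi$ to $\infty$ in the relevant half-plane, and hence, by the symmetry $\varphi(\lambda^*)=-\varphi(\lambda)^*$, a positive channel near $\xi^*$; the logarithmic branch cut of $\varphi$ from $\xi$ to $\xi^*$ can be routed so as not to obstruct these channels, and the algebraic singularity of $r$ at $\xi,\xi^*$ is absorbed into a tiny fixed circle on which the exponential is exponentially small. The case $\tau=0$, treated as a limiting sub-case with the third critical point at infinity, is handled the same way.

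It then remains to invoke small-norm theory. After the deformation the transformed unknown $\widehat{\mathbf{M}}$ satisfies a Riemann-Hilbert problem whose jump matrix is $\mathbf{I}+\mathbf{E}$ with $\|\mathbf{E}\|_{L^1\cap L^2\cap L^\infty}=\mathcal{O}(e^{-\delta n})$, where $\delta$ is a positive multiple of $c$; hence $\widehat{\mathbf{M}}=\mathbf{I}+\mathcal{O}(e^{-\delta n})$ uniformly off the contour, its $\lambda^{-1}$ coefficient is $\mathcal{O}(e^{-\delta n})$, and undoing the bounded explicit transformations near $\lambda=\infty$ yields $\psi^{[2n]}(n\chi,n\tau)=\mathcal{O}(e^{-\delta n})$. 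I expect the main obstacle to be the geometric input of the middle step: proving that the exponential-decay region, as it is \emph{defined} by the critical-point and level-line conditions, genuinely admits globally sign-definite steepest-descent contours of the required shape for \emph{every} one of its points, with the rate $\delta$ locally uniform in $(\chi,\tau)$ so that the region is open, and understanding how the sign chart of $\Re\varphi$ reorganizes across the boundary curves $\mathcal{L}_\text{AE}$ and $\mathcal{L}_\text{EO}^\pm$, where this construction first breaks down and either bands or local parametrices must be introduced.
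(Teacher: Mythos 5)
Your overall strategy---deform the jump of the far-field problem onto contours where $\Re\varphi$ has a definite sign and then invoke small-norm theory---is the same one the paper relies on (the paper gives no proof of its own but cites \cite[\S 3]{BilmanB:2019}; the needed signature chart is the one recalled in the proof of Lemma~\ref{algebraic-lemma}: in the exponential-decay region $\{\Re\varphi=0\}$ contains a closed loop around $\xi$ with $\Re\varphi>0$ inside and $\Re\varphi<0$ just outside, and the mirror configuration around $\xi^*$). However, as written your sketch has a concrete gap. The jump \eqref{eq:jump-rhp-N} is $e^{-n\varphi\sigma_3}\mathcal{S}^{-1}e^{n\varphi\sigma_3}$, and $\mathcal{S}^{-1}$ has diagonal entries $c_1^*/|\mathbf{c}|$ and $c_1/|\mathbf{c}|$, not $1$; consequently there is no factorization into just two unipotent triangular pieces, one carrying $e^{2n\varphi}$ and one carrying $e^{-2n\varphi}$. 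Any triangular factorization necessarily contains a constant diagonal middle factor (cf.\ \eqref{Sinv-on-Gamma}), and after you push the two triangular factors into $\{\Re\varphi<-c\}$ and $\{\Re\varphi>c\}$ this diagonal factor remains as an $\mathcal{O}(1)$ jump that is \emph{not} close to the identity. Your assertion that the deformed problem has jump $\mathbb{I}+\mathbf{E}$ with $\|\mathbf{E}\|$ exponentially small therefore does not follow from the steps you describe, and small-norm theory cannot yet be applied.

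The missing step is precisely the feature that characterizes the exponential-decay region: the leftover constant diagonal jump lives on a \emph{closed} loop that does not meet the real axis, so it is removed exactly by multiplying the unknown by a constant diagonal matrix inside that loop (this is the content of ``no bands, no parametrices''). When the loops instead pinch onto the real axis along an interval, as in the algebraic-decay region, this removal is impossible and one is forced into the outer parametrix $\mathbf{T}^{(\infty)}$ and parabolic-cylinder local parametrices of \S\ref{sec-alg-decay}, producing only $n^{-1/2}$ decay; so glossing over the diagonal factor hides exactly the mechanism your theorem turns on. Relatedly, the geometric input you single out as the main obstacle is not a ``negative channel joining a neighborhood of $\xi$ to $\infty$'': note $\Re\varphi\to+\infty$ as $\lambda\to\xi$, and what is actually needed (and established in \cite{BilmanB:2019}) is the closed-loop sign structure above, which lets you place closed lens contours a fixed distance inside and outside the loop on which the respective exponentials decay at a rate $\delta(\chi,\tau)>0$ locally uniform in $(\chi,\tau)$. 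With the diagonal-jump removal and this corrected geometric statement in place, the remainder of your small-norm argument and the recovery of $\psi^{[2n]}$ from the $\lambda^{-1}$ coefficient go through as in the cited proof.
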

Theorem \ref{exp-decay-thm} was proven in \cite[\S 3]{BilmanB:2019}.

\begin{theorem}
\label{alg-decay-thm}
{\rm (The algebraic-decay region).}  
{ Fix $\chi>0$ and $\tau\geq 0$ so that} $(\chi,\tau)$ is in the algebraic-decay region.  
Let $\lambda^{(1)}$, $\lambda^{(2)}$, and $\lambda^{(0)}$ be 
the real critical points of $\varphi(\lambda)$ as defined in 
\S\ref{subsec-regions} 
{ with $\lambda^{(0)}<\lambda^{(1)}<\lambda^{(2)}$ if $\tau>0$ and 
$\lambda^{(1)}<\lambda^{(2)}$ (and $\lambda^{(0)}=\infty$) if $\tau=0$}.  
Define 
\eq
p:=\frac{1}{2\pi}\log\left(1+\left|\frac{c_2}{c_1}\right|^2\right) \quad \text{and} \quad \nu:=\arg\left(\frac{c_2}{c_1}\right),
\endeq
where $\log(\cdot)$ and $\arg(\cdot)$ each have the principal branch.  
Also introduce 
\eq
\label{theta-def}
\theta(\lambda;\chi,\tau):=-i\varphi(\lambda;\chi,\tau)
\endeq
and 
\begin{equation}
\phi^{[n]}(\chi,\tau):= p \log(n) + 2 p \log\left(\lambda^{(2)}(\chi,\tau) - \lambda^{(1)}(\chi,\tau)\right) + \frac{\pi}{4} + p \log(2) - \arg(\Gamma(i p)),
\end{equation}
where $\Gamma(\cdot)$ is the standard gamma function.  Then 
\eq
\label{psi2n-alg-result}
\begin{split}
\psi^{[2n]}(n\chi,n\tau) = \frac{ \sqrt{2p}\, e^{-i \nu}}{n^{1/2}}&\left(\frac{ e^{-2i {n}\theta(\lambda^{(1)};\chi, \tau)}(-\theta''(\lambda^{(1)};\chi, \tau))^{-i p} }{\sqrt{-\theta''(\lambda^{(1)};\chi, \tau)}}  e^{-i\phi^{[n]}(\chi,\tau)}\right. \\
&\hspace{.2in}\left.  + \frac{ e^{-2i {n} \theta(\lambda^{(2)};\chi, \tau)}\theta''(\lambda^{(2)};\chi, \tau)^{i p} }{\sqrt{\theta''(\lambda^{(2)};\chi, \tau)}}  e^{i\phi^{[n]}(\chi,\tau)} \right)+\mathcal{O}(n^{-1}), \quad { n\to +\infty}.
\end{split}
\endeq
\end{theorem}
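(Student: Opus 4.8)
\emph{Proof idea.}\ The plan is to carry out the Deift--Zhou nonlinear steepest-descent analysis of Riemann--Hilbert Problem~\ref{rhp-M}, which determines $\psi^{[2n]}(x,t;\mathbf c,\xi)$ through the reconstruction formula $\psi^{[2n]}(x,t)=2i\lim_{\lambda\to\infty}\lambda M_{12}(\lambda)$ (up to the standard normalization). After the substitution $x=n\chi$, $t=n\tau$, the jump matrices carried on the pair of small circles around $\xi$ and $\xi^{*}$ acquire a controlling exponential factor whose exponent is $2n\varphi(\lambda;\chi,\tau;\xi)$ with $\varphi$ as in \eqref{phi-def}; since $\varphi$ is purely imaginary on $\mathbb R$, the function $\theta=-i\varphi$ of \eqref{theta-def} is real there, and its real stationary points are exactly $\lambda^{(0)},\lambda^{(1)},\lambda^{(2)}$. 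The off-diagonal content of the jumps is at each real stationary point a multiple of $c_2/c_1$ times the exponential; the focusing sign structure produces the ``reflection-coefficient-like'' constant $1+|c_2/c_1|^{2}=e^{2\pi p}$, which explains the appearance of $p$, while $\nu=\arg(c_2/c_1)$ enters as a constant phase and is the origin of the overall factor $e^{-i\nu}$.

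First I would record the sign chart of $\Re\varphi$ in the upper half-plane and, using the definition of the algebraic-decay region by continuation from the slice $\tau=0$, $0<\chi<2/\beta$ together with the explicit boundary locus \eqref{algebraic-boundary}, show that throughout the open algebraic-decay region the level set $\{\Re\varphi=0\}$ has the fixed topology that permits the contour of Riemann--Hilbert Problem~\ref{rhp-M} to be opened (via the lens mechanism of the robust inverse-scattering framework) into a new contour which, modulo jumps that are uniformly $I+\mathcal O(e^{-cn})$ for some $c>0$ on compact subsets, reduces to two small crosses centered at $\lambda^{(1)}$ and $\lambda^{(2)}$; the third stationary point $\lambda^{(0)}$ stays inactive because the deformed contour is routed away from it into a region where the corresponding factor is exponentially negligible. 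This configuration must degenerate precisely on $\mathcal L_{\mathrm{AE}}$ (where $\lambda^{(1)}=\lambda^{(2)}$) and on $\mathcal L_{\mathrm{AN}}^{\pm}$ (where $\lambda^{(1)}=\lambda^{(0)}$ or $\lambda^{(2)}=\lambda^{(0)}$). I would then conjugate by a scalar Szeg\H{o}-type function $\delta(\lambda)$ solving $\delta_{+}\delta_{-}=1+|c_2/c_1|^{2}$ on the arc between $\lambda^{(1)}$ and $\lambda^{(2)}$ (and trivial elsewhere), which simultaneously makes the two local jumps of the correct triangularity and contributes, through $\delta$ evaluated at each saddle relative to the other, the coupling factor $(\lambda^{(2)}-\lambda^{(1)})^{\pm 2ip}$, i.e.\ the term $2p\log(\lambda^{(2)}-\lambda^{(1)})$ inside $\phi^{[n]}$.

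Next I would build local parametrices on small fixed disks around $\lambda^{(1)}$ and $\lambda^{(2)}$. Near $\lambda^{(j)}$ one has $\theta(\lambda)=\theta(\lambda^{(j)})+\tfrac12\theta''(\lambda^{(j)})(\lambda-\lambda^{(j)})^{2}+\cdots$ with $\theta''(\lambda^{(1)})<0$ and $\theta''(\lambda^{(2)})>0$ (as forced by the form of \eqref{psi2n-alg-result} and confirmed by the sign analysis); rescaling $\lambda-\lambda^{(j)}$ by a multiple of $(n|\theta''(\lambda^{(j)})|)^{-1/2}$ turns the local problem into the standard parabolic-cylinder (Weber) model with parameter $p$, whose solution is classical. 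Matching its large-argument expansion against the outer parametrix on the disk boundary (of radius $\mathcal O(1)$, so that the Weber argument has size $\sqrt n$ and, by the $\mathcal O(z^{-2})$ tail of the Weber asymptotics, the matching error is $\mathcal O(n^{-1})$) pins down the remaining constants: the $e^{-2in\theta(\lambda^{(j)})}$ oscillations, the branches $(-\theta''(\lambda^{(1)}))^{-ip}/\sqrt{-\theta''(\lambda^{(1)})}$ and $\theta''(\lambda^{(2)})^{ip}/\sqrt{\theta''(\lambda^{(2)})}$, the amplitude $\sqrt{2p}$, and the collected constant $\tfrac{\pi}{4}+p\log 2-\arg(\Gamma(ip))+p\log(n)$ in $\phi^{[n]}$.

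Finally I would set $E(\lambda):=M(\lambda)\dot M(\lambda)^{-1}$, where $\dot M$ is the global parametrix (the $\delta$-conjugated outer solution glued to the two Weber parametrices on the disks). Then $E$ solves a small-norm Riemann--Hilbert problem with jump $I+\mathcal O(n^{-1})$ on the two disk boundaries and $I+\mathcal O(e^{-cn})$ elsewhere, so $E=I+\mathcal O(n^{-1})$ uniformly on compact subsets of the algebraic-decay region; extracting $\psi^{[2n]}(n\chi,n\tau)$ from the $\lambda^{-1}$ coefficient of $M=E\dot M$ and summing the two saddle contributions yields \eqref{psi2n-alg-result} with error $\mathcal O(n^{-1})$. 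I expect the principal obstacle to be the second step: establishing that the ``no-band, two-parametrix'' (genus-zero) configuration is valid uniformly on the whole algebraic-decay region as defined by continuation, and that it collapses exactly on $\mathcal L_{\mathrm{AE}}$ and $\mathcal L_{\mathrm{AN}}^{\pm}$ — a global statement about the level lines of $\Re\varphi$ rather than a local computation. The subsequent parabolic-cylinder bookkeeping producing every constant in $\phi^{[n]}$ is laborious but conceptually routine, and as a consistency check the result should reduce on $\tau=0$ to the algebraic-decay asymptotics already recorded in \cite{BilmanB:2019}.
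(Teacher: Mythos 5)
Your outline follows the same architecture as the paper's proof: signature chart of $\Re\varphi$ obtained by continuation from the slice $\tau=0$, $0<\chi<2/\beta$; lens opening so that only the two real saddles $\lambda^{(1)},\lambda^{(2)}$ remain active; a diagonal (Szeg\H{o}-type) outer parametrix handling the constant jump on the interval between them; parabolic-cylinder local parametrices; and a small-norm argument. One small slip: the scalar function you need satisfies the \emph{ratio} condition $\delta_+=\delta_-\,e^{2\pi p}$ on $(\lambda^{(1)},\lambda^{(2)})$ (its matrix form is exactly \eqref{eq:T-out}), not the product condition $\delta_+\delta_-=1+|c_2/c_1|^2$; the product condition would force square-root (band-type) endpoint behavior and would not produce the $(\lambda^{(2)}-\lambda^{(1)})^{\pm 2ip}$ coupling you correctly anticipate.

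The genuine gap is in your error bookkeeping and, with it, the mechanism that produces the leading term. The parabolic-cylinder parametrix satisfies $\mathbf{U}(\zeta)\zeta^{ip\sigma_3}=\mathbb{I}+\tfrac{1}{2i\zeta}\left[\begin{smallmatrix}0 & r\\ -q & 0\end{smallmatrix}\right]+\mathcal{O}(\zeta^{-2})$ (cf.\ \eqref{eq:PC-normalization}), so on a fixed disk boundary, where $\zeta\asymp n^{1/2}$, the mismatch with the outer parametrix is $\mathbb{I}+\mathcal{O}(n^{-1/2})$, \emph{not} $\mathbb{I}+\mathcal{O}(n^{-1})$: the $\mathcal{O}(\zeta^{-1})$ term of the Weber asymptotics does not match the outer parametrix and cannot be discarded. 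Since the outer parametrix is diagonal, its $12$-entry contributes nothing at $\lambda=\infty$, so under your stated premises ($E$ has jumps $\mathbb{I}+\mathcal{O}(n^{-1})$ on the circles, hence $E=\mathbb{I}+\mathcal{O}(n^{-1})$) you would conclude $\psi^{[2n]}(n\chi,n\tau)=\mathcal{O}(n^{-1})$, and the $n^{-1/2}$ leading term in \eqref{psi2n-alg-result} would never appear. The correct accounting (as in the paper) is: the circle jumps are $\mathbb{I}+\mathcal{O}(n^{-1/2})$ with an explicit $\tfrac{1}{2i\zeta}$ off-diagonal part; feeding this into the small-norm/Neumann-series representation, the first-order term is an integral of $[\mathbf{V}_{\mathbf W}]_{12}$ over the two circles, evaluated by residues at $\lambda^{(1)}$ and $\lambda^{(2)}$, and this is what yields the two explicit $n^{-1/2}$ saddle contributions (with $r$, $q$, $|r|=\sqrt{2p}$, and the $\mathbf{H}^{(j)}$-factors supplying every constant in $\phi^{[n]}$), while the $\mathcal{O}(n^{-1})$ error comes from the quadratic terms of the expansion together with the $\mathcal{O}(\zeta^{-2})$ tail. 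Your sentence about ``matching pins down the constants'' gestures at this, but as written the extraction step is inconsistent and needs to be replaced by the first-moment residue computation just described.
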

Theorem \ref{alg-decay-thm} is proven in \S\ref{sec-alg-decay}.  Figure
\ref{algebraic-sol-plots-c13} compares the 
exact solution to the leading-order behavior for various values of $n$.  
\begin{figure}[ht]
\begin{center}
\hspace{.2in} $n=2$ \hspace{1.4in} $n=4$ \hspace{1.6in} $n=8$ \\
\includegraphics[height=1.4in,width=2in]{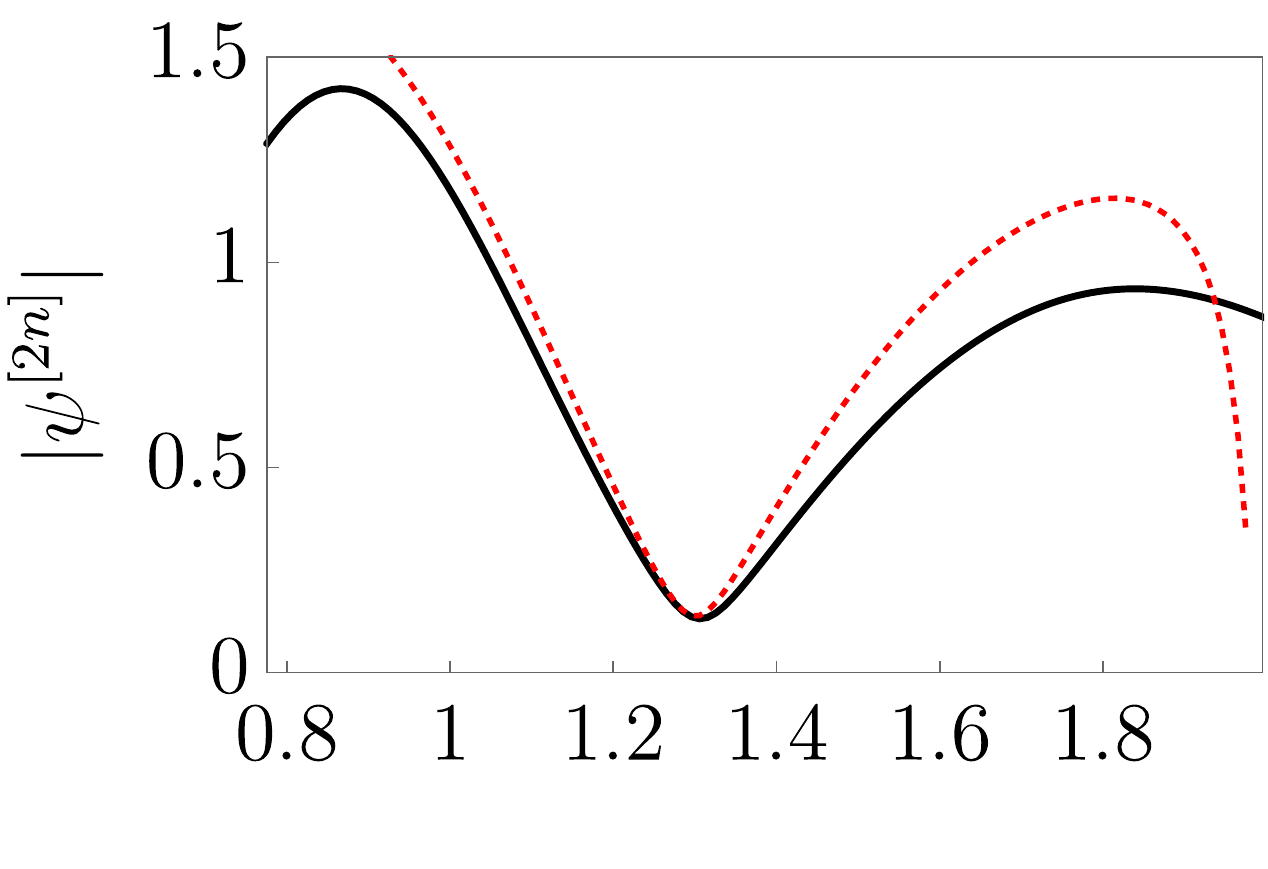}
\includegraphics[height=1.4in,width=2in]{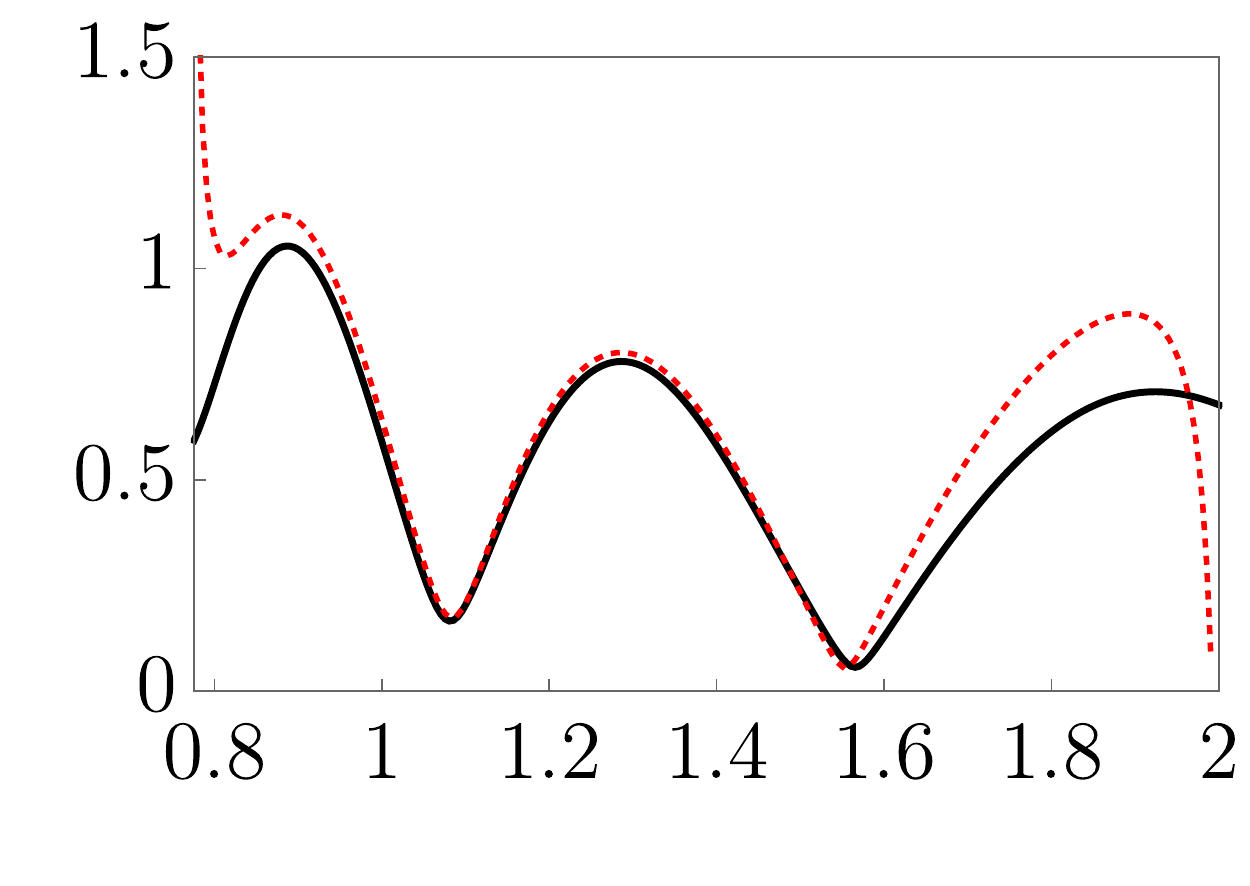}
\includegraphics[height=1.4in,width=2in]{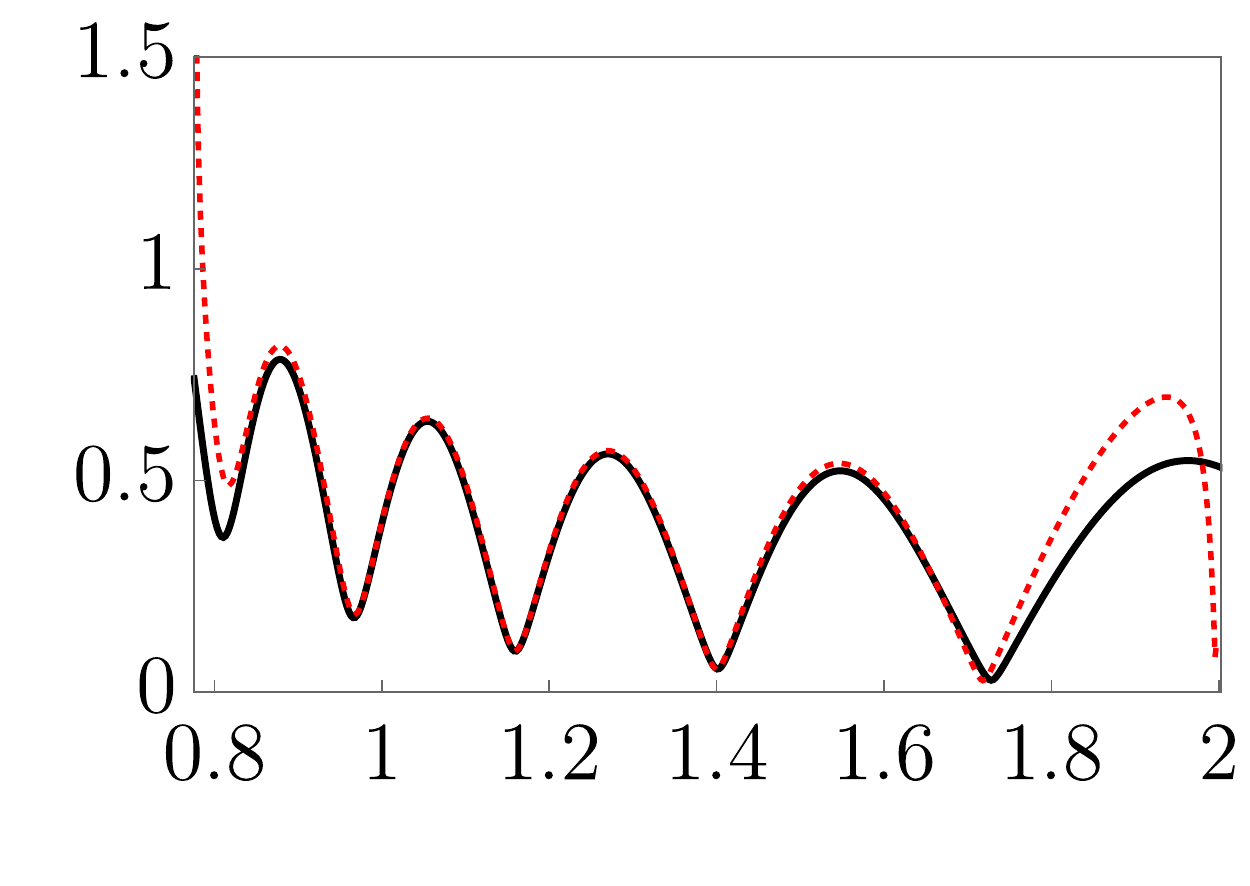}\\
\includegraphics[height=1.4in,width=2in]{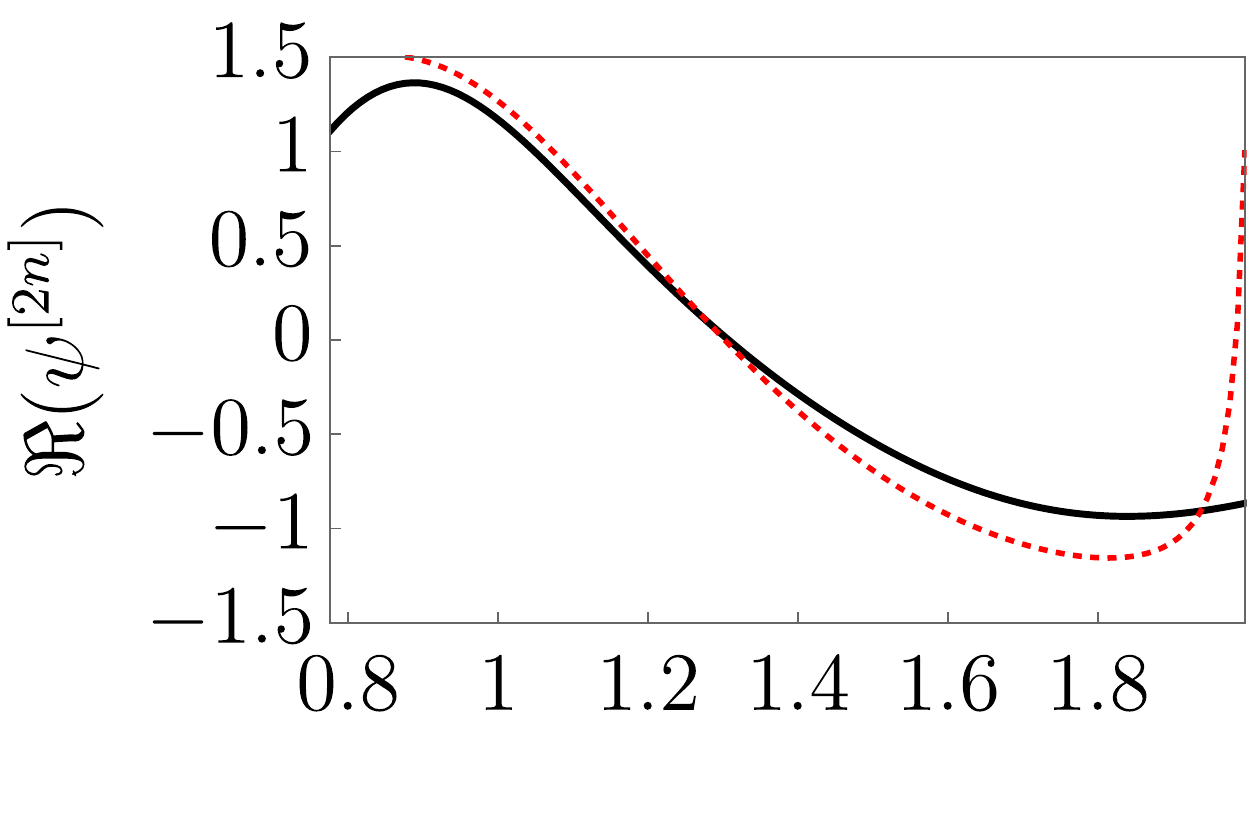}
\includegraphics[height=1.4in,width=2in]{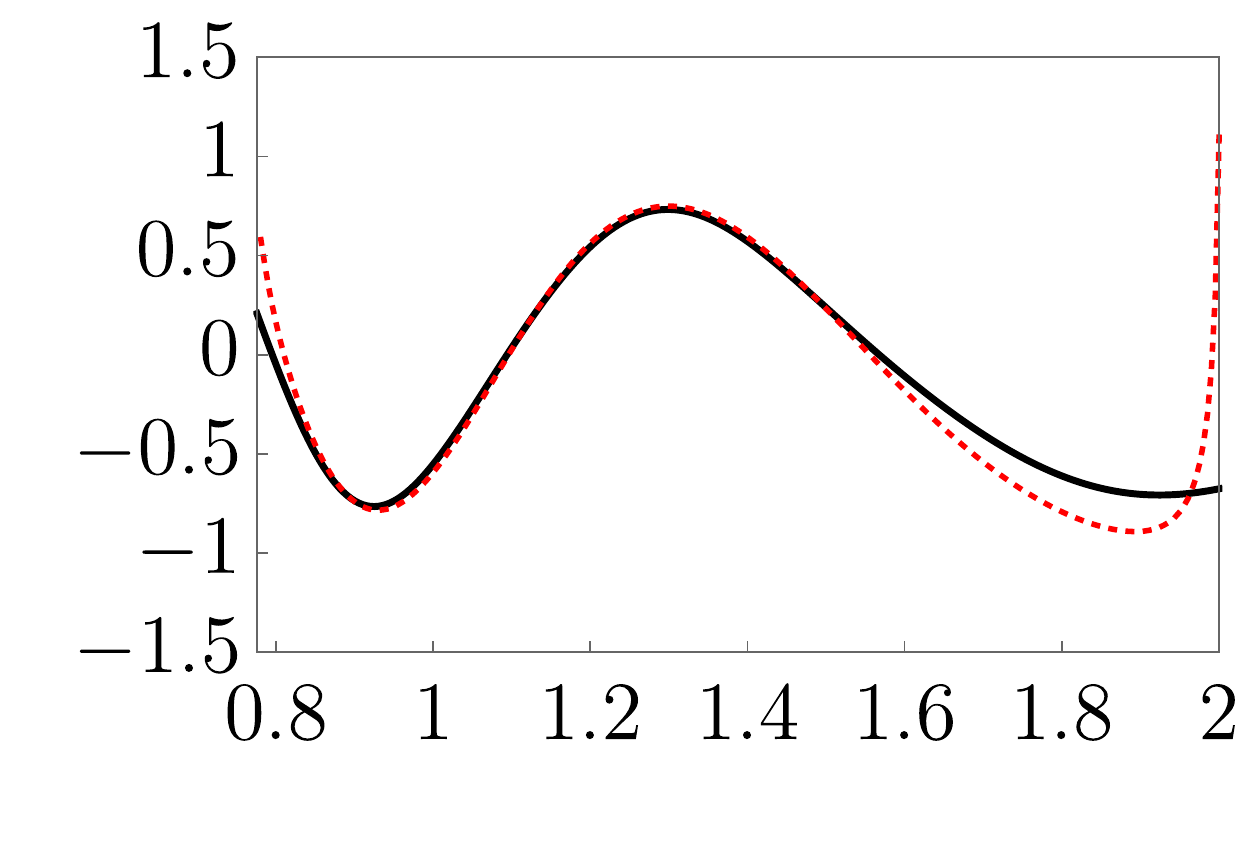}
\includegraphics[height=1.4in,width=2in]{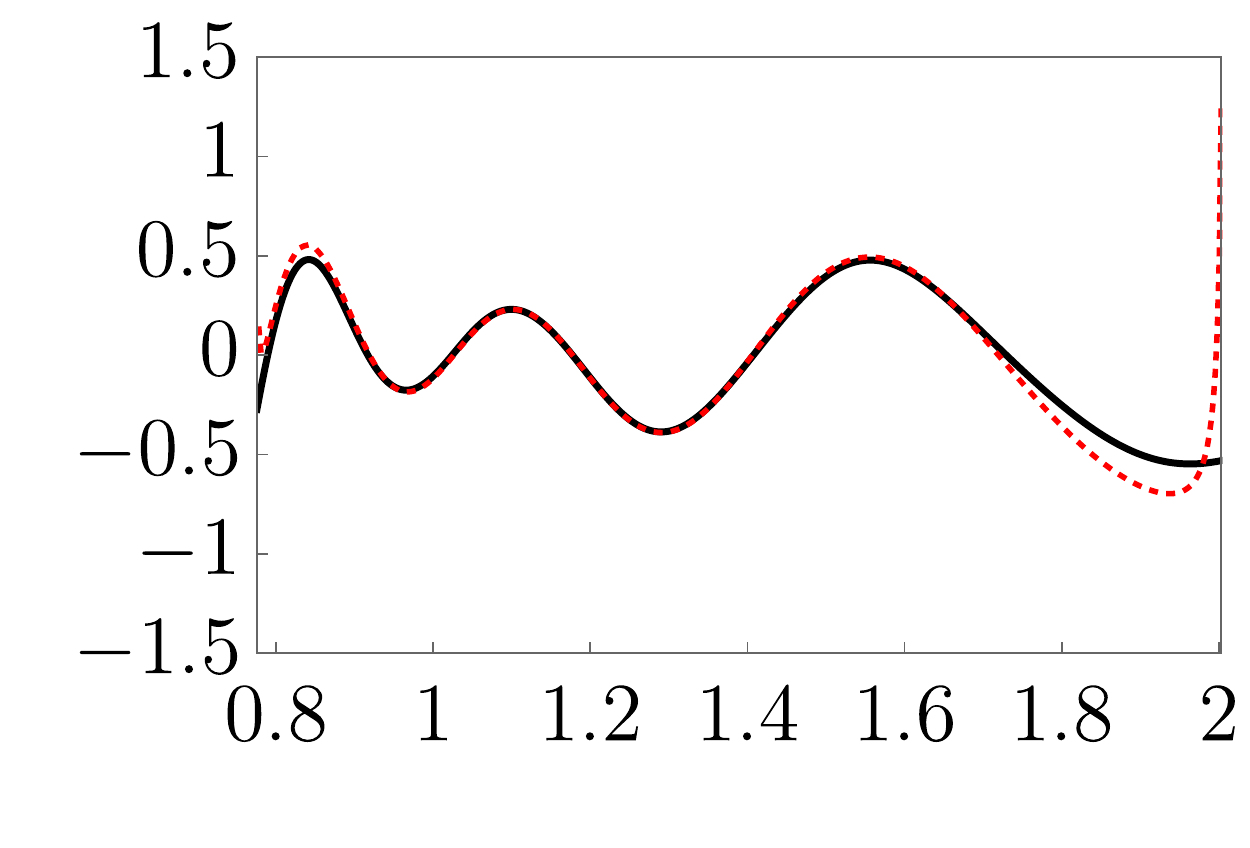}\\
\includegraphics[height=1.5in,width=2in]{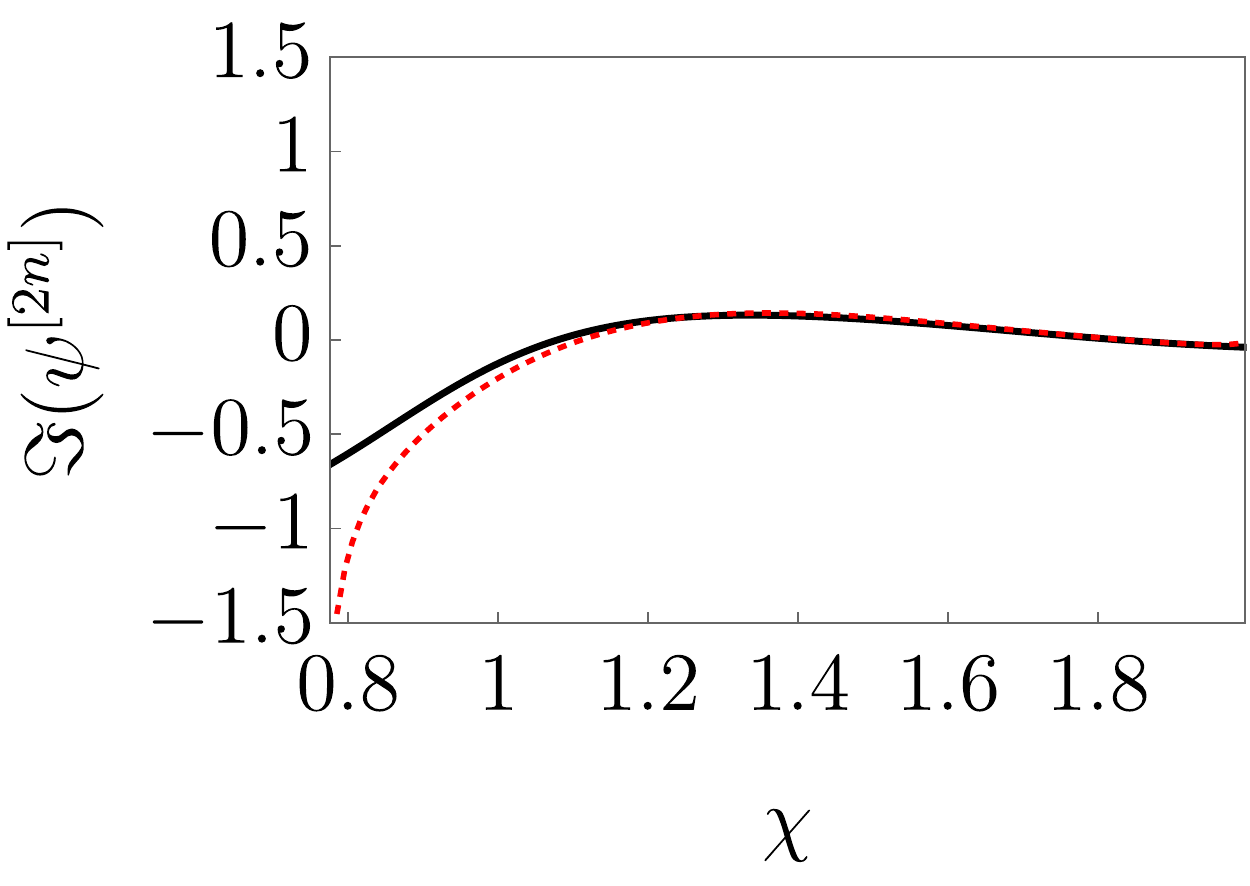}
\includegraphics[height=1.5in,width=2in]{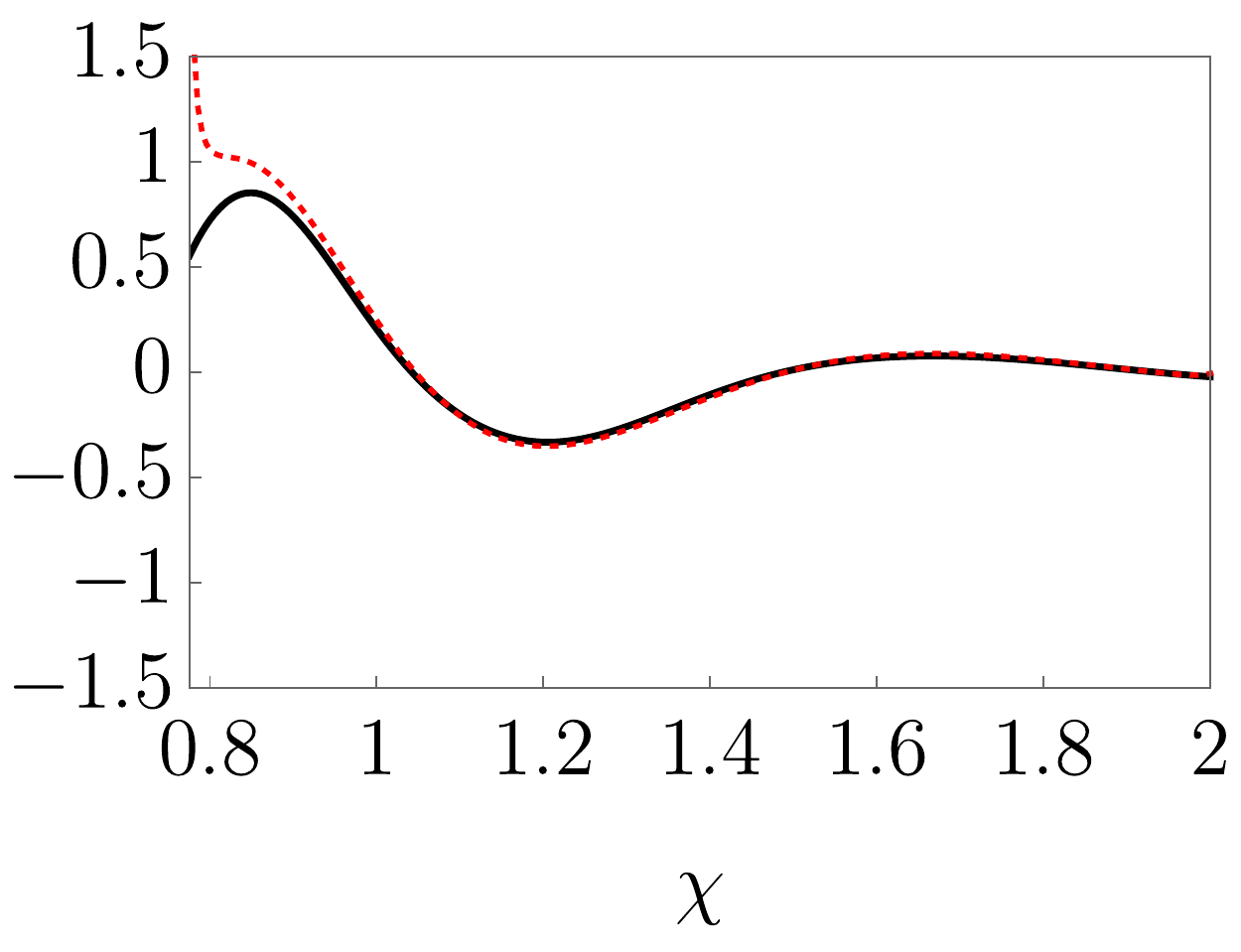}
\includegraphics[height=1.5in,width=2in]{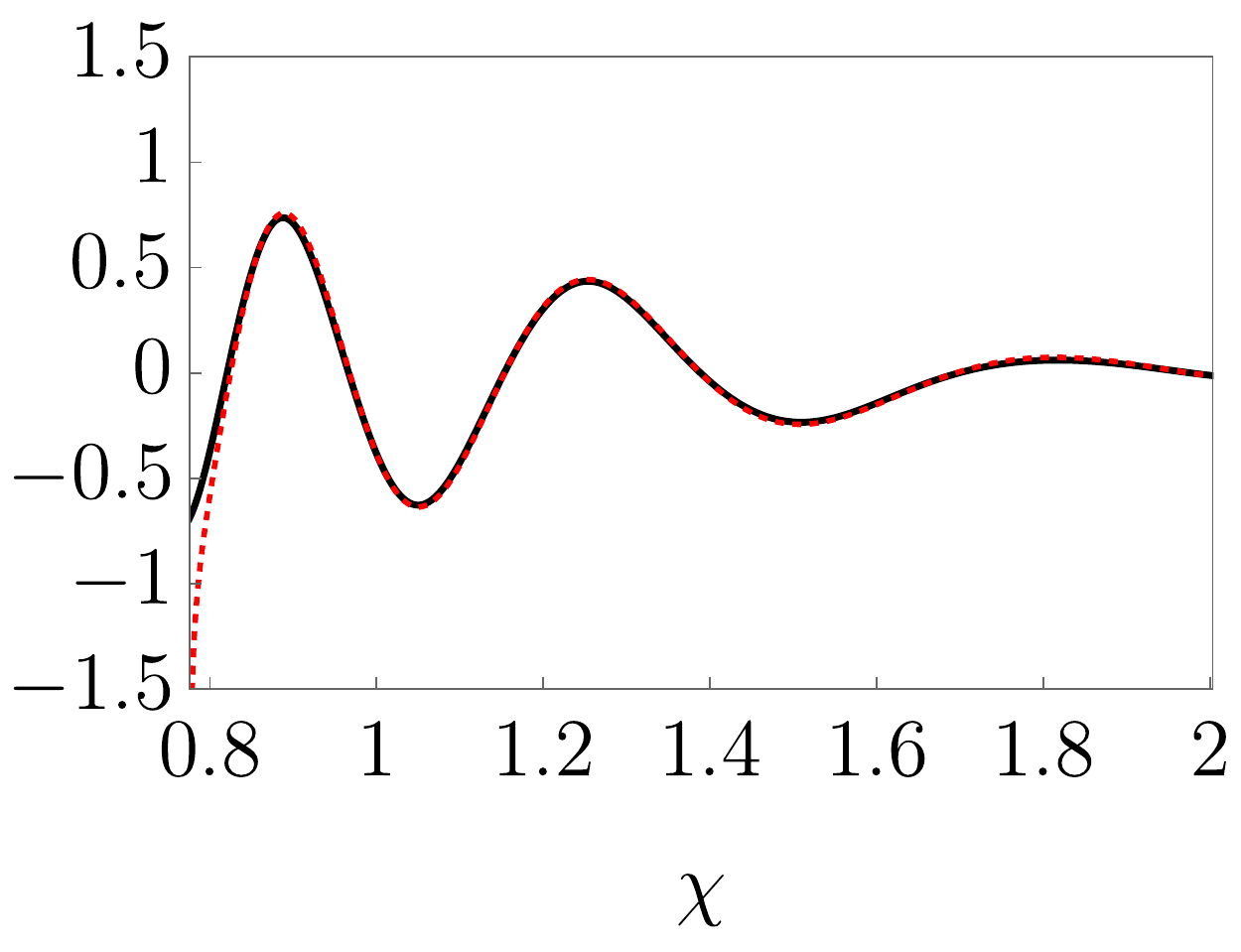}
\caption{Convergence of the leading-order asymptotic approximation in the 
algebraic-decay region for $\xi=i$ and ${\bf c}=(1,3)$ at $\tau=\frac{1}{10}$. 
Solid black curves are for the exact solution 
$\psi^{[2n]}(n\chi,n\frac{1}{10};(1,3),i)$ while 
dashed red curves are for the leading-order approximation given by Theorem 
\ref{alg-decay-thm}.  For this time slice the algebraic-decay region 
(with $\chi\geq 0$) is approximately $0.7756<\chi<2.0050$.  
\emph{Left-to-right}:  $n=2$, $n=4$, $n=8$.  \emph{Top-to-bottom}:  The 
absolute value, real part, and imaginary part.}
\label{algebraic-sol-plots-c13}
\end{center}
\end{figure}
\begin{theorem}
\label{nonosc-thm}
{\rm (The non-oscillatory region).}  
{ Fix $\chi\geq 0$ and $\tau>0$ so that} $(\chi,\tau)$ is in the non-oscillatory region.  Recall that in this region 
$R(\lambda)$ and $g'(\lambda)$ are defined in \eqref{r-def} and 
\eqref{gprime-def}, respectively.  Let $a(\chi,\tau)$ be defined as before so 
that $g'(\lambda)=\mathcal{O}(\lambda^{-2})$ as $\lambda\to\infty$, and 
define $K(\chi,\tau)$ by \eqref{K-def} and $f(\infty;\chi,\tau)$ by 
\eqref{finf-def}.  Then 
\eq
\psi^{[2n]}(n\chi,n\tau) = -i\Im(a(\chi,\tau))e^{-2f(\infty;\chi,\tau)}{e^{-2 i n K(\chi,\tau)}} + \mathcal{O}\left(\frac{1}{n^{1/2}}\right),\quad {n\to+\infty}.
\endeq
\end{theorem}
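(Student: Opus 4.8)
The plan is to run a Deift--Zhou nonlinear steepest-descent analysis of Riemann-Hilbert Problem~\ref{rhp-M} at $(x,t)=(n\chi,n\tau)$; relative to the exponential- and algebraic-decay regions the new ingredient is a genus-zero $g$-function, which is what produces the single band. By Proposition~\ref{prop:symmetries} it suffices to treat $\chi\geq 0$ and $\tau>0$. I would begin by recording that $\psi^{[2n]}(n\chi,n\tau)$ is read off from the coefficient of $\lambda^{-1}$ in the large-$\lambda$ expansion of $\mathbf{M}$, and then introduce $g(\lambda):=\int^{\lambda}g'(s)\,ds$ with $g'$ as in \eqref{gprime-def} and branch cut $B$ running from $a^*$ through $\lambda^{(1)}$ to $a$. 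Two requirements fix the data. First, the normalization $g'(\lambda)=\mathcal{O}(\lambda^{-2})$ as $\lambda\to\infty$ imposes moment conditions that determine $a=a(\chi,\tau)$ with $\Im a>0$ and that, after clearing radicals, reduce to the septic equation mentioned in the Introduction. Second, with $g$ normalized by $g(\infty)=0$ the controlling phase $h:=\varphi-g$ then satisfies $\Re h\equiv\mathrm{const}$ on $B$ and $h_+(\lambda)+h_-(\lambda)\equiv -2K(\chi,\tau)$ there, the real constant $K$ being given by \eqref{K-def}; $K$ is real throughout the non-oscillatory region, which is precisely what makes the leading amplitude $n$-independent. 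After the substitution $\mathbf{N}(\lambda):=\mathbf{M}(\lambda)e^{ing(\lambda)\sigma_3}$ the phase governing the deformed problem is $h$, which by the critical-point configuration described in \S\ref{subsec-regions} (see Figures~\ref{nonoscillatory-phase-plots} and \ref{oscillatory-phase-plots}) has the sign of $\Re h$ reverse across $B$ with the correct orientation and, in the adjacent gap, vanishes to first order only at $\lambda^{(2)}$.

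Next I would carry out the deformation. Factoring the jump on $B$ into triangular factors and opening lenses into the half-neighborhoods of $B$ where $\Re h$ has the favorable sign makes the jump on the lens boundaries $\mathbf{I}+\mathcal{O}(e^{-cn})$ for some $c>0$; the jumps on the small circles about $\xi$ and $\xi^*$ are likewise rendered negligible using the sign of $\Re h$ near those points, where $g'$ is in fact analytic, so $h$ inherits only the logarithmic singularity of $\varphi$. I would then build the global parametrix $\mathbf{O}(\lambda)$: the genus-zero outer parametrix assembled from $\gamma(\lambda):=((\lambda-a)/(\lambda-a^*))^{1/4}$, dressed by a scalar function $f(\lambda)$ solving the scalar Riemann-Hilbert problem on $B$ whose off-diagonal weight is built from $p$ and $\nu$ and which carries the behavior at $\xi$ and $\xi^*$ needed to cancel the residual logarithm; $f$ is given by an explicit Cauchy integral over $B$ and $f(\infty;\chi,\tau)$ by \eqref{finf-def}, while the constant $e^{-2inK}$ produced by $h_++h_-\equiv -2K$ on $B$ is extracted from the band jump by a constant diagonal conjugation. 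At the band endpoints $a$ and $a^*$ I would install the usual Airy parametrices, matching $\mathbf{O}$ on the disk boundaries to relative order $\mathcal{O}(n^{-1})$. The real point $\lambda^{(2)}$ lies in a gap rather than on $B$ and is therefore a stationary point of $h$ on the deformed gap contour; since only the $\mathcal{O}(1)$ leading term with $\mathcal{O}(n^{-1/2})$ error is sought, it suffices to estimate its contribution rather than resolve it with a parabolic-cylinder parametrix of the kind used in the proof of Theorem~\ref{alg-decay-thm}.

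With the parametrices in place, set $\mathbf{E}(\lambda):=\mathbf{N}(\lambda)\mathbf{O}(\lambda)^{-1}$ off the Airy disks and $\mathbf{E}(\lambda):=\mathbf{N}(\lambda)\mathbf{A}(\lambda)^{-1}$ inside them. Then $\mathbf{E}$ solves a small-norm Riemann-Hilbert problem whose jumps are $\mathbf{I}+\mathcal{O}(e^{-cn})$ on the lens and circle contours, $\mathbf{I}+\mathcal{O}(n^{-1})$ on the Airy-disk boundaries, and $\mathbf{I}+\mathcal{O}(n^{-1/2})$ on the gap contour near $\lambda^{(2)}$, so that $\mathbf{E}(\lambda)=\mathbf{I}+\mathcal{O}(n^{-1/2})$ uniformly. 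Undoing the substitution gives $\psi^{[2n]}(n\chi,n\tau)=2i\lim_{\lambda\to\infty}\lambda\,[\mathbf{E}(\lambda)\mathbf{O}(\lambda)]_{12}=2i\lim_{\lambda\to\infty}\lambda\,\mathbf{O}_{12}(\lambda)+\mathcal{O}(n^{-1/2})$, and evaluating the explicit genus-zero formula for $\mathbf{O}$ yields $2i\lim_{\lambda\to\infty}\lambda\,\mathbf{O}_{12}(\lambda)=\tfrac{1}{2}(a^*-a)\,e^{-2f(\infty;\chi,\tau)}e^{-2inK(\chi,\tau)}=-i\,\Im(a(\chi,\tau))\,e^{-2f(\infty;\chi,\tau)}e^{-2inK(\chi,\tau)}$, which is the asserted leading-order formula.

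The main obstacle is global rather than local: one must show that for every $(\chi,\tau)$ in the non-oscillatory region the configuration used above is actually realized---that the moment conditions admit a root $a(\chi,\tau)$ with $\Im a>0$ depending continuously on $(\chi,\tau)$, that $\lambda^{(1)}<\lambda^{(2)}$ with $\lambda^{(1)}$ interior to $B$, and, above all, that the zero level set of $\Re h$ in $\mathbb{C}$ is exactly the union of $B$ with unbounded arcs that partition the plane in the way required for opening the lenses and disposing of the circles about $\xi$ and $\xi^*$, in particular not meeting $\xi$. This amounts to a connectivity analysis of the trajectories of the quadratic differential $(h'(\lambda))^2\,d\lambda^2$ that must hold uniformly over the region, and it is precisely this picture that degenerates on $\mathcal{L}_{\mathrm{NO}}$ (where $\lambda^{(1)}=\lambda^{(2)}$ and the gap closes) and on $\mathcal{L}_{\mathrm{AN}}^{+}$ (where $a$, $a^*$ and $\lambda^{(1)}$ collide), which is why the statement excludes the boundary curves. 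A secondary technical point is confirming that $K$ is real and that the singular behavior of $f$ at $\xi$ and $\xi^*$ contributes nothing further to the leading amplitude.
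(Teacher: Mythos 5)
Your overall scheme is the paper's: a genus-zero $g$-function built from \eqref{gprime-def} with band from $a^*$ through $\lambda^{(1)}$ to $a$, the septic determining $a$, the real constant $K$ of \eqref{K-def}, lens opening, an outer parametrix assembled from $\gamma(\lambda)=((\lambda-a)/(\lambda-a^*))^{1/4}$ dressed by a scalar $f$, Airy parametrices at $a,a^*$, and a small-norm argument yielding $-i\Im(a)e^{-2f(\infty)}e^{-2inK}$. But two steps as you describe them would fail. First, the jump carrying $\mathcal{S}$ near $\xi$ and $\xi^*$ cannot be ``rendered negligible using the sign of $\Re h$'': the jump \eqref{eq:jump-rhp-N} is a conjugation of the constant matrix $\mathcal{S}^{-1}$ by a diagonal exponential, so its diagonal entries remain $c_1^*/|\mathbf{c}|$ and $c_1/|\mathbf{c}|$ regardless of the sign of $\Re h$, and the jump is never close to $\mathbb{I}$ on any circle about $\xi$ or $\xi^*$. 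The correct treatment deforms this contour onto $\partial D$ and uses the factorizations \eqref{Sinv-on-Gamma} and \eqref{Sinv-on-Sigma}: the triangular factors decay into the lenses, but the constant diagonal factor survives as jumps on the gap arcs $\Gamma_{\rm up}$, $\Gamma_{\rm down}$ joining $a$, $a^*$ to $\lambda^{(2)}$, and the constant twist factors survive on the band. Consequently the outer model problem is Riemann--Hilbert Problem~\ref{rhp:R-model}, with jumps on all four arcs, and the scalar $f$ must have jumps on $\Gamma_{\rm up}$ and $\Gamma_{\rm down}$ as well as on the band, as in \eqref{f-jump-nonosc}. Your statement that $f$ is ``a Cauchy integral over $B$'' is inconsistent with the formula \eqref{finf-def} you invoke; a band-only $f$ would leave the $\mathbf{c}$-dependent diagonal jumps in place, and the factor $e^{-2f(\infty)}$ in the answer comes precisely from the gap-arc integrals in \eqref{finf-def}.

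Second, the error analysis at the real points is incomplete. The exponential decay of the lens jumps degenerates at \emph{both} $\lambda^{(1)}$ (the interior band point where $\Sigma_{\rm up}$ and $\Sigma_{\rm down}$ meet with different constant jump matrices and where $\varphi'-g'$ vanishes) and $\lambda^{(2)}$ (where $\Gamma_{\rm up}$ and $\Gamma_{\rm down}$ meet); within an $O(n^{-1/2})$ neighborhood of either point the deviation of the jump from $\mathbb{I}$ is $O(1)$, so your claimed uniform bound $\mathbb{I}+\mathcal{O}(n^{-1/2})$ on the gap contour near $\lambda^{(2)}$ is not correct as stated, and you do not treat $\lambda^{(1)}$ at all. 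The paper installs parabolic-cylinder local parametrices at both $\lambda^{(1)}$ and $\lambda^{(2)}$, matching the outer parametrix to relative order $\mathcal{O}(n^{-1/2})$ on the disk boundaries, and it is these two disks that produce the $\mathcal{O}(n^{-1/2})$ error in the theorem; without them (or at least a genuine $L^1$/$L^2$ estimate of the jump deviation on shrinking neighborhoods together with control of the parametrix there) the conclusion $\mathbf{E}=\mathbb{I}+\mathcal{O}(n^{-1/2})$ is unjustified. Finally, a minor but real slip: with the paper's $g$, the band relation is $(\varphi-g)_++(\varphi-g)_-=2iK$ with $K$ real, not $-2K$, and the $g$-transformation is by $e^{-ng\sigma_3}$; as written your constants would produce exponential growth rather than the oscillatory factor $e^{-2inK}$.
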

Theorem \ref{nonosc-thm} is proven in \S\ref{sec-nonosc}.  Figure
\ref{nonosc-sol-plots-c13} compares the exact solution to the 
leading-order behavior for various values of $n$.  
\begin{figure}[t]
\begin{center}
\hspace{.2in} $n=2$ \hspace{1.4in} $n=4$ \hspace{1.6in} $n=8$ \\
\includegraphics[height=1.4in,width=2in]{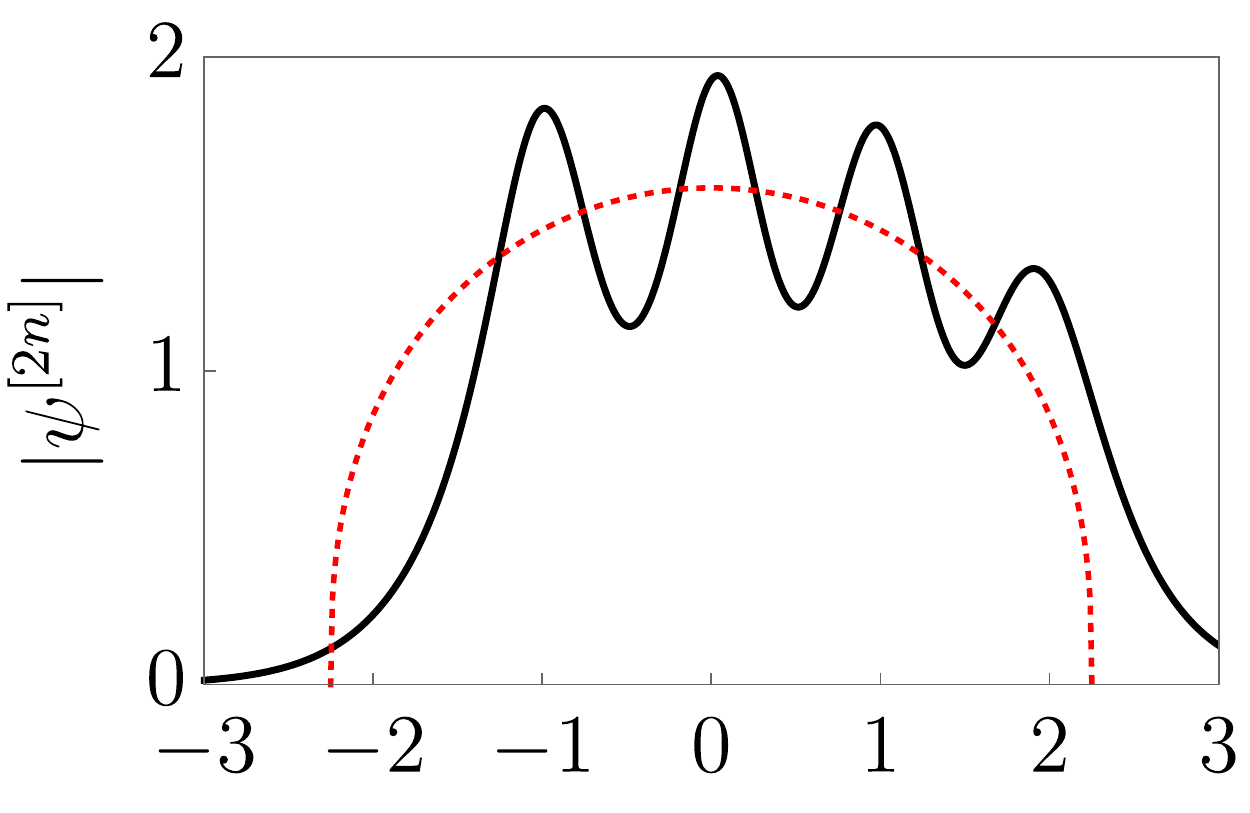}
\includegraphics[height=1.4in,width=2in]{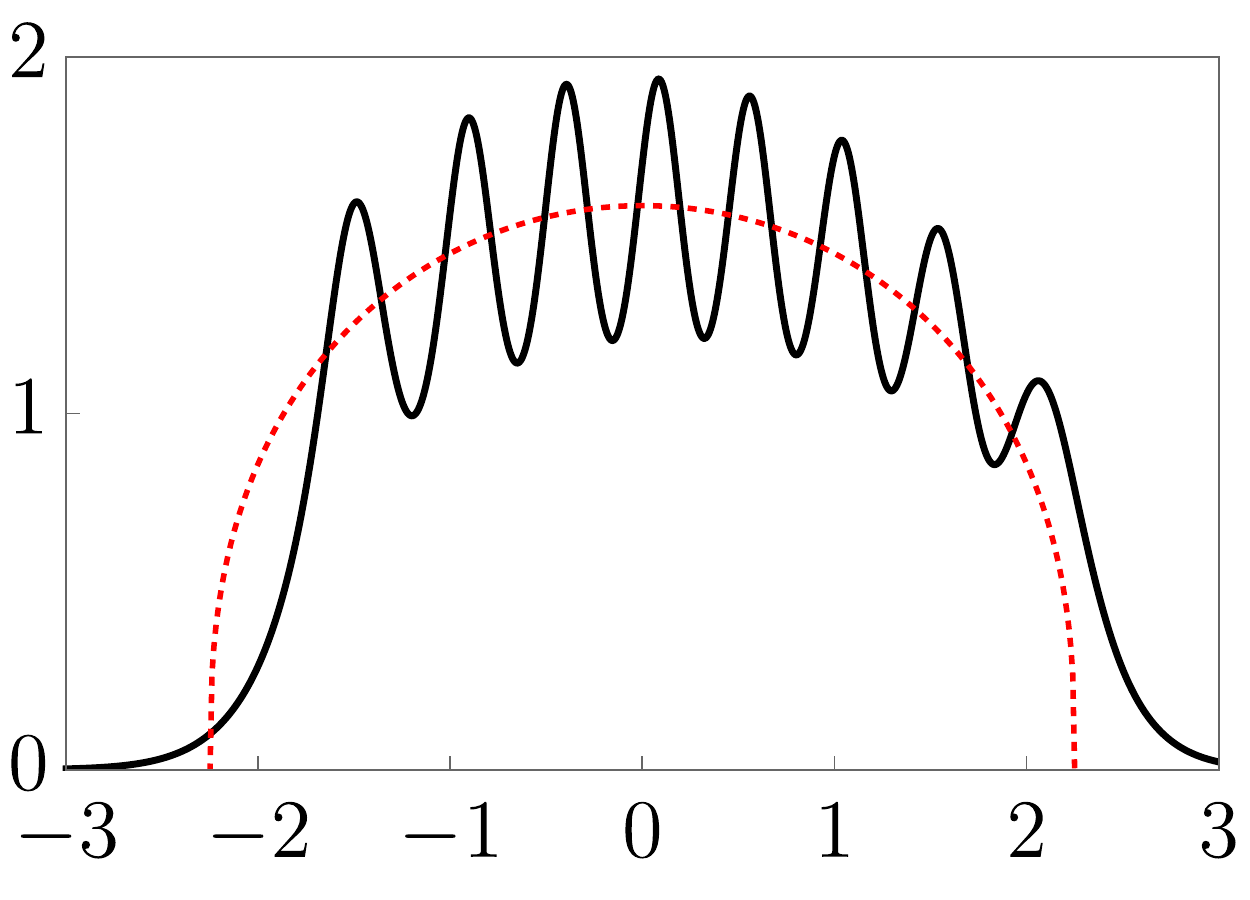}
\includegraphics[height=1.4in,width=2in]{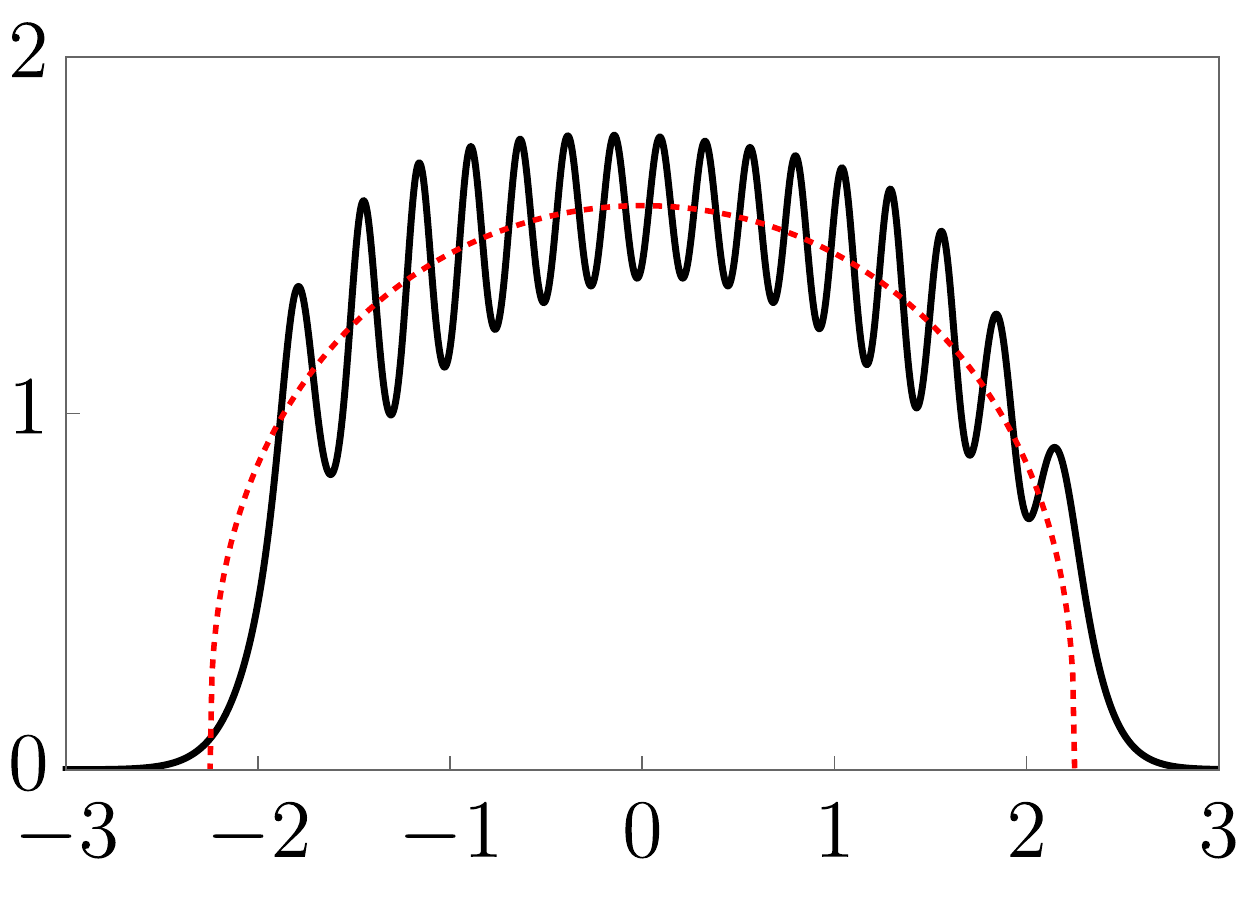}\\
\includegraphics[height=1.4in,width=2in]{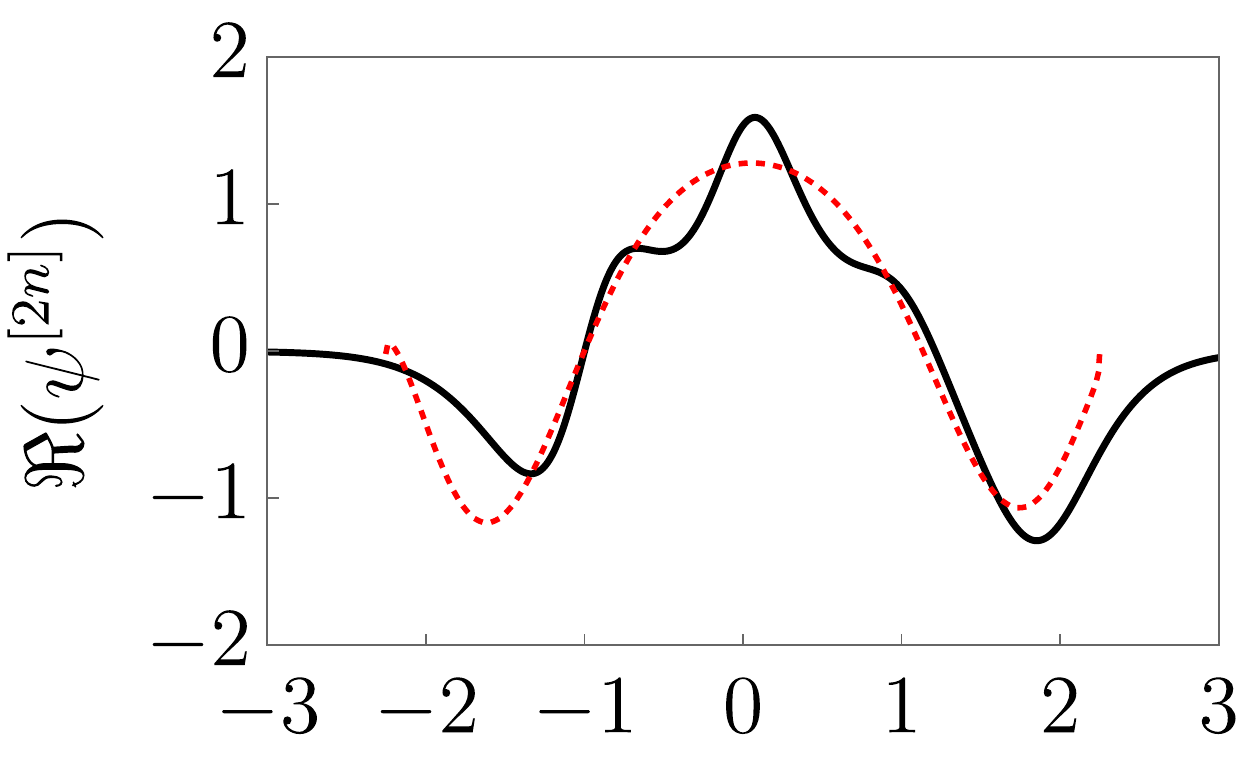}
\includegraphics[height=1.4in,width=2in]{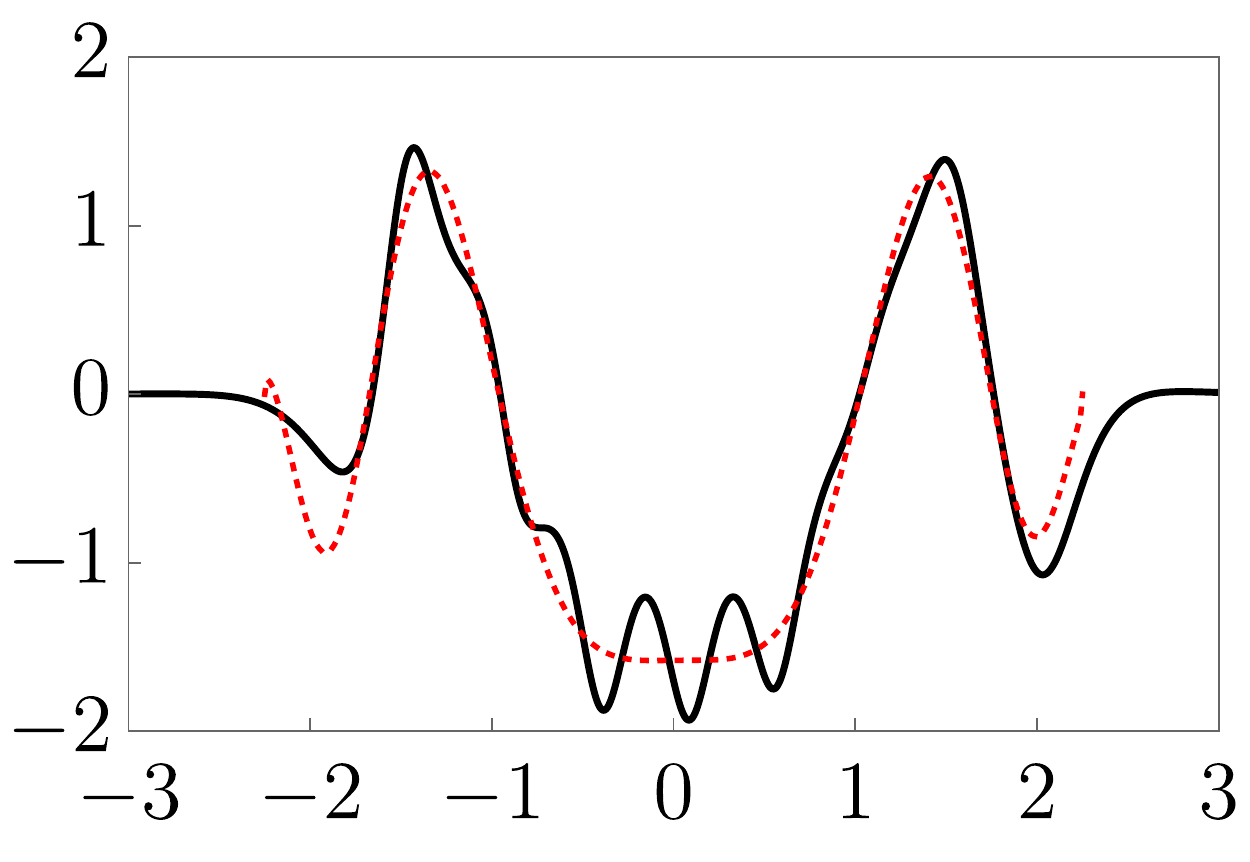}
\includegraphics[height=1.4in,width=2in]{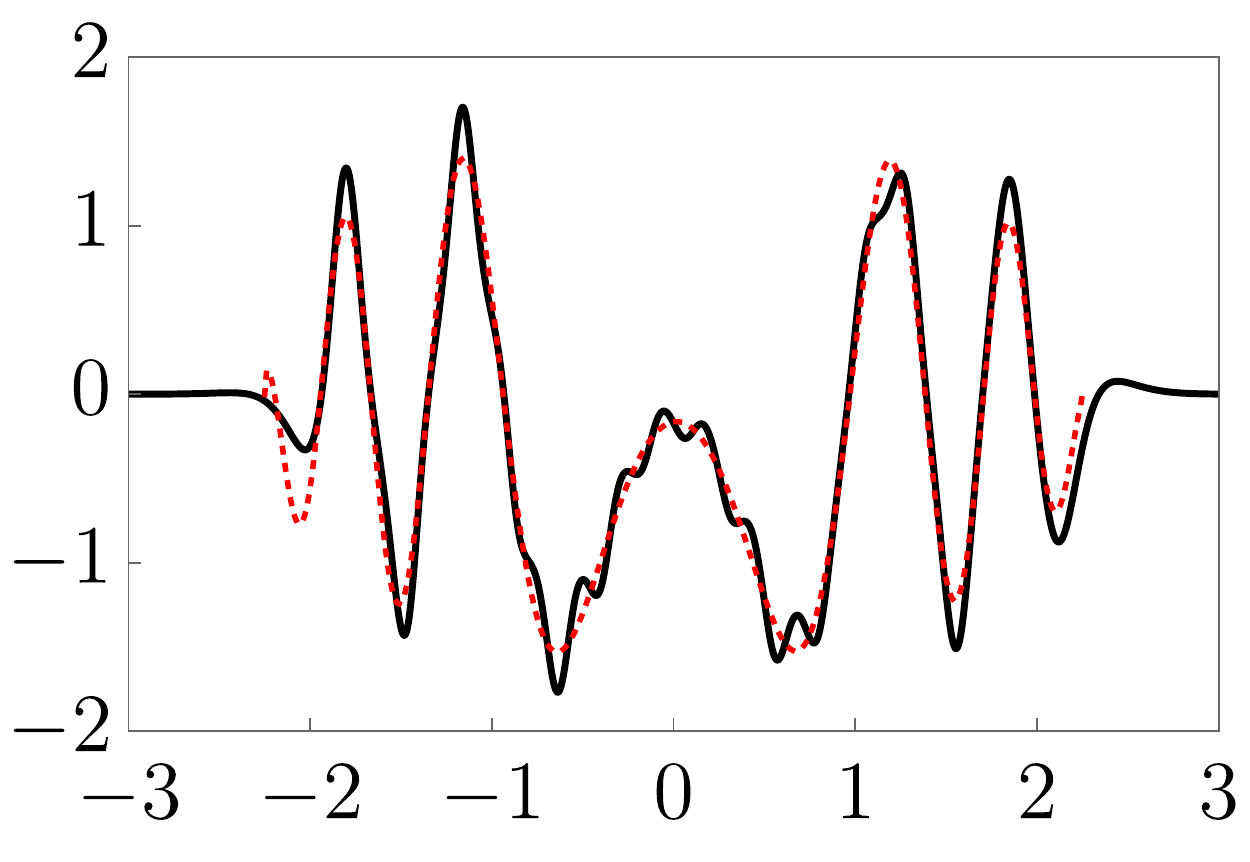}\\
\includegraphics[height=1.5in,width=2in]{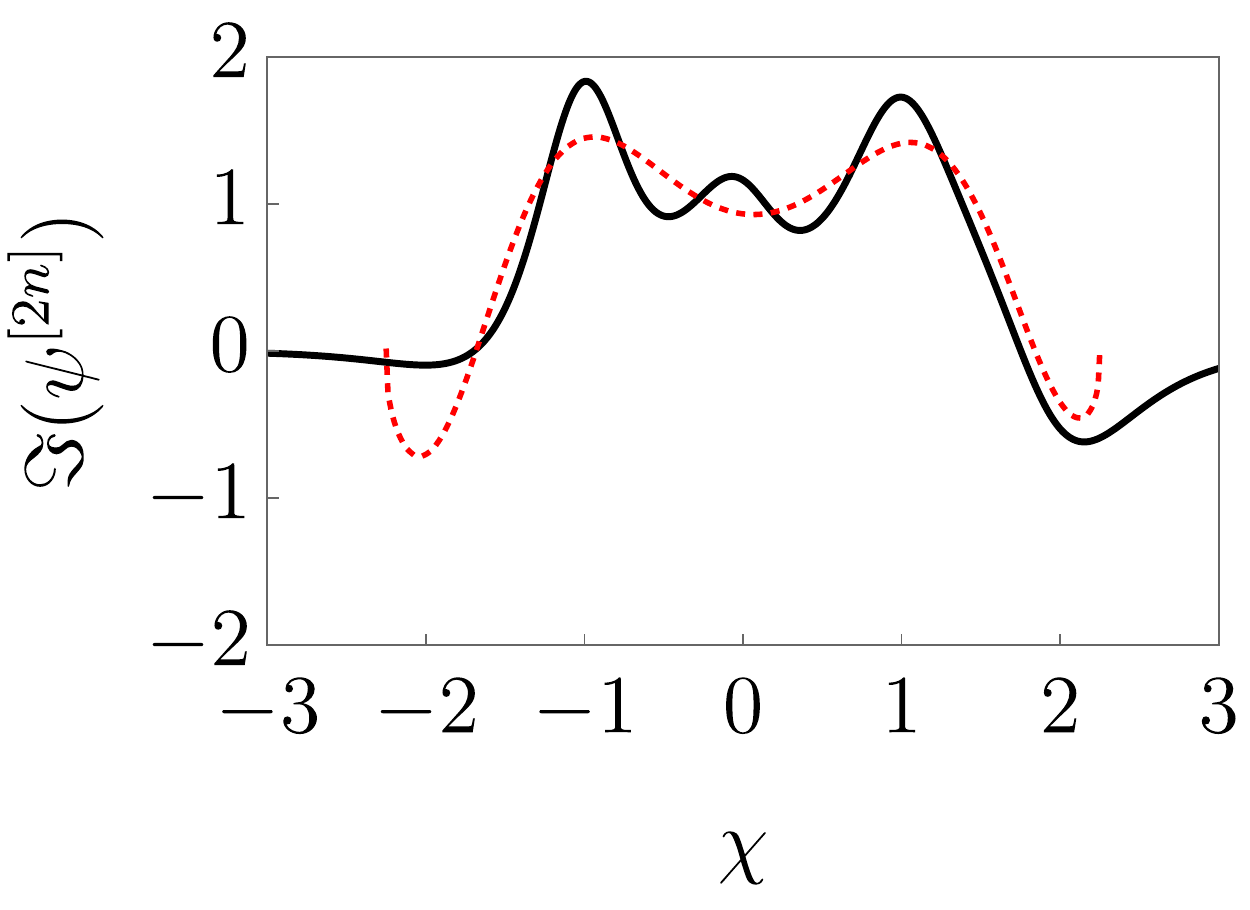}
\includegraphics[height=1.5in,width=2in]{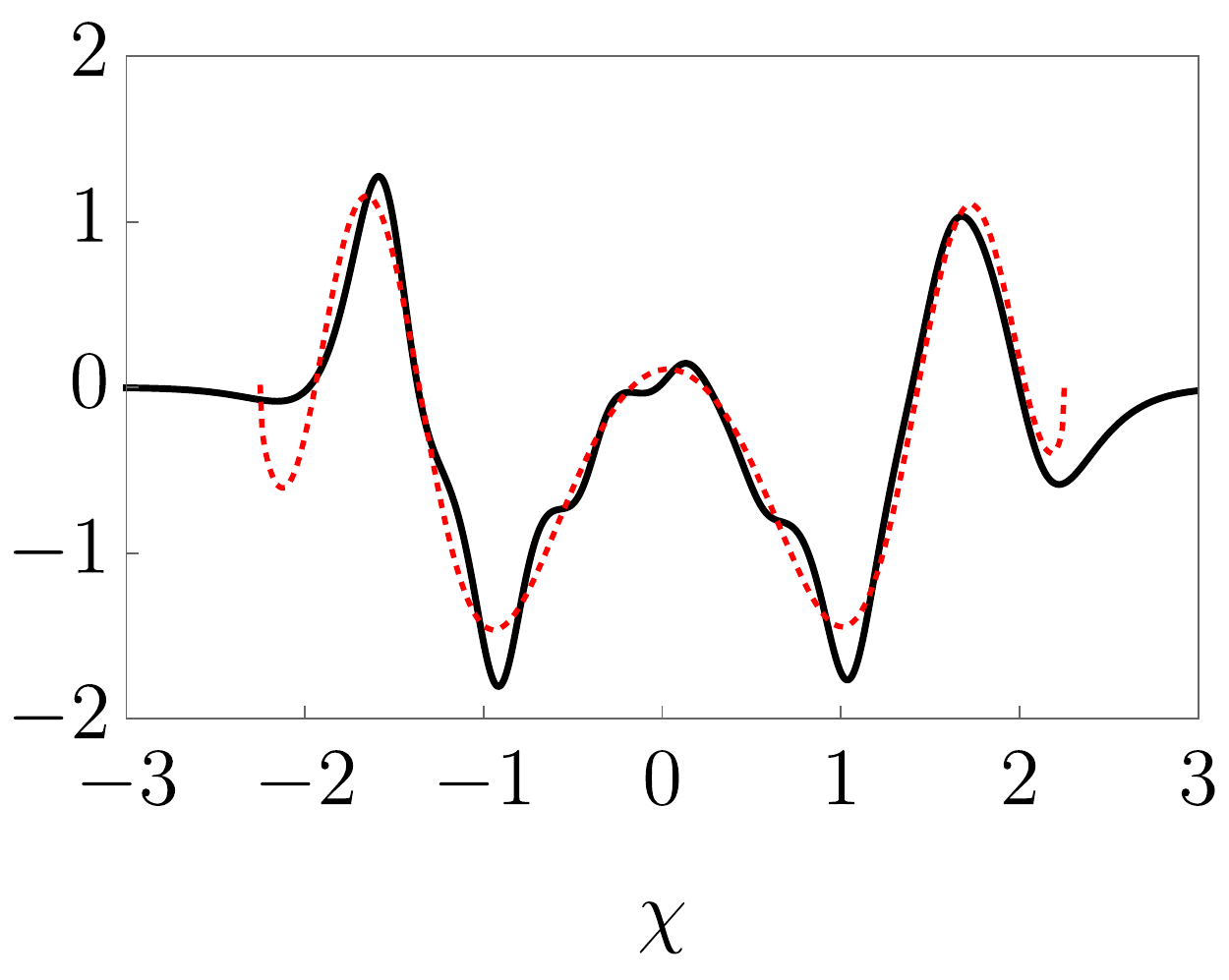}
\includegraphics[height=1.5in,width=2in]{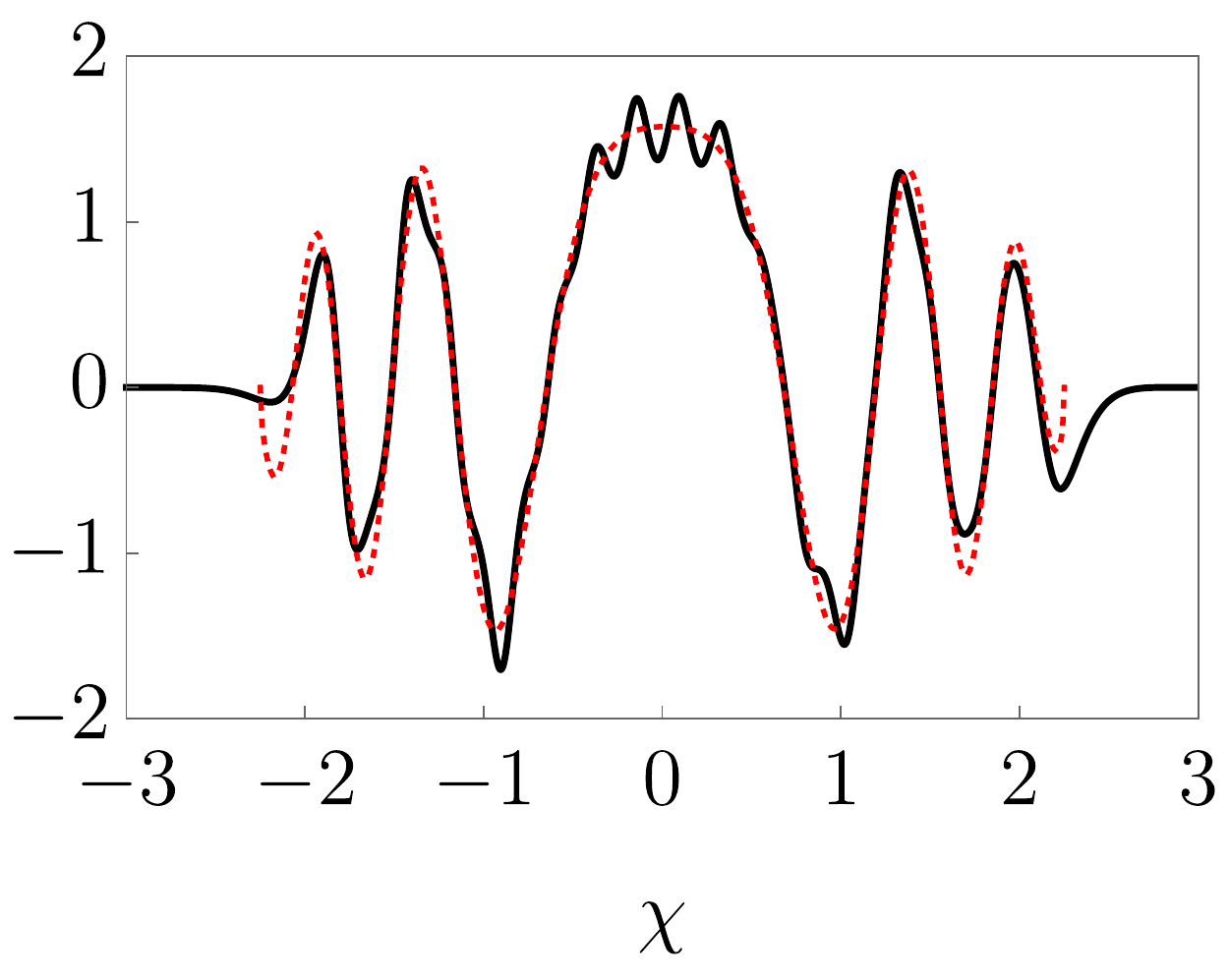}
\caption{Convergence of the leading-order asymptotic approximation in the 
non-oscillatory region for $\xi=i$ and ${\bf c}=(1,3)$ at 
$\tau=\frac{3\sqrt{3}}{8}$.  Solid black curves are for the exact solution 
$\psi^{[2n]}(n\chi,n\frac{3\sqrt{3}}{8};(1,3),i)$ while dashed red curves are 
for the leading-order approximation given by Theorem \ref{nonosc-thm}.  For 
this time slice the non-oscillatory region is exactly 
$-\frac{9}{4}\leq\chi\leq\frac{9}{4}$.  \emph{Left-to-right}:  $n=2$, $n=4$, 
$n=8$.  \emph{Top-to-bottom}:  The absolute value, real part, and imaginary 
part.}
\label{nonosc-sol-plots-c13}
\end{center}
\end{figure}
\begin{theorem}
\label{osc-thm}
{\rm (The oscillatory region).}  
{ Fix $\chi\geq 0$ and $\tau>0$ so that} $(\chi,\tau)$ is in the oscillatory region.  Define 
$a\equiv a(\chi,\tau)$ and $b\equiv b(\chi,\tau)$ by \eqref{a-b-def}, 
$F_1\equiv F_1(\chi,\tau)$ by \eqref{F1-def}, 
$F_0\equiv F_0(\chi,\tau)$ by \eqref{F0-def}, 
$A(\lambda)\equiv A(\lambda;\chi,\tau)$ by \eqref{Abel-map}, 
$B\equiv B(\chi,\tau)$ by \eqref{B-cycle}, 
$J\equiv J(\chi,\tau)$ by \eqref{J-def},
$U\equiv U(\chi,\tau)$ by \eqref{U-def},
and 
$Q\equiv Q(\chi,\tau)$ by \eqref{Q-def}.
Introduce the genus-one Riemann-theta function 
\eq
\Theta(\lambda) \equiv \Theta(\lambda;B) := \sum_{k\in \mathbb{Z}}e^{k\lambda+\frac{1}{2}Bk^2}.
\label{Riemann-theta}
\endeq
Then 
\eq
\begin{split}
\psi^{[2n]}(n\chi,n\tau) = \frac{\Theta(A(\infty)-A(Q)-i\pi-\frac{B}{2}+F_1U)\Theta(A(\infty)+A(Q)+i\pi+\frac{B}{2})}{\Theta(A(\infty)-A(Q)-i\pi-\frac{B}{2})\Theta(A(\infty)+A(Q)+i\pi+\frac{B}{2}-F_1U)} \hspace{.4in}& \\
  \times i\Im(b-a)e^{-2F_1J-2F_0} +\mathcal{O}\left(\frac{1}{n}\right),\quad { n\to +\infty}.&
\end{split}
\endeq
\end{theorem}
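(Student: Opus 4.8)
The plan is to run a Deift--Zhou nonlinear steepest-descent analysis of Riemann--Hilbert Problem~\ref{rhp-M} in the two-band (genus-one) configuration characteristic of the oscillatory region. The first step is a $g$-function substitution $M(\lambda)\mapsto N(\lambda):=M(\lambda)e^{-ing(\lambda)\sigma_3}$, absorbing the exponent $e^{in\varphi\sigma_3}$ built into the jumps of $M$, where $g$ is the scalar function whose derivative has the form \eqref{gprime-def} but now built on \emph{two} complex-conjugate pairs of branch points: $R(\lambda)$ in \eqref{r-def} is replaced by $((\lambda-a)(\lambda-a^*)(\lambda-b)(\lambda-b^*))^{1/2}$, with $a,b$ and the numerator coefficients of $g'$ pinned down by the moment conditions forcing $g'(\lambda)=\mathcal{O}(\lambda^{-2})$ as $\lambda\to\infty$ together with one real ``gap'' condition making the integral of $g'$ across the band-to-band gap purely imaginary. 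As described in \S\ref{subsec-regions}, this is precisely the configuration obtained by continuing across $\mathcal{L}_\text{NO}$; I would establish solvability of this system for $a(\chi,\tau),b(\chi,\tau)$ (cf.\ \eqref{a-b-def}), and verify that the level lines $\Re(\varphi-g)=0$ have the band/gap topology required for the subsequent deformation, by a perturbative analysis near $\mathcal{L}_\text{NO}$ combined with a continuity/openness argument, as for the other regions.

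Next I would factor the jump on each band in the standard Deift--Zhou manner, open lenses so the off-band triangular factors are $I+\mathcal{O}(e^{-cn})$, and reduce to a model problem with constant-in-$\lambda$ jumps on the two bands (carrying the data $p=\tfrac{1}{2\pi}\log(1+|c_2/c_1|^2)$ and $\nu=\arg(c_2/c_1)$, which act as an effective reflection coefficient since $e^{2\pi p}=1+|c_2/c_1|^2$), a constant diagonal jump on the gap, exponentially small jumps elsewhere, and prescribed behavior at $\lambda=\xi,\xi^*$ inherited from RHP~\ref{rhp-M}. The global parametrix $\dot N$ solving this model is built on the elliptic curve $y^2=(\lambda-a)(\lambda-a^*)(\lambda-b)(\lambda-b^*)$: a scalar Szeg\H{o}-type factor (exponential of a Cauchy integral of $\log$ of the band data against the radical) produces $e^{-2F_1 J-2F_0}$, with $F_0,F_1,J$ from \eqref{F0-def}, \eqref{F1-def}, \eqref{J-def} and the $\mathcal{O}(n)$-scale phase carried by $F_1$; quarter-root prefactors give the correct $(\lambda-a)^{-1/4}$-type local behavior at the four branch points; and ratios of the genus-one theta function \eqref{Riemann-theta} in the Abel map $A(\lambda)$ of \eqref{Abel-map} with period $B$ of \eqref{B-cycle}, shifted by the Riemann constant, by $A(Q)$ (the Abel image of the special point attached to $\xi$, \eqref{Q-def}), by $i\pi+\tfrac{B}{2}$, and by $F_1 U$ with $U$ from \eqref{U-def}, make $\dot N$ single-valued with the required monodromy. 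Riemann's theorem on the theta divisor supplies the (non)vanishing needed for $\dot N$ to have bounded inverse off the branch points.

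At each branch point $a,a^*,b,b^*$ I would install the standard Airy parametrix on a fixed small disc, matched to $\dot N$ on its boundary to relative order $\mathcal{O}(n^{-1})$ (no parabolic-cylinder parametrices are needed in the interior of the oscillatory region, since there no critical points collide). Writing $E:=N\dot N^{-1}$ off the discs and $E:=N(\text{Airy parametrix})^{-1}$ inside, $E$ solves a Riemann--Hilbert problem with jump $I+\mathcal{O}(n^{-1})$ on the four circles and $I+\mathcal{O}(e^{-cn})$ elsewhere, and $E\to I$ at infinity; the small-norm theory then gives $E(\lambda)=I+\mathcal{O}(n^{-1})$ uniformly. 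Finally, unravelling the transformations in the reconstruction formula $\psi^{[2n]}(x,t)=2i\lim_{\lambda\to\infty}\lambda M(\lambda)_{12}$ --- the $g$-function, normalized to vanish at infinity, contributes only a bounded diagonal factor tending to the identity and so drops out at leading order --- one gets $\psi^{[2n]}(n\chi,n\tau)=2i\lim_{\lambda\to\infty}\lambda\dot N(\lambda)_{12}+\mathcal{O}(n^{-1})$, and evaluating this limit yields the amplitude $i\Im(b-a)e^{-2F_1 J-2F_0}$ (the factor $\Im(b-a)$ arising from the $\lambda^{-1}$ coefficient of the radical) times the stated quotient of four theta functions.

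The main obstacle is the construction and control of the genus-one outer parametrix and, preceding it, the band/$g$-function analysis: one must show the nonlinear system (moment conditions plus gap condition) defining $a(\chi,\tau),b(\chi,\tau)$ is uniquely solvable throughout the oscillatory region, that the corresponding $g$-function has level lines of exactly the right topology to justify lens opening, and that the theta/Szeg\H{o}/quarter-root formula for $\dot N$ actually reproduces the prescribed jumps, the correct quarter-integer local behavior, and the right behavior at $\xi,\xi^*$ --- this last step resting on Riemann-surface identities (the Riemann bilinear relations, the period-lattice transformation law of $\Theta$, and Riemann's vanishing theorem). Carrying the constants $F_0,F_1,J,U,Q$ correctly through the Szeg\H{o} function and the Abel map, so that the final phase reproduces the claimed $\mathcal{O}(n)$ oscillation, is the most bookkeeping-heavy part of the argument.
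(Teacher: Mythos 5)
Your proposal follows essentially the same route as the paper's proof: a two-band $g$-function (the paper's $G$, with the endpoints $a,b$ fixed by the moment conditions plus the reality condition on the band/gap constants $\Omega$ and $d$), lens opening, a Szeg\H{o}-type scalar factor $F(\lambda)=F_1\lambda+F_0+\cdots$ absorbing $c_1,c_2,\Omega,d$ and carrying the $\mathcal{O}(n)$ phase in $F_1$, a genus-one theta-function outer parametrix with quarter-root factors and the special point $Q$, Airy parametrices at $a,b,a^*,b^*$, and a small-norm argument yielding the $\mathcal{O}(n^{-1})$ error and the leading term $i\Im(b-a)e^{-2F_1J-2F_0}$ times the theta quotient. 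The only differences are cosmetic bookkeeping (the paper first passes to the rescaled problem ${\bf N}^{[n]}$ and deforms $\partial D_0$ onto the lens system, so no behavior at $\xi,\xi^*$ is prescribed in the model problem; the pole data enter only through the jumps), so this matches the paper's argument.
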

Theorem \ref{osc-thm} is proven in \S\ref{sec-osc}.  Figure
\ref{osc-sol-plots-c13} compares the exact solution to the 
leading-order behavior for various values of $n$.  
\begin{figure}
\begin{center}
\hspace{.2in} $n=2$ \hspace{1.4in} $n=4$ \hspace{1.6in} $n=8$ \\
\includegraphics[height=1.4in,width=2in]{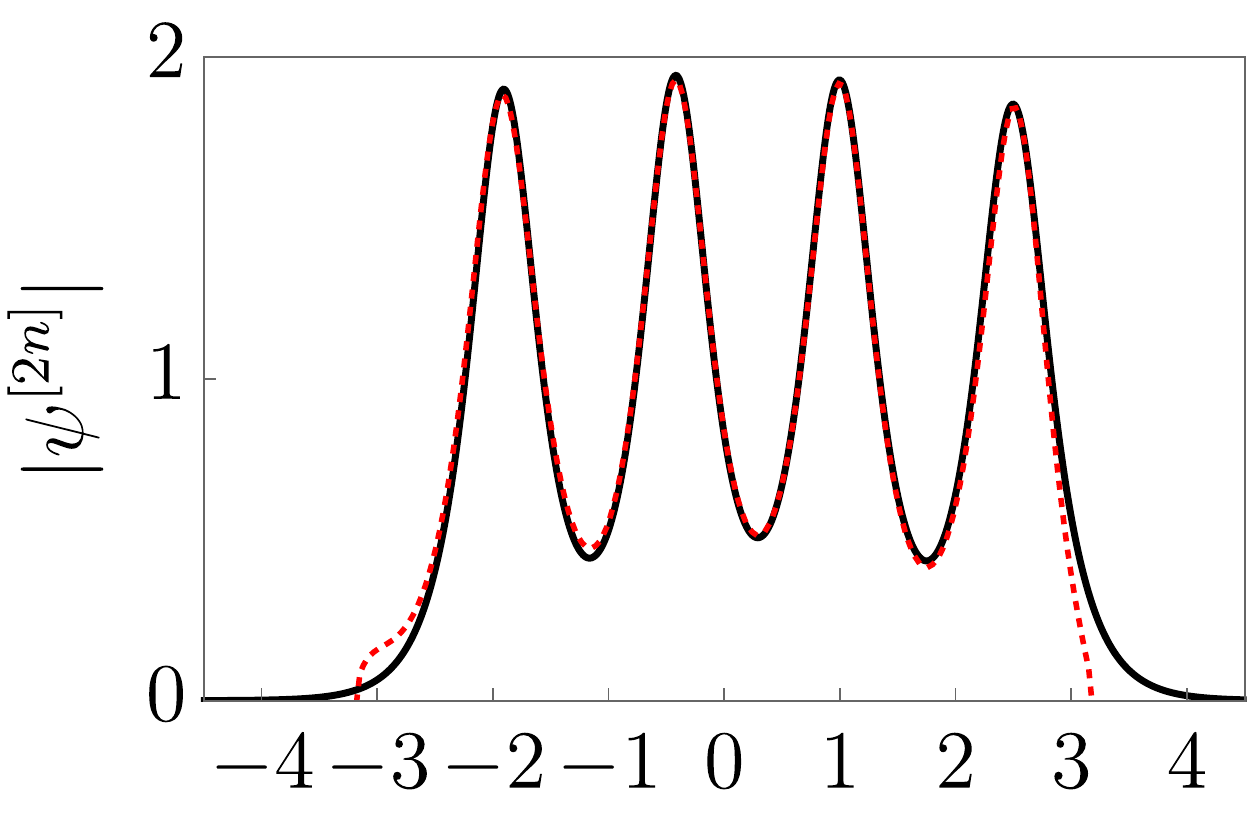}
\includegraphics[height=1.4in,width=2in]{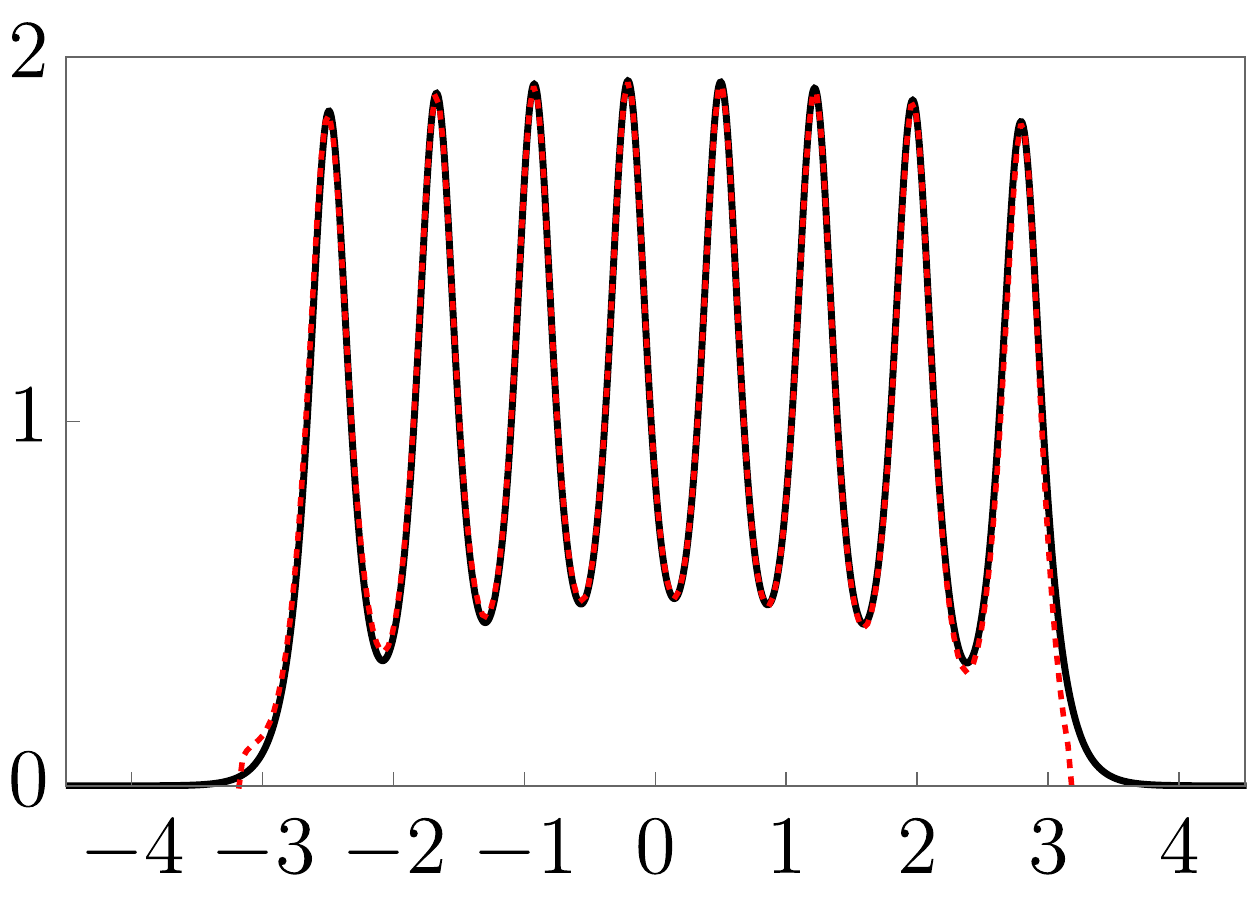}
\includegraphics[height=1.4in,width=2in]{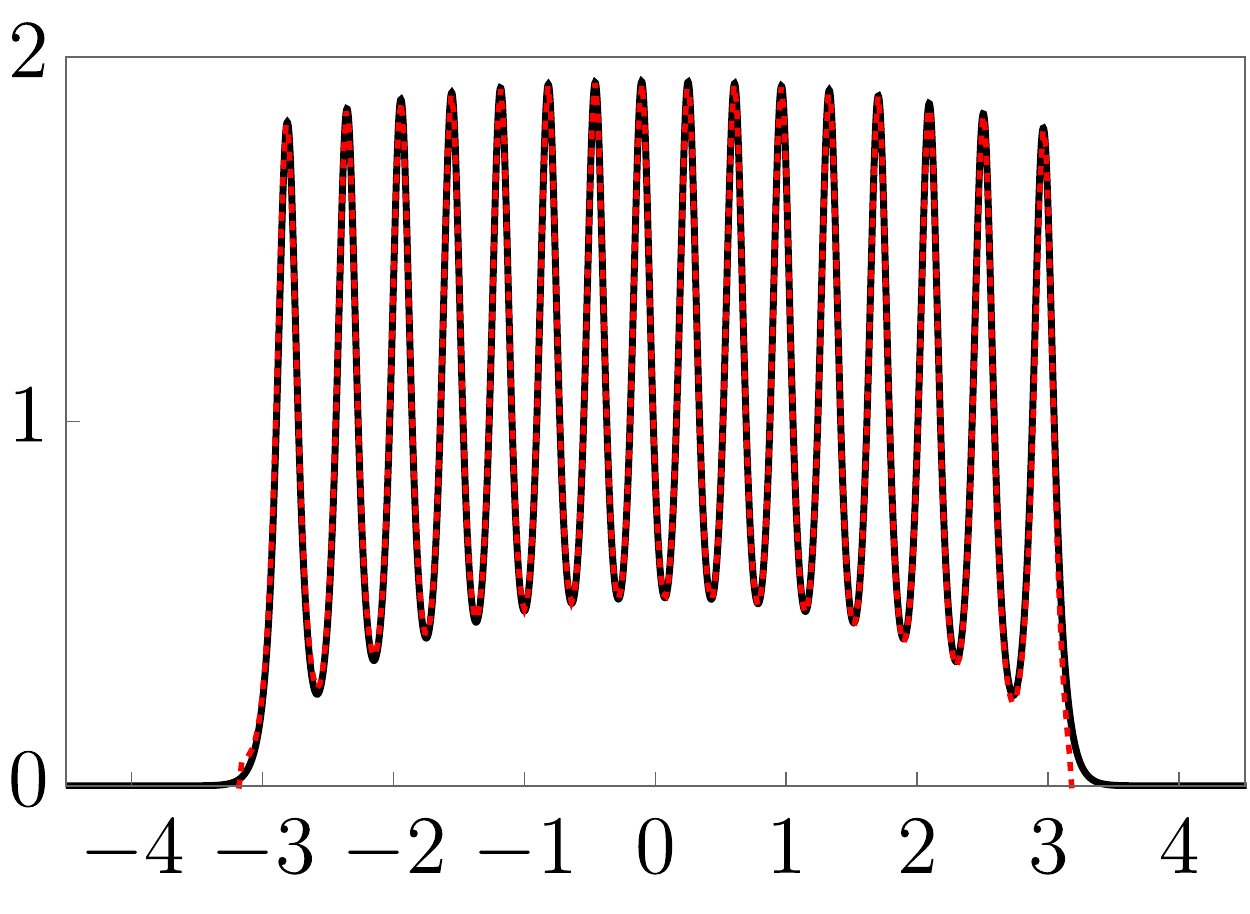}\\
\includegraphics[height=1.4in,width=2in]{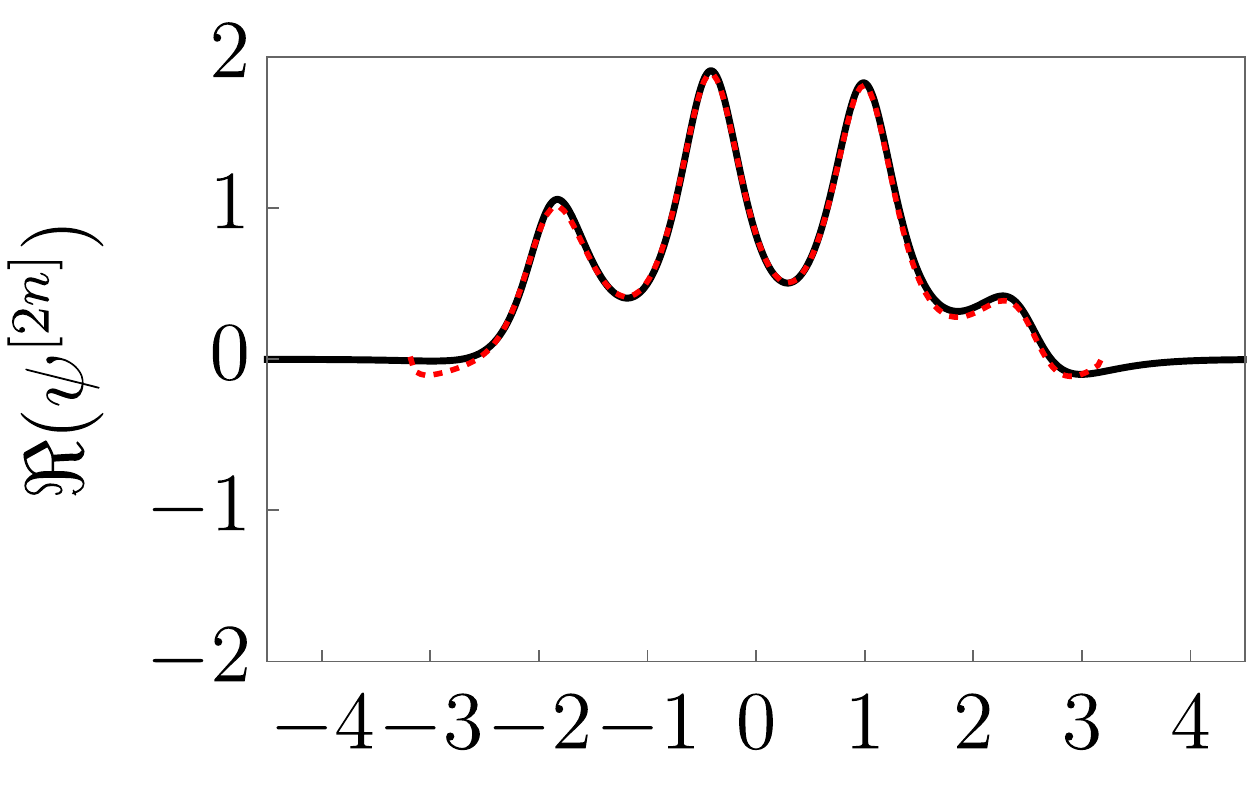}
\includegraphics[height=1.4in,width=2in]{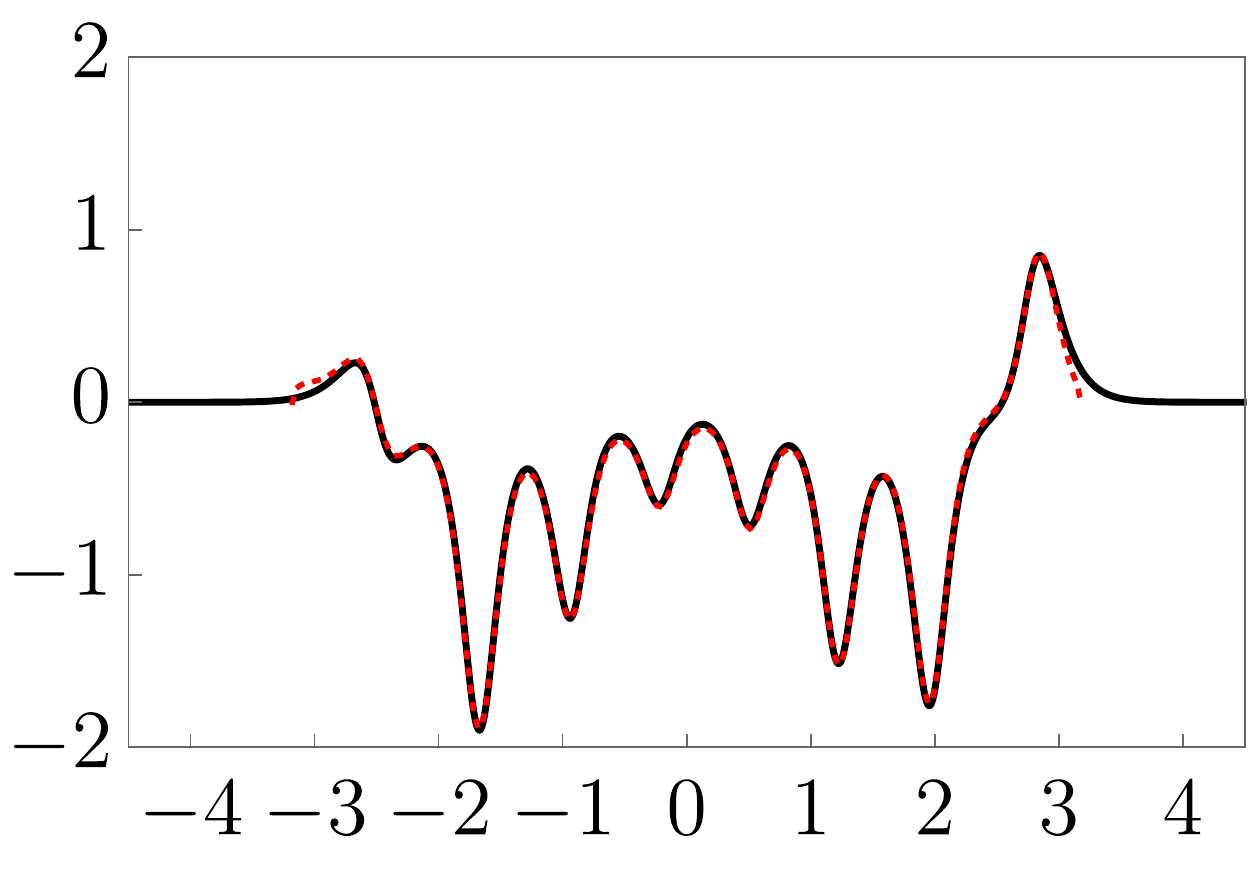}
\includegraphics[height=1.4in,width=2in]{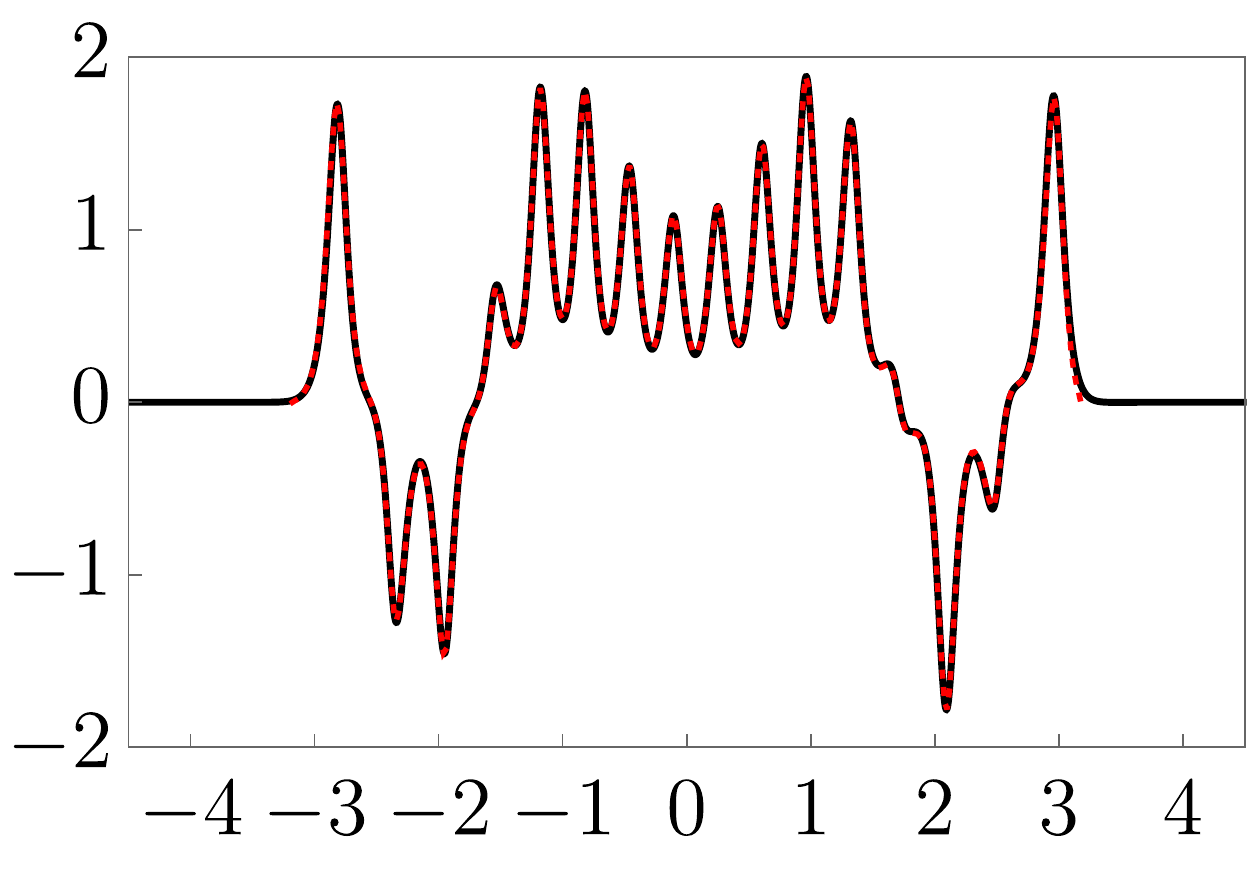}\\
\includegraphics[height=1.5in,width=2in]{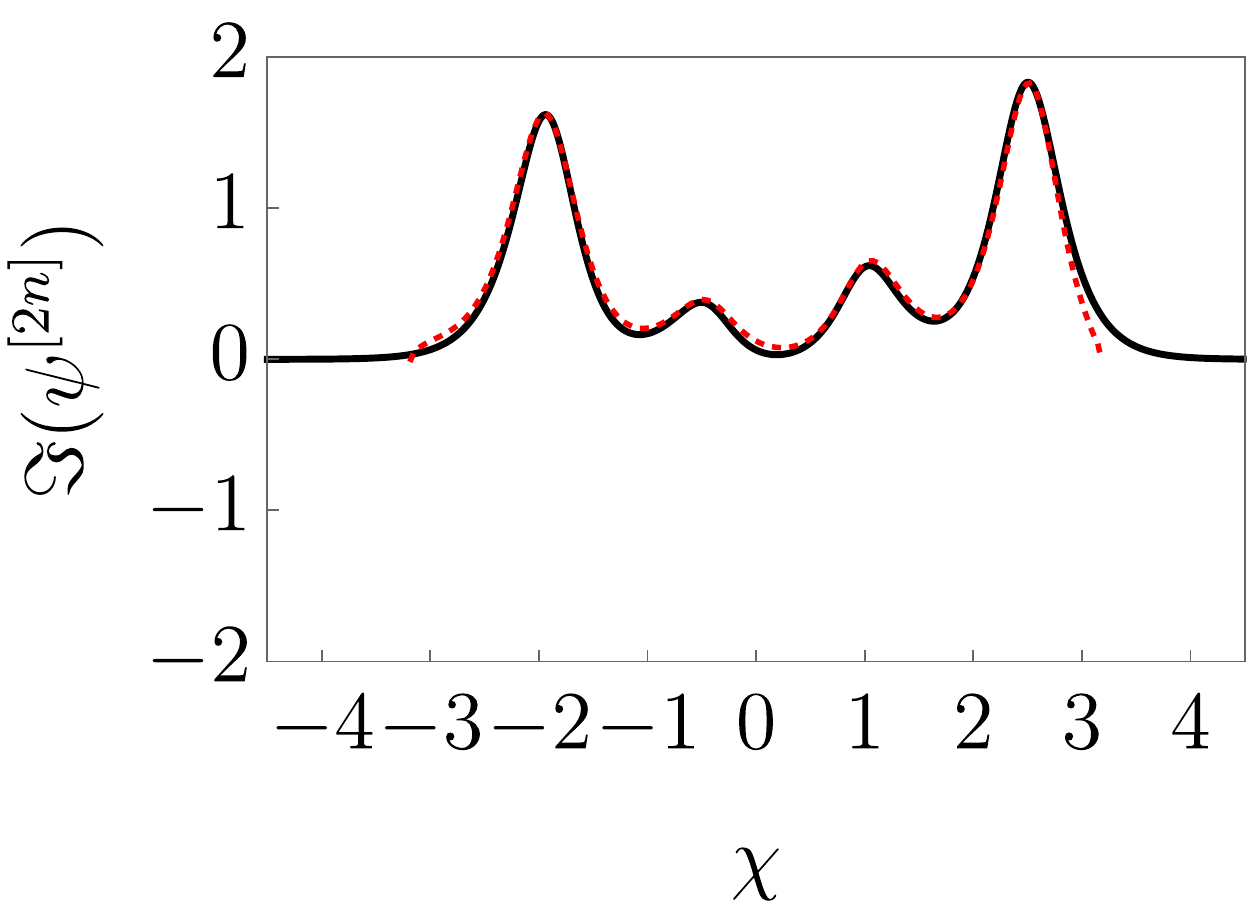}
\includegraphics[height=1.5in,width=2in]{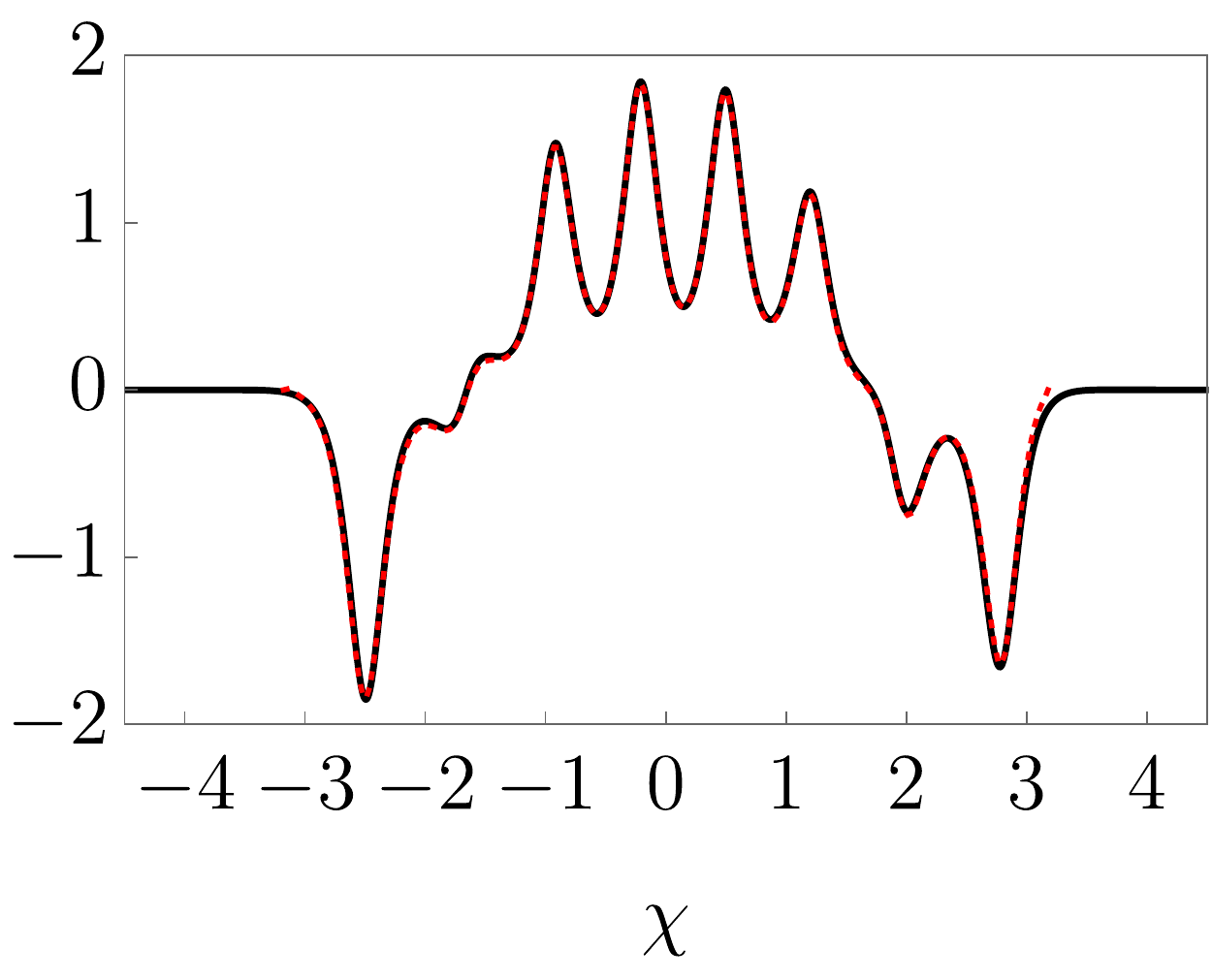}
\includegraphics[height=1.5in,width=2in]{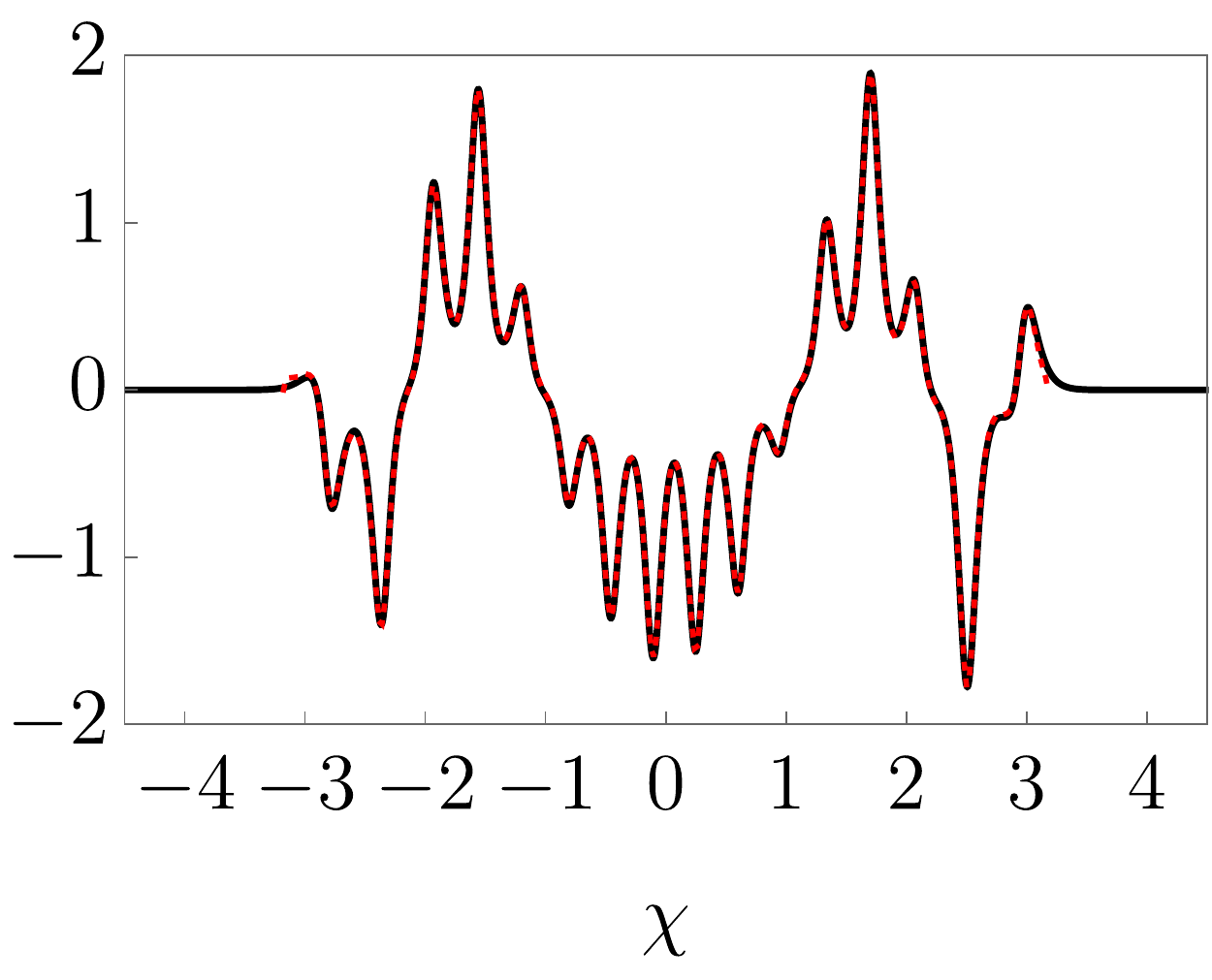}
\caption{Convergence of the leading-order asymptotic approximation in the 
oscillatory region for $\xi=i$ and ${\bf c}=(1,3)$ at $\tau=2$.  Solid black 
curves are for the exact solution 
$\psi^{[2n]}(n\chi,n{2};(1,3),i)$ 
while dashed red curves are for the leading-order 
approximation given by Theorem \ref{osc-thm}.  For this time slice the 
oscillatory region is approximately $-3.178<\chi<3.178$.
\emph{Left-to-right}:  $n=2$, $n=4$, $n=8$.  \emph{Top-to-bottom}:  The 
absolute value, real part, and imaginary part.}
\label{osc-sol-plots-c13}
\end{center}
\end{figure}

\subsection{The far-field Riemann-Hilbert problem}  We now introduce the basic 
Riemann-Hilbert problem used to define the {multiple-pole} solitons we study.  This 
representation was derived in \cite{BilmanB:2019} using the recently
introduced robust inverse-scattering transform \cite{BilmanM:2017}.
\begin{rhp}
\label{rhp-M}
{\rm (The unscaled Riemann-Hilbert problem)}.  Fix a pole location 
$\xi=\alpha+i\beta\in\mathbb{C}^+$, a vector of connection coefficients 
${\bf c}\equiv(c_1,c_2)\in(\mathbb{C}^*)^2$, and a non-negative integer $n$.  
Define $D_0\subset\mathbb{C}$ to be a circular disk centered at the origin 
containing $\xi$ in its interior.  Let $(x,t)\in\mathbb{R}^2$ be arbitrary 
parameters. Find the unique $2\times 2$ matrix-valued function 
$\mathbf{M}^{[n]}(\lambda;x,t)$ with the following properties:
\begin{itemize}
\item[]\textbf{Analyticity:} $\mathbf{M}^{[n]}(\lambda;x,t)$ is analytic for $\lambda\in\mathbb{C}\setminus \partial D_0$, and it takes continuous boundary values from the interior and exterior of $\partial D_0$.
\item[]\textbf{Jump condition:} The boundary values on the jump contour $\partial D_0$ (oriented clockwise) are related as
\begin{equation}
\mathbf{M}_{+}^{[n]}(\lambda;x,t) = \mathbf{M}_{-}^{[n]}(\lambda;x,t)e^{-i(\lambda x+\lambda^2 t)\sigma_3}\mathcal{S} \left( \frac{\lambda-\xi}{\lambda-\xi^*}\right)^{n\sigma_3} \mathcal{S}^{-1}e^{i(\lambda x+\lambda^2 t)\sigma_3},\quad\lambda\in\partial D_0,
\label{eq:jump-rhp-M}
\end{equation}
where
\eq
\mathcal{S}\equiv\mathcal{S}(c_1,c_2):=\frac{1}{|{\bf c}|}\bbm c_1 & -c_2^* \\ c_2 & c_1^* \ebm
\label{S-def}
\endeq
and $\sigma_3$ is the third Pauli matrix
\eq
\sigma_3 := \bbm 1 & 0 \\ 0 & -1 \ebm.
\endeq
\item[]\textbf{Normalization:} $\mathbf{M}^{[n]}(\lambda;x,t)=\mathbb{I}+\mathcal{O}(\lambda^{-1})$ as $\lambda\to\infty$.
\end{itemize}
\end{rhp}
\noindent
Given the solution $\mathbf{M}^{[n]}(\lambda;x,t)$, the function 
\eq
\label{psi-from-M}
\psi^{[2n]}(x,t;{\bf c},\xi) := 2i\lim_{\lambda\to\infty}\lambda[{\bf M}^{[n]}(\lambda;x,t;{\bf c},\xi)]_{12}
\endeq
is a $2n^\text{th}$-order pole soliton solution of \eqref{nls}. {We first present the following elementary symmetry properties of multiple-pole solitons of order $2n$.}
{
\begin{proposition} 
\label{prop:symmetries}
Let $\mathbf{c}=(c_1, c_2)\in \mathbb{C}^*$ and $\xi=\alpha + i \beta$ with $\alpha\in\mathbb{R}$ and $\beta>0$ be given. The multiple-pole solitons $\psi^{[2n]}(x,t; (c_1,c_2), \xi)$ enjoy the following symmetry properties:
\begin{align}
\psi^{[2n]}( - x, t; (c_1,c_2), \xi) &= \psi^{[2n]}( x,t; ( -c_2^*, -c_1^*), - \xi^*),\label{x-symmetry}\\
\psi^{[2n]}( x, -t; (c_1,c_2), \xi) &=  \psi^{[2n]}( x,t; ( c_1^*, c_2^*), - \xi^*)^*.\label{t-symmetry}
\end{align}
\end{proposition}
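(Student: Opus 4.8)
The plan is to prove both identities at the level of Riemann--Hilbert Problem~\ref{rhp-M}: for each symmetry I will substitute a new spectral variable into the problem solved by one member of the family, manipulate the jump matrix so that the transformed matrix solves Riemann--Hilbert Problem~\ref{rhp-M} for the claimed second member, invoke uniqueness of the solution, and finally read off the identity for $\psi^{[2n]}$ from \eqref{psi-from-M}. Two geometric facts will be used repeatedly: since $\partial D_0$ is a circle centered at the origin it is invariant as a set under $\lambda\mapsto-\lambda$ and under $\lambda\mapsto-\lambda^*$, and both of these maps carry the interior of $D_0$ to itself and the exterior to itself, so neither substitution interchanges the boundary values $\mathbf{M}^{[n]}_{+}$ and $\mathbf{M}^{[n]}_{-}$; moreover $-\xi^*=-\alpha+i\beta\in\mathbb{C}^+$ and $|{-\xi^*}|=|\xi|$, so the same disk $D_0$ is admissible for every parameter choice below. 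The algebraic input is just the two identities $\mathcal{S}(c_1,c_2)\sigma_1=\sigma_3\,\mathcal{S}(-c_2^*,-c_1^*)$ and $\overline{\mathcal{S}(c_1,c_2)}=\mathcal{S}(c_1^*,c_2^*)$, both immediate from \eqref{S-def}, together with the elementary facts that $Z^{-\sigma_3}=\sigma_1 Z^{\sigma_3}\sigma_1$ for a nonzero scalar $Z$ and that $\sigma_3$ commutes with every diagonal matrix, in particular with $e^{-i(\lambda x+\lambda^2 t)\sigma_3}$ and with $\bigl(\tfrac{\lambda-\xi}{\lambda-\xi^*}\bigr)^{n\sigma_3}$.

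For \eqref{x-symmetry} I would set $\check{\mathbf{N}}(\lambda):=\sigma_3\,\mathbf{M}^{[n]}(-\lambda;-x,t;(c_1,c_2),\xi)\,\sigma_3$. This matrix is analytic on $\mathbb{C}\setminus\partial D_0$, has continuous boundary values, and satisfies $\check{\mathbf{N}}(\lambda)\to\sigma_3\mathbb{I}\sigma_3=\mathbb{I}$ as $\lambda\to\infty$, so only its jump must be identified. Substituting $\lambda\mapsto-\lambda$ in \eqref{eq:jump-rhp-M} (with data $(-x,t)$) uses $(-\lambda)(-x)+(-\lambda)^2t=\lambda x+\lambda^2t$ and
\[
\frac{-\lambda-\xi}{-\lambda-\xi^*}=\biggl(\frac{\lambda-(-\xi^*)}{\lambda-(-\xi^*)^*}\biggr)^{-1};
\]
the factor $\bigl(\tfrac{\lambda-(-\xi^*)}{\lambda-(-\xi^*)^*}\bigr)^{-n\sigma_3}$ is then rewritten as $\sigma_1\bigl(\tfrac{\lambda-(-\xi^*)}{\lambda-(-\xi^*)^*}\bigr)^{n\sigma_3}\sigma_1$, after which $\mathcal{S}(c_1,c_2)\sigma_1=\sigma_3\mathcal{S}(-c_2^*,-c_1^*)$ and the commutativity of $\sigma_3$ with the oscillatory exponentials reduce the jump to $\sigma_3$ times the jump of Riemann--Hilbert Problem~\ref{rhp-M} with parameters $(x,t)$, $(-c_2^*,-c_1^*)$, $-\xi^*$ times $\sigma_3$. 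Conjugating the jump relation by $\sigma_3$, which is exactly the passage to $\check{\mathbf{N}}$, removes those stray factors, and by uniqueness $\check{\mathbf{N}}(\lambda)=\mathbf{M}^{[n]}(\lambda;x,t;(-c_2^*,-c_1^*),-\xi^*)$. Finally \eqref{psi-from-M} gives the claim: the factor $-1$ from $[\sigma_3 A\sigma_3]_{12}=-A_{12}$ and the factor $-1$ from the substitution $\mu=-\lambda$ in $\lim_{\lambda\to\infty}\lambda[\cdot]_{12}$ cancel, yielding $\psi^{[2n]}(x,t;(-c_2^*,-c_1^*),-\xi^*)=\psi^{[2n]}(-x,t;(c_1,c_2),\xi)$.

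For \eqref{t-symmetry} I would combine the substitution $\lambda\mapsto-\lambda^*$ with entrywise complex conjugation, setting $\mathbf{N}(\lambda):=\overline{\mathbf{M}^{[n]}(-\lambda^*;x,t;(c_1^*,c_2^*),-\xi^*)}$. Precomposing the (holomorphic) solution with the antiholomorphic map $\lambda\mapsto-\lambda^*$ and then conjugating produces a function holomorphic on $\mathbb{C}\setminus\partial D_0$ with continuous boundary values and limit $\overline{\mathbb{I}}=\mathbb{I}$ at infinity. Conjugating \eqref{eq:jump-rhp-M} (with data $(x,t)$, $(c_1^*,c_2^*)$, $-\xi^*$) evaluated at $-\lambda^*$, the identity $\overline{\mathcal{S}(c_1^*,c_2^*)}=\mathcal{S}(c_1,c_2)$ together with $\overline{-\xi^*}=-\xi$ restore the middle factor $\mathcal{S}(c_1,c_2)\bigl(\tfrac{\lambda-\xi}{\lambda-\xi^*}\bigr)^{n\sigma_3}\mathcal{S}(c_1,c_2)^{-1}$, while
\[
\overline{-i\bigl((-\lambda^*)x+(-\lambda^*)^2t\bigr)}=i(-\lambda x+\lambda^2 t)=-i\bigl(\lambda x+\lambda^2(-t)\bigr)
\]
turns $t$ into $-t$ in the oscillatory exponentials. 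Hence $\mathbf{N}$ solves Riemann--Hilbert Problem~\ref{rhp-M} with parameters $(x,-t)$, $(c_1,c_2)$, $\xi$, so $\mathbf{N}(\lambda)=\mathbf{M}^{[n]}(\lambda;x,-t;(c_1,c_2),\xi)$ by uniqueness; inserting this into \eqref{psi-from-M}, using $[\overline{A}]_{12}=\overline{A_{12}}$ and $\mu=-\lambda^*$ (so $\lambda=-\mu^*$ and $\lambda\,\overline{[\cdot]_{12}}=-\overline{\mu[\cdot]_{12}}$), the combined prefactor is $-2i\cdot\overline{(2i)^{-1}}=-2i\cdot\bigl(-(2i)^{-1}\bigr)=1$, which gives $\psi^{[2n]}(x,-t;(c_1,c_2),\xi)=\psi^{[2n]}(x,t;(c_1^*,c_2^*),-\xi^*)^*$.

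The only real difficulty will be the sign- and conjugation-bookkeeping rather than any analytic subtlety: one must make sure neither spectral substitution swaps the $\pm$ boundary values (the geometric fact recorded above), that the transformed disk is still admissible, and that every factor of $-1$ and every complex conjugate generated by $[\sigma_3\cdot\sigma_3]_{12}$, by conjugating $\mathbf{M}^{[n]}$, and by the changes of variable in the normalization limit in \eqref{psi-from-M} is tracked exactly. These are short but error-prone computations; the uniqueness theorem for Riemann--Hilbert Problem~\ref{rhp-M} does all the substantive work.
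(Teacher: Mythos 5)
Your proposal is correct and follows essentially the same route as the paper's Appendix B proof: both symmetries are obtained by combining the substitutions $\lambda\mapsto-\lambda$ (with $\sigma_3$-conjugation) and $\lambda\mapsto-\lambda^*$ (with entrywise conjugation) with the identities $\sigma_3\mathcal{S}(c_1,c_2)\sigma_1=\mathcal{S}(-c_2^*,-c_1^*)$ and $\overline{\mathcal{S}(c_1^*,c_2^*)}=\mathcal{S}(c_1,c_2)$, then invoking uniqueness for Riemann--Hilbert Problem~\ref{rhp-M} and tracking the signs in \eqref{psi-from-M}. The only differences are cosmetic: you run the identification in the opposite direction and make explicit the (correct) observation that neither spectral map swaps the $\pm$ boundary values on $\partial D_0$.
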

}
{A proof of based on the uniqueness of solutions of Riemann-Hilbert Problem~\ref{rhp-M} is given in Appendix~\ref{A:symmetries}.
}

We analyze Riemann-Hilbert Problem \ref{rhp-M} { in the large-$n$ regime} using the Deift-Zhou nonlinear 
steepest-descent method \cite{DeiftZ:1993}, which consists of making a series 
of invertible transformations in order to arrive at a problem that can be 
approximated in the large-$n$ limit.  The first transformation introduces the 
far-field scaling while simplifying the form of the jump matrix.  This 
Riemann-Hilbert problem for ${\bf N}^{[n]}(\lambda)$ will be our starting 
point for analysis in each of the far-field regions.  
{Define}
\eq
{\bf N}^{[n]}(\lambda;\chi,\tau) := \begin{cases} {\bf M}^{[n]}(\lambda;n\chi,n\tau)e^{-in(\lambda \chi+\lambda^2\tau)}\mathcal{S}e^{in(\lambda \chi+\lambda^2\tau)}, & \lambda\in D_0, \\ {\bf M}^{[n]}(\lambda;n\chi,n\tau)\left(\frac{\lambda-\xi^*}{\lambda-\xi}\right)^{n\sigma_3}, & \lambda\notin D_0. \end{cases}
\endeq
{As $\mathbf{N}^{[n]}(\lambda;\chi,\tau)$ is related to $\mathbf{M}^{[n]}(\lambda;n \chi,n\tau)$ outside $D_0$ via multiplication on the right by a diagonal matrix that tends to the identity matrix as $\lambda \to \infty$, the recovery formula remains unchanged:
\begin{equation}
\psi^{[2n]}(n\chi,n\tau;\mathbf{c},\xi) = 2i \lim _{\lambda \rightarrow \infty} \lambda\left[\mathbf{N}^{[n]}(\lambda ; \chi, \tau ; \mathbf{c},\xi)\right]_{12}.
\end{equation}
}
\begin{rhp}
\label{rhp-N}
{\rm (The far-field Riemann-Hilbert problem)}.  Fix a pole location 
$\xi=\alpha+i\beta\in\mathbb{C}^+$, a vector of connection 
coefficients ${\bf c}\equiv(c_1,c_2)\in(\mathbb{C}^*)^2$, and a non-negative 
integer $n$.  Define $D_0\subset\mathbb{C}$ to be a circular disk centered at 
the origin containing $\xi$ in its interior.  Let $(\chi,\tau)\in\mathbb{R}^2$ be 
arbitrary parameters. Find the unique $2\times 2$ matrix-valued function 
$\mathbf{N}^{[n]}(\lambda;\chi,\tau)$ with the following properties:
\begin{itemize}
\item[]\textbf{Analyticity:} $\mathbf{N}^{[n]}(\lambda;\chi,\tau)$ is analytic for 
$\lambda\in\mathbb{C}\setminus \partial D_0$, and it takes continuous boundary 
values from the interior and exterior of $\partial D_0$.
\item[]\textbf{Jump condition:} The boundary values on the jump contour 
$\partial D_0$ (oriented clockwise) are related as 
${\bf N}_+^{[n]}(\lambda;\chi,\tau)={\bf N}_-^{[n]}(\lambda;\chi,\tau){\bf V}_{\bf N}^{[n]}(\lambda;\chi,\tau)$, where 
\begin{equation}
{\bf V}_{\bf N}^{[n]}(\lambda;\chi,\tau) := e^{-n\varphi(\lambda;\chi,\tau)\sigma_3}\mathcal{S}^{-1}e^{n\varphi(\lambda;\chi,\tau)\sigma_3}.
\label{eq:jump-rhp-N}
\end{equation}
\item[]\textbf{Normalization:} $\mathbf{N}^{[n]}(\lambda;\chi,\tau)=\mathbb{I}+\mathcal{O}(\lambda^{-1})$ as $\lambda\to\infty$.
\end{itemize}
\end{rhp}
{ With Proposition~\ref{prop:symmetries} at hand, we restrict our attention to the first quadrant of the $(x,t)$-plane, hence that of the $(\chi,\tau)$-plane, for the remainder of this paper.}

\section{The algebraic-decay region}
\label{sec-alg-decay}

Pick $(\chi,\tau)$ in the algebraic-decay region.  Our first objective is to 
understand the signature chart of $\Re(\varphi(\lambda;\chi,\tau))$.  
\begin{lemma}
\label{algebraic-lemma}
In the algebraic-decay region, there is a domain $D_{\rm up}$ in the upper half-plane 
with the following properties:
\begin{itemize}
\item $D_{\rm up}$ contains $\xi$, is bounded by curves along which 
$\Re(\varphi(\lambda))=0$, and abuts the real axis along a single interval 
(denoted $(\lambda^{(1)},\lambda^{(2)})$).  
\item $\Re(\varphi(\lambda))>0$ for all $\lambda\in D_{\rm up}$.
\item $\Re(\varphi(\lambda))<0$ for all $\lambda$ in the upper half-plane 
in the complement of $\overline{D_{\rm up}}$ but sufficiently close to $D_{\rm up}$.  
\end{itemize}
Similarly, there is a domain $D_{\rm down}$ in the lower half-plane such that:
\begin{itemize}
\item $D_{\rm down}$ contains $\xi^*$, is bounded by curves along which 
$\Re(\varphi(\lambda))=0$, and abuts the real axis along the same interval as 
$D$.  
\item $\Re(\varphi(\lambda))<0$ for all $\lambda\in D_{\rm down}$.
\item $\Re(\varphi(\lambda))>0$ for all $\lambda$ in the lower half-plane 
in the complement of $\overline{D_{\rm down}}$ but sufficiently close to 
$D_{\rm down}$.  
\end{itemize}
\end{lemma}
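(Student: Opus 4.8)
The plan is to analyze the real part of $\varphi(\lambda;\chi,\tau)$ as a harmonic function on $\mathbb{C}\setminus\{\xi,\xi^*\}$ and extract the claimed domains from its zero-level set. First I would record the structural features of $u(\lambda):=\Re(\varphi(\lambda))$. Writing $\lambda=\mu+i\eta$, the polynomial part $\Re(i(\lambda\chi+\lambda^2\tau)) = -\eta\chi - 2\mu\eta\tau$ is harmonic and vanishes on the real axis. The logarithmic part $\Re\log\!\left(\frac{\lambda-\xi^*}{\lambda-\xi}\right) = \log\left|\frac{\lambda-\xi^*}{\lambda-\xi}\right|$ is harmonic away from $\xi$ and $\xi^*$, is strictly positive in the upper half-plane (since there $|\lambda-\xi^*|>|\lambda-\xi|$), strictly negative in the lower half-plane, zero on $\mathbb{R}$, and blows up to $+\infty$ as $\lambda\to\xi$ and $-\infty$ as $\lambda\to\xi^*$. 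Thus $u$ itself vanishes identically on the real axis, tends to $+\infty$ at $\xi$, and tends to $-\infty$ at $\xi^*$. The level set $\{u=0\}$ therefore contains $\mathbb{R}$ plus additional curves; since $u$ is real-analytic and non-constant, $\{u=0\}\setminus\mathbb{R}$ is a finite union of analytic arcs meeting $\mathbb{R}$ only at points where $\varphi'$ vanishes or at $\infty$.

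Next I would use the critical-point data supplied in \S\ref{subsec-regions}. In the algebraic-decay region $\varphi(\lambda)$ has exactly three distinct real critical points (with $\lambda^{(0)}=\infty$ on the slice $\tau=0$), and no critical point has nonzero imaginary part — indeed this is precisely the defining property of the region, obtained by continuous deformation from the $\tau=0$, $0<\chi<2/\beta$ slice with no two critical points colliding, and the relevant bifurcation (a pair of critical points leaving the real axis) is exactly what the curves $\mathcal{L}_\text{EO}^\pm$ describe, so it does not occur in the algebraic-decay region. At a simple real critical point $\lambda^{(k)}$ the level set $\{u=0\}$ has a transversal crossing: two analytic arcs emanate into the upper half-plane (and their conjugates into the lower), at angle $\pi/2$ to the real axis and to each other. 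I would then perform the standard local sign analysis: near $\lambda^{(1)}$ and $\lambda^{(2)}$, using $\varphi(\lambda)\approx \varphi(\lambda^{(k)}) + \tfrac12\varphi''(\lambda^{(k)})(\lambda-\lambda^{(k)})^2$ with $\varphi(\lambda^{(k)})$ purely imaginary and $\varphi''(\lambda^{(k)})$ real, one reads off which of the four local sectors carry $u>0$ versus $u<0$; the key point is that on the real interval strictly between $\lambda^{(1)}$ and $\lambda^{(2)}$ one sector opening upward has $u>0$, consistent with $u\to+\infty$ at the interior point $\xi$ of the upper half-plane.

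I would then assemble the global picture. Start the two arcs of $\{u=0\}$ emanating upward from $\lambda^{(1)}$ and from $\lambda^{(2)}$ into the upper half-plane; follow them. Because $u$ is harmonic on $\mathbb{C}^+\setminus\{\xi\}$, is positive just above $(\lambda^{(1)},\lambda^{(2)})$, blows up to $+\infty$ at $\xi$, and tends to $-\infty$ at $\xi^*$ (hence is negative near $\partial\mathbb{C}^+$ far from $(\lambda^{(1)},\lambda^{(2)})$, and $\to 0$ at $\infty$ in a controlled way since the polynomial part dominates and changes sign), the maximum principle forces these two arcs to bound a single region $D_{\rm up}\subset\mathbb{C}^+$ that contains $\xi$ and abuts $\mathbb{R}$ exactly along $(\lambda^{(1)},\lambda^{(2)})$, with $u>0$ throughout $D_{\rm up}$ and $u<0$ just outside. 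The point is that $D_{\rm up}$ cannot have extra components or extra real intervals without producing spurious real critical points of $\varphi$ (a local max/min of $u$ restricted to $\mathbb{R}$ where an arc detaches) — but the three real critical points are already accounted for by $\lambda^{(1)}$, $\lambda^{(2)}$, and, when $\tau\ne 0$, $\lambda^{(0)}$, and at $\lambda^{(0)}$ the sign analysis shows the two arcs leaving it do \emph{not} enclose $\xi$ (they bound a region where $u$ has the opposite sign from what is needed, or escape to infinity). The domain $D_{\rm down}$ and its sign statements then follow immediately from the Schwarz-type symmetry $\varphi(\lambda^*;\chi,\tau) = -\varphi(\lambda;\chi,\tau)^*$ (clear from \eqref{phi-def}, which swaps the two logarithm arguments and conjugates the polynomial part), so $u(\lambda^*) = -u(\lambda)$, giving $D_{\rm down} = D_{\rm up}^*$ with all signs reversed and the same abutting interval.

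The main obstacle is the global control: ruling out that the level curves $\{u=0\}$ emanating from $\lambda^{(1)}$ and $\lambda^{(2)}$ wander, reconnect with the real axis elsewhere, or spiral, thereby creating extra pieces of $D_{\rm up}$ or extra boundary intervals. The clean way to handle this is a topological/counting argument: the compactification of $\{u=0\}$ is a finite graph whose vertices are the real critical points of $\varphi$ and the point $\infty$ (and where, crucially, $\xi$ and $\xi^*$ are \emph{not} on the set since $u=\pm\infty$ there), the number and local valence of vertices is fixed by the critical-point count from \S\ref{subsec-regions}, and harmonicity (no interior maxima of $u$, hence each complementary face contains a puncture $\xi$ or $\xi^*$ on whose side the sign is determined, or is unbounded) pins down the combinatorics uniquely; a continuity/connectedness argument along the path used to define the algebraic-decay region (starting from the explicit $\tau=0$ configuration, where the picture is elementary) then shows this combinatorial type is the one that holds throughout the region. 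I would also invoke, on the $\tau=0$ slice, the explicit behavior at $\infty$ — there $\varphi(\lambda)\sim i\lambda\chi$, so $u\sim -\chi\,\Im(\lambda)<0$ in the upper half-plane near $\infty$, confirming that the unbounded face carries $u<0$ and forcing $D_{\rm up}$ to be bounded — and for $\tau\ne 0$ the analogous check uses $\varphi(\lambda)\sim i\lambda^2\tau$, whose real part is $-2\tau\,\Re(\lambda)\Im(\lambda)$, again negative in the appropriate sectors bordering the real axis away from $(\lambda^{(1)},\lambda^{(2)})$.
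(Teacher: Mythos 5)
Your overall route is genuinely different from the paper's: you try to build the signature chart of $u:=\Re\varphi$ from scratch (harmonicity, local structure at the real critical points, maximum principle, behavior at $\infty$, plus the symmetry $u(\lambda^*)=-u(\lambda)$), whereas the paper never does a local analysis at all — it starts from the signature chart already proved in the exponential-decay region in \cite{BilmanB:2019}, observes that on $\mathcal{L}_\text{AE}$ the two conjugate critical points merge on the real axis so the zero-level loops around $\xi$ and $\xi^*$ touch there, and then continues into the algebraic-decay region, using that the topology of the chart can only change at a critical-point collision, which is excluded by the very definition of the region. Your closing continuity argument from the $\tau=0$ slice is close in spirit to that, and your symmetry reduction for $D_{\rm down}$ is correct.

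However, your local analysis at $\lambda^{(1)},\lambda^{(2)}$ is wrong, and in a way that feeds into the global step. Since $u\equiv 0$ on all of $\mathbb{R}$ (as you note), at a simple real critical point the zero set of $u$ must consist of the real axis together with exactly \emph{one} orthogonal arc; equivalently $\varphi''(\lambda^{(k)})$ is purely imaginary there — it is $\theta''=-i\varphi''$ that is real, cf.\ \eqref{thetaprime-conds} — so locally $u\approx-\tfrac12\theta''(\lambda^{(k)})r^2\sin 2\phi$ and the zero rays are $\phi=0,\pi/2,\pi$, not $\pm\pi/4,\pm3\pi/4$. Your claim that $\varphi''(\lambda^{(k)})$ is real and that \emph{two} arcs leave each critical point into $\mathbb{C}^+$ at $45^\circ$ contradicts your own observation that $u$ vanishes on $\mathbb{R}$, and it changes the combinatorics of the level-set graph you then try to control. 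A second concrete problem: for $\tau\neq 0$ your global input ``$u<0$ near the real axis/at $\infty$ away from $(\lambda^{(1)},\lambda^{(2)})$'' is false. Just above the axis one has $u\approx-\eta\,\theta'(\mu)$, and for $\tau>0$ this is \emph{positive} for $\mu<\lambda^{(0)}$; likewise $\Re(i\tau\lambda^2)=-2\tau\mu\eta\to+\infty$ in the second quadrant, so there is an unbounded region of positivity in the upper half-plane. The real content of the lemma — that the positive component containing $\xi$ does not merge with that unbounded component, i.e.\ abuts $\mathbb{R}$ along the single interval $(\lambda^{(1)},\lambda^{(2)})$ — is exactly what your maximum-principle step, as written, does not deliver; it needs the corrected one-arc local picture at $\lambda^{(0)},\lambda^{(1)},\lambda^{(2)}$ together with an argument (continuation from the $\tau=0$ slice or, as in the paper, from the exponential-decay chart) that the separating arc through $\lambda^{(0)}$ persists and no reconnection occurs inside the region.
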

\begin{proof}
It is 
instructive to compare with the signature chart in the exponential-decay region.  
In \cite{BilmanB:2019} it was proven that in the exponential-decay region there 
is a closed loop in the $\lambda$-plane surrounding $\xi$ on which 
$\Re(\varphi(\lambda))=0$.  Inside this curve $\Re(\varphi(\lambda))>0$, while 
outside the curve for $\lambda$ sufficiently close to the curve 
$\Re(\varphi(\lambda))<0$.  In the lower half-plane the signature chart is 
symmetric with the signs flipped.  If $\tau=0$ there are two critical points 
$\lambda^+$ and $\lambda^-$ that are complex conjugates;  if $\tau\neq 0$ there is 
an additional real critical point $\lambda^{(0)}$.  See Figure 
\ref{exponential-phase-plots}.
\begin{figure}
\begin{center}
\includegraphics[height=2in]{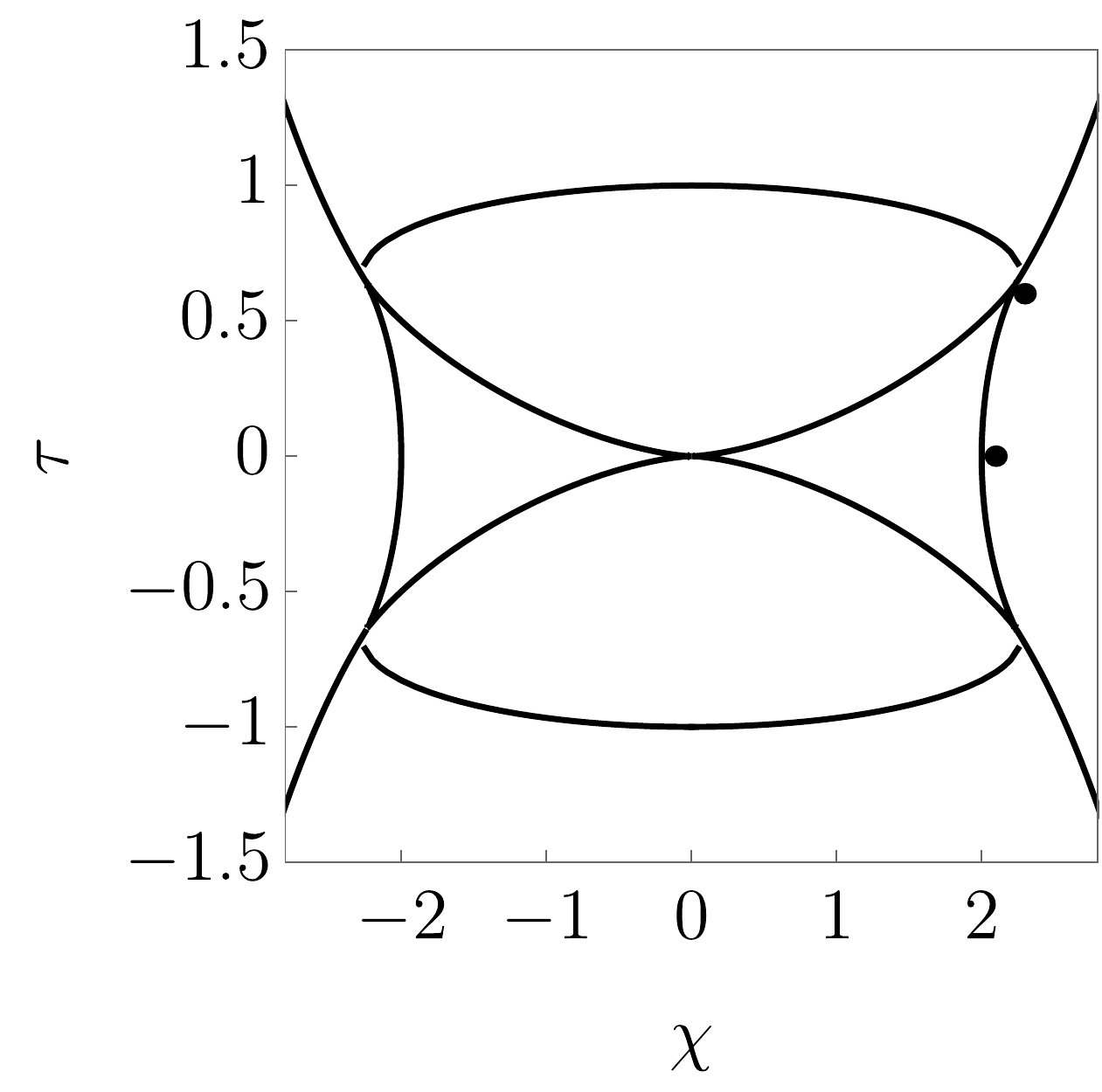} 
\includegraphics[height=2in]{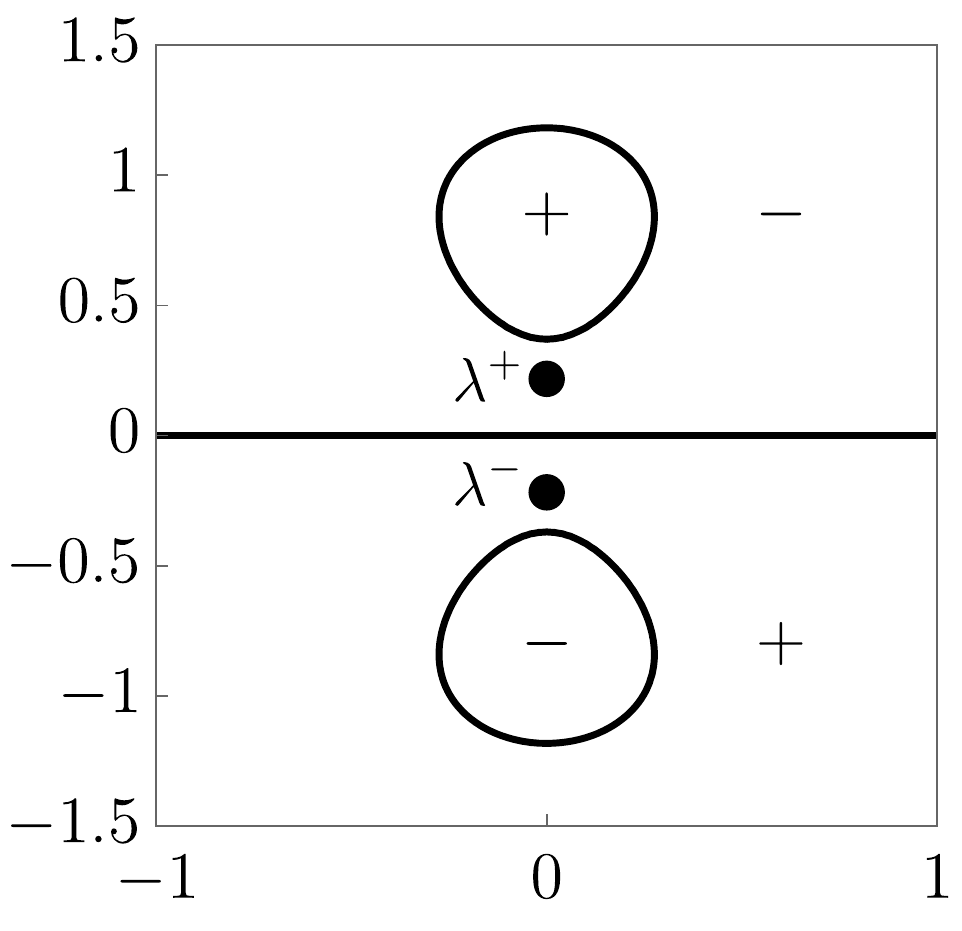} 
\includegraphics[height=2in]{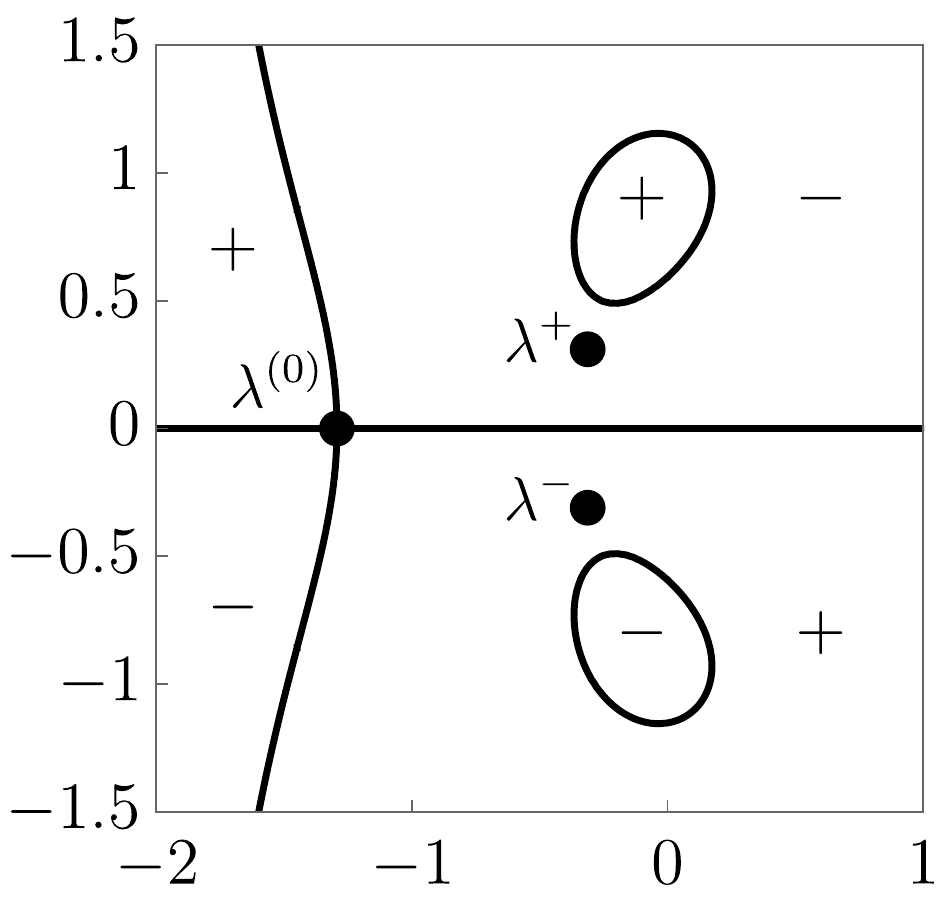} 
\caption{Signature charts of $\Re(\varphi(\lambda;\chi,\tau))$ for $\xi=i$ in the 
exponential-decay region, along with the critical points $\lambda^+$ and 
$\lambda^-$ (and, when it exists, $\lambda^{(0)}$).  
\emph{Left}:  Positions in the ($\chi$,$\tau$)-plane relative to the boundary 
curves.  \emph{Center}:  $\chi=2.1$, $\tau=0$.  \emph{Right}: $\chi=2.3$, 
$\tau=0.6$.}
\label{exponential-phase-plots}
\end{center}
\end{figure}
\begin{figure}
\begin{center}
\includegraphics[height=2in]{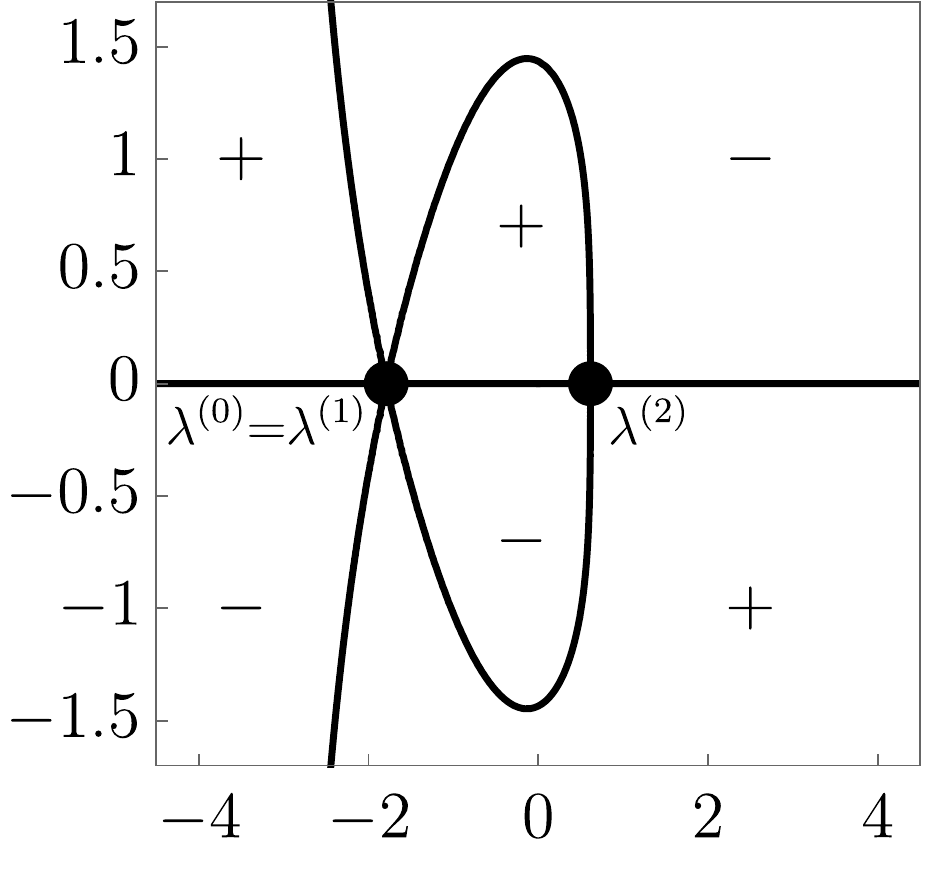}
\includegraphics[height=2in]{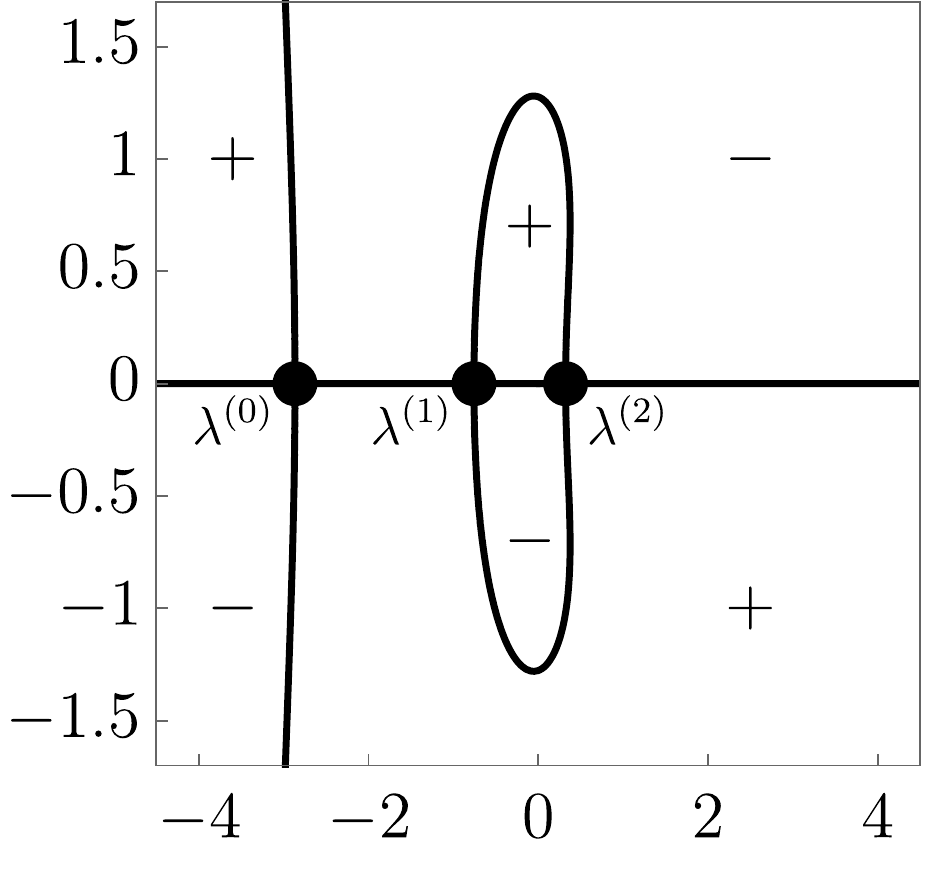}
\includegraphics[height=2in]{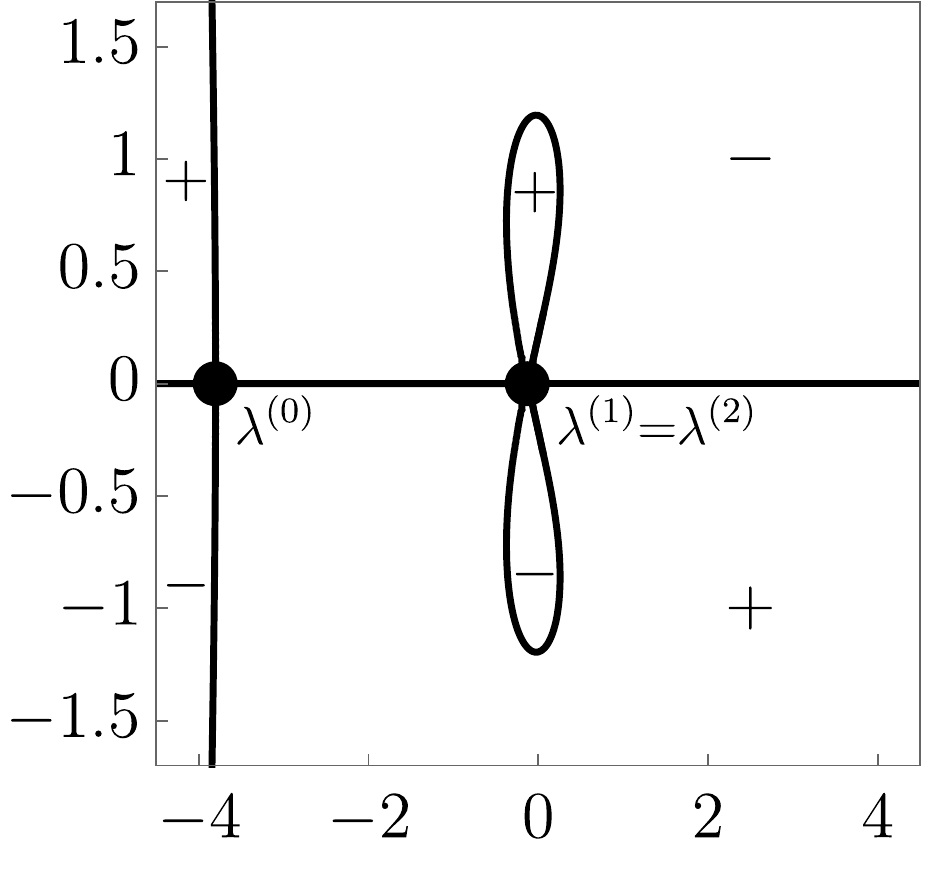} \\
\includegraphics[height=2.1in]{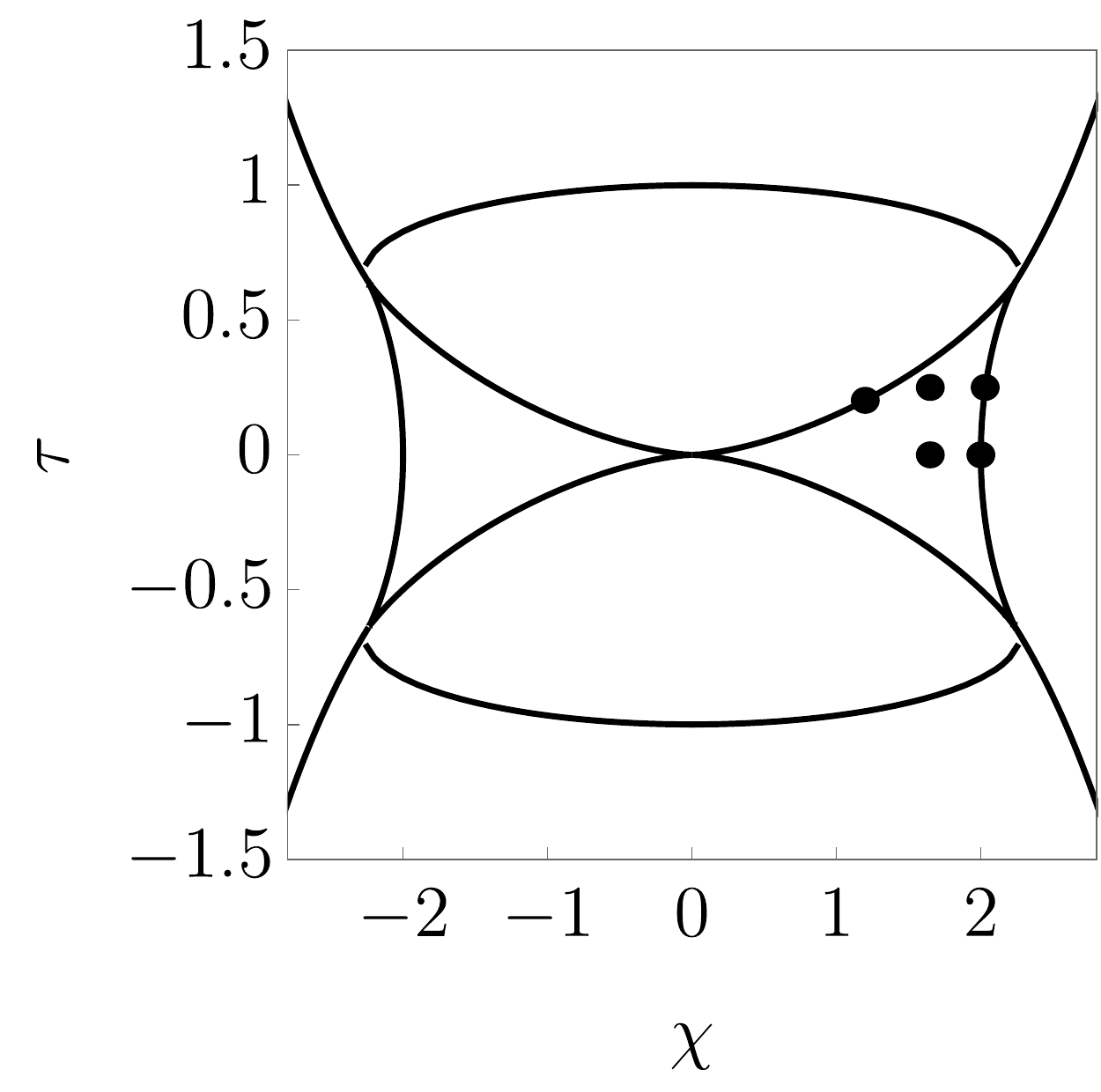} 
\includegraphics[height=2in]{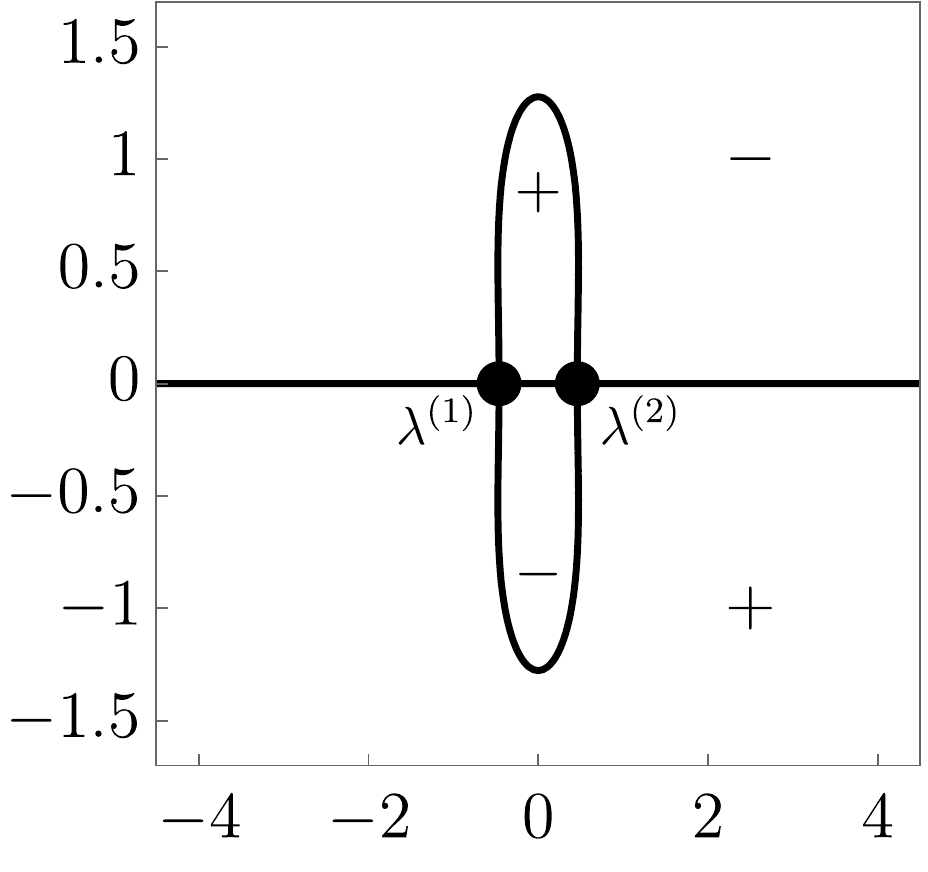} 
\includegraphics[height=2in]{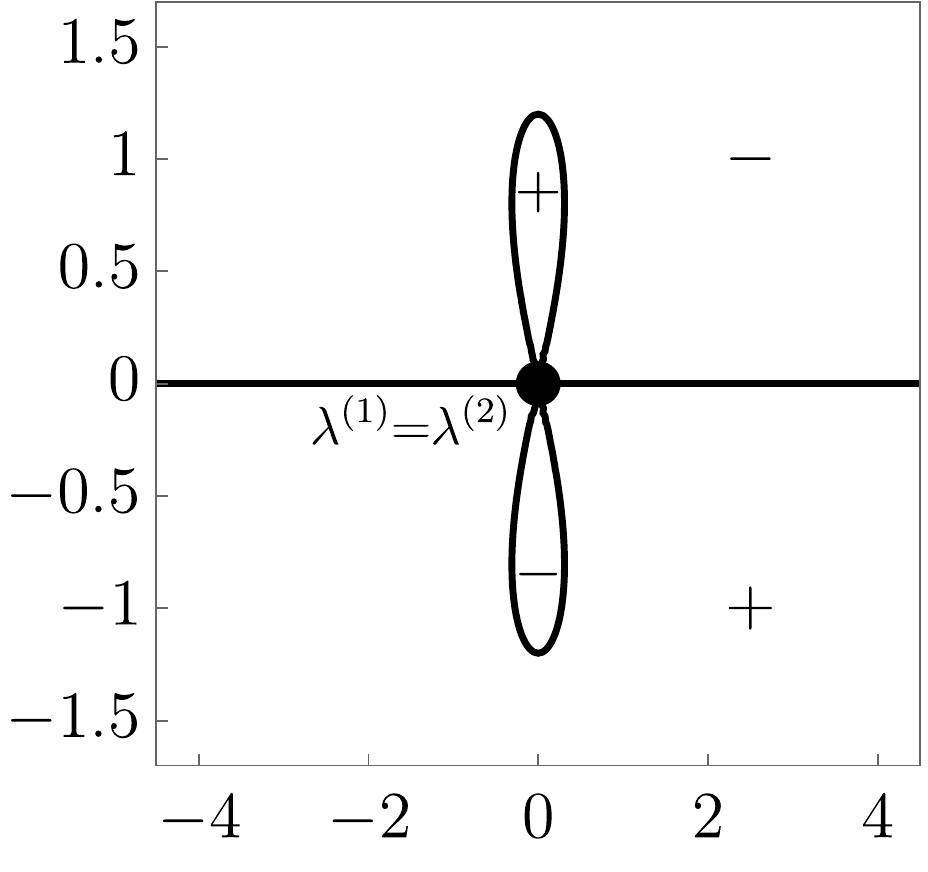} 
\caption{Signature charts of $\Re(\varphi(\lambda;\chi,\tau))$ for $\xi=i$ in the 
algebraic-decay region, along with the critical points $\lambda^{(1)}$ and 
$\lambda^{(2)}$ (and, when it exists, $\lambda^{(0)}$).  \emph{Top left}:  
$\chi=1.2$, $\tau\approx 0.2023$.  \emph{Top middle}: $\chi=1.65$, $\tau=0.25$.  
\emph{Top right}: $\chi\approx 2.03$, $\tau=0.25$.  \emph{Bottom left}:  
Positions in the ($\chi$,$\tau$)-plane relative to the boundary curves.  
\emph{Bottom middle}:  $\chi=1.65$, $\tau=0$.  \emph{Bottom right}: $\chi=2$, 
$\tau=0$.}
\label{algebraic-phase-plots}
\end{center}
\end{figure}
Passing from the exponential-decay region to the algebraic-decay region, the 
boundary curve $\mathcal{L}_\text{AE}$ is marked by the condition 
$\lambda^+=\lambda^-$.  When these two critical points coincide they are real, 
and thus lie on a zero-level curve of $\Re(\varphi(\lambda))$.  This means that 
the two closed curves surrounding $\xi$ and $\xi^*$ along which 
$\Re(\varphi(\lambda))=0$ must intersect at $\lambda^+=\lambda^-$ for 
$(\chi,\tau)$ on $\mathcal{L}_\text{AE}$.  In the notation used in the 
algebraic-decay region the double critical point is 
$\lambda^{(1)}=\lambda^{(2)}$.  See the top right and bottom right panels in 
Figure \ref{algebraic-phase-plots}.  

Now, as $(\chi,\tau)$ moves into the algebraic-decay region from 
$\mathcal{L}_\text{AE}$, the double critical point splits into the two real 
critical points $\lambda^{(1)}$ and $\lambda^{(2)}$.  By definition, no 
critical points coincide inside the algebraic-decay region.  In particular, 
this means that in the algebraic-decay region there is a domain $D_{\rm up}$ in the 
upper half-plane that contains $\xi$, abuts the real axis along the interval 
$(\lambda^{(1)},\lambda^{(2)})$, and is bounded by curves along which 
$\Re(\varphi(\lambda))=0$.  Furthermore, $\Re(\varphi(\lambda))>0$ for all 
$\lambda\in D_{\rm up}$, and $\Re(\varphi(\lambda))<0$ for all $\lambda$ in the 
upper half-plane sufficiently close to $D_{\rm up}$.  There is an analogous domain 
$D_{\rm down}$ in the lower half-plane containing $\xi^*$ such that 
$\Re(\varphi(\lambda))<0$ for all $\lambda\in D_{\rm down}$, and 
$\Re(\varphi(\lambda))>0$ for all $\lambda$ in the lower half-plane sufficiently 
close to $D_{\rm down}$.  See the top middle and bottom middle panels in Figure 
\ref{algebraic-phase-plots}.  
\end{proof}

Define the domain $D$ to be the union of $D_{\rm up}$, $D_{\rm down}$, and the interval 
$(\lambda^{(1)},\lambda^{(2)})$, so that $\partial D$ is a simple Jordan curve 
passing through $\lambda^{(1)}$ and $\lambda^{(2)}$ along which 
$\Re(\varphi(\lambda))=0$.  We write $\Gamma_\text{up}$ for the portion of 
$\partial D$ in the upper half-plane and $\Gamma_\text{down}$ for the portion of 
$\partial D$ in the lower half-plane.  See Figure \ref{algebraic-lenses}.  
We are now ready to carry out our first 
Riemann-Hilbert transformation, which will deform the jump contour from 
$\partial D_0$ to $\Gamma_\text{up}\cup\Gamma_\text{down}$.  Set 
\eq
{\bf O}^{[n]}(\lambda;\chi,\tau):= \begin{cases} {\bf N}^{[n]}(\lambda;\chi,\tau){\bf V}_{\bf N}^{[n]}(\lambda;\chi,\tau), & \lambda\in D_0\cap D^\mathsf{c}, \\ {\bf N}^{[n]}(\lambda;\chi,\tau){\bf V}_{\bf N}^{[n]}(\lambda;\chi,\tau)^{-1}, & \lambda\in D_0^\mathsf{c}\cap D, \\ {\bf N}^{[n]}(\lambda;\chi,\tau), & \text{otherwise}.
\end{cases}
\endeq
Then, orienting $\Gamma_\text{up}\cup\Gamma_\text{down}$ clockwise, the function 
${\bf O}^{[n]}(\lambda)$ satisfies exactly the same Riemann-Hilbert 
problem as ${\bf N}^{[n]}(\lambda)$ with $\partial D_0$ replaced by 
$\Gamma_\text{up}\cup\Gamma_\text{down}$.  Note that the matrix $\mathcal{S}^{-1}$ 
has the following two factorizations:
\eq
\begin{split}
\label{Sinv-on-Gamma}
\mathcal{S}^{-1} & = \bbm 1 & \frac{c_2^*}{c_1} \\ 0 & 1 \ebm \bbm \frac{|{\bf c}|}{c_1} & 0 \\ 0 & \frac{c_1}{|{\bf c}|} \ebm \bbm 1 & 0 \\ -\frac{c_2}{c_1} & 1 \ebm \quad\quad (\text{use for }\lambda\in\Gamma_\text{up}), \\
\mathcal{S}^{-1} & = \bbm 1 & 0 \\ -\frac{c_2}{c_1^*} & 1 \ebm \bbm \frac{c_1^*}{|{\bf c}|} & 0 \\ 0 & \frac{|{\bf c}|}{c_1^*} \ebm \bbm 1 & \frac{c_2^*}{c_1^*} \\ 0 & 1 \ebm \quad\quad (\text{use for }\lambda\in\Gamma_\text{down}).
\end{split}
\endeq
Following the exponential-decay region analysis in \cite{BilmanB:2019}, we 
define the following four contours:
\begin{itemize}
\item $\Gamma_\text{up}^\text{out}$ runs from $\lambda^{(1)}$ to $\lambda^{(2)}$ in 
the upper half-plane entirely in the region where $\Re(\varphi(\lambda))<0$.
\item $\Gamma_\text{up}^\text{in}$ runs from $\lambda^{(1)}$ to $\lambda^{(2)}$ 
entirely in $D_{\rm up}$ (so $\Re(\varphi(\lambda))>0$), and can be deformed to 
$\Gamma_\text{up}$ without passing through $\xi$.
\item $\Gamma_\text{down}^\text{out}$ runs from $\lambda^{(2)}$ to $\lambda^{(1)}$ 
in the lower half-plane entirely in the region where $\Re(\varphi(\lambda))>0$.
\item $\Gamma_\text{down}^\text{in}$ runs from $\lambda^{(1)}$ to $\lambda^{(2)}$ 
entirely in $D_{\rm down}$ (so $\Re(\varphi(\lambda))<0$), and can be deformed to 
$\Gamma_\text{down}$ without passing through $\xi^*$.
\end{itemize}
We also write 
\eq
\Gamma_\text{lens} := \Gamma_\text{up}^\text{out} \cup \Gamma_\text{up}^\text{in} \cup \Gamma_\text{down}^\text{out} \cup \Gamma_\text{down}^\text{in} \quad \text{and} \quad \Gamma:= \Gamma_\text{up} \cup \Gamma_\text{down} \cup \Gamma_\text{lens}.
\endeq
We next define the following four domains:
\begin{itemize}
\item $L_\text{up}^\text{out}$ is the domain in the upper half-plane bounded by 
$\Gamma_\text{up}^\text{out}$ and $\partial D$.  
\item $L_\text{up}^\text{in}$ is the domain in the upper half-plane bounded by 
$\Gamma_\text{up}^\text{in}$ and $\partial D$.  
\item $L_\text{down}^\text{out}$ is the domain in the lower half-plane bounded by 
$\Gamma_\text{down}^\text{out}$ and $\partial D$.  
\item $L_\text{down}^\text{in}$ is the domain in the lower half-plane bounded by 
$\Gamma_\text{down}^\text{in}$ and $\partial D$.  
\end{itemize}
See Figure \ref{algebraic-lenses}.  
\begin{figure}[h]
\begin{center}
\includegraphics[height=2in]{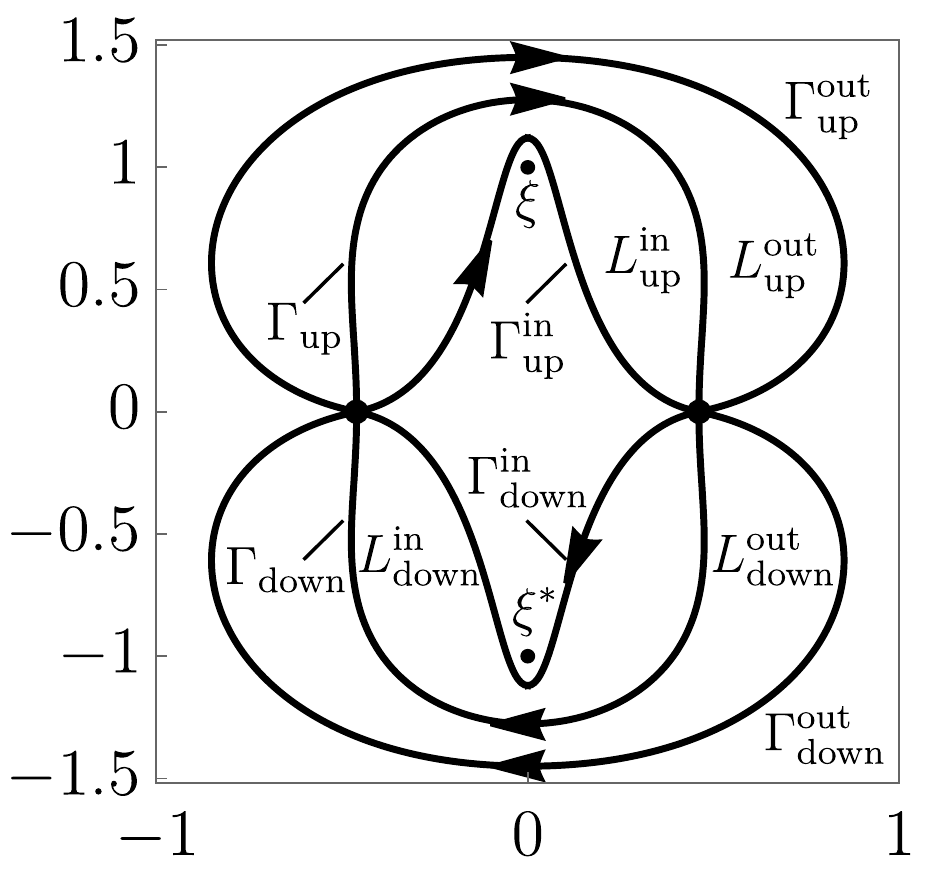} 
\caption{The lenses and lens boundaries in the algebraic-decay region.}
\label{algebraic-lenses}
\end{center}
\end{figure}

Using these lenses, we make the change of variables 
\eq
{\bf Q}^{[n]}(\lambda;\chi,\tau) := \begin{cases}
{\bf O}^{[n]}(\lambda;\chi,\tau)\bbm 1 & \frac{c_2^*}{c_1}e^{-2n\varphi(\lambda;\chi,\tau)} \\ 0 & 1 \ebm, & \lambda\in L_\text{up}^\text{in}, \\
{\bf O}^{[n]}(\lambda;\chi,\tau)\bbm 1 & 0 \\ -\frac{c_2}{c_1}e^{2n\varphi(\lambda;\chi,\tau)} & 1 \ebm^{-1}, & \lambda\in L_\text{up}^\text{out}, \vspace{.025in} \\
{\bf O}^{[n]}(\lambda;\chi,\tau)\bbm 1 & 0 \\ -\frac{c_2}{c_1^*}e^{2n\varphi(\lambda;\chi,\tau)} & 1 \ebm, & \lambda\in L_\text{down}^\text{in}, \\
{\bf O}^{[n]}(\lambda;\chi,\tau)\bbm 1 & \frac{c_2^*}{c_1^*}e^{-2n\varphi(\lambda;\chi,\tau)} \\ 0 & 1 \ebm^{-1}, & \lambda\in L_\text{down}^\text{out}, \\
{\bf O}^{[n]}(\lambda;\chi,\tau), & \text{otherwise}. \end{cases}
\endeq
Then \(\mathbf{Q}^{[n]}(\lambda ; \chi, \tau)\) is analytic for \(\lambda \notin \Gamma \), has the normalization \(\mathbf{Q}^{[n]}(\lambda ; \chi, \tau)=\mathbb{I}+\mathcal{O}\left(\lambda^{-1}\right)\) as \(\lambda \rightarrow \infty\), and satisfies the jump condition \(\mathbf{Q}_{+}^{[ n]}(\lambda ; \chi, \tau)=\mathbf{Q}_{-}^{[ n]}(\lambda ; \chi, \tau) \mathbf{V}_{\mathbf{Q}}^{[n]}(\lambda ; \chi, \tau)\) for $\lambda\in \Gamma$, where 
\begin{equation}
\mathbf{V}_{\mathbf{Q}}^{[n]}(\lambda ; \chi, \tau) :=
\begin{cases}
\bbm 1 & \frac{c_{2}^{*}}{c_{1}}   e^{-2 n \varphi(\lambda ; \chi, \tau)} \\ 0 &  1 \ebm, & \lambda \in \Gamma_\text{up}^\text{in}\,,\vspace{.025in}\\
\bbm \frac{|\mathbf{c}|}{c_1} & 0 \\ 0 &  \frac{c_1}{|\mathbf{c}|} \ebm, & \lambda \in \Gamma_\text{up}\,,\vspace{.025in}\\
\bbm 1 & 0 \\  -\frac{c_{2}}{c_{1}}   e^{2 n \varphi(\lambda ; \chi, \tau)} &  1 \ebm, & \lambda \in \Gamma_\text{up}^\text{out}\,,\vspace{.025in}\\
\bbm 1 & 0 \\  -\frac{c_{2}}{c_{1}^*}   e^{2 n \varphi(\lambda ; \chi, \tau)} &  1 \ebm, & \lambda \in \Gamma_\text{down}^\text{in}\,,\\
\bbm \frac{c_1^*}{|\mathbf{c}|} & 0 \\ 0 &  \frac{|\mathbf{c}|}{c_1^*} \ebm, & \lambda \in \Gamma_\text{down}\,,\\
\bbm 1 & \frac{c_{2}^*}{c_{1}^*}   e^{-2 n \varphi(\lambda ; \chi, \tau)} \\ 0 &  1 \ebm, & \lambda \in \Gamma_\text{down}^\text{out}\,.\\
\end{cases}
\end{equation}
We perform the following sectionally analytic substitutions to eliminate the jump matrices supported on $\Gamma_\text{up}$ and $\Gamma_\text{down}$ at the expense of introducing a jump discontinuity across the interval 
\eq
I:=[\lambda^{(1)}, \lambda^{(2)}]\subset \mathbb{R}
\endeq
separating the regions $D_\xi$ and $D_{\xi^*}$:
\begin{equation}
\mathbf{R}^{[n]}(\lambda; \chi, \tau):=
\begin{cases}
\mathbf{Q}^{[n]}(\lambda; \chi, \tau) \begin{bmatrix} \frac{|\mathbf{c}|}{c_1} & 0 \\ 0 &  \frac{c_1}{|\mathbf{c}|} \end{bmatrix}, & \lambda\in D_\text{up} \setminus \Gamma_\text{up}^\text{in}\,,\\
\mathbf{Q}^{[n]}(\lambda; \chi, \tau) \begin{bmatrix} \frac{c_1^*}{|\mathbf{c}|} & 0 \\ 0 &  \frac{|\mathbf{c}|}{c_1^*} \end{bmatrix}, & \lambda\in D_\text{down} \setminus \Gamma_\text{down}^\text{in}\,,\\
\mathbf{Q}^{[n]}(\lambda; \chi, \tau), & \text{otherwise}.
\end{cases}
\end{equation}
This substitution preserves the normalization 
\(\mathbf{R}^{[n]}(\lambda)=\mathbb{I}+\mathcal{O}\left(\lambda^{-1}\right)\) 
as \(\lambda \rightarrow \infty\) and $\mathbf{R}^{[n]}(\lambda)$ is analytic 
for $\lambda\notin\Gamma\cup I$. We orient $I$ from $\lambda^{(1)}$ to 
$\lambda^{(2)}$. Then 
$\mathbf{R}^{[n]}(\lambda)$ satisfies the jump condition \(\mathbf{R}_{+}^{[ n]}(\lambda ; \chi, \tau)=\mathbf{R}_{-}^{[ n]}(\lambda ; \chi, \tau) \mathbf{V}_{\mathbf{R}}^{[n]}(\lambda ; \chi, \tau)\) for $\lambda\in \Gamma\cup I$, where 
\begin{equation}
\mathbf{V}_{\mathbf{R}}^{[n]}(\lambda ; \chi, \tau) :=
\begin{cases}
\bbm 1 & \frac{c_1 c_2^*}{|\mathbf{c}|^2}   e^{-2 n \varphi(\lambda ; \chi, \tau)} \\ 0 &  1 \ebm, & \lambda \in \Gamma_\text{up}^\text{in}\,,\vspace{.025in}\\
\bbm 1 & 0 \\ - \frac{c_{2}}{c_{1}}   e^{2 n \varphi(\lambda ; \chi, \tau)} &  1 \ebm, & \lambda \in \Gamma_\text{up}^\text{out}\,,\vspace{.025in}\\
\bbm 1 & 0 \\  -\frac{c_1^* c_{2}}{|\mathbf{c}|^2}   e^{2 n \varphi(\lambda ; \chi, \tau)} &  1 \ebm, & \lambda \in \Gamma_\text{down}^\text{in}\,,\vspace{.025in}\\
\bbm 1 & \frac{c_2^*}{c_{1}^*}   e^{-2 n \varphi(\lambda ; \chi, \tau)} \\ 0 &  1 \ebm, & \lambda \in \Gamma_\text{down}^\text{out}\,,\\
\bbm \frac{|\mathbf{c}|^2}{|c_1|^2} & 0 \\ 0 &  \frac{|c_1|^2}{|\mathbf{c}|^2} \ebm, & \lambda \in I.
\end{cases}
\label{eq:R-jump1}
\end{equation}
This piecewise analytic transformation also preserves the recovery formula
\begin{equation}
\psi^{[2n]}(n\chi,n\tau) = 2i \lim_{\lambda\to\infty} \lambda [{\bf R}^{[n]}(\lambda;\chi, \tau)]_{12}.
\label{eq:R-recovery}
\end{equation}
Some algebraic manipulations of the jump matrix are now in order. First, we recall 
$\theta(\lambda;\chi,\tau) := -i \varphi(\lambda;\chi,\tau)$ from \eqref{theta-def}
and then note that the elements of the diagonal jump matrix supported on $I$ satisfy
\begin{equation}
\frac{|\mathbf{c}|^2}{|c_1|^2} = 1 + \left\lvert\frac{c_2}{c_1}\right\rvert^2 =  e^{2\pi p},\quad p:=\frac{1}{2\pi} \log\left(1 + \left\lvert\frac{c_2}{c_1}\right\rvert^2 \right)>0.
\end{equation}
Now, set
\begin{equation}
\kappa:=\left\lvert \frac{c_2}{c_1} \right\rvert >0,\quad \nu:=\arg\left(\frac{c_2}{c_1}\right),
\end{equation}
where $\arg(\cdot)$ denotes the principal branch, and observe that
\begin{equation}
\frac{c_1 c_2^*}{|\mathbf{c}|^2} = \frac{c_2^*}{c_1^*} \frac{|c_1|^2}{|\mathbf{c}|^2}=\kappa  e^{-i \nu}  e^{-2\pi p}.
\end{equation}
Thus, we can rewrite the jump matrix \eqref{eq:R-jump1} as
\begin{equation}
\mathbf{V}_{\mathbf{R}}^{[n]}(\lambda ; \chi, \tau) =
\begin{cases}
\bbm 1 & \kappa  e^{-i\nu} e^{-2\pi p}   e^{-2 i n \theta(\lambda ; \chi, \tau)} \\ 0 &  1 \ebm, & \lambda \in \Gamma_\text{up}^\text{in}\,,\vspace{.025in}\\
\bbm 1 & 0 \\  -\kappa  e^{i\nu}   e^{2 i n \theta(\lambda ; \chi, \tau)} &  1 \ebm, & \lambda \in \Gamma_\text{up}^\text{out}\,,\vspace{.025in}\\
\bbm 1 & 0 \\  -\kappa  e^{i\nu}  e^{-2\pi p}  e^{2 i n \theta(\lambda ; \chi, \tau)} &  1 \ebm, & \lambda \in \Gamma_\text{down}^\text{in}\,,\vspace{.025in}\\
\bbm 1 & \kappa e^{-i \nu}  e^{-2 i n \theta(\lambda ; \chi, \tau)} \\ 0 &  1 \ebm, & \lambda \in \Gamma_\text{down}^\text{out}\,,\\
 e^{2\pi p \sigma_3}, & \lambda \in I.
\end{cases}
\label{eq:R-jump}
\end{equation}
By Lemma \ref{algebraic-lemma}, all of the jump matrices except for the 
diagonal jump matrix $ e^{2\pi p \sigma_3}$ supported on $I$ decay 
exponentially fast to the identity matrix as $n\to\infty$ away from the 
critical points $\lambda^{(1)}$ and $\lambda^{(2)}$.  The asymptotic analysis 
now closely follows \cite[\S 4.1]{BilmanLM:2018}.

\subsubsection*{Parametrix Construction} We eliminate the constant jump 
condition on $I$ and deal with the non-uniform decay near the points 
$\lambda^{(1)}$ and $\lambda^{(2)}$ with the aid of a 
\emph{global parametrix} $\mathbf{T}^{[n]}(\lambda)$. First, define an 
\emph{outer parametrix} by
\begin{equation}
\mathbf{T}^{(\infty)}(\lambda;\chi,\tau):= \left(\frac{\lambda-\lambda^{(1)}(\chi,\tau)}{\lambda-\lambda^{(2)}(\chi,\tau)}\right)^{i p\sigma_3},
\label{eq:T-out}
\end{equation}
where the powers $\pm i p$ are taken as the principal branch so that the 
locus where $(\lambda-\lambda^{(1)})(\lambda-\lambda^{(2)})^{-1}$ is negative 
coincides with the interval $I$. It is clear that 
$\mathbf{T}^{(\infty)}(\lambda;\chi,\tau)=\mathbb{I}+\mathcal{O}(\lambda^{-1})$ 
as $\lambda\to \infty$ and it can be easily verified that 
$\mathbf{T}^{(\infty)}(\lambda;\chi,\tau)$ is analytic for $\lambda$ in 
$\mathbb{C}\setminus I$, satisfying the jump condition
\begin{equation}
\mathbf{T}^{(\infty)}_+(\lambda;\chi,\tau)=\mathbf{T}^{(\infty)}_-(\lambda;\chi,\tau)  e^{2\pi p \sigma_3},\quad \lambda\in I.
\end{equation}

We now move onto constructing \emph{inner parametrices} that will satisfy the 
jump conditions exactly in small, $n$-independent disks $\mathbb{D}^{(1)}$ 
and $\mathbb{D}^{(2)}$ centered at $\lambda^{(1)}$ and $\lambda^{(2)}$, 
respectively.  Before proceeding, we note that 
\eq
\label{thetaprime-conds}
\theta''(\lambda^{(1)};\chi,\tau)<0 \quad \text{and} \quad \theta''(\lambda^{(2)};\chi,\tau)>0
\endeq
for $(\chi,\tau)$ in the algebraic-decay region.  To see this, recall from 
\S\ref{subsec-regions} that the interval $0<\chi<\frac{2}{\beta}$ with 
$\tau=0$ is always contained in the algebraic-decay region.  Direct 
calculation shows that 
\eq
\theta'(\lambda;\chi,0) = \frac{\chi(\lambda-\alpha)^2+\beta^2\chi-2\beta}{(\lambda-\alpha)^2+\beta^2}, \quad \theta''(\lambda;\chi,0) = \frac{4\beta(\lambda-\alpha)}{(\alpha^2+\beta^2-2\alpha\lambda+\lambda^2)^2}
\endeq
(recall $\xi=\alpha+i\beta$).  From the first equation it is immediate that 
$\lambda^{(1)}<0<\lambda^{(2)}$ for $\tau=0$ since $0<\chi<\frac{2}{\beta}$.  
Then the second equation shows that $\theta''(\lambda)<0$ whenever 
$\lambda<\alpha$ (and so, in particular, $\theta''(\lambda^{(1)})<0$) and 
that $\theta''(\lambda)>0$ whenever $\lambda>\alpha$ (and so, in particular, 
$\theta''(\lambda^{(2)})>0$).  Now $\theta(\lambda;\chi,\tau)$ is continuous 
for real $\lambda$, $\chi$, and $\tau$ (with the exception of an additive jump 
of $2\pi i$ across the logarithmic branch cut), and thus the only way the 
concavity at the critical points can change is if two critical points 
coincide.  However, this condition is exactly the boundary of the 
algebraic-decay region, and thus \eqref{thetaprime-conds} holds true 
everywhere in the algebraic-decay region.  

Now, recalling that $\theta'(\lambda^{(1)};\chi,\tau)=0$ and 
$\theta'(\lambda^{(2)};\chi,\tau)=0$, we define the conformal mappings 
$f_1(\lambda;\chi,\tau)$ and $f_2(\lambda;\chi,\tau)$ locally near 
$\lambda=\lambda^{(1)}$ and $\lambda=\lambda^{(2)}$, respectively, by
\begin{equation}
f_1(\lambda;\chi,\tau)^2 := 2(\theta(\lambda^{(1)};\chi,\tau)-\theta(\lambda;\chi,\tau)) \quad \text{and}\quad f_2(\lambda;\chi,\tau)^2 := 2(\theta(\lambda;\chi,\tau)-\theta(\lambda^{(2)};\chi,\tau)),
\label{eq:f-1-2}
\end{equation}
where we choose the solutions satisfying $f_1'(\lambda^{(1)};\chi,\tau)<0$ 
and $f_2'(\lambda^{(2)};\chi,\tau)>0$. Now introducing the rescaled conformal 
coordinates 
\eq
\zeta_1 := n^{1/2} f_1(\lambda;\chi,\tau), \quad \zeta_2 := n^{1/2} f_2(\lambda;\chi,\tau)
\endeq
and taking the rotation by $\pi$ performed by $f_1$ into account, the jump 
conditions satisfied by
\begin{equation}
\mathbf{U}^{(1)}(\lambda;\chi,\tau):= \mathbf{R}^{[n]}(\lambda;\chi,\tau) e^{-i n \theta(\lambda^{(1)};\chi,\tau)\sigma_3} e^{-i\nu\sigma_3/2}\bbm 0 & 1 \\ -1 & 0 \ebm, \quad \lambda\in\mathbb{D}^{(1)}
\label{eq:R-local-1}
\end{equation}
and by
\begin{equation}
\mathbf{U}^{(2)}(\lambda;\chi,\tau):= \mathbf{R}^{[n]}(\lambda;\chi,\tau) e^{-i n \theta(\lambda^{(2)};\chi,\tau)\sigma_3} e^{-i\nu\sigma_3/2}, \quad \lambda\in\mathbb{D}^{(2)}
\label{eq:R-local-2}
\end{equation}
have the same form when expressed in terms of the respective conformal 
coordinates $\zeta = \zeta_1$ and $\zeta=\zeta_2$ and when the jump contours 
are locally taken to be the rays $\arg(\zeta)=\pm \pi/4$, 
$\arg(\zeta)=\pm 3\pi/4$, and $\arg(-\zeta)=0$. Moreover, the resulting jump 
conditions coincide precisely with those in Riemann-Hilbert Problem A.1 
for a parabolic cylinder parametrix in \cite[Appendix A]{Miller:2018}. See 
Figure 9 in \cite{Miller:2018} for the relevant jump contours and matrices. 
Note that the condition $\kappa^2 =  e^{2\pi p} -1$ for consistency of jump 
conditions at $\zeta=0$ holds. We now let $\mathbf{U}(\zeta)$ denote the 
unique solution of the Riemann-Hilbert Problem A.1 in 
\cite[Appendix A]{Miller:2018}.  Here \(\mathbf{U}(\zeta)\) is analytic for 
\(\zeta\) in the five sectors 
$|\arg(\zeta)|<\frac{1}{4}\pi$, $\frac{1}{4}\pi<\arg(\zeta)<\frac{3}{4}\pi$,
$-\frac{3}{4} \pi<\arg (\zeta)<-\frac{1}{4} \pi$, 
$\frac{3}{4} \pi<\arg (\zeta)<\pi$, and $-\pi<\arg(\zeta)<-\frac{3}{4}\pi$.  
It takes continuous boundary values on the excluded rays and at the origin 
from each sector.  Furthermore, 
\(\mathbf{U}(\zeta) \zeta^{i p \sigma_{3}} = \mathbb{I}+\mathcal{O}(\zeta^{-1})\) as \(\zeta \rightarrow \infty\) uniformly in all directions and 
from each sector.  We also have that \(\mathbf{U}(\zeta) \zeta^{i p \sigma_{3}} \) has a complete asymptotic series expansion in descending integer powers of $\zeta$ as $\zeta\to\infty$, with all coefficients being independent of the sector in which $\zeta\to\infty$ \cite[Appendix A.1]{Miller:2018}. In more detail, as given in (A.9) in \cite{Miller:2018}, we have
\begin{equation}
\mathbf{U}(\zeta)\zeta^{i p \sigma_3} = \mathbb{I } + \frac{1}{2 i \zeta}\begin{bmatrix} 0 & r(p, \kappa) \\ -q(p, \kappa) & 0 \end{bmatrix} + \begin{bmatrix}\mathcal{O}(\zeta^{-2}) & \mathcal{O}(\zeta^{-3}) \\ \mathcal{O}(\zeta^{-3}) & \mathcal{O}(\zeta^{-2}) \end{bmatrix}, \quad \zeta\to \infty,
\label{eq:PC-normalization}
\end{equation}
where
\begin{equation}
r(p,\kappa):=2  e^{i \pi/4}\sqrt{\pi}  \frac{ e^{\pi p / 2}  e^{i p \ln (2)}}{\kappa \Gamma(i p)},\quad q(p,\kappa):= -\frac{2 p}{r(p,\kappa)}.
\label{eq:r-q}
\end{equation}

We introduce the inner parametrices $\mathbf{T}^{(1)}(\lambda)$ and 
$\mathbf{T}^{(2)}(\lambda)$ by
\begin{equation}
\mathbf{T}^{(1)}(\lambda;\chi,\tau):=\mathbf{Y}^{(1)}(\lambda;\chi,\tau)\mathbf{U}(n^{1/2}f_1(\lambda;\chi,\tau))\bbm 0 & -1 \\ 1 & 0 \ebm e^{i\nu\sigma_3/2} e^{i n \theta(\lambda^{(1)};\chi,\tau)\sigma_3},\quad \lambda\in \mathbb{D}^{(1)}
\label{eq:T-1}
\end{equation}
and
\begin{equation}
\mathbf{T}^{(2)}(\lambda;\chi,\tau):=\mathbf{Y}^{(2)}(\lambda;\chi,\tau)\mathbf{U}(n^{1/2}f_2(\lambda;\chi,\tau)) e^{i\nu\sigma_3/2} e^{i n \theta(\lambda^{(2)};\chi,\tau)\sigma_3},\quad \lambda\in \mathbb{D}^{(2)},
\label{eq:T-2}
\end{equation}
where the holomorphic prefactor matrices $\mathbf{Y}^{(1)}(\lambda)$ and 
$\mathbf{Y}^{(2)}(\lambda)$ will now be chosen to match well with the 
outer parametrix $\mathbf{T}^{(\infty)}$ on the disk boundaries 
$\partial \mathbb{D}^{(j)}$, $j=1,2$.  Define 
\eq
\begin{split}
\mathbf{H}^{(1)}(\lambda;\chi,\tau) & := (\lambda^{(2)} - \lambda)^{-i p \sigma_3} \left( \frac{\lambda^{(1)}-\lambda}{f_1(\lambda;\chi,\tau)}\right)^{i p \sigma_3} \bbm 0 & 1 \\ -1 & 0 \ebm, \quad \lambda\in\mathbb{D}^{(1)}, \\
\mathbf{H}^{(2)}(\lambda;\chi,\tau) & := (\lambda - \lambda^{(1)})^{i p \sigma_3} \left( \frac{f_2(\lambda;\chi,\tau)}{\lambda-\lambda^{(2)}}\right)^{i p \sigma_3}, \quad \lambda\in\mathbb{D}^{(2)}.
\end{split}
\endeq
Here all the power functions are taken as the principal branch, and hence 
$\mathbf{H}^{(1)}(\lambda)$ and $\mathbf{H}^{(2)}(\lambda)$ are holomorphic 
as matrix-valued functions of $\lambda$ in their domain of definition.  
Recalling the transformations \eqref{eq:R-local-1} and \eqref{eq:R-local-2}, 
note that the outer parametrix $\mathbf{T}^{(\infty)}(\lambda)$ can be 
expressed locally as 
\eq
\mathbf{T}^{(\infty)}(\lambda) e^{-i n \theta(\lambda^{(1)})\sigma_3} e^{-i\nu\sigma_3/2}\bbm 0 & 1 \\ -1 & 0 \ebm = n^{-i p \sigma_3 / 2} e^{-i \nu \sigma_3 / 2}  e^{-i n \theta(\lambda^{(1)})\sigma_3}\mathbf{H}^{(1)}(\lambda)  \zeta_{1}^{-i p \sigma_3}, \quad \lambda\in\mathbb{D}^{(1)}
\label{eq:T-out-local-1}
\endeq
and
\eq
\mathbf{T}^{(\infty)}(\lambda) e^{-i n \theta(\lambda^{(2)})\sigma_3} e^{-i\nu\sigma_3/2} = n^{i p \sigma_3 / 2} e^{-i \nu \sigma_3 / 2}  e^{-i n \theta(\lambda^{(2)})\sigma_3}\mathbf{H}^{(2)}(\lambda)  \zeta_{2}^{-i p \sigma_3}, \quad \lambda\in\mathbb{D}^{(2)}.
\label{eq:T-out-local-2}
\endeq
In light of these formul\ae{}, we choose
\begin{equation}
\mathbf{Y}^{(1)}(\lambda)=\mathbf{Y}^{(1)}(\lambda;\chi,\tau,n):=n^{-i p \sigma_3 / 2} e^{-i \nu \sigma_3 / 2}  e^{-i n \theta(\lambda^{(1)};\chi,\tau)\sigma_3}\mathbf{H}^{(1)}(\lambda;\chi,\tau) 
\end{equation}
and
\begin{equation}
\mathbf{Y}^{(2)}(\lambda)=\mathbf{Y}^{(2)}(\lambda;\chi,\tau,n):=n^{i p \sigma_3 / 2} e^{-i \nu \sigma_3 / 2}  e^{-i n \theta(\lambda^{(2)};\chi,\tau)\sigma_3}\mathbf{H}^{(2)}(\lambda;\chi,\tau),
\end{equation}
noting that both of these matrix-valued functions remain bounded as 
$n\to\infty$ and $\mathbf{Y}^{(j)}(\lambda;\chi,\tau)$ is a holomorphic 
function for $\lambda\in\mathbb{D}^{(j)}$, $j=1,2$. Then from 
\eqref{eq:T-1} and \eqref{eq:T-out-local-1} it follows that
\eq
\begin{split}
\mathbf{T}^{(1)}&(\lambda)\mathbf{T}^{(\infty)}(\lambda)^{-1} \\ 
  & = n^{-i p \sigma_3 / 2} e^{-i \nu \sigma_3 / 2}  e^{-i n \theta(\lambda^{(1)})\sigma_3}  \mathbf{H}^{(1)}(\lambda)  \mathbf{U}(\zeta_1) \zeta_1^{i p \sigma_3}\mathbf{H}^{(1)}(\lambda)^{-1}e^{i n \theta(\lambda^{(1)})\sigma_3} e^{i \nu \sigma_3 / 2} n^{i p \sigma_3 / 2}
\label{eq:W-jump-D1}
\end{split}
\endeq
for $\lambda\in\partial \mathbb{D}^{(1)}$, 
and from \eqref{eq:T-2} and \eqref{eq:T-out-local-2} it follows that
\eq
\begin{split}
\mathbf{T}^{(2)}&(\lambda)\mathbf{T}^{(\infty)}(\lambda)^{-1} \\ 
 & = n^{i p \sigma_3 / 2} e^{-i \nu \sigma_3 / 2}  e^{-i n \theta(\lambda^{(2)})\sigma_3}  \mathbf{H}^{(2)}(\lambda)  \mathbf{U}(\zeta_2)\zeta_2^{i p \sigma_3}\mathbf{H}^{(2)}(\lambda)^{-1}e^{i n \theta(\lambda^{(2)})\sigma_3} e^{i \nu \sigma_3 / 2} n^{-i p \sigma_3 / 2}
\label{eq:W-jump-D2}
\end{split}
\endeq
for $\lambda\in\partial \mathbb{D}^{(2)}$.

Finally, we define the \emph{global parametrix} 
$\mathbf{T}^{[n]}(\lambda;\chi,\tau)$ by
\begin{equation}
\mathbf{T}^{[n]}(\lambda;\chi,\tau):= \begin{cases}
\mathbf{T}^{(1)}(\lambda;\chi,\tau),& \lambda\in \mathbb{D}^{(1)},\\
\mathbf{T}^{(2)}(\lambda;\chi,\tau),& \lambda\in \mathbb{D}^{(2)},\\
\mathbf{T}^{(\infty)}(\lambda;\chi,\tau), & \text{otherwise}.
\end{cases}
\end{equation}
Note that $\mathbf{T}^{[n]}(\lambda;\chi,\tau)$ is a sectionally analytic 
function of $\lambda$, the determinant of 
$\mathbf{T}^{[n]}(\lambda;\chi,\tau))$ is identically 1, and 
$\mathbf{T}^{[n]}(\lambda;\chi,\tau)=\mathbb{I}+\mathcal{O}(\lambda^{-1})$ 
as $\lambda\to\infty$.

\subsubsection*{Error Analysis and Asymptotics}
We proceed by quantifying the error made in approximating 
$\mathbf{R}^{[n]}(\lambda;\chi,\tau)$ by the global parametrix 
$\mathbf{T}^{[n]}(\lambda;\chi,\tau)$.  Consider the ratio
\begin{equation}
\mathbf{W}^{[n]}(\lambda;\chi,\tau) := \mathbf{R}^{[n]}(\lambda;\chi,\tau)\mathbf{T}^{[n]}(\lambda;\chi,\tau)^{-1}.
\label{eq:W-def}
\end{equation}
Now $\mathbf{W}^{[n]}$ extends as a sectionally analytic function of $\lambda$ to $\mathbb{C}\setminus (\partial \mathbb{D}^{(1)} \cup \partial \mathbb{D}^{(2)} \cup \Gamma_\mathbf{W})$, where
\begin{equation}
 \Gamma_\mathbf{W}:= \Gamma\setminus \left(\overline{\mathbb{D}^{(1)}} \cup \overline{\mathbb{D}^{(2)}} \right)=(\Gamma_\text{up}^\text{in} \cup \Gamma_\text{up}^\text{out}\cup \Gamma_\text{down}^\text{in}\cup \Gamma_\text{down}^\text{out})  \setminus\left(\overline{\mathbb{D}^{(1)}} \cup \overline{\mathbb{D}^{(2)}} \right)
\end{equation}
denotes the portion of $\Gamma$ across which $\mathbf{W}^{[n]}$ has a jump 
discontinuity.  Take $\partial \mathbb{D}^{(1)} $ and 
$\partial \mathbb{D}^{(2)} $ to have clockwise orientations. Thus, 
$\mathbf{W}^{[n]}$ satisfies a jump condition of the form
\begin{equation}
\mathbf{W}_{+}^{[n]}(\lambda;\chi,\tau)=\mathbf{W}_{-}^{[n]}(\lambda;\chi,\tau)\mathbf{V}_\mathbf{W}^{[n]}(\lambda;\chi,\tau),\quad \lambda\in \partial \mathbb{D}^{(1)} \cup \partial \mathbb{D}^{(2)} \cup \Gamma_\mathbf{W}.
\label{eq:W-jump}
\end{equation}
Since $\mathbf{T}^{(\infty)}(\lambda)$ defined in \eqref{eq:T-out} is 
analytic across any arc of $\Gamma_\mathbf{W}$, we have
\begin{equation}
\begin{aligned}
\mathbf{V}_\mathbf{W}^{[n]}(\lambda;\chi,\tau)& = \mathbf{W}_{-}(\lambda;\chi,\tau)^{-1} \mathbf{W}_{+}(\lambda;\chi,\tau)\\
&=\mathbf{T}^{(\infty)}(\lambda;\chi,\tau)\mathbf{R}^{[n]}_{-}(\lambda;\chi,\tau)^{-1}\mathbf{R}^{[n]}_{+}(\lambda;\chi,\tau)\mathbf{T}^{(\infty)}(\lambda;\chi,\tau)^{-1},\quad \lambda\in\Gamma_\mathbf{W},
\end{aligned}
\label{eq:W-jump-arcs}
\end{equation}
where the product 
$\mathbf{R}^{[n]}_{-}(\lambda;\chi,\tau)^{-1}\mathbf{R}^{[n]}_{+}(\lambda;\chi,\tau)$ 
coincides with $\mathbf{V}_\mathbf{R}^{[n]}(\lambda;\chi,\tau)$ given in 
\eqref{eq:R-jump}. Since the exponential factors 
$ e^{\pm 2i n \theta(\lambda;\chi,\tau)}$ in \eqref{eq:R-jump} are restricted 
to the exterior of the disks $\mathbb{D}^{(1)}$ and $\mathbb{D}^{(2)}$ in 
\eqref{eq:W-jump-arcs}, and $\mathbf{T}^{(\infty)}(\lambda;\chi,\tau)$ is 
independent of $n$, there exists a constant $d\equiv d(\chi,\tau)>0$ such that
\begin{equation}
\sup_{\lambda\in\Gamma_\mathbf{W}} \| \mathbf{V}_\mathbf{W}^{[n]}(\lambda;\chi,\tau) - \mathbb{I} \| = \mathcal{O}( e^{-n d(\chi,\tau)}), \quad n\to \infty,
\label{eq:W-jump-arcs-estimate}
\end{equation}
where $\| \cdot \|$ denotes the matrix norm induced from an arbitrary vector 
norm on $\mathbb{C}^2$. On the remaining jump contours 
$\partial \mathbb{D}^{(1)}\cup \partial \mathbb{D}^{(2)}$ for 
$\mathbf{W}^{[n]}(\lambda)$ (see \eqref{eq:W-jump}), we have
\begin{equation}
\mathbf{V}^{[n]}_{\mathbf{W}}(\lambda;\chi,\tau) = \mathbf{T}^{(j)}(\lambda;\chi,\tau) \mathbf{T}^{(\infty)}(\lambda;\chi,\tau)^{-1},\quad \lambda\in \partial \mathbb{D}^{(j)},~j=1,2.
\label{eq:V-for-W}
\end{equation}
Now, observe that the factors conjugating 
$\mathbf{U}(\zeta_j) \zeta_j^{i p \sigma_3}$, $j=1,2$ in \eqref{eq:W-jump-D1} 
and \eqref{eq:W-jump-D2} all remain bounded as $n\to\infty$. Recalling that 
$\zeta_{j}$ is proportional to $n^{-1/2}$ for $z\in\mathbb{D}^{(j)}$, from 
\eqref{eq:PC-normalization} we obtain
\begin{equation}
\sup_{\lambda\in \partial \mathbb{D}^{(1)}\cup \partial \mathbb{D}^{(2)} } \| \mathbf{V}_\mathbf{W}^{[n]}(\lambda;\chi,\tau) - \mathbb{I} \| = \mathcal{O}(n^{-1/2}), \quad n\to \infty.
\label{eq:W-jump-circles-estimate}
\end{equation}
The jump condition \eqref{eq:W-jump} implies that 
\eq
\mathbf{W}^{[n]}_+(\lambda) - \mathbf{W}^{[n]}_-(\lambda) = \mathbf{W}^{[n]}_-(\lambda) (\mathbf{V}^{[n]}_\mathbf{W}(\lambda)-\mathbb{I}),
\endeq
and 
$\mathbf{W}^{[n]}(\lambda;\chi,\tau)=\mathbb{I}+\mathcal{O}(\lambda^{-1})$ 
as $\lambda\to\infty$ since both $\mathbf{R}^{[n]}(\lambda;\chi, \tau)$ and 
$\mathbf{T}^{[n]}(\lambda;\chi,\tau)^{-1}$ are normalized to the identity as 
$\lambda\to\infty$. Therefore, it follows from the Plemelj formula that
\eq
\begin{split}
\mathbf{W}^{[n]}(\lambda ; \chi, \tau) = \mathbb{I}+\frac{1}{2 \pi i}\int_{\partial \mathbb{D}^{(1)} \cup \partial \mathbb{D}^{(2)} \cup \Gamma_\mathbf{W}} \frac{\mathbf{W}^{[n]}_{-}(s; \chi, \tau) (\mathbf{V}^{[n]}_\mathbf{W}(s;\chi,\tau)-\mathbb{I})}{s-\lambda}\,d s,&\\
 \lambda\in \mathbb{C}\setminus\big( \partial \mathbb{D}^{(1)} \cup \partial \mathbb{D}^{(2)} \cup & \Gamma_\mathbf{W} \big).
\label{eq:W-integral-eq}
\end{split}
\endeq

Precisely as in \cite[\S 4.1]{BilmanLM:2018}, one can let $\lambda$ tend 
to a point on the contour 
$\partial \mathbb{D}^{(1)} \cup \partial \mathbb{D}^{(2)} \cup \Gamma_\mathbf{W}$ 
from the right side with respect to the orientation to obtain a closed 
integral equation for $\mathbf{W}_-(\lambda;\chi,\tau)$ defined on 
$\partial \mathbb{D}^{(1)} \cup \partial \mathbb{D}^{(2)} \cup \Gamma_\mathbf{W}$ 
away from the self-intersection points. The resulting integral equation is 
uniquely solvable by a Neumann series on 
$L^2(\partial \mathbb{D}^{(1)} \cup \partial \mathbb{D}^{(2)} \cup \Gamma_\mathbf{W})$ 
for sufficiently large $n$, and its solutions satisfy the estimate
\begin{equation}
\mathbf{W}^{[n]}_-(\lambda;\chi,\tau)-\mathbb{I} = \mathcal{O}(n^{-1/2}),\quad n \to \infty
\label{eq:W-L2-estimate}
\end{equation}
in the $L^2(\partial \mathbb{D}^{(1)} \cup \partial \mathbb{D}^{(2)} \cup \Gamma_\mathbf{W})$ sense. We refer the reader to \cite[\S 4.1]{BilmanLM:2018} for the details regarding this argument. From the integral equation \eqref{eq:W-integral-eq} we now extract the Laurent series expansion of $\mathbf{W}^{[n]}(\lambda;\chi,\tau)$ convergent for sufficiently large $\lambda$:
\begin{equation}
\mathbf{W}^{[n]} (\lambda;\chi,\tau)=\mathbb{I} - \frac{1}{2\pi i}\sum_{k=1}^\infty \lambda^{-k} \int_{ \partial \mathbb{D}^{(1)} \cup \partial \mathbb{D}^{(2)} \cup \Gamma_\mathbf{W} } \mathbf{W}^{[n]}_-(s;\chi,\tau)( \mathbf{V}^{[n]}_{\mathbf{W}} (s;\chi,\tau) - \mathbb{I})s^{k-1}\,d s,
\label{eq:W-Laurent}
\end{equation}
for  $|\lambda|>\sup\{|s| \colon s\in \partial \mathbb{D}^{(1)} \cup \partial \mathbb{D}^{(2)} \cup \Gamma_\mathbf{W} \} $.

On the other hand, $\mathbf{T}^{(\infty)}(\lambda;\chi,\tau)$ is a diagonal 
matrix tending to the identity as $\lambda\to\infty$.  From 
\eqref{eq:R-recovery} and \eqref{eq:W-def} it follows that
\begin{equation}
\psi^{[2n]}(n\chi,n\tau) = 2i \lim_{n\to\infty} \lambda [{\bf W}^{[n]}(\lambda;\chi,\tau)]_{12}.
\end{equation}
This, together with the Laurent series expansion \eqref{eq:W-Laurent}, yields 
the expression
\begin{equation}
\begin{aligned}
\psi^{[2n]}(n\chi,n\tau) = -\frac{1}{\pi}&\left( \int_{\partial \mathbb{D}^{(1)} \cup \partial \mathbb{D}^{(2)} \cup \Gamma_\mathbf{W} } [\mathbf{W}^{[n]}_-(s;\chi,\tau)]_{11}[\mathbf{V}^{[n]}_\mathbf{W}(s;\chi,\tau)]_{12}\,d s \right. \\ &\quad +\left.  \int_{\partial \mathbb{D}^{(1)} \cup \partial \mathbb{D}^{(2)} \cup \Gamma_\mathbf{W} } [\mathbf{W}^{[n]}_-(s;\chi,\tau)]_{12}([\mathbf{V}^{[n]}_\mathbf{W}(s;\chi,\tau)]_{22}-1)\,d s  \right).
\end{aligned}
\end{equation}
Now, because the domain of integration in the integrals above is a compact contour, the $L^{1}$-norm on $\partial \mathbb{D}^{(1)} \cup \partial \mathbb{D}^{(2)} \cup \Gamma_\mathbf{W}$ is subordinate to the $L^{2}$-norm.
Therefore, combining the $L^\infty$-type estimates \eqref{eq:W-jump-arcs-estimate} and \eqref{eq:W-jump-circles-estimate} with the $L^2$-type estimate \eqref{eq:W-L2-estimate}, we arrive at
\begin{equation}
\psi^{[2n]}(n\chi,n\tau) = -\frac{1}{\pi} \int_{\partial \mathbb{D}^{(1)} \cup \partial \mathbb{D}^{(2)} \cup \Gamma_\mathbf{W} }[\mathbf{V}^{[n]}_\mathbf{W}(s;\chi,\tau)]_{12}\,d s + \mathcal{O}(n^{-1}),\quad n\to\infty.
\end{equation}
Here the error term is uniform for $(\chi,\tau)$ chosen from any compacta 
inside the interior of the algebraic-decay region.  Moreover, the same 
formula holds with a different error term, of the same order, if we replace 
the integration contour 
$\partial \mathbb{D}^{(1)} \cup \partial \mathbb{D}^{(2)} \cup \Gamma_\mathbf{W}$ 
with $\partial \mathbb{D}^{(1)} \cup \partial \mathbb{D}^{(2)}$ due to the 
exponential decay in the estimate \eqref{eq:W-jump-arcs-estimate}:
\begin{equation}
\psi^{[2n]}(n\chi,n\tau) = -\frac{1}{\pi} \int_{\partial \mathbb{D}^{(1)} \cup \partial \mathbb{D}^{(2)}} [\mathbf{V}^{[n]}_\mathbf{W}(s;\chi,\tau)]_{12}\,d s + \mathcal{O}(n^{-1}),\quad n\to\infty.
\end{equation}
Using \eqref{eq:W-jump-D1} and \eqref{eq:W-jump-D2} together with the 
normalization \eqref{eq:PC-normalization} in \eqref{eq:V-for-W} lets us 
write, as $n\to\infty$, 
\begin{equation}
[\mathbf{V}_\mathbf{W}^{[n]}(\lambda)]_{12} = \frac{n^{-i p}  e^{-i\nu}  e^{-2i n \theta(\lambda^{(1)})} }{2i n^{1/2} f_1(\lambda)}
 q( [\mathbf{H}^{(1)}(\lambda)]_{12})^2+ \mathcal{O}(n^{-1}),\quad \lambda\in\partial \mathbb{D}^{(1)}
\label{eq:V-12-D1}
\end{equation}
and
\begin{equation}
[\mathbf{V}_\mathbf{W}^{[n]}(\lambda)]_{12} = \frac{n^{i p}  e^{-i\nu}  e^{-2i n \theta(\lambda^{(2)})} }{2i n^{1/2} f_2(\lambda)}
 r ([\mathbf{H}^{(2)}(\lambda)]_{11})^2 + \mathcal{O}(n^{-1}),\quad \lambda\in\partial \mathbb{D}^{(2)}\,,
\label{eq:eq:V-12-D2}
\end{equation}
where $r\equiv r(p,k)$ and $q\equiv q(p,k)$ are given in \eqref{eq:r-q}, and 
both of the error estimates are uniform on the relevant circles. As 
$f_{j}(\lambda)$ has a simple zero at $\lambda^{(j)}$, 
and the matrix elements of $\mathbf{H}^{(j)}(\lambda)$ are analytic 
in $\mathbb{D}^{(j)}$, $j=1,2$, the integrals of the explicit leading terms 
in \eqref{eq:W-jump-D1} and \eqref{eq:W-jump-D2} can be evaluated by a  
residue calculation at $\lambda=\lambda^{(1)}$ and at 
$\lambda=\lambda^{(2)}$, respectively. Doing so gives
\eq
\begin{split}
\psi^{[2n]}(n\chi,n\tau) 
  = \frac{ e^{-i \nu}}{n^{1/2}}&\left[
 \frac{n^{-i p}  e^{-2i n \theta(\lambda^{(1)};\chi,\tau)} }{f_1'(\lambda^{(1)};\chi,\tau)} q ([\mathbf{H}^{(1)}(\lambda^{(1)};\chi,\tau)]_{12})^2\right.\\
 & \left.\quad+
 \frac{n^{i p}  e^{-2i n \theta(\lambda^{(2)};\chi,\tau)} }{ f_2'(\lambda^{(2)};\chi,\tau)} r ([\mathbf{H}^{(2)}(\lambda^{(2)};\chi,\tau)]_{11})^2 \right] + \mathcal{O}(n^{-1})
\label{eq:psi-2n-formula-1},\quad n \to +\infty.
\end{split}
\endeq
To get a more explicit formula, note first that by the definitions \eqref{eq:f-1-2} we have
\begin{equation}
f'_1(\lambda^{(1)};\chi,\tau) = - \sqrt{- \theta''(\lambda^{(1)};\chi,\tau)} \quad\text{and}\quad f'_2(\lambda^{(2)};\chi,\tau) = \sqrt{ \theta''(\lambda^{(2)};\chi,\tau)}.
\end{equation}
Next, we calculate the terms {
$[\mathbf{H}^{(1)}(\lambda^{(1)})]_{12}$ and 
$[\mathbf{H}^{(2)}(\lambda^{(2)})]_{11}$} in 
\eqref{eq:psi-2n-formula-1} explicitly. 
Applying l'H\^opital's rule in the 
definitions \eqref{eq:T-out-local-1} and \eqref{eq:T-out-local-2} gives
\begin{equation}
\mathbf{H}^{(1)}(\lambda^{(1)})=(\lambda^{(2)}-\lambda^{(1)})^{-i p \sigma_{3}}\left(\frac{-1}{f_1^{\prime}(\lambda^{(1)})}\right)^{i p\sigma_{3}}\bbm 0 & 1 \\ -1 & 0 \ebm
\end{equation}
and
\begin{equation}
\mathbf{H}^{(2)}(\lambda^{(2)})=(\lambda^{(2)}-\lambda^{(1)})^{i p \sigma_{3}}\left(f_2^{\prime}(\lambda^{(2)})\right)^{i p\sigma_{3}}.
\end{equation}
Thus, we have obtained
\begin{equation}
\begin{aligned}
\frac{ q ([\mathbf{H}^{(1)}(\lambda^{(1)})]_{12})^2}{f_1'(\lambda^{(1)})} &= - (\lambda^{(2)}-\lambda^{(1)})^{-2i p}(-\theta''(\lambda^{(1)}))^{-i p}\frac{q}{\sqrt{-\theta''(\lambda^{(1)})}}\,,\\
\frac{r ([\mathbf{H}^{(2)}(\lambda^{(2)})]_{11})^2 }{f_2'(\lambda^{(2)})} &=(\lambda^{(2)} -\lambda^{(1)} )^{2i p} \theta''(\lambda^{(2)})^{i p} \frac{r}{\sqrt{\theta''(\lambda^{(2)})}}.
\end{aligned}
\end{equation}
Finally, since $p>0$ and $\kappa>0$, it can be deduced that 
$q(p,\kappa) = -r(p,\kappa)^*$ using the identity given in 
\cite[Equation (5.4.3)]{dlmf} for the modulus of the gamma function on the 
imaginary axis. With these at hand, one can check that 
$|r|=|r(p,\kappa)| = \sqrt{2 p}$, and consequently Equation  
\eqref{eq:psi-2n-formula-1} can be rewritten as Equation 
\eqref{psi2n-alg-result}.  This completes the proof of Theorem 
\ref{alg-decay-thm}.

{Since the completion of the first draft of this work, one of the authors and Miller showed \cite{BilmanM:2021} that Theorem~\ref{alg-decay-thm} holds for a more general, continuum family of solutions $\{q(x,t;\mathbf{G}, M)\}_{M>0}$ (in the notation of \cite{BilmanM:2021}) of the focusing NLS equation \eqref{nls}, which includes fundamental rogue wave solutions studied in \cite{BilmanLM:2018, BilmanM:2021} as well as a special case of multiple-pole solitons considered in this work with the choices
\begin{equation}
\mathbf{G}:=\mathcal{S}^{-1}=\frac{1}{\sqrt{2}}\begin{bmatrix} 1  &  1 \\ -1 & 1 \end{bmatrix}\quad\text{and}\quad
\mathbf{G}:=\mathcal{S}^{-1}=\frac{1}{\sqrt{2}}\begin{bmatrix} 1  &  -1 \\ 1 & 1 \end{bmatrix},
\end{equation}
which corresponds to setting $c_1=c_2=1$ and $c_1=-c_2=1$, respectively, along with $\xi=i$.
}

\section{The non-oscillatory region}
\label{sec-nonosc}

We now study the non-oscillatory region.  
In this region the leading-order solution  arises from a single band in the model Riemann-Hilbert problem.  
To see this 
it is necessary to introduce a so-called $g$-function, a standard technique 
in the asymptotic analysis of Riemann-Hilbert problems (see, for instance, 
\cite{DeiftVZ:1997,LyngM:2007}).  Define 
$g(\lambda;\chi,\tau)$ 
as the unique solution of the following Riemann-Hilbert problem. {
Recalling the definitions of the real numbers $\lambda^{(1)}<\lambda^{(2)}$ from Theorem~\ref{nonosc-thm},
we take the branch cut of the function
\begin{equation}
\lambda \mapsto \log\left( \frac{\lambda-\xi^*}{\lambda-\xi}\right)
\end{equation}
appearing in the phase $\varphi(\lambda;\chi,\tau)$ (cf. \eqref{phi-def}) to be a Schwarz-symmetric arc $\Sigma_\mathrm{c}$ which connects $\lambda=\xi$ and $\lambda=\xi^*$ while passing through the midpoint of $\lambda^{(1)}$ and $\lambda^{(2)}$, which will be derived in more detail later on.
}
\begin{rhp}[The $g$-function in the non-oscillatory region]
Fix a pole location $\xi\in\mathbb{C}^+$, a pair of nonzero complex numbers 
$(c_1,c_2)$, and a 
pair of real numbers $(\chi,\tau)$ in the non-oscillatory region.  Determine the unique contour $\Sigma(\chi,\tau)$ and the unique function 
$g(\lambda;\chi,\tau)$ satisfying the following conditions.
\begin{itemize}
\item[]\textbf{Analyticity:}  $g(\lambda)$ is analytic for 
$\lambda\in\mathbb{C}$ except on $\Sigma$, where it achieves continuous boundary 
values.  The contour $\Sigma$ is simple, bounded, and symmetric across the real 
axis.  
\item[]\textbf{Jump condition:}  The boundary values taken by $g(\lambda)$ are
related by the jump condition
\eq
\label{g-condition-1}
g_+(\lambda) + g_-(\lambda) - 2\varphi(\lambda) = { -2 i K}, \quad \lambda \in \Sigma,
\endeq
{where $K=K(\chi,\tau)$ is a real-valued constant to be determined.}
Furthermore, 
\eq
\Re(\varphi(\lambda)-g_+(\lambda)) =  \Re(\varphi(\lambda)-g_-(\lambda)) = 0, \quad \lambda\in\Sigma. 
\endeq
\item[]\textbf{Normalization:}  As $\lambda\to\infty$, $g(\lambda)$
satisfies the condition
\begin{equation}
\label{g-condition-3}
g(\lambda) = \mathcal{O}\left(\lambda^{-1}\right)
\end{equation}
with the limit being uniform with respect to direction.
\item[]\textbf{Symmetry:}  $g(\lambda)$ satisfies the symmetry condition
\eq
\label{g-condition-2}
g(\lambda) = -g(\lambda^*)^*.\\
\endeq
\end{itemize}
\label{rhp:g-nonosc}
\end{rhp}
We now solve Riemann-Hilbert Problem \ref{rhp:g-nonosc} by first solving for 
$g'(\lambda)$.  Note that the function $g'(\lambda)$ satisfies the jump condition 
\eq
\label{g'-condition-1}
g'_+(\lambda) + g'_-(\lambda) = 2i \chi+4i\lambda\tau+\frac{2}{\lambda-\xi^*}-\frac{2}{\lambda-\xi}, \quad \lambda \in \Sigma
\endeq
and the normalization 
\eq
\label{g'-condition-3}
g'(\lambda) = \mathcal{O}\left(\lambda^{-2}\right), \quad \lambda \to \infty.
\endeq
Momentarily suppose that the contour $\Sigma$ is known and has endpoints 
$a\equiv a(\chi,\tau)$ and $a^*\equiv a(\chi,\tau)^*$.  We orient $\Sigma$ from 
$a^*$ to $a$.  Define 
\eq
R(\lambda) := ((\lambda-a)(\lambda-a^{*}))^{1/2}
\endeq
chosen with branch cut $\Sigma$ and asymptotic behavior 
$R(\lambda)=\lambda+\mathcal{O}(1)$ as $\lambda \to \infty$.  
Then, by the Plemelj formula we have
\eq 
\label{g'function}
g'(\lambda)=\frac{R(\lambda)}{2\pi i}\int_{\Sigma}\frac{2i \chi+4is\tau+\frac{2}{s-\xi^*}-\frac{2}{s-\xi}}{R_{+}(s)(s-\lambda)}ds.
\endeq
These integrals can be calculated explicitly via residues by turning the path 
integral along $\Sigma$ into an integral along a large closed loop, yielding 
\eq \label{g'-function-new}
g'(\lambda)=\frac{R(\lambda)}{R(\xi^{*})(\xi^{*}-\lambda)}-\frac{R(\lambda)}{R(\xi)(\xi-\lambda)}-2i\tau R(\lambda)+i\chi+2i\tau\lambda+\frac{1}{\lambda-\xi^*}-\frac{1}{\lambda-\xi}.
\endeq
Imposing the normalization condition \eqref{g'-condition-3}, we require the terms 
propotional to $\lambda^0$ and $\lambda^{-1}$ in the large-$\lambda$ expansion of 
\eqref{g'-function-new} to be zero:
\eq 
\label{a-condition-1}
\mathcal{O}(1):\,\chi+\tau(a+a^{*})+\frac{i}{R(\xi^{*})}-\frac{i}{R(\xi)}=0,
\endeq
\eq 
\label{a-condition-2}
\mathcal{O}(\lambda^{-1}):\,\frac{\chi}{2}(a+a^{*})+\tau\left(\frac{3}{4}(a+a^*)^2-aa^*\right)+\frac{i\xi^{*}}{R(\xi^{*})}-\frac{i\xi}{R(\xi)}=0.
\endeq
Multiplying \eqref{a-condition-1} by $\xi^{*}$ and using it to eliminate 
$\frac{i\xi^{*}}{R(\xi^{*})}$ in \eqref{a-condition-2}, we have
\eq 
\label{a-condition-4}
\chi\left(\frac{S}{2}-\alpha+i\beta\right)+\tau\left(\frac{3}{4}S^2-P-(\alpha-i\beta)S\right) = \frac{-2\beta}{(P-(\alpha+i\beta)S+(\alpha+i\beta)^2)^{1/2}},
\endeq
where we have written $\xi=\alpha+i\beta$ and defined
\eq
S:=a+a^*, \quad P:=aa^*.
\endeq
Square both sides of equation \eqref{a-condition-4} and clear the denominator. 
Noting that the quantities $\chi$, $\tau$, $S$, $P$, $\alpha$, and $\beta$ are 
all real, we see that the imaginary part is zero if 
\eq
\label{P-from-S}
P = \frac{8(\alpha^2+\beta^2)\tau(S\tau+\chi)+(S-2\alpha)(3St+2\chi)^2}{4\tau(3S\tau+2\chi-2\alpha\tau)}.
\endeq
Plugging this value for $P$ into the real part gives a septic equation for $S$, 
which we do not record here.  This septic equation has three complex-conjugate 
pairs of roots and one real root, which is $S$.   We can then compute $P$ from 
\eqref{P-from-S}, and finally compute $a$ from the known values of $P$ and $S$.  
{Since $g'(\lambda)$ is integrable at $\lambda=\infty$}, the function $g(\lambda)$ is now defined by 
\eq
g(\lambda) := \int_\infty^\lambda g'(s)ds,
\endeq
where the path of integration does not pass through $\Sigma$. 
{Although this determines $g$ as the unique antiderivative that satisfies $g(\lambda)=\mathcal{O}(\lambda^{-1})$, it is more convenient to determine the value of the (integration) constant $K$ that appears in the jump condition \eqref{g-condition-1} by a different calculation. The very same $g$-function and its different variations recently played a central role in the asymptotic analysis of high-order rogue waves in a work \cite{BilmanM:2021} by one of the authors with Miller, and we will use the approach taken there.
Before doing this, we proceed with finalizing the choice of $\Sigma$.
}

From 
\eqref{g'function} we see that redefining $\Sigma$ changes the branch 
cut of $R(\lambda)$ but only changes $g'(\lambda)$ (and thus $g(\lambda)$) by 
an overall sign.  Therefore, the choice of $\Sigma$ does not change the contours on 
which $\Re(\varphi(\lambda)-g(\lambda))=0$.  We thus redefine $\Sigma$ to be 
the unique simple contour from $a^*$ to $a$ on which 
$\Re(\varphi(\lambda)-g(\lambda))=0$ and for which 
$\Re(\varphi(\lambda)-g(\lambda))$ is positive to either side in the upper 
half-plane and negative to both sides in the lower half-plane.  The following 
lemma shows that such a choice is possible and furthermore gives the 
necessary facts about $\varphi(\lambda)-g(\lambda)$ we will need to carry out 
the steepest-descent analysis.
\begin{figure}[t]
\begin{center}
\includegraphics[height=2in]{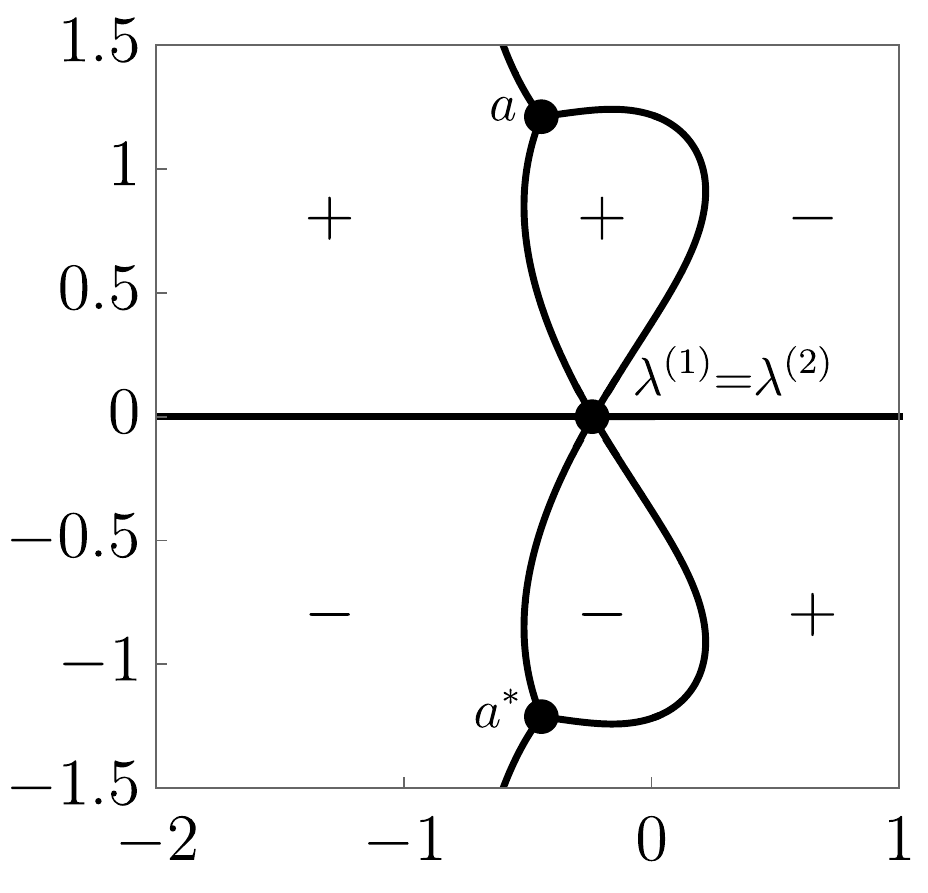} \\
\includegraphics[height=2in]{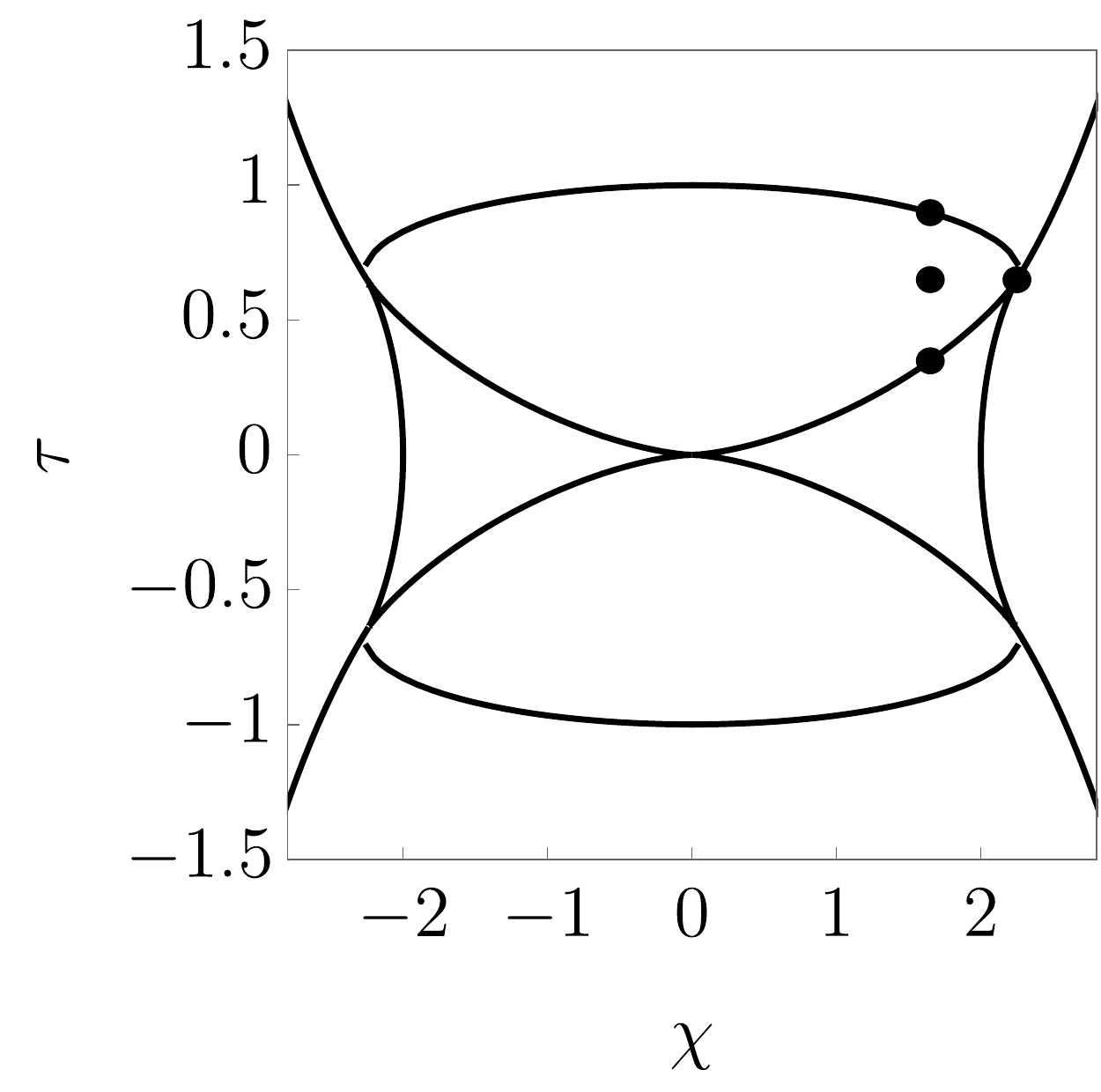} 
\includegraphics[height=2in]{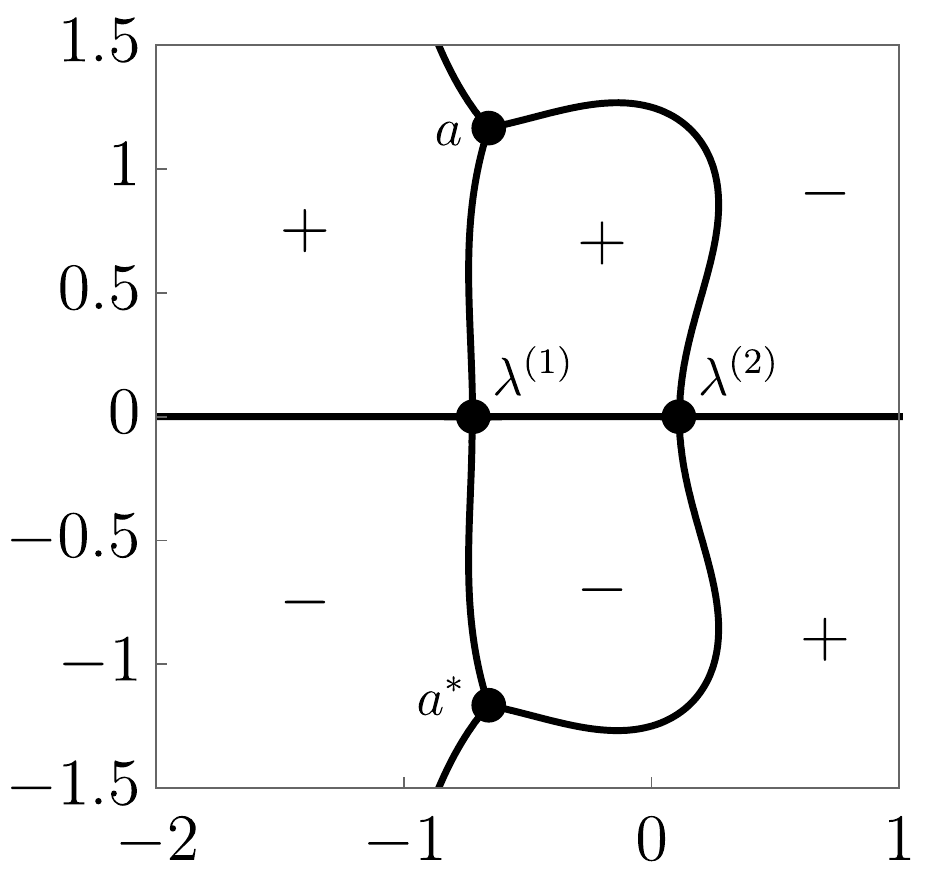} 
\includegraphics[height=2in]{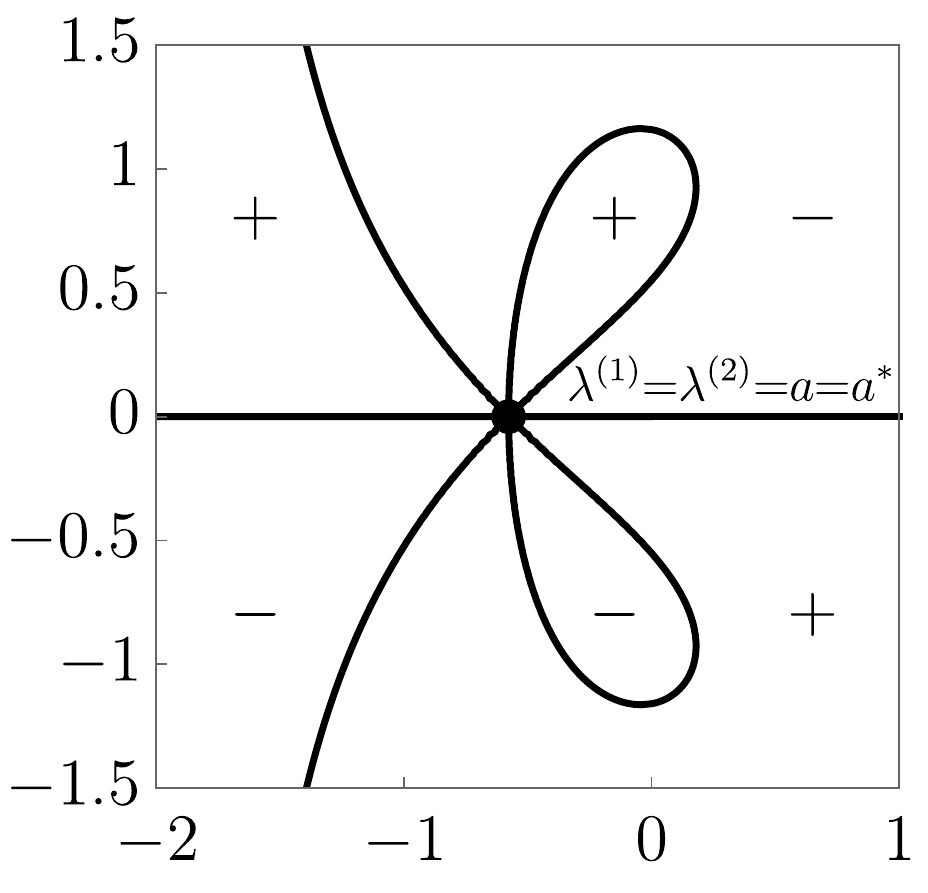} \\
\includegraphics[height=2in]{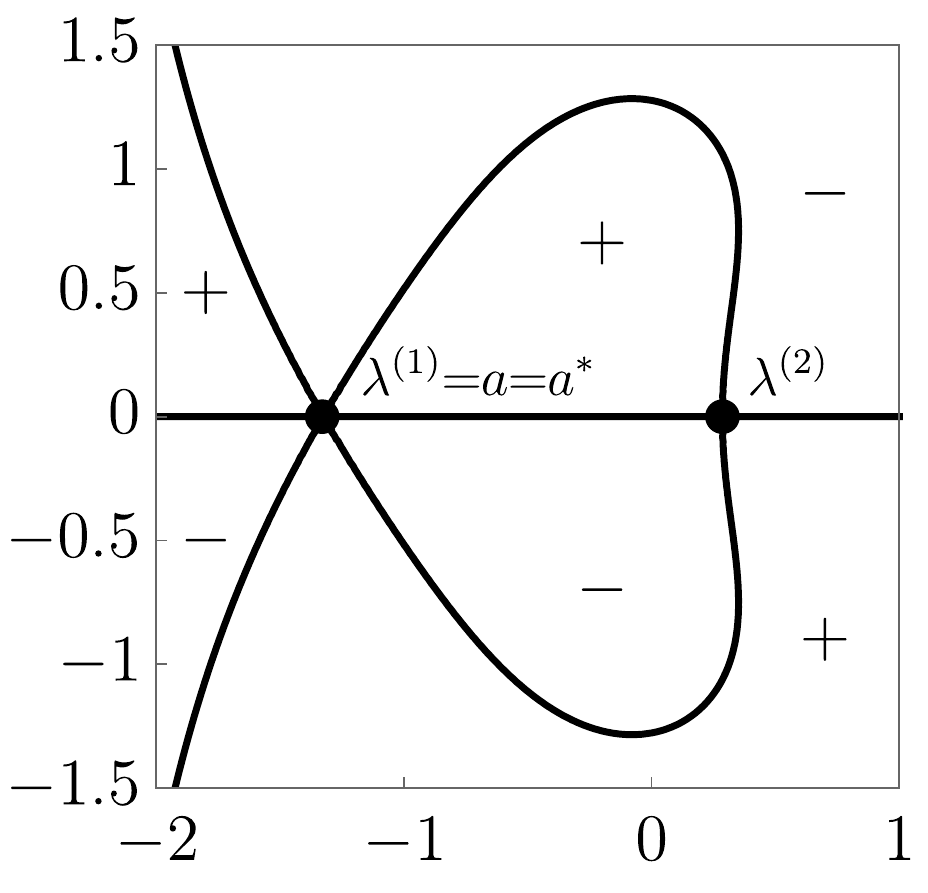}
\caption{Signature charts of 
$\Re(\varphi(\lambda;\chi,\tau)-g(\lambda;\chi,\tau))$ for $\xi=i$ in the 
non-oscillatory region, along with the critical points $\lambda^{(1)}$ and 
$\lambda^{(2)}$ and the band endpoints $a$ and $a^*$.  \emph{Top}: $\chi=1.65$, 
$\tau\approx 0.8983$.  \emph{Center left}:  Positions in the ($\chi$,$\tau$)-plane 
relative to the boundary curves.  \emph{Center}: $\chi=1.65$, $\tau=0.65$.  
\emph{Center right}: $\chi=\frac{9}{4}$, $\tau=\frac{3\sqrt{3}}{8}$.  
\emph{Bottom}:  $\chi=1.65$, $\tau\approx 0.3488$.}
\label{nonoscillatory-phase-plots}
\end{center}
\end{figure}
\begin{lemma}
\label{nonosc-lemma}
In the non-oscillatory region, there is a domain $D_{\rm up}$ in the upper half-plane 
with the following properties:
\begin{itemize}
\item $D_{\rm up}$ contains $\xi$, is bounded by curves along which 
$\Re(\varphi(\lambda)-g(\lambda))=0$, and abuts the real axis along a single 
interval denoted by $(\lambda^{(1)},\lambda^{(2)})$.  
\item $\Re(\varphi(\lambda)-g(\lambda))>0$ for all $\lambda\in D_{\rm up}$.
\item One arc of the boundary of $D_{\rm up}$ is the contour 
$\Sigma_{\rm up}:=\Sigma\cap\mathbb{C}^+$ from $\lambda^{(1)}$ to $a$, along which 
$\Re(\varphi(\lambda)-g(\lambda))>0$ for any $\lambda$ sufficiently close to 
either side of $\Sigma_{\rm up}$.  
\item The remaining boundary of $D_{\rm up}$ in the upper half-plane is a contour 
from $a$ to $\lambda^{(2)}$ (denoted $\Gamma_{\rm up}$) along which 
$\Re(\varphi(\lambda)-g(\lambda))<0$ for any $\lambda$ in the exterior of 
$\overline{D_{\rm up}}$ but sufficiently close to $D_{\rm up}$.  
\end{itemize}
The domain $D_{\rm down}$ in the lower half-plane, defined as the reflection through 
the real axis of $D_{\rm up}$, has the following properties:
\begin{itemize}
\item $D_{\rm down}$ contains $\xi^*$, is bounded by curves along which 
$\Re(\varphi(\lambda)-g(\lambda))=0$, and abuts the real axis along the same 
interval as $D_{\rm up}$.  
\item $\Re(\varphi(\lambda)-g(\lambda))<0$ for all $\lambda\in D_{\rm down}$.
\item One arc of the boundary of $D_{\rm down}$ is the contour  
$\Sigma_{\rm down}:=\Sigma\cap\mathbb{C}^-$ from $a^*$ to $\lambda^{(1)}$, along 
which $\Re(\varphi(\lambda)-g(\lambda))<0$ for any $\lambda$ sufficiently close to 
either side of $\Sigma_{\rm down}$.  
\item The remaining boundary of $D_{\rm down}$ in the lower half-plane is a contour 
from $\lambda^{(2)}$ to $a^*$ (denoted $\Gamma_{\rm down}$) along which 
$\Re(\varphi(\lambda)-g(\lambda))>0$ for any $\lambda$ in the exterior of 
$\overline{D_{\rm down}}$ but sufficiently close to $D_{\rm down}$.  
\end{itemize}
\end{lemma}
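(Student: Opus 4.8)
The plan is to work with the function $h(\lambda):=\varphi(\lambda;\chi,\tau)-g(\lambda;\chi,\tau)$, to deduce the local structure of the zero level set $\{\Re(h)=0\}$ near each distinguished point from the explicit data produced in the construction of $g$, and then to fix the global topology by continuity in $(\chi,\tau)$, propagating inward from the boundary arc $\mathcal{L}_\text{AN}^+$ where $g$ degenerates. First I would record the properties of $h$ that are immediate from Riemann--Hilbert Problem~\ref{rhp:g-nonosc} and from \eqref{g'-function-new}: $h$ is analytic off $\Sigma\cup\Sigma_\mathrm{c}$; across the logarithm cut $\Sigma_\mathrm{c}$ only $\varphi$ jumps, and by the purely imaginary amount $2\pi i$, so $\Re(h)$ is continuous there and is harmonic on $\mathbb{C}\setminus(\Sigma\cup\{\xi,\xi^*\})$; by the definition of $\Sigma$ as the zero-level arc of $\Re(\varphi-g)$ one has $\Re(h_+)=\Re(h_-)=0$ on $\Sigma$; the identities $\overline{\varphi(\lambda^*)}=-\varphi(\lambda)$ (valid once $\Sigma_\mathrm{c}$ is routed Schwarz-symmetrically) and \eqref{g-condition-2} give $\overline{h(\lambda^*)}=-h(\lambda)$, hence $\Re(h(\lambda^*))=-\Re(h(\lambda))$, which reduces every assertion to the upper half-plane and forces $D_{\rm down}$ to be the reflection of $D_{\rm up}$ through $\mathbb{R}$ with all signs reversed; and near $\xi$ one has $\Re(h(\lambda))=-\log|\lambda-\xi|+\mathcal{O}(1)\to+\infty$ (and $\to-\infty$ near $\xi^*$), so $\xi$ automatically lies in a region $\{\Re(h)>0\}$ bounded by a zero-level curve.

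Second, I would read off the local pictures of $\{\Re(h)=0\}$ from the explicit derivative $h'(\lambda)=\varphi'(\lambda)-g'(\lambda)=R(\lambda)\left(2i\tau+\frac{1}{R(\xi)(\xi-\lambda)}-\frac{1}{R(\xi^*)(\xi^*-\lambda)}\right)$. The condition fixing $a$ is that $g'=\varphi'-h'$ be $\mathcal{O}(\lambda^{-2})$ at infinity --- equivalently \eqref{a-condition-1}--\eqref{a-condition-2}, which lead to the septic for $S=a+a^*$ and to \eqref{P-from-S} --- and, as recorded in \S\ref{subsec-regions}, in the non-oscillatory region the only zeros of $h'$ are the simple real zeros $\lambda^{(1)}<\lambda^{(2)}$ and the order-$\frac{1}{2}$ branch points $a,a^*$. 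Thus $h(\lambda)-h(a)$ vanishes to order $\frac{3}{2}$ at $a$, which together with $\Re(h(a))=0$ determines the local structure of $\{\Re(h)=0\}$ near $a$: it contains the branch cut $\Sigma$ continuing toward $\lambda^{(1)}$, along which $\Re(h)>0$ to either side, and an arc $\Gamma_{\rm up}$ running to $\lambda^{(2)}$ with $\Re(h)<0$ on its exterior. Near the simple zeros $\lambda^{(1)},\lambda^{(2)}$ (where $\Re(h)=0$), the difference $h-h(\lambda^{(j)})$ vanishes to second order, and one checks from the sign of $\Re(h)$ just off the real axis over $(\lambda^{(1)},\lambda^{(2)})$ --- inherited from the corresponding statement for $\Re(\varphi)$ established in the algebraic-decay region (Lemma~\ref{algebraic-lemma}) --- that $(\lambda^{(1)},\lambda^{(2)})$ abuts $D_{\rm up}$ as claimed; near $\xi$ the level set consists of small Jordan curves surrounding $\xi$. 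Each of these local descriptions is consistent with the domains $D_{\rm up}$ and $D_{\rm down}$ in the lemma.

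Third, to pin down the global topology I would argue by continuity in $(\chi,\tau)$. The topology of $\Sigma\cup\{\Re(h)=0\}$, together with the sign of $\Re(h)$ in each complementary component, is locally constant as long as none of $\lambda^{(1)},\lambda^{(2)},a,a^*$ collide with one another or with $\xi,\xi^*$; by the definition of the non-oscillatory region in \S\ref{subsec-regions} this holds throughout, and the region is connected, so it suffices to identify the configuration in a one-sided neighborhood of $\mathcal{L}_\text{AN}^+$. There $a=a^*=\lambda^{(1)}=\lambda^{(0)}$, so $R(\lambda)=\lambda-\lambda^{(1)}$, and a short computation using $\varphi'(\lambda^{(1)})=0$ shows that $(\lambda-\lambda^{(1)})\left(2i\tau+\frac{1}{R(\xi)(\xi-\lambda)}-\frac{1}{R(\xi^*)(\xi^*-\lambda)}\right)=\varphi'(\lambda)$, i.e. $g'\equiv0$, so $g\equiv0$ by the normalization $g=\mathcal{O}(\lambda^{-1})$ and $K=0$ in \eqref{g-condition-1}. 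Hence on $\mathcal{L}_\text{AN}^+$ the signature chart of $\Re(h)$ is exactly that of $\Re(\varphi)$ in the algebraic-decay region with $\lambda^{(0)}=\lambda^{(1)}$, which is the domain $D_{\rm up}$ of Lemma~\ref{algebraic-lemma} with a corner at $\lambda^{(1)}$. As $(\chi,\tau)$ enters the non-oscillatory region this double point splits: the band $\Sigma_{\rm up}$ opens out of the corner while the arc of $\partial D_{\rm up}$ from $\lambda^{(1)}$ to $\lambda^{(2)}$ (now $\Gamma_{\rm up}$) and the interval $(\lambda^{(1)},\lambda^{(2)})$ persist, yielding exactly the asserted structure; the lower half-plane statements follow from $\Re(h(\lambda^*))=-\Re(h(\lambda))$.

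I expect the main obstacle to be this global step: ruling out spurious components of $\{\Re(h)=0\}$ and confirming that the level arc leaving $a$ genuinely reaches $\lambda^{(2)}$, so that $D_{\rm up}$ closes up as drawn, uniformly over compact subsets of the region. The continuity-and-degeneration argument reduces this to the single configuration on $\mathcal{L}_\text{AN}^+$, where it is supplied by Lemma~\ref{algebraic-lemma}; the numerically computed charts in Figure~\ref{nonoscillatory-phase-plots} corroborate the picture throughout the region. A secondary point, settled along the way, is the compatibility of the contour choices: that $\Sigma$ can be taken to be the zero-level arc of $\Re(h)$ running from $a^*$ through $\lambda^{(1)}$ to $a$ with the one-sided signs stated in the lemma (immediate from the local pictures at $a$, $a^*$, and $\lambda^{(1)}$), and that $\Sigma_\mathrm{c}$ may be routed so as not to interfere, which is automatic because $\Re(h)$ is continuous across it.
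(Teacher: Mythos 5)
Your proposal is correct and follows essentially the same route as the paper: compute $\varphi'(\lambda)-g'(\lambda)=R(\lambda)\bigl(2i\tau+\tfrac{1}{R(\xi)(\xi-\lambda)}-\tfrac{1}{R(\xi^*)(\xi^*-\lambda)}\bigr)$, identify the branch points $a,a^*$ and the two remaining zeros $\lambda^{(1)},\lambda^{(2)}$, and fix the reality of these zeros and the global topology of the signature chart by continuation in $(\chi,\tau)$ from the boundary $\mathcal{L}_\text{AN}$, where $g\equiv 0$ and the chart reduces to that of Lemma~\ref{algebraic-lemma}. You simply supply more of the local detail (harmonicity, Schwarz symmetry, the $3/2$-power behavior at $a$, and the explicit verification that $g'\equiv 0$ on $\mathcal{L}_\text{AN}^+$) than the paper's brief argument records.
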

\begin{proof}
From \eqref{phi-def} and \eqref{g'-function-new} we see that 
\eq
\varphi'(\lambda)-g'(\lambda) = R(\lambda)\left(2i\tau - \frac{1}{R(\xi^{*})(\xi^{*}-\lambda)} + \frac{1}{R(\xi)(\xi-\lambda)} \right).
\endeq
From here we see that $\phi'(\lambda)-g'(\lambda)$ has two square-root branch 
points at $a$ and $a^*$.  Setting the term in parentheses equal to zero and 
rewriting as a quadratic expression in $\lambda$, we see 
$\phi'(\lambda)-g'(\lambda)$ also has two other zeros that we label as 
$\lambda^{(1)}$ and $\lambda^{(2)}$.  The fact that $\lambda^{(1)}$ and 
$\lambda^{(2)}$ must be real, as well as the topological structure of the 
signature chart of $\Re(\varphi(\lambda)-g(\lambda))$, follows from analytic 
continuation from the boundary curve $\mathcal{L}_\text{AN}$ (at which 
$g(\lambda)\equiv 0$).  See Figure \ref{nonoscillatory-phase-plots}.  
\end{proof}

{We now revisit the jump condition \eqref{g-condition-1} and proceed with the determination of the constant $K$. Note that the endpoints $\lambda=a$ and $\lambda=a^*$ of $\Sigma$ have already been determined in the earlier construction.
Recall that $g(\lambda)$ is analytic for $\lambda\in \mathbb{C}\setminus \Sigma$ with $g(\lambda)=\mathcal{O}(\lambda^{-1})$ as $\lambda\to \infty$. The fact that $\xi$ is contained in the region $D_\mathrm{up}$ and that $\Sigma_\mathrm{up}$ is a subset of the boundary of $D_\mathrm{up}$ ensures that $\Sigma \cap \Sigma_{\mathrm{c}} = \varnothing$. Thus, we may proceed as in \cite{BilmanM:2021} and express $g(\lambda)$ in the form $g(\lambda)=R(\lambda) k(\lambda)$, where $k(\lambda)$ is necessarily analytic for $\lambda\in\mathbb{C}\setminus \Sigma$ with continuous boundary values except at the endpoints $\lambda=a,a^*$ where $g(\lambda)$ is required to be bounded. Then, requiring $k(\lambda)=\mathcal{O}(\lambda^{-2})$ as  $\lambda\to\infty$, \eqref{g-condition-1} implies that
\begin{equation}
k_+(\lambda) - k_-(\lambda) = \frac{2\varphi(\lambda) - 2i K}{R_+(\lambda)},\quad \lambda\in  \Sigma,
\end{equation}
hence the Plemelj formula gives
\begin{equation}
\label{k-fun-def}
k(\lambda) = \frac{1}{i \pi }\int_{\Sigma}  \frac{\varphi(s) - i K}{R_+(s)(s-\lambda)}\,ds.
\end{equation}
Enforcing the condition $k(\lambda)=\mathcal{O}(\lambda^{-2})$ as  $\lambda\to\infty$ in the representation \eqref{k-fun-def} results in the condition
\begin{equation}
\label{K-condition}
\int_{\Sigma}\frac{\varphi(\lambda) - i K }{R_+(\lambda)}\,d\lambda = 0.
\end{equation}
First, recall that $R(\lambda)=\lambda+\mathcal{O}(1)$ as $\lambda\to\infty$. Thus, for an arbitrary clockwise-oriented loop $C$ surrounding the branch cut $\Sigma$ of $R(\lambda)$ we can obtain by a residue calculation at $\lambda=\infty$:
\begin{equation}
\label{K-integral-1}
\int_{\Sigma}\frac{d\lambda}{R_+(\lambda)} = \frac{1}{2}\oint_C \frac{d\lambda}{R(\lambda)} = -i \pi.
\end{equation}
As the integral above is nonzero, the condition \eqref{K-condition} successfully determines the constant $K$. The remaining integral
\begin{equation}
\label{K-integral-2}
\int_{\Sigma} \frac{\varphi(\lambda) }{R_+(\lambda)}\,d\lambda  = 
\int_{\Sigma} \frac{i(\chi \lambda + \tau \lambda^2) }{R_+(\lambda)}\,d\lambda +
\int_{\Sigma} \frac{\log\left( \frac{\lambda-\xi^*}{\lambda- \xi}\right) }{R_+(\lambda)}\,d\lambda
\end{equation}
in \eqref{K-condition} can also be computed similarly. Using the expansion
\begin{equation}
R(\lambda)^{-1} = \lambda^{-1} + \frac{1}{2}(a+a^*) \lambda^{-2} + \frac{1}{4}\left( (a+a^*)^2 + \frac{1}{2}(a-a^*)^2\right) \lambda^{-3} + \mathcal{O}(\lambda^{-4}), \quad \lambda\to \infty,
\end{equation}
we find that
\begin{equation}
\label{K-integral-2-val-1}
\int_{\Sigma} \frac{i(\chi \lambda + \tau \lambda^2) }{R_+(\lambda)}\,d\lambda =
\pi \left[\frac{1}{2}\chi(a+a^*) + \frac{1}{4}\tau \left( (a+a^*)^2 + \frac{1}{2}(a-a^*)^2\right)  \right].
\end{equation}
Next, to evaluate the second integral on the right-hand side of \eqref{K-integral-2} we again let $C$ be a clockwise-oriented loop surrounding the branch cut $\Sigma$ of $R(\lambda)$ but excluding the branch cut $\Sigma_\mathrm{c}$ of the logarithm in the integrand. Then, since the integrand is integrable at $\lambda=\infty$, letting $C'$ be a counter-clockwise oriented contour that surrounds $\Sigma_\mathrm{c}$ but that excludes $\Sigma$ yields
\begin{equation}
\int_{\Sigma} \frac{\log\left( \frac{\lambda-\xi^*}{\lambda- \xi}\right) }{R_+(\lambda)}\,d\lambda =
\frac{1}{2} \oint_{C} \frac{\log\left( \frac{\lambda-\xi^*}{\lambda- \xi}\right) }{R(\lambda)}\,d\lambda =
\frac{1}{2} \oint_{C'} \frac{\log\left( \frac{\lambda-\xi^*}{\lambda- \xi}\right) }{R(\lambda)}\,d\lambda.
\end{equation}
Now, recalling that $R(\lambda)$ is analytic on $\Sigma_\mathrm{c}$, we may collapse the contour $C'$ to both sides of $\Sigma_\mathrm{c}$ and use the fact that the boundary values of the logarithm differ by $2\pi i$ on $\Sigma_\mathrm{c}$ to obtain
\begin{equation}
\label{K-integral-2-val-2}
\int_{\Sigma} \frac{\log\left( \frac{\lambda-\xi^*}{\lambda- \xi}\right) }{R_+(\lambda)}\,d\lambda =
i \pi \int_{\Sigma_{\mathrm{c}}} \frac{1}{R(\lambda)}\,d\lambda.
\end{equation}
Combining \eqref{K-integral-2-val-1} and \eqref{K-integral-2-val-2} gives
\begin{equation}
\int_{\Sigma} \frac{\varphi(\lambda) }{R_+(\lambda)}\,d\lambda  = 
\pi \left[\frac{1}{2}\chi(a+a^*) + \frac{1}{4}\tau \left( (a+a^*)^2 + \frac{1}{2}(a-a^*)^2\right)  \right] + 
i \pi \int_{\Sigma_{\mathrm{c}}} \frac{1}{R(\lambda)}\,d\lambda,
\end{equation}
which, together with \eqref{K-integral-2} results in
\begin{equation}
K(\chi,\tau) =  \left[\frac{1}{2}\chi(a+a^*) + \frac{1}{4}\tau \left( (a+a^*)^2 + \frac{1}{2}(a-a^*)^2\right)  \right] + 
i \int_{\Sigma_{\mathrm{c}}} \frac{1}{R(\lambda)}\,d\lambda,
\label{K-def}
\end{equation}
which is real-valued.
}

\begin{figure}[h]
\begin{center}
\includegraphics[height=2in]{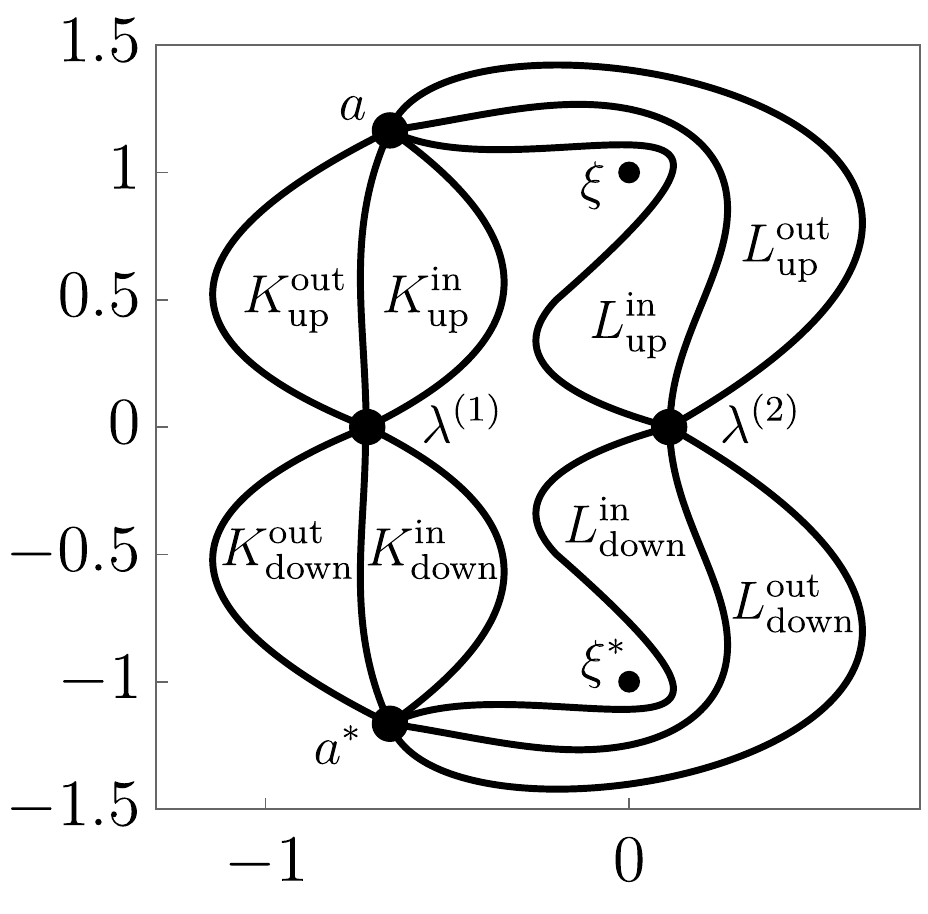} 
\includegraphics[height=2in]{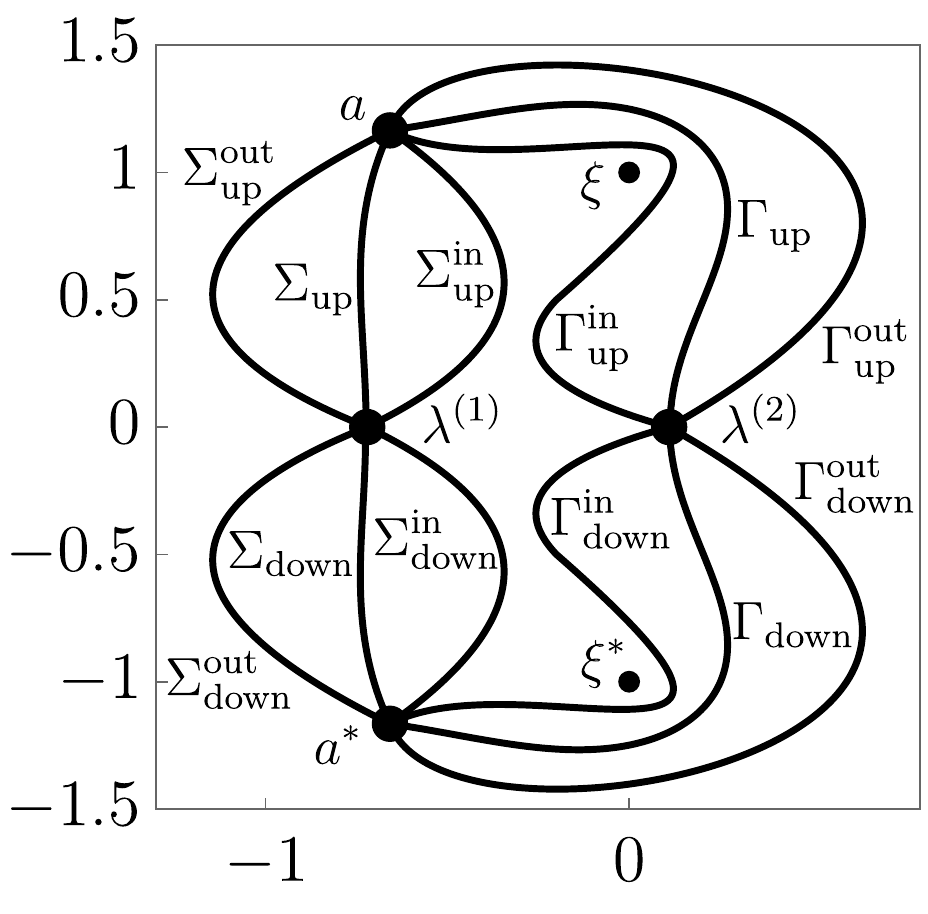} 
\caption{The domains (left) and contours (right) used in the definition of 
${\bf Q}^{[n]}(\lambda)$ in the non-oscillatory region.}
\label{nonosc-lenses}
\end{center}
\end{figure}

We are now ready to carry out the first Riemann-Hilbert transformation.  Let 
the domain $D$ be the union of $D_{\rm up}$, $D_{\rm down}$, and 
the interval $(\lambda^{(1)},\lambda^{(2)})$.  Note $D$ is bounded by 
$\Sigma_\text{up}\cup\Gamma_\text{up}\cup\Gamma_\text{down}\cup\Sigma_\text{down}$.
Recall the function ${\bf N}^{[n]}(\lambda)$ satisfying 
Riemann-Hilbert Problem \ref{rhp-N} and make the change of variables 
\eq
{\bf O}^{[n]}(\lambda;\chi,\tau):= \begin{cases} {\bf N}^{[n]}(\lambda;\chi,\tau){\bf V}_{\bf N}^{[n]}(\lambda;\chi,\tau), & \lambda\in D_0\cap D^\mathsf{c}, \\ {\bf N}^{[n]}(\lambda;\chi,\tau){\bf V}_{\bf N}^{[n]}(\lambda;\chi,\tau)^{-1}, & \lambda\in D_0^\mathsf{c}\cap D, \\ {\bf N}^{[n]}(\lambda;\chi,\tau), & \text{otherwise}.
\end{cases}
\endeq
Now ${\bf O}^{[n]}(\lambda)$ satisfies the same Riemann-Hilbert problem as 
${\bf N}^{[n]}(\lambda)$ with the jump contour $\partial D_0$ replaced by 
$\partial D$.  Next, we introduce the $g$-function via 
\eq \label{g-transform}
{\bf P}^{[n]}(\lambda;\chi,\tau):={\bf O}^{[n]}(\lambda;\chi,\tau)e^{-ng(\lambda;\chi,\tau)\sigma_3}.
\endeq
The jump condition for ${\bf P}^{[n]}(\lambda)$ is now 
\eq
\label{P-jumps-nonosc}
{\bf P}_{+}^{[n]}(\lambda) = {\bf P}_{-}^{[n]}(\lambda)e^{-n(\varphi(\lambda)-g_{-}(\lambda))\sigma_3}\mathcal{S}^{-1}e^{n(\varphi(\lambda)-g_{+}(\lambda))\sigma_3}, \quad \lambda\in\partial D.
\endeq
We define the following contours:
\begin{itemize}
\item $\Sigma_\text{up}^\text{out}$ runs from $\lambda^{(1)}$ to $a$ in 
the upper half-plane entirely in the region exterior to $D$ in which 
$\Re(\varphi(\lambda)-g(\lambda))>0$.
\item $\Sigma_\text{up}^\text{in}$ runs from $\lambda^{(1)}$ to $a$ 
entirely in $D_{\rm up}$ (so $\Re(\varphi(\lambda)-g(\lambda))>0$), and can be 
deformed to $\Sigma_\text{up}$ without passing through $\xi$.
\item $\Gamma_\text{up}^\text{out}$ runs from $a$ to $\lambda^{(2)}$ 
in the upper half-plane entirely in the region where 
$\Re(\varphi(\lambda)-g(\lambda))<0$.
\item $\Gamma_\text{up}^\text{in}$ runs from $a$ to $\lambda^{(2)}$ 
entirely in $D_{\rm up}$ (so $\Re(\varphi(\lambda)-g(\lambda))>0$), and can be 
deformed to $\Gamma_\text{up}$ without passing through $\xi$.
\item $\Sigma_\text{down}^\text{out}$ (oriented from $a^*$ to $\lambda^{(1)}$), 
$\Sigma_\text{down}^\text{in}$ (oriented from $a^*$ to $\lambda^{(1)}$), 
$\Gamma_\text{down}^\text{out}$ (oriented from $\lambda^{(2)}$ to $a^*$), 
and $\Gamma_\text{down}^\text{in}$ (oriented from $\lambda^{(2)}$ to $a^*$) 
are the reflections through the real axis of $\Sigma_\text{up}^\text{out}$, 
$\Sigma_\text{up}^\text{in}$, $\Gamma_\text{up}^\text{out}$, and 
$\Gamma_\text{up}^\text{in}$, respectively.
\end{itemize}
Define the following eight domains:
\begin{itemize}
\item $K_\text{up}^\text{out}$ (respectively, $K_\text{up}^\text{in}$) is the 
domain in the upper half-plane bounded by $\Sigma_\text{up}^\text{out}$ 
(respectively, $\Sigma_\text{up}^\text{in}$) and $\Sigma_\text{up}$.  
\item $L_\text{up}^\text{out}$ (respectively, $L_\text{up}^\text{in}$) is the 
domain in the upper half-plane bounded by $\Gamma_\text{up}^\text{out}$ 
(respectively, $\Gamma_\text{up}^\text{in}$) and $\Gamma_\text{up}$.  
\item $K_\text{down}^\text{out}$, $K_\text{down}^\text{in}$, 
$L_\text{down}^\text{out}$, and $L_\text{down}^\text{in}$ are the reflections 
through the real axis of $K_\text{up}^\text{out}$, $K_\text{up}^\text{in}$, 
$L_\text{up}^\text{out}$, and $L_\text{up}^\text{in}$, respectively.
\end{itemize}
See Figure \ref{nonosc-lenses}.
On $\Sigma$ we will use the following alternative factorizations of 
$\mathcal{S}^{-1}$:
\eq
\begin{split}
\label{Sinv-on-Sigma}
\mathcal{S}^{-1} & = \bbm 1 & -\frac{c_1^*}{c_2} \\ 0 & 1 \ebm \bbm 0 & \frac{|{\bf c}|}{c_2} \\ -\frac{c_2}{|{\bf c}|}  & 0 \ebm \bbm 1 & -\frac{c_1}{c_2} \\ 0 & 1 \ebm \quad\quad (\text{use for }\lambda\in\Sigma_\text{up}), \\
\mathcal{S}^{-1} & = \bbm 1 & 0 \\ \frac{c_1}{c_2^*} & 1 \ebm \bbm 0 & \frac{c_2^*}{|{\bf c}|} \\ -\frac{|{\bf c}|}{c_2^*} & 0\ebm \bbm 1 & 0 \\ \frac{c_1^*}{c_2^*} & 1 \ebm \quad\quad (\text{use for }\lambda\in\Sigma_\text{down}).
\end{split}
\endeq
We open lenses by defining 
\eq
\label{Q-def-nonosc}
{\bf Q}^{[n]}(\lambda;\chi,\tau) := \begin{cases}
{\bf P}^{[n]}(\lambda;\chi,\tau)\bbm 1 & -\frac{c_1^*}{c_2}e^{-2n(\varphi(\lambda;\chi,\tau)-g(\lambda;\chi,\tau))} \\ 0 & 1 \ebm, & \lambda\in K_\text{up}^\text{in}, \\
{\bf P}^{[n]}(\lambda;\chi,\tau)\bbm 1 & -\frac{c_1}{c_2}e^{-2n(\varphi(\lambda;\chi,\tau)-g(\lambda;\chi,\tau))} \\ 0 & 1 \ebm^{-1}, & \lambda\in K_\text{up}^\text{out},\vspace{.025in} \\
{\bf P}^{[n]}(\lambda;\chi,\tau)\bbm 1 & 0 \\ \frac{c_1}{c_2^*}e^{2n(\varphi(\lambda;\chi,\tau)-g(\lambda;\chi,\tau))} & 1 \ebm, & \lambda\in K_\text{down}^\text{in}, \\
{\bf P}^{[n]}(\lambda;\chi,\tau)\bbm 1 & 0 \\ \frac{c_1^*}{c_2^*}e^{2n(\varphi(\lambda;\chi,\tau)-g(\lambda;\chi,\tau))} & 1 \ebm^{-1}, & \lambda\in K_\text{down}^\text{out},\vspace{.025in} \\
{\bf P}^{[n]}(\lambda;\chi,\tau)\bbm 1 & \frac{c_2^*}{c_1}e^{-2n(\varphi(\lambda;\chi,\tau)-g(\lambda;\chi,\tau))} \\ 0 & 1 \ebm, & \lambda\in L_\text{up}^\text{in}, \\
{\bf P}^{[n]}(\lambda;\chi,\tau)\bbm 1 & 0 \\ -\frac{c_2}{c_1}e^{2n(\varphi(\lambda;\chi,\tau)-g(\lambda;\chi,\tau))} & 1 \ebm^{-1}, & \lambda\in L_\text{up}^\text{out},\vspace{.025in} \\
{\bf P}^{[n]}(\lambda;\chi,\tau)\bbm 1 & 0 \\ -\frac{c_2}{c_1^*}e^{2n(\varphi(\lambda;\chi,\tau)-g(\lambda;\chi,\tau))} & 1 \ebm, & \lambda\in L_\text{down}^\text{in}, \\
{\bf P}^{[n]}(\lambda;\chi,\tau)\bbm 1 & \frac{c_2^*}{c_1^*}e^{-2n(\varphi(\lambda;\chi,\tau)-g(\lambda;\chi,\tau))} \\ 0 & 1 \ebm^{-1}, & \lambda\in L_\text{down}^\text{out}, \\
{\bf P}^{[n]}(\lambda;\chi,\tau), & \text{otherwise}. \end{cases}
\endeq
Using \eqref{P-jumps-nonosc}, \eqref{Sinv-on-Sigma}, and \eqref{Sinv-on-Gamma}, 
we see that ${\bf Q}^{[n]}(\lambda)$ satisfies the jumps  
${\bf Q}_+^{[n]}(\lambda)={\bf Q}_-^{[n]}(\lambda){\bf V}_{\bf Q}^{[n]}(\lambda)$,
where the jumps on the various contours are given by
\eq
\begin{alignedat}{3}
&\Sigma_\text{up} :\,\,\bbm 0 & \frac{|{\bf c}|}{c_2} {e^{-2i n K} } \\ -\frac{c_2}{|{\bf c}|}{e^{2i n K}}  & 0 \ebm,\quad
&&\Sigma_\text{down}:\,\,\bbm 0 & \frac{c_2^*}{|{\bf c}|} {e^{-2i n K}} \\ -\frac{|{\bf c}|}{c_2^*} {e^{2i n K}} & 0\ebm,\quad
&&\Gamma_\text{up}:\,\,\bbm \frac{|{\bf c}|}{c_1} & 0\\ 0 & \frac{c_1}{|{\bf c}|} \ebm,  
\\
&\Gamma_\text{down}:\,\,\bbm \frac{c_1^{*}}{|{\bf c}|}& 0\\ 0 & \frac{|{\bf c}|}{c_1^{*}}  \ebm,  \quad
&&\Sigma_\text{up}^\text{in}:\,\,\bbm 1& -\frac{c_1^*}{c_2}e^{-2n(\varphi-g)}\\ 0 & 1  \ebm,\quad
&&\Sigma_\text{up}^\text{out}:\,\,\bbm 1& -\frac{c_1}{c_2}e^{-2n(\varphi-g)}\\ 0 & 1  \ebm,
\\
&\Sigma_\text{down}^\text{in}:\bbm 1& 0\\ \frac{c_1}{c_2^*}e^{2n(\varphi-g)} & 0  \ebm, \quad
&&\Sigma_\text{down}^\text{out}:\,\,\bbm 1& 0\\ \frac{c_1^{*}}{c_2^*}e^{2n(\varphi-g)} & 0  \ebm,\quad
&&\Gamma_\text{up}^\text{in}:\,\,\bbm 1& \frac{c_2^{*}}{c_1}e^{-2n(\varphi-g)}\\ 0 & 1  \ebm,\\
\hspace{.2in}
&\Gamma_\text{up}^\text{out}:\,\,\bbm 1& 0\\ -\frac{c_2}{c_1}e^{2n(\varphi-g)} & 0  \ebm,\quad
&&\Gamma_\text{down}^\text{in}:\,\,\bbm 1& 0\\ -\frac{c_2}{c_1^*}e^{2n(\varphi-g)} & 0  \ebm,\quad
&&\Gamma_\text{down}^\text{out}:\,\,\bbm 1& \frac{c_2^{*}}{c_1^{*}}e^{-2n(\varphi-g)}\\ 0 & 1 \ebm.
\hspace{1in}
\end{alignedat}{3}
\endeq
Lemma \ref{nonosc-lemma} shows that, except for the four constant jumps, all of 
the jumps decay exponentially to the identity for $\lambda$ bounded away from 
$a$, $a^*$, $\lambda^{(1)}$, and $\lambda^{(2)}$.  We are thus ready to define 
the outer model Riemann-Hilbert problem.
\begin{rhp}[The outer model problem in the non-oscillatory region]
Fix a pole location $\xi\in\mathbb{C}^+$, a pair of nonzero complex numbers 
$(c_1,c_2)$, and 
a pair of real numbers $(\chi,\tau)$ in the non-oscillatory region.  Determine 
the $2\times 2$ matrix ${\bf R}^{(\infty)}(\lambda;\chi,\tau)$ with the following 
properties:
\begin{itemize}
\item[]\textbf{Analyticity:}  ${\bf R}^{(\infty)}(\lambda;\chi,\tau)$
is analytic for $\lambda\in \mathbb{C}$ except on $\Sigma_{\rm up}\cup\Sigma_{\rm down}\cup\Gamma_{\rm up}\cup\Gamma_{\rm down}$,
where it achieves continuous boundary values on the interior of each arc.
\item[]\textbf{Jump condition:}  The boundary values taken by ${\bf R}^{(\infty)}(\lambda;\chi,\tau)$ are
related by the jump conditions
${\bf R}^{(\infty)}_+(\lambda;\chi,\tau)={\bf R}^{(\infty)}_-(\lambda;\chi,\tau){\bf V}_{\bf R}^{(\infty)}(\lambda;\chi,\tau)$,
where
\eq
{\bf V}_{\bf R}^{(\infty)}(\lambda;\chi,\tau) := \begin{cases} \bbm 0 & \frac{|{\bf c}|}{c_2} {e^{-2i n K}} \\ -\frac{c_2}{|{\bf c}|} {e^{2i n K}} & 0\ebm, & \lambda\in \Sigma_{\rm up}, \vspace{.025in} \\ \bbm 0 & \frac{c_2^*}{|{\bf c}|} {e^{-2i n K}} \\ -\frac{|{\bf c}|}{c_2^*} {e^{ 2i n K}} & 0\ebm, & \lambda\in \Sigma_{\rm down},\vspace{.025in} \\
\bbm \frac{|{\bf c}|}{c_1} & 0 \\ 0 & \frac{c_1}{|{\bf c}|} \ebm, & \lambda\in \Gamma_{\rm up}, \vspace{.025in} \\ \bbm \frac{c_1^*}{|{\bf c}|} & 0 \\ 0 & \frac{|{\bf c}|}{c_1^*} \ebm, & \lambda\in \Gamma_{\rm down}.
\end{cases}
\endeq
\item[]\textbf{Normalization:}  As $\lambda\to\infty$, the matrix ${\bf R}^{(\infty)}(\lambda;\chi,\tau)$
satisfies the condition
\begin{equation}
{\bf R}^{(\infty)}(\lambda;\chi,\tau) = \mathbb{I}+\mathcal{O}(\lambda^{-1})
\end{equation}
with the limit being uniform with respect to direction.
\end{itemize}
\label{rhp:R-model}
\end{rhp}
The first step in solving for ${\bf R}^{(\infty)}(\lambda)$ is to remove the 
dependence on $c_1$ and $c_2$.  Define the function 
\eq
\begin{split}
f(\lambda):=\frac{R(\lambda)}{2\pi i} &\left[\int_{\Sigma_{\rm up}} \frac{\log\left(\frac{c_2}{|{\bf c}|}\right)}{R_{+}(s)(s-\lambda)} ds+ \int_{\Sigma_{\rm down}} \frac{\log\left(\frac{|{\bf c}|}{c_2^{*}}\right)}{R_{+}(s)(s-\lambda)} ds \right.\\ 
 & \hspace*{.2in}+ \left.\int_{\Gamma_{\rm up}} \frac{\log\left(\frac{|{\bf c}|}{c_1}\right)}{R(s)(s-\lambda)} ds +\int_{\Gamma_{\rm down}} \frac{\log\left(\frac{c_1^{*}}{|{\bf c}|}\right)}{R(s)(s-\lambda)} ds \right].
\end{split}
\endeq
Then $f(\lambda)$ satisfies the jump conditions 
\eq
\label{f-jump-nonosc}
\begin{split}
f_+(\lambda) + f_-(\lambda) & = -\log\left(\frac{|{\bf c}|}{c_2}\right),\quad\quad \lambda \in \Sigma_{\rm up}, \\
f_+(\lambda) + f_-(\lambda) & = -\log\left(\frac{c_2^{*}}{|{\bf c}|}\right),\quad\quad \lambda \in \Sigma_{\rm down}, \\
f_+(\lambda) - f_-(\lambda) & = -\log\left(\frac{c_1}{|{\bf c}|}\right),\quad\quad \lambda \in \Gamma_{\rm up}, \\
f_+(\lambda) - f_-(\lambda) & = -\log\left(\frac{|{\bf c}|}{c_1^{*}}\right),\quad\quad \lambda \in \Gamma_{\rm down}, \\
\end{split}
\endeq
and the symmetry
\eq
f(\lambda) = -(f(\lambda^{*}))^*.
\endeq
We also have that $f(\lambda)$ is bounded as $\lambda\to\infty$, and 
\eq
\begin{split}
f(\infty):=\lim_{\lambda\to\infty}f(\lambda) = -\frac{1}{2\pi i}&\left[\int_{\Sigma_{\rm up}} \frac{\log\left(\frac{c_2}{|{\bf c}|}\right)}{R_{+}(s)} ds+ \int_{\Sigma_{\rm down}} \frac{\log\left(\frac{|{\bf c}|}{c_2^{*}}\right)}{R_{+}(s)} ds\right. \\
&\hspace*{.2in}+ \left.\int_{\Gamma_{\rm up}} \frac{\log\left(\frac{|{\bf c}|}{c_1}\right)}{R(s)} ds +\int_{\Gamma_{\rm down}} \frac{\log\left(\frac{c_1^{*}}{|{\bf c}|}\right)}{R(s)} ds  \right].
\end{split}
\label{finf-def}
\endeq
We note $f(\infty)$ is a purely imaginary number.  Introduce
\eq 
\label{f-function-transform}
{\bf S}(\lambda):=e^{f(\infty)\sigma_3}{\bf R}^{(\infty)}(\lambda)e^{-f(\lambda)\sigma_3}.
\endeq
Thus, we have ${\bf S}_+(\lambda)={\bf S}_-(\lambda){\bf V}_{\bf S}(\lambda)$,
where
\eq
{\bf V}_{\bf S}(\lambda) := \begin{cases} \bbm 0 & \frac{|{\bf c}|}{c_2}e^{f_+(\lambda)+f_-(\lambda)} {e^{-2inK}} \\ -\frac{c_2}{|{\bf c}|}e^{-(f_+(\lambda)+f_-(\lambda))}{e^{2inK}} & 0\ebm, & \lambda\in \Sigma_{\rm up}, \vspace{.025in} \\ \bbm 0 & \frac{c_2^*}{|{\bf c}|}e^{f_+(\lambda)+f_-(\lambda)} {e^{-2inK}} \\ -\frac{|{\bf c}|}{c_2^*} e^{-(f_+(\lambda)+f_-(\lambda))} {e^{2inK}} & 0\ebm, & \lambda\in \Sigma_{\rm down},\vspace{.025in} \\
\bbm \frac{|{\bf c}|}{c_1}e^{-(f_+(\lambda)-f_-(\lambda))} & 0 \\ 0 & \frac{c_1}{|{\bf c}|}e^{f_+(\lambda)-f_-(\lambda)} \ebm, & \lambda\in \Gamma_{\rm up}, \vspace{.025in} \\ \bbm \frac{c_1^*}{|{\bf c}|}e^{-(f_+(\lambda)-f_-(\lambda))} & 0 \\ 0 & \frac{|{\bf c}|}{c_1^*}e^{f_+(\lambda)-f_-(\lambda)}  \ebm, & \lambda\in \Gamma_{\rm down}.
\end{cases}
\endeq
From the conditions \eqref{f-jump-nonosc} for $f(\lambda)$ we see the jump 
simplifies to 
\eq
{\bf S}_+(\lambda)={\bf S}_-(\lambda){e^{-inK\sigma_3}}\bbm 0 & 1 \\ -1 & 0\ebm {e^{inK\sigma_3}}, \quad \lambda\in\Sigma.
\endeq
Along with the normalization condition 
${\bf S}(\lambda)=\mathbb{I}+\mathcal{O}(\lambda^{-1})$, this specifies that 
${\bf S}(\lambda)$ must be 
\eq
{\bf S}(\lambda)={e^{- i n K\sigma_3}}
\bbm \displaystyle \frac{\gamma{{(\lambda)}}+\gamma{{(\lambda)}}^{-1}}{2} & \displaystyle \frac{-i\gamma{{(\lambda)}}+i\gamma{{(\lambda)}}^{-1}}{2} \\ \displaystyle \frac{i\gamma{{(\lambda)}}-i\gamma{{(\lambda)}}^{-1}}{2} & \displaystyle \frac{\gamma{{(\lambda)}}+\gamma{{(\lambda)}}^{-1}}{2} \ebm {e^{i n K\sigma_3}},
\endeq
where 
\eq
\label{beta-nonosc}
\gamma(\lambda):=\left(\frac{\lambda-a}{\lambda-a^{*}}\right)^{1/4}
\endeq
is cut on $\Sigma$ and has asymptotic behavior 
$\gamma(\lambda)=1+\mathcal{O}(\lambda^{-1})$ as $\lambda\to\infty$.  Thus, we have
\eq
{\bf R}^{(\infty)}(\lambda)={e^{-i n K\sigma_3}}
\bbm \displaystyle \frac{\gamma{{(\lambda)}}+\gamma{{(\lambda)}}^{-1}}{2}e^{f(\lambda)-f(\infty)} & \displaystyle \frac{\gamma{{(\lambda)}}-\gamma{{(\lambda)}}^{-1}}{2i}e^{-f(\lambda)-f(\infty)}  \\ \displaystyle -\frac{\gamma{{(\lambda)}}-\gamma{{(\lambda)}}^{-1}}{2i}e^{f(\lambda)+f(\infty)} & \displaystyle \frac{\gamma{{(\lambda)}}+\gamma{{(\lambda)}}^{-1}}{2}e^{-f(\lambda)+f(\infty)} \ebm {e^{inK\sigma_3}}.
\endeq
To complete the definition of the global model solution ${\bf R}(\lambda)$, we 
need to define local parametrices ${\bf R}^{(1)}(\lambda)$, 
${\bf R}^{(2)}(\lambda)$, ${\bf R}^{(a)}(\lambda)$, and ${\bf R}^{(a^*)}(\lambda)$ 
in small, fixed disks $\mathbb{D}^{(1)}$, $\mathbb{D}^{(2)}$, $\mathbb{D}^{(a)}$, 
and $\mathbb{D}^{(a^*)}$ centered at $\lambda^{(1)}$, $\lambda^{(2)}$, $a$, and 
$a^*$, respectively.  These local parametrices satisfy two conditions:
\begin{itemize}
\item ${\bf R}^{(\bullet)}(\lambda)$ satisfies the same jump conditions as ${\bf Q}^{[n]}(\lambda)$ for $\lambda\in\mathbb{D}^{(\bullet)}$, where $\bullet\in\{1,2,a,a^*\}$.
\item ${\bf R}^{(\bullet)}(\lambda) = \begin{cases} {\bf R}^{(\infty)}(\lambda)(\mathbb{I}+\mathcal{O}(n^{-1/2})), &  \lambda\in\partial\mathbb{D}^{(\bullet)}, \text{ where }\bullet\in\{1,2\}, \\ {\bf R}^{(\infty)}(\lambda)(\mathbb{I}+\mathcal{O}(n^{-1})), & \lambda\in\partial\mathbb{D}^{(\bullet)}\text{ where }\bullet\in\{a,a^*\}.\end{cases}$
\end{itemize}
While we will not need their explicit form, the parametrices 
${\bf R}^{(1)}(\lambda)$ and ${\bf R}^{(2)}(\lambda)$ can be constructed 
explicitly using parabolic cylinder functions (see, for example, 
\S\ref{sec-alg-decay}), while the parametrices ${\bf R}^{(1)}(\lambda)$ and 
${\bf R}^{(2)}(\lambda)$ can be constructed explicitly using Airy functions (see, 
for example, \cite{DeiftKMVZ:1999}).  Then the function 
\eq
{\bf R}(\lambda) := \begin{cases} {\bf R}^{(1)}(\lambda), & \lambda\in\mathbb{D}^{(1)}, \\ {\bf R}^{(2)}(\lambda), & \lambda\in\mathbb{D}^{(2)}, \\ {\bf R}^{(a)}(\lambda), & \lambda\in\mathbb{D}^{(a)}, \\ {\bf R}^{(a^*)}(\lambda), & \lambda\in\mathbb{D}^{(a^*)}, \\ {\bf R}^{(\infty)}(\lambda), & \text{otherwise} \end{cases}
\endeq
is a valid approximation to ${\bf Q}^{[n]}(\lambda)$ everywhere in the complex 
$\lambda$-plane as $n\to\infty$.  In particular, we have 
\eq
{\bf Q}^{[n]}(\lambda) = \left(\mathbb{I}+\mathcal{O}(n^{-1/2})\right){\bf R}(\lambda).
\endeq
Working our way through the various transformations, we see that, for 
$|\lambda|$ sufficiently large,
\eq
\begin{split}
[{\bf M}^{[n]}&(\lambda;n\chi,n\tau)]_{12}  = \left(\frac{\lambda-\xi^*}{\lambda-\xi}\right)^n[{\bf N}^{[n]}(\lambda;\chi,\tau)]_{12} = \left(\frac{\lambda-\xi^*}{\lambda-\xi}\right)^n[{\bf O}^{[n]}(\lambda;\chi,\tau)]_{12} \\ 
 & = \left(\frac{\lambda-\xi^*}{\lambda-\xi}\right)^ne^{-ng(\lambda;\chi,\tau)}[{\bf P}^{[n]}(\lambda;\chi,\tau)]_{12} = \left(\frac{\lambda-\xi^*}{\lambda-\xi}\right)^ne^{-ng(\lambda;\chi,\tau)}[{\bf Q}^{[n]}(\lambda;\chi,\tau)]_{12} \\
 & = \left(\frac{\lambda-\xi^*}{\lambda-\xi}\right)^ne^{-ng(\lambda;\chi,\tau)}\left([{\bf R}^{(\infty)}(\lambda;\chi,\tau)]_{12}+\mathcal{O}(n^{-1/2})\right) \\ 
 & = \left( \frac{\lambda-\xi^*}{\lambda-\xi}\right)^{n}e^{-ng(\lambda;\chi,\tau)-f(\lambda;\chi,\tau)-f(\infty;\chi,\tau)} \left(\frac{\gamma(\lambda;\chi,\tau)-\gamma(\lambda;\chi,\tau)^{-1}}{2i}{e^{-2 i n K(\chi,\tau)}} + \mathcal{O}(n^{-1/2})\right).
\end{split}
\endeq
From 
\eq
\gamma(\lambda)-\gamma(\lambda)^{-1} = \frac{a^*-a}{2\lambda} + \mathcal{O}(\lambda^{-2}),
\endeq
\eq
\left(\frac{\lambda-\xi^*}{\lambda-\xi}\right)^n = 1 + \mathcal{O}(\lambda^{-1}),
\endeq
and
\eq
e^{-ng(\lambda)-f(\lambda)-f({\infty})} = e^{-2f(\infty)} + \mathcal{O}(\lambda^{-1}),
\endeq
we see 
\eq
\lim_{\lambda\to\infty} \lambda [{\bf M}^{[n]}(\lambda;n\chi,n\tau)]_{12} = \frac{a^*(\chi,\tau)-a(\chi,\tau)}{4i}e^{-2f(\infty;\chi,\tau)} {e^{-2 i n K(\chi,\tau)}} + \mathcal{O}(n^{-1/2}).
\endeq
Along with \eqref{psi-from-M}, this establishes Theorem \ref{nonosc-thm}.

\section{The oscillatory region}
\label{sec-osc}

Finally, we consider the oscillatory region.  From the Riemann-Hilbert point of 
view, this region is distinguished by a two-band model problem.  We begin by 
solving the following Riemann-Hilbert problem for $G(\lambda;\chi,\tau)$.  
\begin{rhp}[The $G$-function in the oscillatory region]
Fix a pole location $\xi\in\mathbb{C}^+$, a pair of nonzero complex numbers 
$(c_1,c_2)$, and a 
pair of real numbers $(\chi,\tau)$ in the oscillatory region.  Determine 
the unique contours $\Sigma_{\rm up}(\chi,\tau)$, 
$\Sigma_{\rm down}(\chi,\tau)$, and $\Gamma_{\rm mid}(\chi,\tau)$, the 
unique constants $\Omega(\chi,\tau)$ and $d(\chi,\tau)$, and the unique function 
$G(\lambda;\chi,\tau)$ satisfying the following conditions.
\begin{itemize}
\item[]\textbf{Analyticity:}  $G(\lambda)$ is analytic for 
$\lambda\in\mathbb{C}$ except on 
$\Sigma_{\rm up}\cup\Sigma_{\rm down}\cup\Gamma_{\rm mid}$, where it achieves 
continuous boundary values.  All three contours are simple and bounded.  
$\Sigma_{\rm down}$ is the reflection of $\Sigma_{\rm up}$ through the real 
axis.  $\Gamma_{\rm mid}$ is symmetric across the real axis and connects 
$\Sigma_{\rm down}$ to $\Sigma_{\rm up}$.  
\item[]\textbf{Jump condition:}  The boundary values taken by $G(\lambda)$ are
related by the jump conditions
\eq
\begin{split}
G_+(\lambda) + G_-(\lambda) & = 2\varphi(\lambda) + \Omega, \quad \lambda \in \Sigma_{\rm up},\\
G_+(\lambda) + G_-(\lambda) & = 2\varphi(\lambda) - \Omega^* = 2\varphi(\lambda) + \Omega, \quad \lambda \in \Sigma_{\rm down},\\
G_+(\lambda) - G_-(\lambda) & = d, \quad \lambda \in \Gamma_{\rm mid}.
\end{split}
\endeq
Here $\Omega$ and $d$ are purely imaginary constants.  
Furthermore, 
\eq
\Re(\varphi(\lambda)-G_+(\lambda)) = \Re(\varphi(\lambda)-G_-(\lambda)) = 0, \quad \lambda\in\Sigma_{\rm up}\cup\Sigma_{\rm down}\cup\Gamma_{\rm mid}. 
\endeq
\item[]\textbf{Normalization:}  As $\lambda\to\infty$, $G(\lambda)$ satisfies 
\begin{equation}
G(\lambda) = \mathcal{O}\left(\lambda^{-1}\right)
\end{equation}
with the limit being uniform with respect to direction.
\item[]\textbf{Symmetry:}  $G(\lambda)$ satisfies the symmetry condition
\eq
\label{G-symmetry}
G(\lambda) = -G(\lambda^*)^*.\\
\endeq
\end{itemize}
\label{rhp:G-osc}
\end{rhp}
The symmetry condition immediately implies that $d$ is purely imaginary.  
However, the fact that $\Omega$ is purely imaginary is a condition on 
$\Sigma_\text{up}$ and $\Sigma_\text{down}$.

Assume that $\Sigma_\text{up}$ and $\Sigma_\text{down}$ are known.  Suppose 
$\Sigma_\text{up}$ is oriented from $b\equiv b(\chi,\tau)$ to 
$a\equiv a(\chi,\tau)$ with $\Im(a)>\Im(b)$ and $\Sigma_\text{down}$ is oriented from 
$a^*$ to $b^*$.  
The band endpoints $a$ and $b$ are uniquely determined by the conditions 
\eq
\label{a-b-def}
G(\lambda)=\mathcal{O}(\lambda^{-1}), \quad \Re(\Omega)=0.
\endeq
We now differentiate and solve for $G'(\lambda)$.  Observe that $G'(\lambda)$ has 
jumps 
\eq
\label{G'-condition-1}
G'_+(\lambda) + G'_-(\lambda) = 2i \chi+4i\lambda\tau+\frac{2}{\lambda-\xi^*}-\frac{2}{\lambda-\xi},\quad\quad \lambda \in \Sigma_\text{up} \cup \Sigma_\text{down}
\endeq
and normalization 
\eq
\label{G'-condition-3}
G'(\lambda) =  \mathcal{O}(\lambda^{-2}), \quad \lambda \to \infty.
\endeq
Define 
\eq
\mathfrak{R}(\lambda):=((\lambda-a)(\lambda-a^{*})(\lambda-b)(\lambda-b^{*}))^{1/2}
\endeq
to be the function cut on $\Sigma_\text{up}\cup\Sigma_\text{down}$ with 
asymptotic behavior $\mathfrak{R}(\lambda) = \lambda^2 +\mathcal{O}(\lambda)$ as 
$\lambda \to \infty.$
Note that if we define the symmetric functions 
\eq
\label{symmetric-fns}
\begin{split}
\mathfrak{s}_1 := a+a^*+b+b^*, \quad \mathfrak{s}_2 := aa^*+ab+ab^*+a^*b+a^*b^*+bb^*, \\
\mathfrak{s}_3 := aa^*b+aa^*b^*+abb^*+a^*bb^*, \quad \mathfrak{s}_4 := aa^*bb^*,\hspace{.55in}
\end{split}
\endeq
then we can write
\eq
\mathfrak{R}(\lambda)=(\lambda^4-\mathfrak{s}_1\lambda^3+\mathfrak{s}_2\lambda^2-\mathfrak{s}_3\lambda+\mathfrak{s}_4)^{1/2}.
\endeq
By the Plemelj formula, we have
\eq 
\label{G'function}
G'(\lambda)=\frac{\mathfrak{R}(\lambda)}{2\pi i}\int_{\Sigma_\text{up}\cup\Sigma_\text{down}}\frac{2i \chi+4is\tau+\frac{2}{s-\xi^*}-\frac{2}{s-\xi}}{\mathfrak{R}_{+}(s)(s-\lambda)}ds.
\endeq
Similar to the calculation for $g'(\lambda)$ in \S\ref{sec-nonosc}, an explicit 
residue computation gives 
\eq 
\label{G'-function-new}
G'(\lambda)=i\chi+2i\tau\lambda+\frac{1}{\lambda-\xi^*}-\frac{1}{\lambda-\xi}
+\frac{\mathfrak{R}(\lambda)}{\mathfrak{R}(\xi^{*})(\xi^{*}-\lambda)}-\frac{\mathfrak{R}(\lambda)}{\mathfrak{R}(\xi)(\xi-\lambda)}.
\endeq
We now present a computationally effective method of determining $a$ and $b$.  
Imposing the growth condition $G'(\lambda)=\mathcal{O}(\lambda^{-2})$ 
leads to the following three conditions arising from requiring the terms 
proportional to $\lambda^1$, $\lambda^0$, and $\lambda^{-1}$ in the 
large-$\lambda$ expansion of \eqref{G'-function-new} to be zero:
\eq \label{ab-condition-1}
\mathcal{O}(\lambda):\,2\tau+\frac{i}{\mathfrak{R}(\xi^{*})}-\frac{i}{\mathfrak{R}(\xi)}=0,
\endeq
\eq \label{ab-condition-2}
\mathcal{O}(1):\,\chi+\tau\mathfrak{s}_1+\frac{i\xi^{*}}{\mathfrak{R}(\xi^{*})}-\frac{i\xi}{\mathfrak{R}(\xi)}=0,
\endeq
\eq 
\label{ab-condition-3}
\mathcal{O}(\lambda^{-1}):\,\frac{\chi}{2}\mathfrak{s}_1+\tau\left(\frac{3}{4}\mathfrak{s}_1^2-\mathfrak{s}_2\right)
+\frac{i(\xi^*)^2}{\mathfrak{R}(\xi^{*})}-\frac{i\xi^2}{\mathfrak{R}(\xi)}=0.
\endeq
These are three real conditions on the two complex unknowns $a$ and $b$ (the 
fourth condition will be $\Re(\Omega)=0$).  Multiplying equation 
\eqref{ab-condition-1} by $\xi^{*}$ and plugging it into \eqref{ab-condition-2}, 
we have
\eq 
\label{ab-condition-4}
\chi+\tau\mathfrak{s}_1-2\tau \xi^*=-i\frac{\xi^*-\xi}{\mathfrak{R}(\xi)}.
\endeq
Next, multiplying equation \eqref{ab-condition-1} by $(\xi^*)^2$ and plugging it 
into \eqref{ab-condition-3}, we have
\eq \label{ab-condition-5}
\frac{\chi}{2}\mathfrak{s}_1+\tau\left(\frac{3}{4}\mathfrak{s}_1^2-\mathfrak{s}_2\right)-2\tau(\xi^*)^2
=-i\frac{(\xi^*-\xi)(\xi^*+\xi)}{\mathfrak{R}(\xi)}.
\endeq
Then, multiplying equation \eqref{ab-condition-4} by $(\xi^*+\xi)$ and equating 
it with \eqref{ab-condition-5}, we have
\eq \label{ab-condition-6}
\mathfrak{s}_2=\frac{3}{4}\mathfrak{s}_1^2+\left(\frac{1}{2}\frac{\chi}{\tau}-\xi^{*}-\xi\right)\mathfrak{s}_1+2\xi\xi^*
-(\xi^*+\xi)\frac{\chi}{\tau},
\endeq
which indicates that if $\mathfrak{s}_1$ is real then $\mathfrak{s}_2$ is real.  Now use 
\eqref{ab-condition-6} to eliminate $\mathfrak{s}_2$ in \eqref{ab-condition-4} (here 
$\mathfrak{s}_2$ appears in $\mathfrak{R}(\xi)$).  Take 
the real and imaginary parts to get two real equations on the three real 
variables $\mathfrak{s}_1, \mathfrak{s}_3$, and $\mathfrak{s}_4$.  These equations are both linear 
in $\mathfrak{s}_3$ and $\mathfrak{s}_4$, so $\mathfrak{s}_3$ and $\mathfrak{s}_4$ can be solved exactly 
in terms of $\mathfrak{s}_1$.  Thus, given $\mathfrak{s}_1$, we can determine 
$\mathfrak{s}_2$, $\mathfrak{s}_3$, 
and $\mathfrak{s}_4$, from which the system \eqref{symmetric-fns} can be inverted to 
obtain $a$ and $b$.  At this point we can define $G(\lambda)$ by 
\eq
\label{G-from-Gprime}
G(\lambda):=\int_\infty^\lambda G'(s)ds,
\endeq
where the path of integration is chosen to avoid 
$\Sigma_\text{up}\cup\Sigma_\text{down}\cup\Gamma_\text{mid}$.  Finally, we 
choose $\mathfrak{s}_1$ so that, once $a$ and $b$ and thus $G(\lambda)$ have been 
computed, $d:=G_+(\lambda)-G_-(\lambda)$ is purely imaginary (here $d$ is 
independent of $\lambda$ as long as $\lambda\in\Gamma_\text{mid}$). 

The final step in the definition of $G(\lambda)$ is the choice of cuts.  
Similar to the non-oscillatory case, we note from \eqref{G'function} that 
shifting $\Sigma_\text{up}$ or $\Sigma_\text{down}$ only changes $G(\lambda)$ by 
at most a sign, and so has no effect on the placement of the contours along which 
$\Re(\varphi(\lambda)-G(\lambda))=0$.  Therefore, we redefine $\Sigma_\text{up}$ 
to be the simple contour from $b$ to $a$ along which 
$\Re(\varphi(\lambda)-G(\lambda))=0$ and $\Re(\varphi(\lambda)-G(\lambda))$ is 
positive to either side.  The symmetry condition \eqref{G-symmetry} then forces 
$\Sigma_\text{down}$ to be the reflection of $\Sigma_\text{up}$ through the 
real axis.  We also choose $\Gamma_\text{mid}$ (whose main role is to restrict 
the integration path in \eqref{G-from-Gprime}) to be the contour from $b^*$ to 
$b$ along which $\Re(\varphi(\lambda)-G(\lambda))=0$.  The fact that such 
contours exist along which $\Re(\varphi(\lambda)-G(\lambda))=0$ is proven next
in Lemma \ref{osc-lemma}.  

\begin{lemma}
\label{osc-lemma}
In the oscillatory region, there is a domain $D_{\rm up}$ in the upper half-plane 
with the following properties:
\begin{itemize}
\item $D_{\rm up}$ contains $\xi$ and is bounded by a simple Jordan curve along which 
$\Re(\varphi(\lambda)-G(\lambda))=0$.  This curve contains the points $a$ and 
$b$.
\item $\Re(\varphi(\lambda)-G(\lambda))>0$ for all $\lambda\in D_{\rm up}$.
\item One arc of the boundary of $D_{\rm up}$ is the contour 
$\Sigma_{\rm up}$ from $b$ to $a$, along which  $\Re(\varphi(\lambda)-G(\lambda))>0$ 
for any $\lambda$ sufficiently close to either side of $\Sigma_{\rm up}$.  
\item The remaining boundary of $D_{\rm up}$ is a contour from $a$ to $b$ (denoted 
$\Gamma_{\rm up}$) along which $\Re(\varphi(\lambda)-G(\lambda))<0$ for any 
$\lambda$ in the exterior of $\overline{D_{\rm up}}$ but sufficiently close to 
$D_{\rm up}$.  
\end{itemize}
The domain $D_{\rm down}$ in the lower half-plane, defined as the reflection of 
$D_{\rm up}$ through the real axis, has the following properties:
\begin{itemize}
\item $D_{\rm down}$ contains $\xi^*$ and is bounded by a simple Jordan curve along 
which $\Re(\varphi(\lambda)-G(\lambda))=0$.
\item $\Re(\varphi(\lambda)-G(\lambda))<0$ for all $\lambda\in D_{\rm down}$.
\item One arc of the boundary of $D_{\rm down}$ is a contour (denoted 
$\Sigma_{\rm down}$) from $a^*$ to $b^*$, along which 
$\Re(\varphi(\lambda)-G(\lambda))<0$ for any $\lambda$ sufficiently close to 
either side of $\Sigma_{\rm down}$.  
\item The remaining boundary of $D_{\rm down}$ is a contour from $b^*$ to $a^*$ 
(denoted $\Gamma_{\rm down}$) along which $\Re(\varphi(\lambda)-G(\lambda))>0$ 
for any $\lambda$ in the exterior of $\overline{D_{\rm down}}$ but sufficiently 
close to $D_{\rm down}$.  
\end{itemize}
\end{lemma}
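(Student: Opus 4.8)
The plan is to establish Lemma~\ref{osc-lemma} by analytic continuation from the boundary curve $\mathcal{L}_\text{NO}$, mirroring the way Lemma~\ref{nonosc-lemma} was obtained by continuation from $\mathcal{L}_\text{AN}$. On $\mathcal{L}_\text{NO}$ one has $\lambda^{(1)}=\lambda^{(2)}$, and in the construction of $G(\lambda)$ the branch points $b,b^*$ of $\mathfrak{R}(\lambda)$ coalesce with each other and with this common real value, so that $\Gamma_\text{mid}$ shrinks to a point, $\mathfrak{R}(\lambda)\to(\lambda-\lambda^{(1)})R(\lambda)$, $G\to g$, and the signature chart of $\Re(\varphi(\lambda)-G(\lambda))$ there is precisely the one supplied by Lemma~\ref{nonosc-lemma} for $\Re(\varphi(\lambda)-g(\lambda))$. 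Moving just inside the oscillatory region is a standard band splitting: $b$ enters $\mathbb{C}^+$, $b^*$ enters $\mathbb{C}^-$, $\Gamma_\text{mid}$ opens, and one checks directly that the zero level set of $\Re(\varphi-G)$ deforms into the claimed configuration---a simple Jordan curve through $a$ and $b$ splitting into $\Sigma_\text{up}$ (from $b$ to $a$) and $\Gamma_\text{up}$ (from $a$ to $b$) and bounding a domain $D_\text{up}\ni\xi$ in $\mathbb{C}^+$, together with the arc $\Gamma_\text{mid}$ and the Schwarz reflections of all of these.

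The quantitative input that drives the continuation is a description of the critical points of $\varphi-G$. Using \eqref{phi-def} and \eqref{G'-function-new},
\eq
\varphi'(\lambda)-G'(\lambda)=\mathfrak{R}(\lambda)\left(\frac{1}{\mathfrak{R}(\xi)(\xi-\lambda)}-\frac{1}{\mathfrak{R}(\xi^{*})(\xi^{*}-\lambda)}\right).
\endeq
The parenthesized rational function has numerator linear in $\lambda$ with leading coefficient $\mathfrak{R}(\xi)-\mathfrak{R}(\xi^{*})$, which is nonzero in the open oscillatory region since \eqref{ab-condition-1} forces $\Im\mathfrak{R}(\xi)=\tau|\mathfrak{R}(\xi)|^{2}\neq0$ for $\tau>0$; hence $\varphi'-G'$ has a square-root-type zero at each band endpoint $a,a^*,b,b^*$ and exactly one further simple zero, at a point $Q=Q(\chi,\tau)$. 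The symmetry \eqref{G-symmetry}, together with $\varphi(\lambda^*)^*=-\varphi(\lambda)$ for real $\chi,\tau$, gives $(\varphi-G)(\lambda^*)^*=-(\varphi-G)(\lambda)$ and hence $\varphi'(\lambda)-G'(\lambda)=-\overline{(\varphi'-G')(\lambda^{*})}$, so zeros occur in conjugate pairs or on $\mathbb{R}$; therefore the unique extra zero $Q$ is real (and on $\mathcal{L}_\text{NO}$ it coincides with $b=b^*=\lambda^{(1)}=\lambda^{(2)}$, consistent with the $(\tfrac12+\tfrac12+1)$-fold degeneracy of $\varphi'-g'$ there). Finally, $\varphi$ and $G$ are purely imaginary along $\mathbb{R}$ (the former because each term in \eqref{phi-def} is, the latter by \eqref{G-symmetry}), so $\Re(\varphi-G)\equiv0$ on $\mathbb{R}$; in particular every critical point of $\varphi-G$ lies on the zero level set $\{\Re(\varphi-G)=0\}$, and $\varphi'-G'$ being purely imaginary on $\mathbb{R}$ and vanishing only at $Q$ means $\partial_{\Im\lambda}\Re(\varphi-G)=-\Im((\varphi-G)')$ changes sign exactly at $Q$.

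With the critical points in hand, I would complete the proof by a homotopy argument. The zero level set of the harmonic function $\Re(\varphi-G)$ is a smooth $1$-manifold away from the zeros of $(\varphi-G)'$, and its isotopy type can change only when two such zeros collide or when a component escapes to (or enters from) $\infty$; the latter is precluded because $\Re(\varphi(\lambda)-G(\lambda))=-\tau|\lambda|^{2}\sin(2\arg\lambda)+\mathcal{O}(|\lambda|)$ as $\lambda\to\infty$, so for $\tau>0$ the only unbounded component of the zero set is $\mathbb{R}$ itself. The symmetric functions $\mathfrak{s}_1,\dots,\mathfrak{s}_4$ (hence $a,b$), the constants $\Omega$ and $d$, the function $G$, and the point $Q$ all depend real-analytically on $(\chi,\tau)$ in the open oscillatory region, and by construction no two band endpoints collide there and $Q$ never meets one; therefore the topology of $\{\Re(\varphi-G)=0\}$ is locally constant on the connected open oscillatory region and equals the configuration identified just inside $\mathcal{L}_\text{NO}$. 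The existence of $D_\text{up}$, $\Sigma_\text{up}$, $\Gamma_\text{up}$ with the asserted sign behavior then follows from the local structure of $\Re(\varphi-G)$ at $a,b,Q$ (three zero-level arcs meeting at each band endpoint, four crossing transversally at the saddle $Q$, with $\Re(\varphi-G)=0$ at all of $a,b,Q$), together with the orientation and positivity conventions fixed for $\Sigma_\text{up}$ in Riemann-Hilbert Problem~\ref{rhp:G-osc}; the statements for $D_\text{down}$, $\Sigma_\text{down}$, $\Gamma_\text{down}$ are obtained by reflection through $\mathbb{R}$ using \eqref{G-symmetry}.

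The hard part will be the step asserting that no \emph{interior} topological change occurs: a priori, as $(\chi,\tau)$ ranges over the oscillatory region, the real saddle $Q$ could drift into a band endpoint, or the zero level line from $a$ to $b$ could reconnect with the one from $a^*$ to $b^*$, and a rigorous argument must confine every such degeneration to the region boundary $\mathcal{L}_\text{NO}\cup\mathcal{L}_\text{EO}^{\pm}$, where the band structure itself is known to degenerate. Controlling this requires a genuinely global analysis of the real-analytic function $\Re(\varphi-G)$---exploiting its Schwarz symmetry, the residue structure at $\xi,\xi^*$, the complete count of critical points above, and the four-quadrant behavior at infinity---so that no spurious level-set component can form; as is customary in this literature, the rigorous near-boundary analysis is supplemented by direct inspection of the signature charts (cf.\ Figures~\ref{oscillatory-phase-plots} and \ref{nonoscillatory-phase-plots}).
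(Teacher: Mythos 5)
Your proposal follows essentially the same route as the paper's proof: the same factorization $\varphi'(\lambda)-G'(\lambda)=\mathfrak{R}(\lambda)\bigl(\tfrac{1}{\mathfrak{R}(\xi)(\xi-\lambda)}-\tfrac{1}{\mathfrak{R}(\xi^*)(\xi^*-\lambda)}\bigr)$, the count of exactly one additional critical point forced onto the real axis by Schwarz symmetry, and the determination of the signature-chart topology by analytic continuation from $\mathcal{L}_{\rm NO}$ (the paper also allows continuation from $\mathcal{L}_{\rm EO}$), with your extra checks (nonvanishing of $\mathfrak{R}(\xi)-\mathfrak{R}(\xi^*)$ via \eqref{ab-condition-1}, and the honest flagging that ruling out interior degenerations is the delicate point) merely filling in detail the paper leaves implicit. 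One small correction: since $\Re(\varphi(\lambda)-G(\lambda))\sim-\tau|\lambda|^{2}\sin(2\arg\lambda)$ at infinity, the zero level set also contains two unbounded branches asymptotic to the vertical directions $\arg\lambda=\pm\pi/2$ (visible in Figure \ref{oscillatory-phase-plots}), so $\mathbb{R}$ is not the only unbounded component; the statement you actually need for the homotopy step is that these asymptotic zero directions are independent of $(\chi,\tau)$, so no level-set component can appear at or escape to infinity as the parameters vary within the region.
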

\begin{proof}
The proof is similar to that of Lemma \ref{nonosc-lemma}.  From \eqref{phi-def} 
and \eqref{G'-function-new}, we see 
\eq
\varphi'(\lambda) - G'(\lambda) = \mathfrak{R}(\lambda)\left(\frac{1}{\mathfrak{R}(\xi)(\xi-\lambda)}-\frac{1}{\mathfrak{R}(\xi^*)(\xi^*-\lambda)}\right).
\endeq
From the first factor $\mathfrak{R}(\lambda)$, we see 
$\varphi'(\lambda)-G'(\lambda)$ has four square-root branch points and the same 
branch cut as $\mathfrak{R}(\lambda)$.  From the second factor we can clear 
denominators and see that $\varphi(\lambda)-G(\lambda)$ has exactly one critical 
point.  By symmetry this critical point must lie on the real axis, and thus on a 
curve on which $\varphi(\lambda)-G(\lambda)=0$.  The topology of the level curves 
and the structure of the signature chart of $\Re(\varphi(\lambda)-G(\lambda))$ 
is deduced from analytic continuation from either $\mathcal{L}_\text{NO}$ (the 
shared boundary with the non-oscillatory region) or from $\mathcal{L}_\text{EO}$ 
(the shared boundary with the exponential-decay region).
\end{proof}
The signature chart of $\Re(\varphi(\lambda)-G(\lambda))$ is illustrated in 
Figure \ref{oscillatory-phase-plots}.
We now begin our transformations of Riemann-Hilbert Problem \ref{rhp-N}.  Define 
\eq
{\bf O}^{[n]}(\lambda;\chi,\tau):= \begin{cases} {\bf N}^{[n]}(\lambda;\chi,\tau){\bf V}_{\bf N}^{[n]}(\lambda;\chi,\tau), & \lambda\in D_0\cap(D_{\rm up}\cup D_{\rm down})^\mathsf{c}, \\ {\bf N}^{[n]}(\lambda;\chi,\tau){\bf V}_{\bf N}^{[n]}(\lambda;\chi,\tau)^{-1}, & \lambda\in D_0^\mathsf{c}\cap(D_{\rm up}\cup D_{\rm down}), \\ {\bf N}^{[n]}(\lambda;\chi,\tau), & \text{otherwise}.
\end{cases}
\endeq
The jump for ${\bf O}^{[n]}(\lambda)$ lies on 
$\Sigma_\text{up}\cup\Sigma_\text{down}\cup\Gamma_\text{up}\cup\Gamma_\text{down}$.
Next, define 
\eq \label{G-transform-osc}
{\bf P}^{[n]}(\lambda;\chi,\tau):={\bf O}^{[n]}(\lambda;\chi,\tau)e^{-nG(\lambda)\sigma_3}.
\endeq
The matrix ${\bf P}^{[n]}(\lambda)$ has an additional jump on 
$\Gamma_\text{mid}$, namely
\eq
{\bf P}_+^{[n]}(\lambda) = {\bf P}_-^{[n]}(\lambda)\bbm e^{-n(G_+(\lambda)-G_-(\lambda))} & 0 \\ 0 & e^{n(G_+(\lambda)-G_-(\lambda))}\ebm = {\bf P}_-^{[n]}(\lambda)\bbm e^{-nd} & 0 \\ 0 & e^{nd}\ebm, \quad \lambda\in\Gamma_\text{mid}.
\endeq
\begin{figure}
\begin{center}
\includegraphics[height=2.1in]{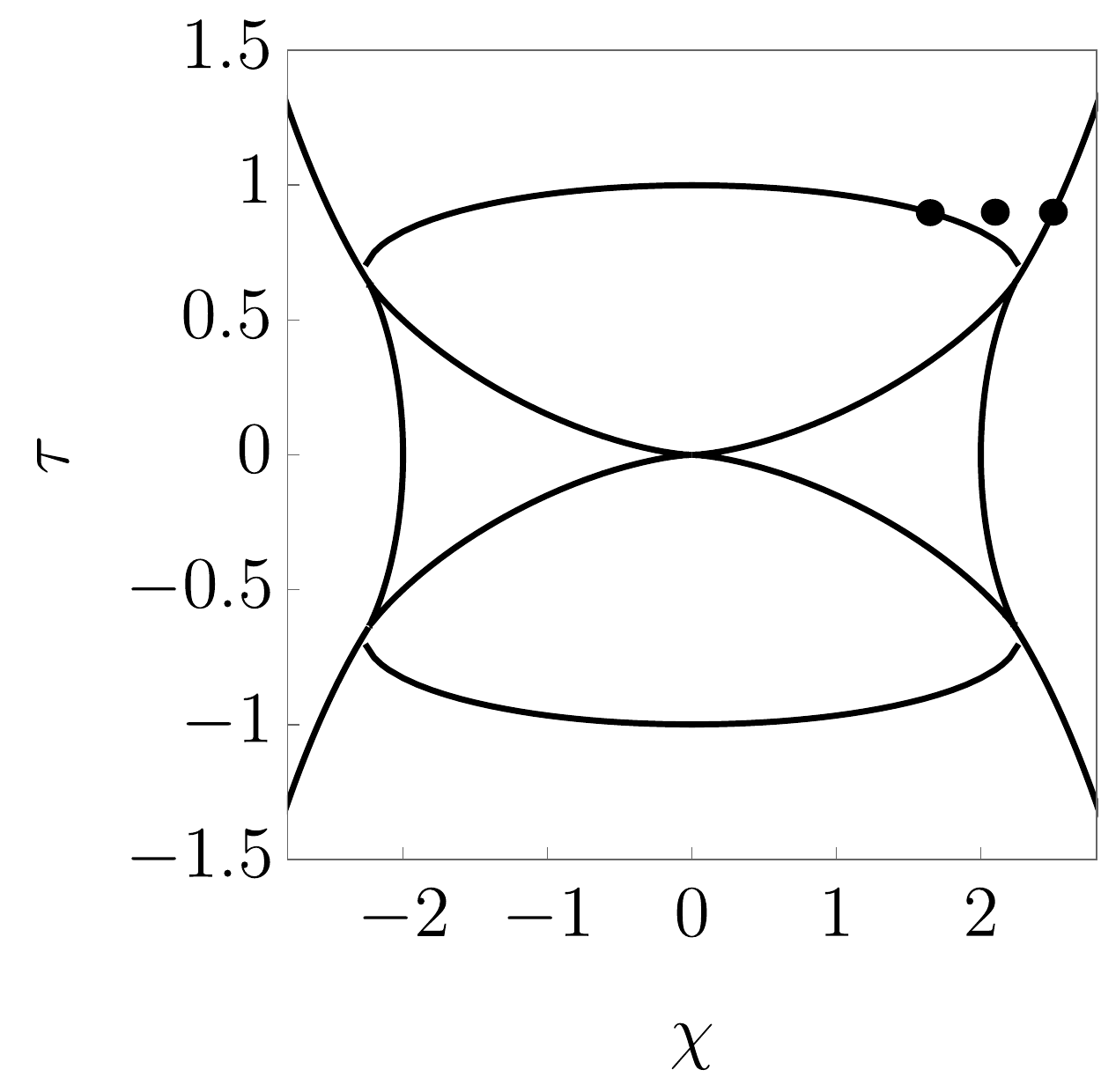} \\
\includegraphics[height=2in]{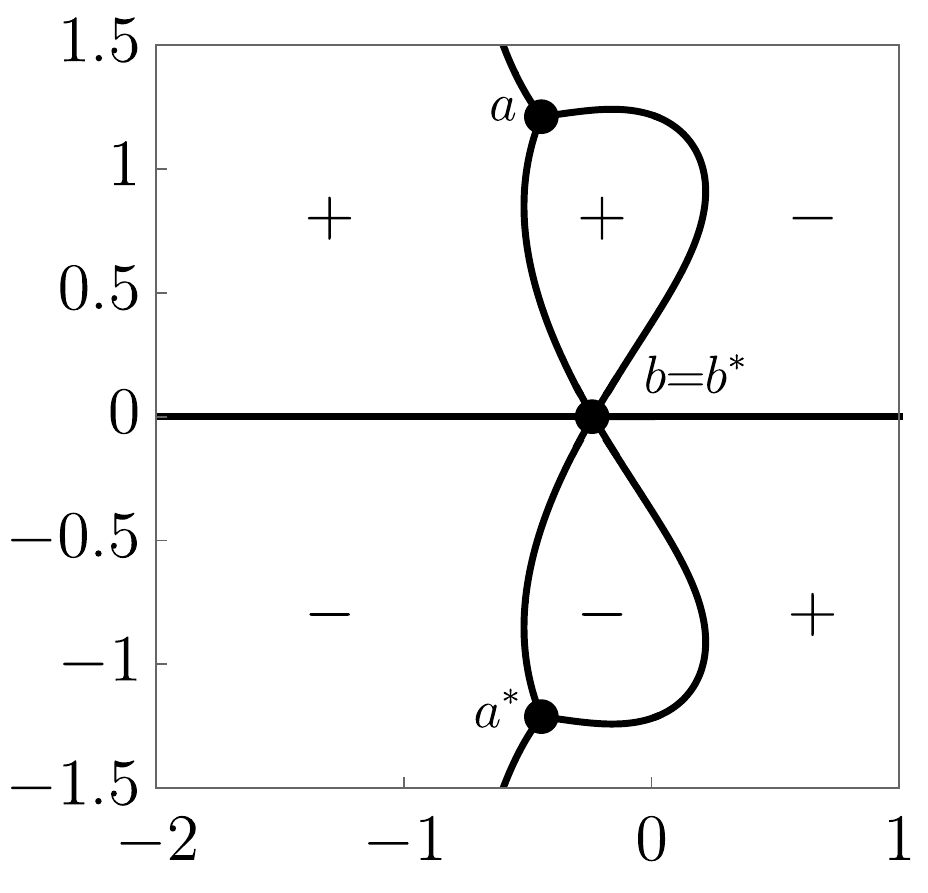} 
\includegraphics[height=2in]{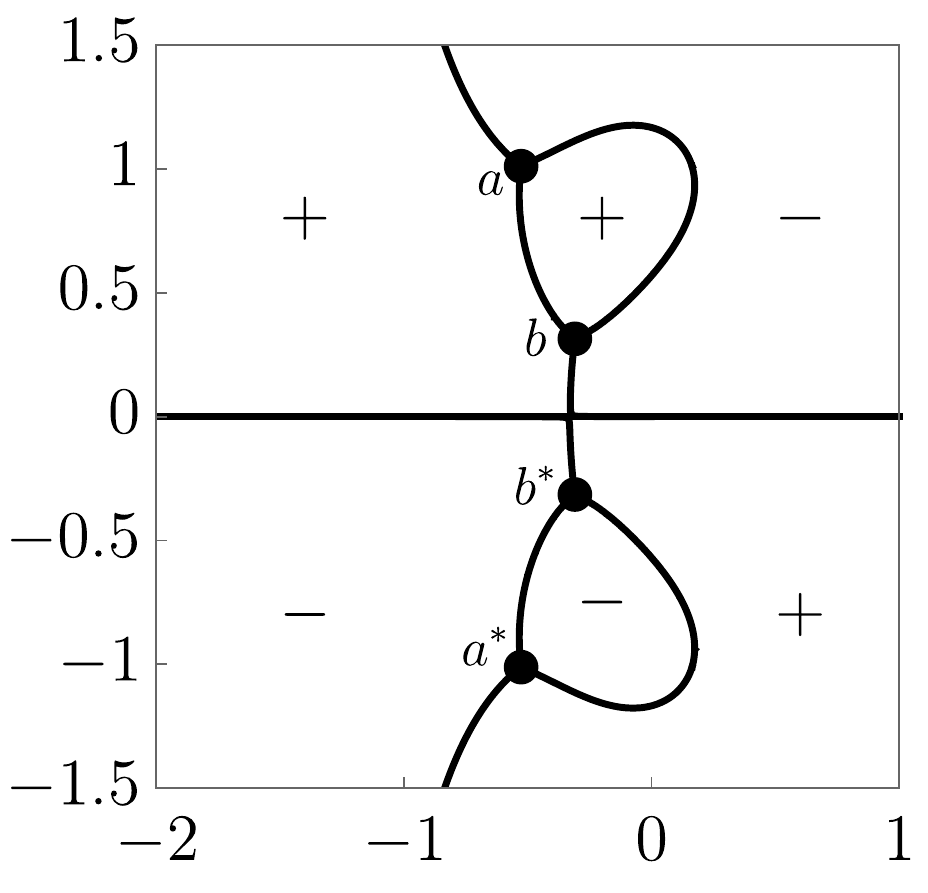} 
\includegraphics[height=2in]{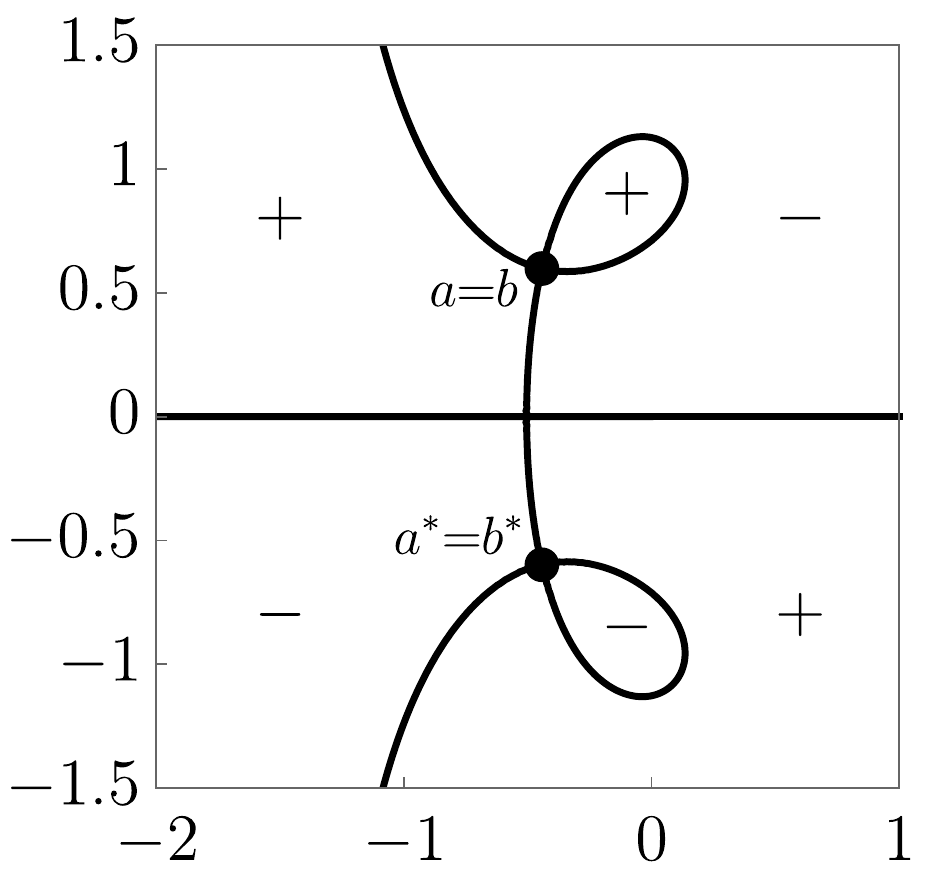} 
\caption{Signature charts of 
$\Re(\varphi(\lambda;\chi,\tau)-G(\lambda;\chi,\tau))$ for $\xi=i$ in the 
oscillatory region, along with the band endpoints $a$, $a^*$, $b$, and $b^*$.  
\emph{Top}: Positions in the ($\chi$,$\tau$)-plane relative to 
the boundary curves.  \emph{Bottom right}:  $\chi=1.65$, $\tau\approx 0.8983$.  
\emph{Bottom middle}:  $\chi=2.1$, $\tau=0.9$.  \emph{Bottom right}: 
$\chi\approx 2.502$, $\tau=0.9$.} 
\label{oscillatory-phase-plots}
\end{center}
\end{figure}
Analogously to the non-oscillatory region, we define the contours 
\begin{itemize}
\item $\Sigma_\text{up}^\text{out}$ runs from $b$ to $a$ in 
the upper half-plane entirely in the region exterior to $D_{\rm up}$ in which 
$\Re(\varphi(\lambda)-G(\lambda))>0$.
\item $\Sigma_\text{up}^\text{in}$ runs from $b$ to $a$ 
entirely in $D_{\rm up}$ (so $\Re(\varphi(\lambda)-G(\lambda))>0$), and can be 
deformed to $\Sigma_\text{up}$ without passing through $\xi$.
\item $\Gamma_\text{up}^\text{out}$ runs from $a$ to $b$ 
in the upper half-plane entirely in the region where 
$\Re(\varphi(\lambda)-G(\lambda))<0$.
\item $\Gamma_\text{up}^\text{in}$ runs from $a$ to $b$ 
entirely in $D_{\rm up}$ (so $\Re(\varphi(\lambda)-G(\lambda))>0$), and can be 
deformed to $\Gamma_\text{up}$ without passing through $\xi$.
\item $\Sigma_\text{down}^\text{out}$ (oriented from $a^*$ to $b^*$), 
$\Sigma_\text{down}^\text{in}$ (oriented from $a^*$ to $b^*$), 
$\Gamma_\text{down}^\text{out}$ (oriented from $b^*$ to $a^*$), 
and $\Gamma_\text{down}^\text{in}$ (oriented from $b^*$ to $a^*$) 
are the reflections through the real axis of $\Sigma_\text{up}^\text{out}$, 
$\Sigma_\text{up}^\text{in}$, $\Gamma_\text{up}^\text{out}$, and 
$\Gamma_\text{up}^\text{in}$, respectively.
\end{itemize}
Also define the domains 
\begin{itemize}
\item $K_\text{up}^\text{out}$ (respectively, $K_\text{up}^\text{in}$) is the 
domain in the upper half-plane bounded by $\Sigma_\text{up}^\text{out}$ 
(respectively, $\Sigma_\text{up}^\text{in}$) and $\Sigma_\text{up}$.  
\item $L_\text{up}^\text{out}$ (respectively, $L_\text{up}^\text{in}$) is the 
domain in the upper half-plane bounded by $\Gamma_\text{up}^\text{out}$ 
(respectively, $\Gamma_\text{up}^\text{in}$) and $\Gamma_\text{up}$.  
\item $K_\text{down}^\text{out}$, $K_\text{down}^\text{in}$, 
$L_\text{down}^\text{out}$, and $L_\text{down}^\text{in}$ are the reflections 
through the real axis of $K_\text{up}^\text{out}$, $K_\text{up}^\text{in}$, 
$L_\text{up}^\text{out}$, and $L_\text{up}^\text{in}$, respectively.
\end{itemize}
\begin{figure}[b]
\begin{center}
\includegraphics[height=2in]{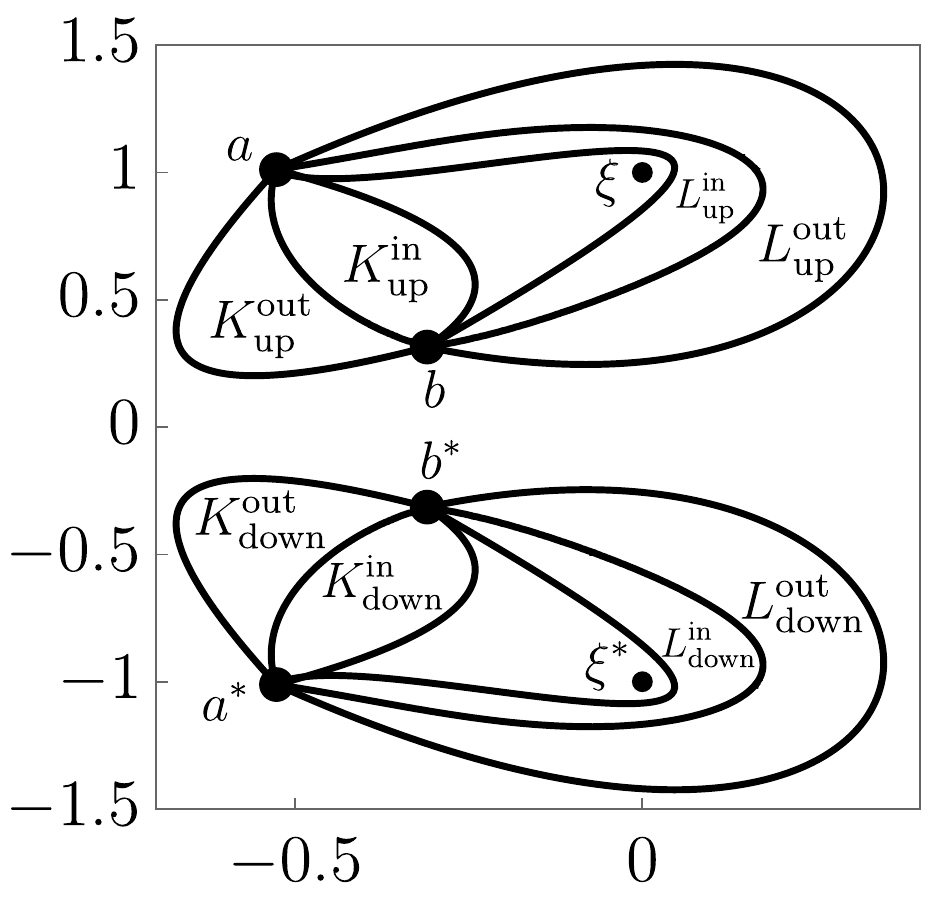} 
\includegraphics[height=2in]{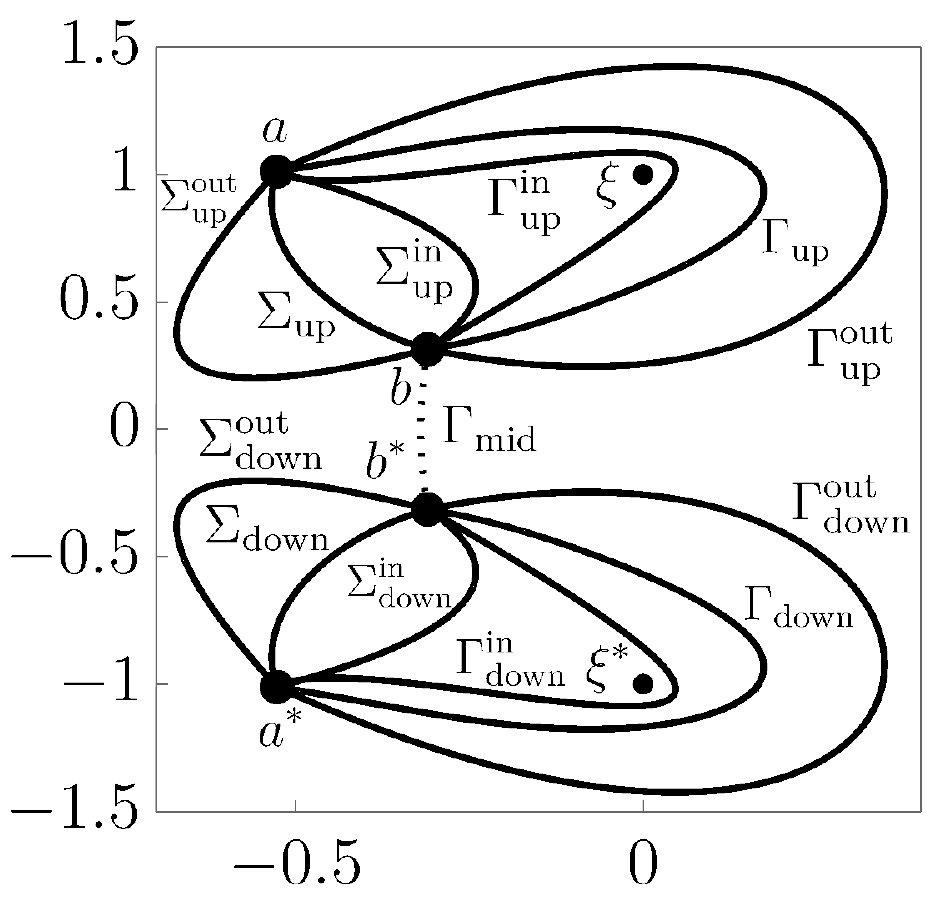} 
\caption{The domains (left) and contours (right) used in the definition of 
${\bf Q}^{[n]}(\lambda)$ in the oscillatory region.  The contour 
$\Gamma_\text{mid}$ is denoted by a dotted line.}
\label{osc-lenses}
\end{center}
\end{figure}
See Figure \ref{osc-lenses}.  Then we define ${\bf Q}^{[n]}(\lambda)$ by 
opening lenses as in \eqref{Q-def-nonosc} (except with $g(\lambda)$ replaced by 
$G(\lambda)$).  The jump matrices for ${\bf Q}^{[n]}(\lambda)$ are as follows:
\eq
\begin{split}
\Sigma_\text{up}:\,\,\bbm 0 & \frac{|{\bf c}|}{c_2}e^{n\Omega} \\ -\frac{c_2}{|{\bf c}|}e^{-n\Omega}  & 0 \ebm,\quad
\Sigma_\text{down}:\,\,\bbm 0 & \frac{c_2^*}{|{\bf c}|}e^{n\Omega} \\ -\frac{|{\bf c}|}{c_2^*}e^{-n\Omega} & 0\ebm, \hspace{0.93in}
\\
\Gamma_\text{up}:\,\,\bbm \frac{|{\bf c}|}{c_1} & 0\\ 0 & \frac{c_1}{|{\bf c}|} \ebm,  \quad
\Gamma_\text{down}:\,\,\bbm \frac{c_1^{*}}{|{\bf c}|}& 0\\ 0 & \frac{|{\bf c}|}{c_1^{*}}  \ebm, \quad  
\Gamma_\text{mid}:\,\,\bbm e^{-nd} & 0 \\ 0 & e^{nd} \ebm, \hspace{0.85in}
\\
\Sigma_\text{up}^\text{in}:\,\,\bbm 1& -\frac{c_1^*}{c_2}e^{-2n(\varphi-G)}\\ 0 & 1  \ebm,\quad
\Sigma_\text{up}^\text{out}:\,\,\bbm 1& -\frac{c_1}{c_2}e^{-2n(\varphi-G)}\\ 0 & 1  \ebm,\quad
\Sigma_\text{down}^\text{in}:\,\,\bbm 1& 0\\ \frac{c_1}{c_2^*}e^{2n(\varphi-G)} & 0  \ebm, 
\\
\Sigma_\text{down}^\text{out}:\,\,\bbm 1& 0\\ \frac{c_1^{*}}{c_2^*}e^{2n(\varphi-G)} & 0  \ebm,\quad
\Gamma_\text{up}^\text{in}:\,\,\bbm 1& \frac{c_2^{*}}{c_1}e^{-2n(\varphi-G)}\\ 0 & 1  \ebm,\quad
\Gamma_\text{up}^\text{out}:\,\,\bbm 1& 0\\ -\frac{c_2}{c_1}e^{2n(\varphi-G)} & 0  \ebm,
\hspace{.2in}
\\
\Gamma_\text{down}^\text{in}:\,\,\bbm 1& 0\\ -\frac{c_2}{c_1^*}e^{2n(\varphi-G)} & 0  \ebm,\quad
\Gamma_\text{down}^\text{out}:\,\,\bbm 1& \frac{c_2^{*}}{c_1^{*}}e^{-2n(\varphi-G)}\\ 0 & 1 \ebm.
\hspace{1in}
\end{split}
\endeq
Lemma \ref{osc-lemma} shows that all of the non-constant jump matrices decay 
exponentially fast to the identity matrix outside of small fixed neighborhoods 
$\mathbb{D}^{(a)}$, $\mathbb{D}^{(b)}$, $\mathbb{D}^{(a^*)}$, and 
$\mathbb{D}^{(b^*)}$ of $a$, $b$, $a^*$, and $b^*$, respectively.  We therefore 
arrive at the outer model problem.  

\begin{rhp}[The outer model problem in the oscillatory region]
Fix a pole location $\xi\in\mathbb{C}^+$, a pair of nonzero complex numbers 
$(c_1,c_2)$, and a 
pair of real numbers $(\chi,\tau)$ in the oscillatory region.  Determine the 
$2\times 2$ matrix ${\bf R}^{(\infty)}(\lambda;\chi,\tau)$ with the 
following properties:
\begin{itemize}
\item[]\textbf{Analyticity:}  ${\bf R}^{(\infty)}(\lambda;\chi,\tau)$ is analytic 
for $\lambda\in \mathbb{C}$ except on 
$\Sigma_{\rm up}\cup\Sigma_{\rm down}\cup\Gamma_{\rm up}\cup\Gamma_{\rm down}\cup\Gamma_{\rm mid}$,
where it achieves continuous boundary values on the interior of each arc.
\item[]\textbf{Jump condition:}  The boundary values taken by ${\bf R}^{(\infty)}(\lambda;\chi,\tau)$ are
related by the jump conditions
${\bf R}^{(\infty)}_+(\lambda;\chi,\tau)={\bf R}^{(\infty)}_-(\lambda;\chi,\tau){\bf V}_{\bf R}^{(\infty)}(\lambda;\chi,\tau)$,
where
\eq
{\bf V}_{\bf R}^{(\infty)}(\lambda;\chi,\tau) := \begin{cases} \bbm 0 & \frac{|{\bf c}|}{c_2}e^{n\Omega} \\ -\frac{c_2}{|{\bf c}|} e^{-n\Omega}& 0\ebm, & \lambda\in \Sigma_{\rm up}, \vspace{.025in} \\ \bbm 0 & \frac{c_2^*}{|{\bf c}|}e^{n\Omega} \\ -\frac{|{\bf c}|}{c_2^*}e^{-n\Omega} & 0\ebm, & \lambda\in \Sigma_{\rm down},\vspace{.025in} \\
\bbm \frac{|{\bf c}|}{c_1} & 0 \\ 0 & \frac{c_1}{|{\bf c}|} \ebm, & \lambda\in \Gamma_{\rm up}, \vspace{.025in} \\ \bbm \frac{c_1^*}{|{\bf c}|} & 0 \\ 0 & \frac{|{\bf c}|}{c_1^*} \ebm, & \lambda\in \Gamma_{\rm down}, \vspace{.025in} \\
\bbm e^{-nd} & 0 \\ 0 & e^{nd} \ebm, & \lambda\in\Gamma_{\rm mid}.
\end{cases}
\endeq
\item[]\textbf{Normalization:}  As $\lambda\to\infty$, the matrix ${\bf R}^{(\infty)}(\lambda;\chi,\tau)$
satisfies the condition
\begin{equation}
{\bf R}^{(\infty)}(\lambda;\chi,\tau) = \mathbb{I}+\mathcal{O}(\lambda^{-1})
\end{equation}
with the limit being uniform with respect to direction.
\end{itemize}
\label{rhp:R-model-oscillatory}
\end{rhp}
To remove the dependence on $c_1$, $c_2$, $\Omega$, and $d$, we define 
\eq
\begin{split}
F(\lambda):=\frac{\mathfrak{R}(\lambda)}{2\pi i}\left(\int_{\Sigma_\text{up}} \frac{-n\Omega-\log\left(\frac{|{\bf c}|}{c_2}\right)}{\mathfrak{R}_{+}(s)(s-\lambda)} ds+ \int_{\Sigma_\text{down}} \frac{-n\Omega-\log\left(\frac{c_2^{*}}{|{\bf c}|}\right)}{\mathfrak{R}_{+}(s)(s-\lambda)} ds \right. \hspace{1.2in}\\ 
\left . + \int_{\Gamma_{\rm up}} \frac{\log\left(\frac{|{\bf c}|}{c_1}\right)}{\mathfrak{R}(s)(s-\lambda)} ds +\int_{\Gamma_{\rm down}} \frac{\log\left(\frac{c_1^{*}}{|{\bf c}|}\right)}{\mathfrak{R}(s)(s-\lambda)} ds +\int_{\Gamma_\text{mid}}\frac{-nd}{\mathfrak{R}(s)(s-\lambda)}ds \right).
\end{split}
\endeq
Here $F(\lambda)$ satisfies the jump conditions 
\eq
\begin{split}
F_{+}+ F_{-} & = -n\Omega -\log\left(\frac{|{\bf c}|}{c_2}\right),\quad \lambda \in \Sigma_\text{up}, \\
F_{+}+ F_{-} & =  -n\Omega-\log\left(\frac{c_2^{*}}{|{\bf c}|}\right),\quad \lambda \in \Sigma_\text{down}, \\
F_{+}- F_{-} & = \log\left(\frac{|{\bf c}|}{c_1}\right),\quad\lambda \in \Gamma_{\rm up}, \\
F_{+}- F_{-}  & = \log\left(\frac{c_1^{*}}{|{\bf c}|}\right),\quad \lambda \in \Gamma_{\rm down}, \\
F_{+}- F_{-} & = -nd,\quad \lambda\in\Gamma_\text{mid}
\end{split}
\endeq
and the symmetry
\eq
F(\lambda) = -(F(\lambda^{*}))^*.
\endeq
As $\lambda\to\infty$ we have 
\eq
F(\lambda)=F_1\lambda+F_0+\mathcal{O}(\lambda^{-1}), 
\endeq
where 
\eq
\label{F1-def}
\begin{split}
F_1:=\frac{-1}{2\pi i}\left(\int_{\Sigma_\text{up}} \frac{-n\Omega-\log\left(\frac{|{\bf c}|}{c_2}\right)}{\mathfrak{R}_{+}(s)} ds+ \int_{\Sigma_\text{down}} \frac{-n\Omega-\log\left(\frac{c_2^{*}}{|{\bf c}|}\right)}{\mathfrak{R}_{+}(s)} ds \right. \hspace{0.8in}\\ 
\left . + \int_{\Gamma_{\rm up}} \frac{\log\left(\frac{|{\bf c}|}{c_1}\right)}{\mathfrak{R}(s)} ds +\int_{\Gamma_{\rm down}} \frac{\log\left(\frac{c_1^{*}}{|{\bf c}|}\right)}{\mathfrak{R}(s)} ds +\int_{\Gamma_\text{mid}}\frac{-nd}{\mathfrak{R}(s)(s-\lambda)}ds \right)
\end{split}
\endeq
and
\eq
\label{F0-def}
\begin{split}
F_0 := -\frac{\mathfrak{s}_1}{2}F_1 - \frac{1}{2\pi i}\left(\int_{\Sigma_\text{up}} \frac{-n\Omega-\log\left(\frac{|{\bf c}|}{c_2}\right)}{\mathfrak{R}_{+}(s)} sds+ \int_{\Sigma_\text{down}} \frac{-n\Omega-\log\left(\frac{c_2^{*}}{|{\bf c}|}\right)}{\mathfrak{R}_{+}(s)} sds \right. \hspace{0.6in}\\ 
\left . + \int_{\Gamma_{\rm up}} \frac{\log\left(\frac{|{\bf c}|}{c_1}\right)}{\mathfrak{R}(s)} sds +\int_{\Gamma_{\rm down}} \frac{\log\left(\frac{c_1^{*}}{|{\bf c}|}\right)}{\mathfrak{R}(s)} sds +\int_{\Gamma_\text{mid}}\frac{-nd}{\mathfrak{R}(s)(s-\lambda)}sds \right).
\end{split}
\endeq
Define 
\eq \label{S-def-osc}
{\bf S}(\lambda):=e^{F_0\sigma_3}{\bf R}^{(\infty)}(\lambda)e^{-F(\lambda)\sigma_3}.
\endeq
Then ${\bf S}(\lambda)$ is analytic for 
$\lambda\notin\Sigma_\text{up}\cup\Sigma_\text{down}$, has jumps 
\eq
\label{S-jumps}
{\bf S}_{+}(\lambda)={\bf S}_{-}(\lambda)\bbm 0 & 1 \\ -1 & 0 \ebm, \quad \lambda\in\Sigma_\text{up}\cup\Sigma_\text{down},
\endeq
and has large-$\lambda$ behavior 
\eq
\label{S-asymptotics}
{\bf S}(\lambda)e^{F_1\lambda\sigma_3}= \mathbb{I}+\mathcal{O}(\lambda^{-1}), \quad \lambda\to\infty.
\endeq

We now build ${\bf S}(\lambda)$ explicitly out of Riemann-theta functions.  See 
\cite{BothnerM:2019,BuckinghamM:2013}, for example, for similar constructions.  
The function $\mathfrak{R}(\lambda)$ defines a genus-one Riemann surface 
constructed from two copies of the complex plane cut on $\Sigma_\text{up}$ and 
$\Sigma_\text{down}$.  We introduce a basis of homology cycles 
$\{\mathfrak{a},\mathfrak{b}\}$ as shown in Figure \ref{osc-cycles}.  Here integration on 
the second sheet is accomplished by replacing $\mathfrak{R}(\lambda)$ by 
$-\mathfrak{R}(\lambda)$.  
\begin{figure}[ht]
\begin{center}
\includegraphics[height=2.1in]{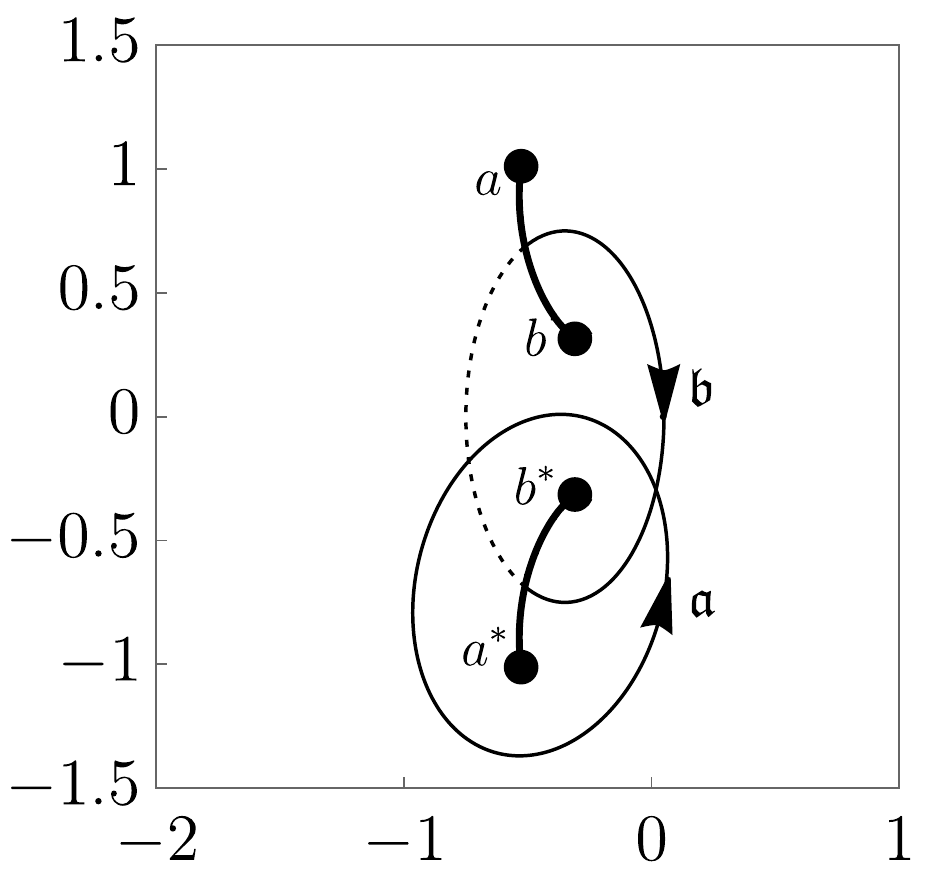}
\caption{The homology cycles $\mathfrak{a}$ and $\mathfrak{b}$ in relation to the branch cuts 
of $\mathfrak{R}(\lambda)$.  Thin solid lines lie on the first sheet while the 
dotted line lies on the second sheet.}
\label{osc-cycles}
\end{center}
\end{figure}
Define the Abel map as
\eq \label{Abel-map}
A(\lambda):=\frac{2\pi i}{\oint_{\mathfrak{a}}\frac{ds}{\mathfrak{R}(s)}}\int_{a^*}^{\lambda}\frac{ds}{\mathfrak{R}(s)}.
\endeq
We think of the integration as being on the Riemann surface (i.e. if the 
integration path passes through a branch cut then $\mathfrak{R}(\lambda)$ flips 
to $-\mathfrak{R}(\lambda)$).  The Abel map depends on the integration contour 
and changes value if an extra $\mathfrak{a}$ cycle or $\mathfrak{b}$ cycle is added.  In 
particular, adding an extra $\mathfrak{a}$ cycle to the integration contour adds 
$2\pi i$ to the Abel map, while an extra $\mathfrak{b}$ cycle adds the quantity
\eq \label{B-cycle}
B:=\frac{2\pi i}{\oint_{\mathfrak{a}}\frac{ds}{\mathfrak{R}(s)}}\oint_{\mathfrak{b}}\frac{ds}{\mathfrak{R}(s)}.
\endeq
We define the lattice 
\eq
\Lambda:=2\pi ij + Bk, \quad j,k\in\mathbb{Z}.
\endeq
Then the Abel map is well-defined modulo $\Lambda$.  We compute
\eq
\label{Abel-properties}
\begin{split}
A_{+}(\lambda)+ A_{-}(\lambda) & = -B\text{ mod }\Lambda, \quad \lambda \in \Sigma_\text{up},\\
A_{+}(\lambda)- A_{-}(\lambda)   & = -2\pi i\text{ mod }\Lambda, \quad \lambda\in\Gamma_\text{mid},\\
A_{+}(\lambda)+ A_{-}(\lambda) & = 0 \text{ mod }\Lambda, \quad \lambda \in \Sigma_\text{down}.
\end{split}
\endeq
We now define two differentials $\omega$ and $\Delta$.  Let
\eq
\omega := \frac{2\pi i}{\oint_{\mathfrak{a}}\frac{ds}{\mathfrak{R}(s)}}\frac{ds}{\mathfrak{R}(s)}
\endeq
be the holomorphic differential normalized so $\oint_\mathfrak{a}\omega=2\pi i$.  We 
also define 
\eq 
\label{one-form}
\Delta_0 := \frac{s^2-\frac{1}{2}\mathfrak{s}_1s}{\mathfrak{R}(s)}ds, \quad \Delta=\Delta_0-\left(\frac{1}{2\pi i}\oint_{\mathfrak{a}}\Delta_0\right)\omega
\endeq
so that $\oint_\mathfrak{a}\Delta=0$.  Here $\Delta_0$ is chosen to ensure that 
\eq
\label{J-def}
J := \lim_{\lambda \to \infty}\left(\lambda-\int_{a^*}^{\lambda}\Delta\right)
\endeq
exists.  We also set 
\eq
\label{U-def}
U:=\oint_{\mathfrak{b}}\Delta.
\endeq
Now $\int_{a^*}^\lambda\Delta$ satisfies the jump conditions 
\eq
\label{Delta-properties}
\begin{split}
\int_{a^*}^{\lambda_+}\Delta & = -U-\int_{a^*}^{\lambda_-}\Delta, \quad \lambda\in\Sigma_\text{up}, \\
\int_{a^*}^{\lambda_+}\Delta & = -\int_{a^*}^{\lambda_-}\Delta,\quad \lambda\in\Sigma_\text{down}
\end{split}
\endeq
(here we restrict the integration path to be on the first sheet).  The 
Riemann-theta function defined by \eqref{Riemann-theta} 
has the properties \cite{dlmf}
\eq
\Theta(-\lambda)=\Theta(\lambda),\quad \Theta(\lambda+2\pi i)=\Theta(\lambda),\quad \Theta(\lambda+B)=e^{-\frac{1}{2}B}e^{-\lambda}\Theta(\lambda).
\endeq
Also $\Theta(\lambda)=0$ if and only if $\lambda=\left(i\pi +\frac{1}{2}B\right)$ mod 
$\Lambda$.  Then for any $Q\in\mathbb{C}$,  the function
\eq \label{q-function}
q(\lambda) := \frac{\Theta(A(\lambda)-A(Q)-i\pi-\frac{B}{2}-F_1U)}{\Theta(A(\lambda)-A(Q)-i\pi-\frac{B}{2})}
e^{-F_1\int_{a^*}^{\lambda}\Delta},
\endeq
is well-defined, independent of the integration path (assuming the paths in 
$A(\lambda)$ and $\int_{a^*}^{\lambda}$ are the same).  The function $q(\lambda)$ 
has a simple zero at $\lambda=Q$ (to be determined).  Consider the matrix
\eq 
\label{T-matrix}
\begin{split}
&{\bf T}(\lambda) := \\ 
&\bbm \displaystyle \frac{\Theta(A(\lambda)+A(Q)+i\pi+\frac{B}{2}-F_1U)}{\Theta(A(\lambda)+A(Q)+i\pi+\frac{B}{2})} e^{-F_1\int_{a^*}^{\lambda}\Delta} & \displaystyle \frac{\Theta(A(\lambda)-A(Q)-i\pi-\frac{B}{2}+F_1U)}{\Theta(A(\lambda)-A(Q)-i\pi-\frac{B}{2})} e^{F_1\int_{a^*}^{\lambda}\Delta} \\ \displaystyle \frac{\Theta(A(\lambda)-A(Q)-i\pi-\frac{B}{2}-F_1U)}{\Theta(A(\lambda)-A(Q)-i\pi-\frac{B}{2})} e^{-F_1\int_{a^*}^{\lambda}\Delta} & \displaystyle \frac{\Theta(A(\lambda)+A(Q)+i\pi+\frac{B}{2}+F_1U)}{\Theta(A(\lambda)+A(Q)+i\pi+\frac{B}{2})} e^{F_1\int_{a^*}^{\lambda}\Delta}\ebm.
\end{split}
\endeq
From \eqref{Abel-properties} and \eqref{Delta-properties}, ${\bf T}(\lambda)$ has 
the jump relations 
\eq
{\bf T}_+(\lambda)={\bf T}_-(\lambda)\bbm 0 & 1 \\ 1 & 0 \ebm, \quad \lambda\in\Sigma_\text{up}\cup\Sigma_\text{down}.
\endeq

We need to slightly adjust the jump condition to that in \eqref{S-jumps} while at 
the same time removing the simple poles in the off-diagonal entries of 
${\bf T}(\lambda)$.  Analogously to \eqref{beta-nonosc}, we define 
\eq
\gamma(\lambda) := \left(\frac{(\lambda-b)(\lambda-a^*)}{(\lambda-a)(\lambda-b^*)}\right)^{1/4}
\endeq
to be the function cut on $\Sigma_\text{up}\cup\Sigma_\text{down}$ with 
asymptotic behavior $\gamma(\lambda) = 1+\mathcal{O}(\lambda^{-1})$ as 
$\lambda\to\infty.$  This function satisfies 
$\gamma_+(\lambda)=-i\gamma_-(\lambda)$ for 
$\lambda\in\Sigma_\text{up}\cup\Sigma_\text{down}$.  Define
\eq
f^\text{D}(\lambda) := \frac{\gamma(\lambda)+\gamma(\lambda)^{-1}}{2},\quad f^\text{OD}(\lambda) := \frac{\gamma(\lambda)-\gamma(\lambda)^{-1}}{2i},
\endeq
so that 
\eq
f_+^\text{D}(\lambda)=f_-^\text{OD}(\lambda),\quad f_+^\text{OD}(\lambda)=-f_-^\text{D}(\lambda), \quad \lambda\in\Sigma_\text{up}\cup\Sigma_\text{down}.
\endeq
Define $Q\equiv Q(\chi,\tau)$ to be the unique complex number such that 
\eq
\label{Q-def}
f^\text{D}(Q)f^\text{OD}(Q)=0.
\endeq
We proceed under the assumption that $Q$ is a simple zero of $f^{OD}(\lambda)$ 
and $f^{D}(\lambda)$ has no zeros.  This is the case we observe numerically for 
the parameter values in Figure \ref{osc-sol-plots-c13}.  The alternate case 
when $f^\text{D}(Q)=0$ does not change the final answer and can be handled by a 
slight modification as described in \cite{BothnerM:2019}.  
If we choose ${\bf S}(\lambda)$ in the form 
\eq \label{S-matrix}
{\bf S}(\lambda)=\bbm C_{11} & 0 \\ 0 & C_{22} \ebm \bbm f^\text{D}(\lambda)[{\bf T}(\lambda)]_{11} & -f^\text{OD}(\lambda)[{\bf T}(\lambda)]_{12} \\ f^\text{OD}(\lambda)[{\bf T}(\lambda)]_{21} & f^\text{D}(\lambda)[{\bf T}(\lambda)]_{22}\ebm,
\endeq
where $C_{11}$ and $C_{22}$ are any constants, then the jump condition 
\eqref{S-jumps} is satisfied, and ${\bf S}(\lambda)$ is analytic for 
$\lambda\notin\Sigma_\text{up}\cup\Sigma_\text{down}$.  Noting that 
$f^\text{OD}(\lambda)=\mathcal{O}(\lambda^{-1})$ and 
$f^\text{D}(\lambda)=1+\mathcal{O}(\lambda^{-1})$, we see the normalization 
\eqref{S-asymptotics} is satisfied if we choose 
\eq
\begin{split}
C_{11} & := \frac{\Theta(A(\infty)+A(Q)+i\pi+\frac{B}{2})}{\Theta(A(\infty)+A(Q)+i\pi+\frac{B}{2}-F_1U)}e^{-F_1J}, \\ 
C_{22} & := \frac{\Theta(A(\infty)+A(Q)+i\pi+\frac{B}{2})}{\Theta(A(\infty)+A(Q)+i\pi+\frac{B}{2}+F_1U)}e^{F_1J}.
\end{split}
\endeq
This completes the construction of ${\bf S}(\lambda)$, and thus of 
${\bf R}^{(\infty)}(\lambda)$ via \eqref{S-def-osc}.  

Define ${\bf R}^{(a)}(\lambda)$, ${\bf R}^{(b)}(\lambda)$, 
${\bf R}^{(a^*)}(\lambda)$, and ${\bf R}^{(b^*)}(\lambda)$ as the 
local parametrices in small, fixed disks $\mathbb{D}^{(a)}$, $\mathbb{D}^{(b)}$, 
$\mathbb{D}^{(a^*)}$, and $\mathbb{D}^{(b^*)}$ centered at $a$, $b$, $a^*$, and 
$b^*$, respectively.  Each of these parametrices can be constructed using Airy 
functions (see, for example, \cite{DeiftKMVZ:1999}).  Then the global parametrix 
\eq
{\bf R}(\lambda) := \begin{cases} {\bf R}^{(a)}(\lambda), & \lambda\in\mathbb{D}^{(a)}, \\ {\bf R}^{(b)}(\lambda), & \lambda\in\mathbb{D}^{(b)}, \\ {\bf R}^{(a^*)}(\lambda), & \lambda\in\mathbb{D}^{(a^*)}, \\ {\bf R}^{(b^*)}(\lambda), & \lambda\in\mathbb{D}^{(b^*)}, \\ {\bf R}^{(\infty)}(\lambda), & \text{otherwise} \end{cases}
\endeq
satisfies 
\eq
{\bf Q}^{[n]}(\lambda) = \left(\mathbb{I}+\mathcal{O}(n^{-1})\right){\bf R}(\lambda).
\endeq
Undoing the different Riemann-Hilbert transformations, we find that, for 
$|\lambda|$ sufficiently large,
\eq
\begin{split}
&[{\bf M}^{[n]}(\lambda;n\chi,n\tau)]_{12}  = \left(\frac{\lambda-\xi^*}{\lambda-\xi}\right)^n[{\bf N}^{[n]}(\lambda;\chi,\tau)]_{12} = \left(\frac{\lambda-\xi^*}{\lambda-\xi}\right)^n[{\bf O}^{[n]}(\lambda;\chi,\tau)]_{12} \\ 
 & = \left(\frac{\lambda-\xi^*}{\lambda-\xi}\right)^ne^{-nG(\lambda;\chi,\tau)}[{\bf P}^{[n]}(\lambda;\chi,\tau)]_{12} = \left(\frac{\lambda-\xi^*}{\lambda-\xi}\right)^n 
 e^{-nG(\lambda;\chi,\tau)}[{\bf Q}^{[n]}(\lambda;\chi,\tau)]_{12} \\
 & = \left(\frac{\lambda-\xi^*}{\lambda-\xi}\right)^n
 e^{-nG(\lambda;\chi,\tau)} \left( [{\bf R}^{(\infty)}(\lambda;\chi,\tau)]_{12} + \mathcal{O}(n^{-1})\right) \\ 
 & = \left(\frac{\lambda-\xi^*}{\lambda-\xi}\right)^n e^{-nG(\lambda;\chi,\tau)} \left( e^{-F(\lambda;\chi,\tau)-F_0(\chi,\tau)} [{\bf S}(\lambda;\chi,\tau)]_{12} +\mathcal{O}(n^{-1})\right) \\ 
 & = \left(\frac{\lambda-\xi^*}{\lambda-\xi}\right)^n e^{-nG(\lambda;\chi,\tau)} \left( -C_{11}(\chi,\tau)f^\text{OD}(\chi,\tau) e^{-F(\lambda;\chi,\tau)-F_0(\chi,\tau)}[{\bf T}(\lambda;\chi,\tau)]_{12} +\mathcal{O}(n^{-1})\right).
\end{split}
\endeq
We now apply 
\eq
f^\text{OD}(\lambda)=\frac{a-a^*-b+b^*}{4i\lambda}+\mathcal{O}(\lambda^{-2}),
\endeq
\eq
\left(\frac{\lambda-\xi^*}{\lambda-\xi}\right)^n = 1+ \mathcal{O}(\lambda^{-1}),
\endeq
and
\eq
e^{-F(\lambda)-F_0-nG(\lambda)} = e^{-F_1\lambda-2F_0}(1+\mathcal{O}(\lambda^{-1}))
\endeq
to find 

\begin{multline}
\lim_{\lambda\to\infty} \lambda [{\bf M}^{[n]}(\lambda;n\chi,n\tau)]_{12} =  \frac{\Theta(A(\infty)-A(Q)-i\pi-\frac{B}{2}+F_1U)\Theta(A(\infty)+A(Q)+i\pi+\frac{B}{2})}{\Theta(A(\infty)-A(Q)-i\pi-\frac{B}{2})\Theta(A(\infty)+A(Q)+i\pi+\frac{B}{2}-F_1U)} \\
\times\frac{a^*-a-b^*+b}{4i} e^{-2F_1J-2F_0} +\mathcal{O}(n^{-1}),
\end{multline}
where the right-hand side is a function of $\chi$ and $\tau$.  
We then recover $\psi^{[2n]}(n\chi,n\tau)$ from \eqref{psi-from-M}, 
thereby proving Theorem \ref{osc-thm}.

\appendix

\section{Construction of the {multiple-pole} solitons via Darboux transformations}
\label{app-Darboux}

We summarize the construction via Darboux transformations of the multiple-pole 
solitons that we study.  Fix $\xi=\alpha+i\beta$ with $\beta>0$ and 
${\bf c} = (c_1,c_2)\in(\mathbb{C}^*)^2$.  We start with the trivial initial 
condition $\psi^{[0]}(x,t)\equiv 0$ and repeatedly apply the same Darboux 
transformation $n$ times to obtain a solution $\psi^{[2n]}(x,t)$ with order $2n$ 
poles at $\xi$ and $\xi^*$.  See \cite{BilmanB:2019} for full details.  

We construct the associated eigenvector matrix ${\bf U}^{[n]}(\lambda;x,t)$ 
iteratively.  Define 
\eq
\label{background-U}
{\bf U}^{[0]}(\lambda;x,t) := e^{-i(\lambda x+\lambda^2 t)\sigma_3}.
\endeq
This is the background eigenvector matrix corresponding to 
$\psi^{[n]}(x,t)\equiv 0$.  Recall the circular disk $D_0$ from Riemann-Hilbert 
Problem \ref{rhp-M} that is centered at the origin and contains $\xi$.  Given 
${\bf U}^{[n]}(\lambda;x,t)$, define 
\eq
\label{s-N-w}
\begin{split}
{\bf s}^{[n]}(x,t):={\bf U}^{[n]}(\xi;x,t){\bf c}^\mathsf{T}, \quad N^{[n]}(x,t):={\bf s}^{[n]}(x,t)^\dagger{\bf s}^{[n]}(x,t), \\
w^{[n]}(x,t):={\bf c}{\bf U}^{[n]}(\xi;x,t)^\mathsf{T}\bbm 0 & -i \\ i & 0 \ebm{\bf U}^{[n]\prime}(\xi;x,t){\bf c}^\mathsf{T}.\phantom{nnnnnn}
\end{split}
\endeq
Here $\dagger$ denotes the conjugate-transpose.  From here, introduce 
\eq
\label{Yn-Zn}
\begin{split}
{\bf Y}^{[n]}(x,t):= & \frac{-4\beta^2 w^{[n]}(x,t)^*}{4\beta^2|w^{[n]}(x,t)|^2 + N^{[n]}(x,t)^2}{\bf s}^{[n]}(x,t){\bf s}^{[n]}(x,t)^\mathsf{T}\bbm 0 & -i \\ i & 0 \ebm \\ 
& + \frac{2i\beta N^{[n]}(x,t)}{4\beta^2|w^{[n]}(x,t)|^2+N^{[n]}(x,t)^2}\bbm 0 & -i \\ i & 0 \ebm{\bf s}^{[n]}(x,t)^*{\bf s}^{[n]}(x,t)^\mathsf{T}\bbm 0 & -i \\ i & 0 \ebm, \\
{\bf Z}^{[n]}(x,t) := & \bbm 0 & -i \\ i & 0 \ebm{\bf Y}^{[n]}(x,t)^*\bbm 0 & -i \\ i & 0 \ebm
\end{split}
\endeq
and define 
\eq
\label{Gn-ito-Y-and-Z}
{\bf G}^{[n]}(\lambda;x,t) := \mathbb{I} + \frac{{\bf Y}^{[n]}(x,t)}{\lambda-\xi} + \frac{{\bf Z}^{[n]}(x,t)}{\lambda-\xi^*}.
\endeq
Then we set 
\eq
\label{Unp1-ito-Gn-Un}
{\bf U}^{[n+1]}(\lambda;x,t) := \begin{cases} {\bf G}^{[n]}(\lambda;x,t){\bf U}^{[n]}(\lambda;x,t), & \lambda\notin D_0, \\ {\bf G}^{[n]}(\lambda;x,t){\bf U}^{[n]}(\lambda;x,t){\bf G}^{[n]}(\lambda;0,0)^{-1}, & \lambda\in D_0 \end{cases}
\endeq
and obtain the desired multiple-pole soliton solution of \eqref{nls} by
\eq
\label{psi-n}
\psi^{[2n+2]}(x,t) = \psi^{[2n]}(x,t) + 2i([{\bf Y}^{[n]}(x,t)]_{12}-[{\bf Y}^{[n]}(x,t)^*]_{21}).
\endeq

{
\section{Elementary symmetry properties of the multiple-pole solitons}
\label{A:symmetries}
Fix $\xi=\alpha+i \beta$, $\alpha\in\mathbb{R}$, $\beta>0$, 
and let
\begin{equation}
B(\lambda;\zeta):= \frac{\lambda-\zeta}{\lambda-\zeta^*}
\end{equation}
for convenience. First note that 
\begin{equation}
B(-\lambda; \xi) = B\big(\lambda; -\xi^*\big)^{-1}.
\label{B-symmetry-1}
\end{equation}
Next, from the definition \eqref{S-def} of $\mathcal{S}\equiv \mathcal{S}(c_1,c_2)$, it is easy to verify that
\begin{equation}
\sigma_3 \mathcal{S}(c_1,c_2) \sigma_1 = \mathcal{S}(-c_2^*, -c_1^*),\quad \sigma_1 := \begin{bmatrix} 0 & 1 \\ 1 & 0\end{bmatrix}.
\label{S-symmetry}
\end{equation}
Let $\theta$ denote the phase $\theta(\lambda;x,t):= \lambda x + \lambda^2 t$ in \eqref{eq:jump-rhp-M}. Define $\mathbf{O}\big(\lambda;x,t;(c_1,c_2),\xi\big)$ in terms of the solution $\mathbf{M}\big(\lambda;x,t;(c_1,c_2),\xi\big)$ of Riemann-Hilbert Problem \ref{rhp-M} by
\begin{equation}
\mathbf{O}\big(\lambda;x,t;(c_1,c_2),\xi\big) = \sigma_3 \mathbf{M}\big(\lambda;x,t;(-c_2^*, -c_1^*),-\xi^* \big) \sigma_3,
\label{O-def}
\end{equation}
and recalling the jump condition \eqref{eq:jump-rhp-M} observe that
\begin{equation}
\begin{split}
\mathbf{O}_+\big(\lambda; x, t; (c_1,c_2), \xi\big) &= \sigma_3 \mathbf{M}_+\big(\lambda;x,t;(-c_2^*, -c_1^*),-\xi^* \big) \sigma_3 \\
&= \sigma_ 3 \mathbf{M}_-\big(\lambda;x,t;(-c_2^*, -c_1^*),-\xi^* \big) \\
&\qquad \times e^{-i \theta(\lambda;x,t)\sigma_3} \mathcal{S}(-c_2^*, -c_1^*) B\big(\lambda;-\xi^*\big)^{n\sigma_3}\mathcal{S}(-c_2^*, -c_1^*)^{-1}e^{ i \theta(\lambda;x,t)\sigma_3} \sigma_3\\
&= \mathbf{O}_-\big(\lambda; x, t; (c_1,c_2), \xi\big) \\
&\qquad \times \sigma_ 3 e^{-i \theta(\lambda;x,t)\sigma_3} \mathcal{S}(-c_2^*, -c_1^*) B\big(\lambda;-\xi^*\big)^{n\sigma_3}\mathcal{S}(-c_2^*, -c_1^*)^{-1}e^{ i \theta(\lambda;x,t)\sigma_3} \sigma_3\\
&= \mathbf{O}_-\big(\lambda; x, t; (c_1,c_2), \xi\big) \\
&\qquad \times e^{-i \theta(\lambda;x,t)\sigma_3} \left[\sigma_ 3 \mathcal{S}(-c_2^*, -c_1^*) \sigma_1\right] B\big(\lambda;-\xi^*\big)^{-n\sigma_3}[\sigma_1\mathcal{S}(-c_2^*, -c_1^*)^{-1} \sigma_3] e^{ i \theta(\lambda;x,t)\sigma_3}\\
&= \mathbf{O}_-\big(\lambda; x, t; (c_1,c_2), \xi\big) \\
&\qquad \times e^{-i \theta(\lambda;x,t)\sigma_3} \mathcal{S}(c_1,c_2) B\big(\lambda;-\xi^*\big)^{-n\sigma_3}\mathcal{S}(c_1,c_2)^{-1} e^{ i \theta(\lambda;x,t)\sigma_3},\\
\end{split}
\end{equation}
where we have used \eqref{S-symmetry} in the last equality. It now follows from \eqref{B-symmetry-1} and $\theta(-\lambda;x,t) = \theta(\lambda;-x,t)$ that $\mathbf{M}(\lambda;-x, t; (c_1,c_2), \xi)$ and $\mathbf{O}(-\lambda;x, t; (c_1,c_2), \xi)$ satisfy the same jump condition. Moreover, they satisfy the same analyticity and normalization condition as $\lambda\to\infty$. Therefore, by uniqueness of the solutions of Riemann-Hilbert Problem~\ref{rhp-M}, 
$\mathbf{O}(-\lambda; x, t; (c_1,c_2), \xi)=\mathbf{M}(\lambda; -x, t; (c_1,c_2), \xi)$. Then
\begin{equation}
\begin{split}
\psi^{[2n]}(-x,t;(c_1,c_2),\xi) & = 2i \lim_{\lambda\to\infty}\lambda  [  \mathbf{M}(\lambda; -x, t; (c_1,c_2), \xi)]_{12} \\
& = 2i \lim_{\lambda\to\infty} \lambda [  \mathbf{O}(-\lambda; x, t; (c_1,c_2), \xi)]_{12}\\
& = 2i \lim_{\lambda\to\infty} \lambda [ \sigma_3 \mathbf{M}(-\lambda; x, t; (-c_2^*, -c_1^*), -\xi^*) \sigma_3]_{12}\\
& = -  2i \lim_{\lambda\to\infty} \lambda [ \sigma_3 \mathbf{M}(\lambda; x, t; (-c_2^*, -c_1^*), -\xi^*) \sigma_3]_{12}\\
& =  2i \lim_{\lambda\to\infty} \lambda [  \mathbf{M}(\lambda; x, t; (-c_2^*, -c_1^*), -\xi^*) ]_{12}\\
& = \psi^{[2n]}(x,t; (-c_2^*, -c_1^*),-\xi^*),
\end{split}
\end{equation}
which proves \eqref{x-symmetry}. To prove \eqref{t-symmetry}, observe that  $B(\lambda^*; \xi)^* = B(\lambda;\xi)^{-1}$,
hence from \eqref{B-symmetry-1} we have
$B(- \lambda^*; \xi)^* = B(\lambda;-\xi^*)$.
From this, together with $[i \theta(- \lambda^*; x, -t)]^* = i\theta(\lambda;x,t) $, it similarly follows that $\mathbf{M}(\lambda;x, -t; (c_1,c_2), \xi )$ and $\mathbf{M}(-\lambda^*; x, t; (c_1^*, c_2^*), -\xi^* )^*$ solve the same Riemann-Hilbert Problem. Then, again by uniqueness,
\begin{equation}
\begin{split}
\psi^{[2n]}(x, -t;(c_1,c_2),\xi) & = 2i \lim_{\lambda\to\infty}\lambda  [  \mathbf{M}(\lambda; x, -t; (c_1,c_2), \xi)]_{12} \\
& = 2i \lim_{\lambda\to\infty} \lambda [  \mathbf{M}(-\lambda^*; x, t; (c_1^*,c_2^*), -\xi^*)^* ]_{12}\\
& =-  2i   \lim_{\lambda\to\infty} \left( \lambda^*[  \mathbf{M}(\lambda^*; x, t; (c_1^*,c_2^*), -\xi^*) ]_{12} \right)^*\\
& =\left(2i  \lim_{\lambda\to\infty}  [ \lambda  \mathbf{M}(\lambda; x, t; (c_1^*,c_2^*), -\xi^*)]_{12} \right)^*\\
&= \psi^{[2n]}(x, t;(c_1^*,c_2^*),-\xi^*)^*,
\end{split}
\end{equation}
which finishes the proof of Proposition~\ref{prop:symmetries}.
}

\end{document}